\newtheorem{theorem}{Theorem}
\newtheorem{Prop}[theorem]{Proposition}
\newtheorem{lemma}[theorem]{Lemma}
\numberwithin{theorem}{section}
\newtheorem{Cor}[theorem]{Corollary}
\theoremstyle{definition}
\newtheorem{Def}[theorem]{Definition}
\theoremstyle{remark}
\newtheorem{Opm}[theorem]{Remark}
\def\R{\mathds{R} }
\def\E{\mathds{E} }
\def\Z{\mathds{Z} }
\def\N{\mathds{N} }
\def\C{\mathds{C} }
\def\K{\mathds{K} }
\DeclareMathOperator{\sdim}{sdim}
\DeclareMathOperator{\Sym}{Sym}
\DeclareMathOperator{\TKK}{TKK}
\DeclareMathOperator{\ad}{ad}
\DeclareMathOperator{\Ad}{Ad}
\DeclareMathOperator{\Inn}{Inn}
\DeclareMathOperator{\End}{End}
\DeclareMathOperator{\bessel}{\mathcal B_\lambda}
\DeclareMathOperator{\SB}{SB}
\DeclareMathOperator{\Lie}{Lie}
\newcommand{\bbo}[1]{\left<{#1}\right>_{\varpi}}
\DeclareMathOperator{\Ann}{Ann}
\newcommand{\bfip}[1]{\left<{#1}\right>_\mathcal B}
\newcommand{\fip}[1]{\left<{#1}\right>_\mathcal F}
\newcommand{\fipbar}[1]{\overline{\left<{#1}\right>}_\mathcal F}
\newcommand{\fp}[1]{\left<{#1}\right>_F}
\newcommand{\fpbar}[1]{\overline{\left<{#1}\right>}_F}
\newcommand{\ip}[1]{\left<{#1}\right>}
\newcommand{\ipW}[1]{\left<{#1}\right>_W}
\newcommand{\ipO}[1]{\left<{#1}\right>_{\mc O}}
\newcommand{\ipJ}[1]{\left({#1}\right)_J}
\newcommand{\bbf}[1]{\left<{#1}\right>_\beta}
\newcommand{\pt}[1]{\partial_{#1}}
\newcommand{\pu}[1]{\partial^{#1}}
\newcommand{\mf}[1]{\mathfrak{#1}}
\newcommand{\ds}[1]{\mathds{#1}}
\newcommand{\mc}[1]{\mathcal{#1}}
\newcommand{\pushright}[1]{\ifmeasuring@#1\else\omit\hfill$\displaystyle#1$\fi\ignorespaces}
\newcommand{\ol}[1]{\overline{#1}}
\newcommand{\ul}[1]{\underline{#1}}
\newcommand{\ot}[1]{\widetilde{#1}}
\newcommand{\wh}[1]{\widehat{#1}}
\newcommand{\id}{{\rm id}}
\newcommand{\minus}{\scalebox{0.9}{{\rm -}}}
\newcommand{\plus}{\scalebox{0.6}{{\rm+}}}
\DeclareMathOperator{\tr}{tr}
\DeclareMathOperator{\pil}{d\pi_\lambda}
\DeclareMathOperator{\rol}{d\rho_\lambda}
\DeclareMathOperator{\dotpi}{d\ot{\pi}}
\DeclareMathOperator{\dotrho}{d\ot{\rho}}
\DeclareMathOperator{\dnu}{d\nu}
\DeclareMathOperator{\hatpi}{d\wh{\pi}}
\DeclareMathOperator{\SpO}{SpO}
\DeclareMathOperator{\SO}{SO}
\DeclareMathOperator{\Sp}{Sp}
\DeclareMathOperator{\Spin}{Spin}
\DeclareMathOperator{\Mp}{Mp}
\numberwithin{equation}{section}
\DeclarePairedDelimiter\abs{\lvert}{\rvert}%
\DeclarePairedDelimiter\norm{\lVert}{\rVert}%
\let\oldabs\abs
\def\abs{\@ifstar{\oldabs}{\oldabs*}}
\let\oldnorm\norm
\def\norm{\@ifstar{\oldnorm}{\oldnorm*}}
\newcounter{indexcount}
\newcommand{\threedigit}[1]{\ifnum #1<100 0\fi\ifnum #1<10 0\fi#1}
\newcommand{\symindex}[1]{%
  \ifcsname\detokenize{SYM@@#1}\endcsname
    \index[sym]{\csname\detokenize{SYM@@#1}\endcsname @#1}%
  \else
    \stepcounter{indexcount}%
    \expandafter\xdef\csname SYM@@\detokenize{#1}\endcsname{%
      \expandafter\threedigit\expandafter{\romannumeral-`Q\theindexcount}%
    }%
    \index[sym]{\threedigit{\theindexcount}@\unexpanded{\unexpanded{#1}}}%
  \fi
}
\begin{document}
\title[Minimal representations of the metaplectic supergroup]{Minimal representations of the metaplectic Lie supergroup and the super Segal-Bargmann transform}

\author{Sam Claerebout}
\address{Department of Electronics and Information Systems \\Faculty of Engineering and Architecture\\Ghent University\\Krijgslaan 281, 9000 Gent\\ Belgium.}
\email{Sam.Claerebout@UGent.be}

\date{\today}
\keywords{Minimal representation, Metaplectic representation, Fock model, Schr\"odinger model, Lie superalgebra, Jordan superalgebra, Fischer product, Superpolynomials.}
\subjclass[2010]{17B10, 17B60, 22E46, 58C50} 

\begin{abstract}
We construct a Schr\"odinger model and a Fock model of a minimal representation of the metaplectic Lie supergroup $\Mp(2m|2n,2n)$. Then, we show that the Schr\"odinger model of the minimal representation leads to an already known Schr\"odinger model of the metaplectic representation of $\Mp(2m|2n,2n)$. Therefore, the Fock model of the minimal representation allows us to construct a Fock model of this metaplectic representation. We then construct an intertwining super Segal-Bargmann transform which extends the classical Segal-Bargmann transform.
\end{abstract}

\maketitle

\tableofcontents

\section{Introduction} 

\subsection{The classical setting}

The metaplectic group $\Mp(2m, \ds R)$ is a double cover of the symplectic group $\Sp(2m, \ds R)$. A well-known and well-studied unitary representation of the metaplectic group is the metaplectic representation, also called oscillator representation or Shegal-Shale-Weil representation. It has two interesting realisations, one more analytic in nature and one more algebraic in nature. The more analytic one is called Schr\"odinger model or $L^2$-model $(\pi, L^2(\ds R^m))$. Here, the action $\pi$ is generated by creation and annihilation operators and $L^2(\ds R^m)$ is the space of square-integrable functions on $\ds R^m$. The more algebraic one is called the Fock model $(\rho, \mc F(\ds C^m))$. Here $\rho$ is generated by coordinate multiplication and differentiation operators and $\mc F(\ds C^{m})$ is the classical Fock space, i.e.\ the space of entire functions on $\ds C^m$ which are square integrable with respect to the Gaussian measure $\exp(-\norm{z}^2)$. These two models are connected via the classical Segal-Bargmannn transform $\SB: L^2(\ds R^m)\rightarrow \mc F(\ds C^{m})$ defined by
\begin{align*}
\SB(f(x))(z) := \exp(-\frac{1}{2}\norm{z}^2)\int_{\ds R^{m|2n}}\exp(2(z\cdot x))\exp(-\norm{x}^2)f(x)dx.
\end{align*}
It is a unitary isomorphism which intertwines the Schr\"odinger model with the Fock model, i.e.\ we have the following commuting diagram
\begin{equation}\label{Dia_Int}
\begin{tikzcd}[row sep=2cm,column sep=2cm,inner sep=1ex,every label/.append style={font=\normalsize}]
L^2(\R^m)  \arrow[thick,swap] {d}{\pi}  \arrow[thick]{r}[name=U]{\SB}
&
\mathcal{F}(\C^m)  
\arrow[thick] {d}{\rho}
\\
L^2(\R^m)   \arrow[thick,swap]{r}[name=D]{\SB}   & \mathcal{F}(\C^m) 
\arrow[to path={(U) node[midway,scale=3] {$\circlearrowleft$}  (D)}]{}
\end{tikzcd}
\end{equation}
For more information about the classical Segal-Bargmannn transform and the metaplectic representation, we refer to \cite{Folland}.

The metaplectic representation decomposes into two irreducible unitary parts, which are given by restricting the action to either even or odd functions. Both parts are examples of the minimal representations of $\Mp(2m, \ds R)$. An irreducible unitary representation of a real simple Lie group is called a minimal representation if the annihilator of the representation on the Lie algebra level is a specific primitive ideal called the Joseph ideal. See, e.g. \cite{GanSavin} for the technical details and exact definitions concerning minimal representations.

For minimal representations of a large class of Lie groups, there exists a unified generalisation of the constructions above, developed in \cite{HKM} and \cite{HKMO}. In these papers the minimal representations of $\Mp(2m, \ds R)$ are realised in $L^2$ and Fock spaces on the minimal orbit instead. In particular, the Schr\"odinger model is realised on the Hilbert space $L^2(\mc O_{\ds R})$, where  $\mc O_{\ds R}$ is the set of all positive real symmetric matrices of rank one. In \cite[Section 1.7]{HKMO} and \cite[Section 3.2]{HKMO} the folding map
\begin{align*}
\begin{matrix}
p: &\ds R^{m}\setminus \{0\} &\rightarrow &\mc O_{\ds R},\\
 &x= (x_1, \ldots, x_{m}) &\mapsto &x^t\cdot x
\end{matrix}
\end{align*}
and the complexified folding map
\begin{align*}
\begin{matrix}
p_\ds C: &\ds C^{m}\setminus \{0\} &\rightarrow &\mc O_{\ds C},\\
 &z= (z_1, \ldots, z_{m}) &\mapsto &z^t\cdot z,
\end{matrix}
\end{align*}
respectively, are defined. The folding map induces a unitary isomorphism $\psi_{\ds R}$ between $L^2_{\text{even}}(\ds R^{m})$ and $L^2(\mc O)$, while the complexified folding map induces a unitary isomorphism $\psi_{\ds C}$ between the Fock space restricted to even functions and the Fock space on the complexification of $\mc O_{\ds R}$.

A Segal-Bargmann transform $\wh\SB$ that intertwines the models on the minimal orbit is then also constructed in \cite[Section 3]{HKMO}. It is linked to the classical Segal-Bargmann transform by the following commuting diagram of unitary isomorphisms.

\begin{equation}\label{Dia_Fold}
\begin{tikzcd}[row sep=2cm,column sep=2cm,inner sep=1ex,every label/.append style={font=\normalsize}]
L^2(\mc O_{\ds R})  \arrow[thick,swap] {d}{\wh\SB}  \arrow[thick]{r}[name=U]{\psi_{\ds R}}
&
L^2_{\text{even}}(\ds R^{m})
\arrow[thick] {d}{\SB}
\\
\mc F(\mc O_{\ds C})   \arrow[thick,swap]{r}[name=D]{\psi_{\ds C}}   & \mc F_{\text{even}}(\ds C^{m})
\arrow[to path={(U) node[midway,scale=3] {$\circlearrowleft$}  (D)}]{}
\end{tikzcd}
\end{equation}

\subsection{Minimal representations of Lie supergroups}

In \cite{BCARXIV2} a general approach is outlined to construct minimal representations and the related models for Lie supergroups. Lie supergroups are a generalisation of Lie groups that allow us to mathematically describe supersymmetry. In particular, we have the metaplectic Lie supergroup $\Mp(2m|p,q)$ which is a generalisation of the metaplectic group $\Mp(2m, \ds R)$. The present paper uses the approach outlined in \cite{BCARXIV2} explicitly for $\Mp(2m|p,q)$ to obtain a generalisation of the metaplectic representation. Specifically, for $\Mp(2m|2n,2n)$ we construct
\begin{itemize}
\item minimal representations,
\item Schr\"odinger models,
\item Fock models,
\item and intertwining Segal-Bargmann transforms,
\end{itemize}
such that they generalise the respective constructions of the metaplectic representation. We refer to the general approach given in \cite{BCARXIV2} simply as the \textit{general approach}.

Note that this general approach is an extension of the non-super cases given in \cite{HKM} and \cite{HKMO}. Therefore, we first obtain minimal representations of $\Mp(2m|2n,2n)$ realised in $L^2$ and Fock spaces on a generalisation of the minimal orbit. Then, to obtain the realisations that generalise the classical Schr\"odinger and Fock model, we apply a generalisation of the folding isomorphisms $\psi_{\ds R}$ and $\psi_{\ds C}$, respectively.

The restriction to $p=q=2n$ in $\Mp(2m|p,q)$ is required since our approach depends on a specific 3-grading of the underlying Lie algebra of $\Mp(2m|p,q)$. The underlying Lie superalgebra of $\Mp(2m|p,q)$ is the orthosymplectic Lie superalgebra $\mf{spo}(2m|p,q)\cong \mf{osp}(p,q|2m)$ and the 3-grading we use corresponds to the TKK-construction of the Jordan orthosymplectic superalgebra $JOSP(m|2n)$. We give this construction explicitly in Sections \ref{SSTKK} and \ref{SSTKK2} and refer to \cite{BC2} for more information about Tits-Kantor-Koecher (TKK) superalgebras.

Our approach also depends on a character of the structure algebra of $JOSP(m|2n)$. When following the general approach with our choice of 3-grading, we find that two distinct characters correspond with minimal representations. One of these minimal representations leads to a generalisation of the metaplectic representation and is the main focus of this paper. The other one is instead a generalisation of the minimal representation of the split orthogonal group ${\rm O}(2n,2n)$ and will only be discussed briefly in Section \ref{S_lambda_1}.

\subsection{Relation to other works}

In \cite{Nishiyama} and \cite{dGM} a Schr\"odinger model generalisation for $\Mp(2m|p,q)$ was already constructed using a representation of the Heisenberg supergroup. In Section \ref{S_Meta} we show that our generalised metaplectic representation is `superunitarily equivalent' to the one constructed in \cite{dGM} for $p=q=2n$.

For the orthosymplectic Lie supergroup ${\rm OSp}(p,q|2m)\cong {\rm SpO}(2m|p,q)$ a minimal representation has already been studied explicitly in \cite{BF} and \cite{BCD}. We have that $\Mp(2m|p,q)$ is a $\Z/2\Z\times \Z/2\Z$-cover of the identity component of ${\rm SpO}(2m|p,q)$. In particular, they have the same underlying Lie algebra.  However, the minimal representation of ${\rm OSp}(p,q|2m)$ depends on a different 3-grading of $\mf{osp}(p,q|2m)$. Moreover, it is not a generalisation of the metaplectic representation, but a generalisation of the minimal representation of the indefinite orthogonal group ${\rm O}(p,q)$. We will refer to the results and constructions in \cite{BF} and \cite{BCD} as the \textit{orthosymplectic case}. We will compare our results with the ones in the orthosymplectic case throughout the present paper.

In \cite{C2} tensor product representations of $\mf{osp}_{\ds C}(d|2m)$ are studied. One of these representations also generalised the minimal representation of $\mf{sp}_{\ds C}(2m)$ and in \cite{CSS2} it is proven that the annihilator ideal of this representation is a Joseph-like ideal. In Section \ref{SS_Joseph} we show that the annihilator ideal of our (complexified) minimal representation is also this Joseph-like ideal. This indicates that our representation of $\Mp(2m|2n,2n)$ is a natural generalisation of the minimal representation of $\Mp(2m, \ds R)$.

In \cite{BC3} the minimal representation of the exceptional Lie superalgebra $D(2,1;\alpha)$, with $\alpha\in \ds C$ is constructed. In \cite{C3} it is then integrated to the group level. $D(2,1;\alpha)$ is a deformation of $\mf{spo}(2|2,2)$ and if $\alpha\in\{1, -2, -\frac{1}{2}\}$ then they are isomorphic. In particular, if $\alpha= -\frac{1}{2}$ and $m=n=1$ then the 3-grading used for $D(2,1;\alpha)$ in \cite{BC3} corresponds to the 3-grading used for $\mf{spo}(2m|2n,2n)$ in the present paper. Therefore, our results for $m=n=1$ either coincide with or extend the results in \cite{BC3} for $\alpha=-\frac{1}{2}$. Since the Lie supergroup $\ds D(2,1, \alpha)$ considered in \cite{C3} is not isomorphic to $\Mp(2|2,2)$, our results no longer coincide on the group level.

\subsection{Superunitarity}

In this paper we will only use the notions of superunitarity and Hilbert superspaces as defined in \cite{dGM}, rather than the more standard notions in, e.g. \cite{CCTV}. The main reason for this choice is that many Lie supergroups do not admit non-trivial superunitary representations with respect to the standard definitions. In particular, minimal representations would no longer be unitary representations. In \cite{dGM} they prove, using their less restrictive definitions, that the metaplectic representation is a superunitary representation of $\Mp(2m|p,q)$. For this paper, it will therefore be sufficient to use these definitions. Note however that even for these less restrictive definitions the minimal representation in the orthosymplectic case does, in general, not seem to be superunitary.

\subsection{Main results}

Throughout the paper, we construct the following.
\begin{itemize}
\item Two minimal representations of $\mf{spo}(2m|2n,2n)$, which generalise the minimal representations of $\mf{sp}(2m)$ and $\mf{o}(2n,2n)$.
\item A Schr\"odinger model $(L^2(\mc O_{\ds R}), \pi_0, \pil)$ and a Fock model $(\mc F(\mc O_{\ds C}), \rho_0, \rol)$ of the minimal representation of $\Mp(2m|2n,2n)$.
\item A Schr\"odinger model $(L^2(\ds R^{m|2n}), \ot\pi_0, \dotpi)$ and a Fock model $(\mc F(\ds C^{m|2n}), \ot\rho_0, \dotrho)$ of the metaplectic representation of $\Mp(2m|2n,2n)$.
\item Folding isomorphisms $\psi_{\ds R}$ and $\psi_{\ds C}$ which connect the Schr\"odinger and Fock models, respectively.
\item Segal-Bargmann transforms $\wh\SB$ and $\SB$ for the minimal and metaplectic representations of $\Mp(2m|2n,2n)$, respectively.
\end{itemize}
Aside from the Schr\"odinger model of the metaplectic representation, all these constructions are new when $m$ and $n$ are nonzero. Moreover, while the Schr\"odinger model itself is not new, the approach we use to construct it is different from the approach used in \cite{dGM}. For $m=0$, we retrieve the classical constructions with respect to $\Mp(2m, \ds R)$. 

The main theorems are the following:
\begin{itemize}
\item Theorem \ref{Th Intertwining Prop} shows that our Segal-Bargmann transfroms intertwine the respective Schr\"odinger and Fock models. 
\item Theorem \ref{Th_superunitary_isom} shows that our Segal-Bargmann transform and Folding isomorphisms are superunitary.
\item Theorem \ref{Th_SUR} shows that our Schr\"odinger and Fock models are superunitary representations.
\end{itemize} 
They can be summarised as generalising diagrams \eqref{Dia_Int} and \eqref{Dia_Fold}, i.e.\ we have
\begin{equation*}
\begin{tikzcd}[row sep=2cm,column sep=2cm,inner sep=1ex,every label/.append style={font=\normalsize}]
L^2(\ds R^{m|2n})  \arrow[thick,swap] {d}{\ot \pi_0}  \arrow[thick]{r}[name=U]{\SB}
&
\mathcal{F}(\ds C^{m|2n})  
\arrow[thick] {d}{\ot \rho_0}
\\
L^2(\ds R^{m|2n})   \arrow[thick,swap]{r}[name=D]{\SB}   & \mathcal{F}(\ds C^{m|2n}) 
\arrow[to path={(U) node[midway,scale=3] {$\circlearrowleft$}  (D)}]{}
\end{tikzcd}\, , \, \begin{tikzcd}[row sep=2cm,column sep=2cm,inner sep=1ex,every label/.append style={font=\normalsize}]
L^2(\mc O_{\ds R})  \arrow[thick,swap] {d}{\wh\SB}  \arrow[thick]{r}[name=U]{\psi_{\ds R}}
&
L^2_{\text{even}}(\ds R^{m|2n})
\arrow[thick] {d}{\SB}
\\
\mc F(\mc O_{\ds C})   \arrow[thick,swap]{r}[name=D]{\psi_{\ds C}}   & \mc F_{\text{even}}(\ds C^{m|2n})
\arrow[to path={(U) node[midway,scale=3] {$\circlearrowleft$}  (D)}]{}
\end{tikzcd}
\end{equation*}

\subsection{Structure of the paper}

The paper is organised as follows. In section \ref{S_Prelim}, we introduce the Jordan superalgebras, Lie superalgebras, Lie supergroups and their associated structures used in this paper. In particular, we construct $\mf{spo}(2m|2n,2n)$ explictly as the TKK-algebra of $JOSP(m|2n)$ in Section \ref{SSTKK2}. Then, in Section \ref{SS_Special_Cases} we highlight three special cases of this construction with respect to the parameters $m$ and $n$ of $\Mp(2m|2n,2n)$.

In Section \ref{SecConstr} we construct the Schr\"odinger and Fock models of the minimal representations of $\mf{spo}(2m|2n,2n)$ by following the general approach. This construction builds upon a (super)polynomial realisation defined in \cite{BC1}, which uses the Bessel operators. Therefore we start the section by introducing the Bessel operators and defining the space of superpolynomials.

In Section \ref{S_lambda12} we restrict ourselves to the minimal representation that generalises the minimal representation of $\mf{sp}(2m)$. Then, we construct the folding isomorphisms and use them to obtain Schr\"odinger and Fock models which more directly generalise the classical ones to the super setting. 

In Section \ref{S_Prop_Meta} we prove that our metaplectic representation has some of the properties we would expect a generalisation of the metaplectic representation to have. In particular:
\begin{itemize}
\item Theorem \ref{ThDecF} gives a decomposition of our Fock model in terms of (super) spherical harmonics and shows that our minimal representation is always indecomposable, but only irreducible if $M=m-2n \not\in -2\ds N$.
\item Theorem \ref{Th_Joseph} shows that the annihilator ideal of our (complexified) minimal and metaplectic representations is a Joseph-like ideal.
\item Theorem \ref{Th Gelfand-Kirillov dimension} shows that the Gelfand-Kirillov dimension of our minimal representation is the same as in the classical setting, i.e.\ it is equal to $m$.
\end{itemize}

In Section \ref{S_Herm_superspace} we introduce the notions of super-inner products and Hermitian superspaces from \cite{dGM} and show that our Schr\"odinger and Fock models give rise to well-defined Hermitian superspaces. We also prove that our representations act infinitesimally superunitary on these Hermitatian superspaces, i.e.\ the actions are skew-supersymmetric with respect to the super-inner products. For the Fock spaces, we also construct reproducing kernels.

In Section \ref{S_SB}, we construct the Segal-Bargmann transforms and prove that they intertwine the respective Schr\"odinger and Fock models. The classical Hermite polynomials can be obtained as the preimages of the monomials in the Fock model under the Segal-Bargmann transform. In Section \ref{SS_Hermite} we use this fact to define Hermite superpolynomials. We also extend the Hermitian superspaces from the previous section to Hilbert superspaces and prove that the Segal-Bargmann transforms and folding isomorphisms are superunitary isomorphisms.

In Section \ref{S_Meta}, we show that our representations of $\mf{spo}(2m|2n,2n)$ integrate to superunitary representations of the Metaplectic Lie supergroup $\Mp(2m|2n,2n)$. The way we show this is by proving that our Schr\"odinger model of the metaplectic representation is, up to a Fourier transform, equal to the one in \cite{dGM}. Therefore, a Fourier transform is introduced in Section \ref{SS_Fourier} and a brief introduction to the metaplectic representation from \cite{dGM} is given in Section \ref{SS_meta_rep}.

In Section \ref{S_lambda_1}, we briefly discuss the other minimal representation we found, the one that generalises the minimal representation of $O(2n,2n)$.

In Appendix \ref{AppS_Long} we give the proofs of two results concerning the Bessel operators, Proposition \ref{Prop_Explicit_Bessel} and Lemma \ref{Lemma_Q_Bessel}. These proofs are straightforward calculations which were too long to include in the paper itself.

At the end of the paper, there is a list of the various notations we used, together with the page numbers corresponding to where the notations are defined.

\subsection{Notations and conventions}

We use $\ds K$\symindex{$\ds K$, $\ds R$, $\ds C$ and $\ds N$} to denote either the field of real numbers $\ds R$ or the field of complex numbers $\ds C$ when results and constructions hold for both choices. Jordan and Lie superalgebras will be defined over $\ds K$ unless otherwise stated. If we wish to specify the field, we will denote the field by a subindex $\ds K$. Function spaces will always be defined over $\ds C$. We use the convention $\ds N = \{0,1,2,\ldots\}$ and denote the complex unit by $\imath$.\symindex{$\imath$} A sesquilinear map means a left-linear and right-antilinear map.

A supervector space is a $\Z/2\Z$-graded vector space $V=V_{\ol 0}\oplus V_{\ol 1}$. An element $v\in V$ is said to have homogeneous parity if $v\in V_i$, $i\in \Z/2\Z$ and we call $i$ its parity and denote it by $|v|$\symindex{$\vert\cdot\vert$}. When we use $|v|$ in a formula, we consider elements of homogeneous parity, with the implicit convention that the formula must be extended linearly for arbitrary elements. If $\dim(V_i) = d_i$, then we write $\dim(V) = (d_{\ol 0}|d_{\ol 1})$\symindex{$\dim$}. We denote the supervector space $V$ with $V_{\ol 0} = \ds K^m$ and $V_{\ol 1} = \ds K^n$ as $\ds K^{m|n}$\symindex{$\ds K^{m\vert n}$}. A superalgebra is a supervector space $A=A_{\ol 0}\oplus A_{\ol 1}$ for which $A$ is an algebra and $A_iA_j\subseteq A_{i+j}$.

Let $V$ be a supervector space with $\dim(V) = (m|n)$. We will always assume a homogeneous basis $(e_i)_{i=1}^{m+n}$ is ordered such that the even parity elements have lower indices than the odd parity elements. We define the parity $|i|$ of an index $i$ by the parity of the associated basis element $e_i$, i.e.\ $|i| := |e_i|$.

\section{Preliminaries}\label{S_Prelim}

Let us start by giving the formal definitions of our superalgebras and Lie supergroups.

\begin{Def}
A \textbf{Lie superalgebra} is a superalgebra $\mf{g}=\mf{g}_{\ol 0}\oplus\mf{g}_{\ol 1}$ with a bilinear multiplication $[\cdot \,, \cdot]$ satisfying the following axioms:
\begin{itemize}
\item Skew-supersymmetry: $[a,b]=-(-1)^{|a||b|}[b,a]$,
\item Super Jacobi identity: $[a,[b,c]]=[[a,b],c]+(-1)^{|a||b|}[b,[a,c]]$.
\end{itemize}
The multiplication on $\mf g$ is called its \textbf{Lie bracket}.
\end{Def}

\begin{Def}
A \textbf{Jordan superalgebra} is a supercommutative superalgebra $J = J_{\ol 0}\oplus J_{\ol 1}$ satisfying the Jordan identity
\begin{align*}
(-1)^{|x||z|}[L_{x}, L_{yz}]+(-1)^{|y||x|}[L_{y}, L_{zx}]+(-1)^{|z||y|}[L_{z}, L_{xy}]=0 \text{ for all } x,y,z \in J.
\end{align*}
Here the operator $L_x$ is left multiplication with $x$ and $[\cdot\,,\cdot]$ is the supercommutator, i.e.\ $[L_x,L_y] := L_xL_y - (-1)^{|x||y|}L_yL_x$. The algebra product on $J$ is called its \textbf{Jordan product}. A Jordan superalgebra is called \textbf{unital} if there exists an $e\in J_{\ol 0}$ such that $a\cdot e = e\cdot a = a$, for all $a\in J$.
\end{Def}

\begin{Def}
A \textbf{Lie supergroup} is a pair $G = (G_0, \mf g)$ together with an action $\Ad : G_0\times \mf g\rightarrow \mf g$ where $G_0$ is a Lie group and $\mf g$ is a Lie superalgebra for which
\begin{itemize}
\item $\mf g_{\ol 0}$ is the Lie algebra of $G_0$, i.e.\ $\Lie(G_0)\cong \mf g_{\ol 0}$.
\item The action $\Ad$ extends the adjoint representation of $G_0$ on $\mf g_{\ol 0}$
\item For all $X\in \mf g_{\ol 0}$ and $Y\in \mf g$ we have
\begin{align*}
d\Ad(X)Y= \left.\dfrac{d}{dt}\Ad(\exp(tX))Y\right|_{t=0}=[X,Y].
\end{align*}
\end{itemize}
\end{Def}

Note that in the existing literature, the definition above is sometimes used for ``super Harisch-Chandra pairs'' instead. The name ``Lie supergroups'' is then reserved for another structure based on supermanifolds. However, these definitions result in categorically equivalent structures. Therefore, we will only use the definition above for Lie supergroups and refer to \cite[Chapter 7]{CCF} for more information about these structures and how they are connected.

\subsection{The general linear superalgebras}\label{SSgl}

The endomorphisms of the supervector space $\ds K^{m|n}$ form an associative superalgebra. We can express it using matrices as\symindex{$\End$} 
\begin{align*}
\End(\ds K^{m|n}):= \left\lbrace \left(\begin{array}{c|c}
 a & b\\ \hline
 c & d\\
\end{array}\right)\mid a\in \ds K^{m\times m}, b\in \ds K^{m\times n}, c\in \ds K^{n\times m}, d\in \ds K^{n\times n} \right\rbrace,
\end{align*}
where the block diagonal matrices $a$ and $d$ give the even part, while the odd part is given by the off-diagonal blocks $b$ and $c$. 

The general linear Lie superalgebra is defined as $\mf{gl}(m|n) := \End(\ds K^{m|n})$\symindex{$\mf{gl}(m\vert n)$} with the Lie bracket given by \[[x,y] := xy - (-1)^{|x||y|}yx,\] for $x,y\in \mf{gl}(m|n)$.

The general linear Jordan superalgebra is defined as $JGL(m|n) := \End(\ds K^{m|n})$\symindex{$JGL(m\vert n)$} with the Jordan product given by \[x\cdot y := \frac{1}{2}(xy+(-1)^{|x||y|}yx),\] for $x,y\in JGL(m|n)$.

Let $E_{ij}$\symindex{$E_{ij}$} be the $(m|n) \times (m|n)$-matrix where the only non-zero entry is a $1$ on the $i$th row and $j$th column. Then $\{E_{ij}: 1\leq i,j \leq m+n\}$ is a basis of $\End(\ds K^{m|n})$. We define the \textbf{supertranspose}\symindex{$\cdot^{ST}$ and $\cdot^\Pi$} of $X = \sum_{i,j=1}^{m+n}X_{ij}E_{ij}\in \End(\ds K^{m|n})$ as \[X^{ST} := \sum_{i,j=1}^{m+n}(-1)^{|j|(|i|+|j|)}X_{ij}E_{ji}\in \End(\ds K^{m|n}).\] We define the \textbf{parity transpose} of $X = \sum_{i,j=1}^{m+n}X_{ij}E_{ij}\in \End(\ds K^{m|n})$ as \[X^\Pi := \sum_{i,j=1}^{m+n}X_{ij}E_{i+n, j+n}\in \End(\ds K^{n|m}),\] where the indices $i,j \in \{1, \ldots, m+n\}$ are taken modulo $m+n$. 

Let $\{e_i\}_{i=1}^{m+n}$ and $\{e_i'\}_{i=1}^{m+n}$ denote a homogeneous basis of $\ds K^{m|n}$ and $\ds K^{n|m}$, respectively and recall that, by our conventions, it is ordered such that the even elements have lower indices than the odd elements. Then we also define a \textbf{parity transpose} of $x =\sum_{i=1}^{m+n}x_i e_i\in\ds K^{m|n}$ by 
\begin{align*}
\Pi(x) := \sum_{i=1}^{m+n} x_i e_i'\in \ds K^{n|m}.
\end{align*}

\subsection{The orthosymplectic superalgebras}\label{SSosp}

Let $\bbf{\cdot\,,\cdot}$\symindex{$\bbf{\cdot\,,\cdot}$} be a supersymmetric, non-degenerate, even bilinear form on $\ds K^{m|2n}$ with basis $\{e_i\}_{i=1}^{m+2n}$. We denote the matrix components by $\beta_{ij} := \bbf{e_i,e_j}$\symindex{$\beta_{ij}$ and $\beta^{ij}$}. Then, we have $\bbf{x,y}=x^t \beta y$ for all $x,y\in \ds K^{m|2n}$ and $\beta = (\beta_{ij})_{i,j=1}^{m+2n}$. Denote the matrix components of the inverse matrix by $\beta^{ij}$, i.e.\ $\beta^{ij}$ is defined such that $\sum_{j}\beta_{ij}\beta^{jk}= \delta_{ik}$.

The \textbf{orthosymplectic Lie superalgebra} $\mf{osp}(m|2n,\beta)$\symindex{$\mf{osp}(m\vert 2n,\beta)$} is defined as the subalgebra of $\mf{gl}(m|2n)$ preserving $\bbf{\cdot\,,\cdot}$, i.e.\
\begin{align*}
\mf{osp}(m|2n,\beta) &:= \left\lbrace X\in \mf{gl}(m|2n)|\bbf{Xu,v}+(-1)^{|u||X|}\bbf{u,Xv} = 0,\forall u,v \in \ds K^{m|2n} \right\rbrace\\
&= \left\lbrace X\in \mf{gl}(m|2n)|X^{ST}\beta+\beta X = 0 \right\rbrace.
\end{align*}
The \textbf{Jordan orthosymplectic superalgebra} $JOSP(m|2n,\beta)$\symindex{$JOSP(m \vert 2n,\beta)$} is defined as the subalgebra of $JGL(m|2n)$ consisting of selfadjoint operators with respect to $\bbf{\cdot\,,\cdot}$, i.e.\
\begin{align*}
&JOSP(m|2n,\beta)\\
&\quad :=\left\lbrace X\in JGL(m|2n)|\bbf{Xu,v}-(-1)^{|u||X|}\bbf{u,Xv} =0,\forall u,v \in \ds K^{m|2n} \right\rbrace\\
&\quad = \left\lbrace X\in JGL(m|2n)|X^{ST}\beta-\beta X = 0 \right\rbrace.
\end{align*}
We define the elements\symindex{$\ell_{ij}$}
\begin{align*}
\ell_{ij} &:= \sum_{k=1}^{m+2n}\beta_{jk}E_{ik}+(-1)^{|i||j|}\beta_{ik}E_{jk},
\end{align*}
for $i,j\in \{1, \ldots, m+2n\}$. A basis of $JOSP(m|2n,\beta)$ is then given by $\ell_{ij}$ for $i<j$ and $\ell_{ii}$ for $|i|=0$. The Jordan multiplication in terms of the basis elements is given by
\begin{align*}
2 \ell_{ij}\cdot \ell_{kl} = \beta_{jk}\ell_{il} + (-1)^{|i||j|}\beta_{ik}\ell_{jl} + (-1)^{|k||l|}\beta_{jl}\ell_{ik} + (-1)^{|i||j|+|k||l|}\beta_{il}\ell_{jk}
\end{align*}
and the unit in terms of the basis elements is given by\symindex{$e$}
\begin{align*}
e := \dfrac{1}{2}\sum_{i,j=1}^{m+2n}\ell_{ij}\beta^{ij} = \dfrac{1}{2}\sum_{i=1}^{m}\ell_{ii}\beta^{ii}+\sum_{1\leq i < j \leq m+2n}\ell_{ij}\beta^{ij}.
\end{align*}

A supersymmetric, non-degenerate, even bilinear form bilinear form $\bbf{\cdot\,,\cdot}$ on $\ds K^{m|2n}$ induces a skew-supersymmetric non-degenerate, even bilinear $\left<\cdot\,,\cdot\right>_{\varpi}$ on $\ds K^{2n|m}$ defined by $\left<\Pi(u) ,\Pi(v)\right>_{\varpi}:=\bbf{u,v}$, for $u,v \in \ds K^{m|2n}$, where $\Pi$ is the parity transposition defined in Section \ref{SSgl}. We define\symindex{$\mf{spo}(2n\vert m,\varpi)$}
\begin{align*}
&\mf{spo}(2n|m,\varpi)\\
&\quad := \left\lbrace X\in \mf{gl}(2n|m)|\left<Xu,v\right>_\varpi+(-1)^{|u||X|}\left<u,Xv\right>_{\varpi} = 0,\forall u,v \in \ds K^{m|2n} \right\rbrace,
\end{align*}
as the subalgebra of $\mf{gl}(2n|m)$ preserving $\left<\cdot\,,\cdot\right>_{\varpi}$. In most literature it is, somewhat confusingly, also called the orthosymplectic Lie superalgebra. This is justified by the fact that $\mf{osp}(m|2n,\beta)\cong \mf{spo}(2n|m,\varpi)$ by parity transposition. A fitting alternative name for $\mf{spo}(2n|m,\varpi)$ might be ``symplecthogonal Lie superalgebra''.

If the $(\beta_{ij})_{i,j=1}^m$ part of $\beta$ has signature $(p,q)$, with $p+q=m$, we also denote the Lie superalgebras $\mf{osp}(m|2n,\beta)$, $JOSP(m|2n,\beta)$ and $\mf{spo}(2n|m,\varpi)$\symindex{$\mf{osp}(p,q \vert 2n)$}\symindex{$JOSP(p,q \vert 2n)$}\symindex{$\mf{spo}(2n \vert p,q)$} by $\mf{osp}(p,q|2n)$, $JOSP(p,q|2n)$ and $\mf{spo}(2n|p,q)$, respectively.

\subsection{The Heisenberg Lie superalgebra}\label{SS_Heisenberg}

Let $\bbo{\cdot\,,\cdot}$ be a skew-supersymmetric, non-degenerate, even bilinear form on $\ds K^{2m|n}$ with basis $\{e_i\}_{i=1}^{2m+n}$. We denote the matrix components by $\varpi_{ij} := \bbo{e_i,e_j}$.

\begin{Def}
The \textbf{Heisenberg Lie superalgebra} $\mf{h}(2m|n,\varpi)$\symindex{$\mf{h}(2m\vert n,\varpi)$} is defined as the superalgebra $\ds K^{2m|n}\oplus \ds K Z$, where $Z$ is an even generator and the Lie bracket is given by
\begin{align*}
[p_1+ a_1 Z, p_2 + a_2 Z] = \bbo{p_1,p_2}Z,
\end{align*}
for all $p_1,p_2\in \ds K^{2m|n}$ and $a_1,a_2\in\ds K$.
\end{Def}

Note that $\ds K Z$ is the center subalgebra of $\mf{h}(2m|n,\varpi)$ and
\begin{align*}
[e_i,e_j] = \varpi_{ij}Z,
\end{align*}
for all $i,j\in\{1, \ldots, 2m+n\}$.

If the $(\varpi_{ij})_{i,j=2m+1}^{2m+n}$ part of $\varpi$ has signature $(p,q)$, with $p+q=n$, we also denote $\mf{h}(2m|p+q,\varpi)$ by $\mf{h}(2m|p,q)$.\symindex{$\mf{h}(2m\vert p,q)$}

\subsection{The unitary Lie superalgebra}\label{SS_unitary}

The following Lie superalgebra is only defined over $\ds R$. Let $\left<\cdot\,,\cdot\right>_\sigma$ be a non-degenerate sesquilinear form on $\ds C^{m|n}$ with basis $\{e_i\}_{i=1}^{m+n}$. We denote the matrix components by $\sigma_{ij} := \left<e_i,e_j\right>_{\sigma}$. Let $\ol x$ denote the complex conjugate of $x$.

\begin{Def}\symindex{$\mf u(m\vert n, \sigma)$}
The \textbf{unitary Lie superalgebra} $\mf u(m|n, \sigma)$ is defined as the subalgebra of $\mf{gl}_{\ds C}(m|n)$ preserving $\left<\cdot\,,\cdot\right>_\sigma$, i.e.\
\begin{align*}
\mf{u}(m|n,\sigma) &:= \left\lbrace X\in \mf{gl}_{\ds C}(m|n)|\left<Xu,v\right>_\sigma+(-1)^{|u||X|}\left<u,Xv\right>_\sigma = 0,\forall u,v \in \ds C^{m|n} \right\rbrace\\
&= \left\lbrace X\in \mf{gl}_{\ds C}(m|n)|\ol X^{ST}\sigma+\sigma X = 0 \right\rbrace.
\end{align*}
\end{Def}

\subsection{The metaplectic Lie supergroup}\label{SS_metaplectic}

We define the Lie supergroup $\SpO(2m|p,q)$\symindex{$\SpO(2m\vert p,q)$} as the pair $(\Sp(2m, \ds K)\times O(p,q), \mf{spo}(2m|p,q))$, where the adjoint representation is given by
\begin{align*}
\Ad(M)X := MXM^{-1},
\end{align*}
for all $M\in \Sp(2m, \ds K)\times O(p,q)$ and $X\in \mf{spo}(2m|p,q)$. We denote its connected component at the identity by $\SpO^\circ(2m|p,q) = (\Sp(2m, \ds K)\times \SO^\circ(p,q), \mf{spo}(2m|p,q))$.\symindex{$\SpO^\circ(2m\vert p,q)$} Let $\Mp(2m, \ds K)$ be the metaplectic group and $\Spin^\circ(p,q)$\symindex{$\Spin^\circ(p,q)$} the connected component of the spin group at the identity. Then, we have a canonical $\Z/2\Z\times \Z/2\Z$-covering
\begin{align*}
P : \Mp(2m, \ds K)\times \Spin^\circ(p,q)\rightarrow \Sp(2m, \ds K) \times \SO^\circ(p,q).
\end{align*}

\begin{Def}\symindex{$\Mp(2m\vert p,q)$}
The \textbf{metaplectic Lie supergroup} $\Mp(2m|p,q)$ is a $\Z/2\Z\times \Z/2\Z$-covering of $\SpO^\circ(2m|p,q)$, i.e.\
\[\Mp(2m|p,q) = (\Mp(2m, \ds K)\times \Spin^\circ(p,q), \mf{spo}(2m|p,q)),\]
where the adjoint representation is given by
\begin{align*}
\ot{\Ad}(M)X := \Ad(P(M))X,
\end{align*}
for all $M\in \Mp(2m, \ds K)\times \Spin^\circ(p,q)$ and $X\in \mf{spo}(2m|p,q)$.
\end{Def}

\subsection{The TKK-constuction}\label{SSTKK}
With each Jordan (super)algebra one can associate a $3$-graded Lie (super)algebra via the TKK-construction. There exist different TKK-constructions in the literature, see \cite{BC2} for an overview, but for the Jordan or\-tho\-sym\-plec\-tic superalgebra $JOSP(m|2n,\beta)$, with $(m,n)\neq (0,1)$, all constructions lead to $\mf{spo}(2m|2n,2n)$. Note that this is independent of the bilinear form $\beta$. We will quickly review the Koecher construction here.
Let $J$ be a unital Jordan superalgebra. The space of inner derivations of $J$ is defined as\symindex{$\Inn$}
\begin{align*}
\Inn(J) := \mbox{span}_{\ds K}\left\lbrace [L_x, L_y]| x,y\in J\right\rbrace.
\end{align*}
Here the operator $L_x$\symindex{$L_x$} is left multiplication by $x$ and $[\cdot\,,\cdot]$ is the supercommutator bracket, i.e.\ $[L_x,L_y] := L_xL_y - (-1)^{|x||y|}L_yL_x$.

The inner structure algebra of $J$ is defined as\symindex{$\mf{istr}$}
\begin{align*}
\mf{istr}(J) = \{L_x | x\in J\} \oplus \Inn(J) = \mbox{span}_{\ds K}\left\lbrace L_x, [L_x, L_y]| x,y\in J\right\rbrace.
\end{align*}

Let $J^+$ and $J^-$ be two copies of $J$. As vector spaces, we define the TKK-algebra of $J$ as\symindex{$\TKK$}
\begin{align*}
\TKK(J) &:= J^-\oplus \mf{istr}(J) \oplus J^+.
\end{align*}
The Lie bracket is defined as follows. We embed $\mf{istr}(J)$ as a subalgebra of $\TKK(J)$ and for homogeneous $x,y\in J^+$, $u,v\in J^-$, $a,b \in J$ we set
\begin{align*}
[x,u] &= 2L_{xu}+2[L_x,L_u], &&& [x,y] &= [u,v] = 0,\\
[L_a,x] &= ax, &&& [L_a,u] &= -au,\\
[[L_a,L_b], x] &= [L_a,L_b]x, &&& [[L_a,L_b], u] &= [L_a,L_b]u.
\end{align*}

%

\subsection{The TKK-algebra $\mf {spo}(2m|2n,2n)$}\label{SSTKK2}

From now on we will always assume $(m,n)\neq (0,1)$. Since the results are independent of the chosen bilinear forms, we will only work with the following explicit realisations throughout this paper.
Let\symindex{$\beta$}
\begin{align*}
\beta := \left(\begin{array}{c|cc}
 I_m & 0 & 0\\ \hline
 0 & 0 & -I_n\\
 0 & I_n & 0\\
\end{array}\right),
\end{align*}
be the matrix realisation of a supersymmetric, non-degenerate, even bilinear form on $\ds K^{m|2n}$ and\symindex{$\Omega$}
\begin{align*}
\Omega := \left(\begin{array}{cc|cccc}
 0 & -I_{m} & 0 & 0 & 0 & 0\\
 I_{m} & 0 & 0 & 0 & 0 & 0\\ \hline
 0 & 0 & 0 & 0 & 0 & I_{n}\\
 0 & 0 & 0 & 0 & -I_{n} & 0\\
 0 & 0 & 0 & -I_{n} & 0 & 0\\
 0 & 0 & I_{n} & 0 & 0 & 0\\
\end{array}\right),
\end{align*}
be the matrix realisation of a skew-supersymmetric, non-degenerate, even bilinear form on $\ds K^{2m|4n}$. We define\symindex{$J$ and $\mf g$}
\begin{align*}
J &:= JOSP(m|2n,\beta) \quad \text{ and }\quad \mf g := \mf{spo}(2m|4n,\Omega).
\end{align*}
We will now show that $\mf g$ is isomorphic to the TKK-construction of $J$. Specifically, we will show that the following theorem holds by constructing the isomorphisms explicitly.
\begin{theorem}\label{ThTKK-JP}
For $(m,n)\neq (0,1)$ we have $\mf{istr}(JOSP(m|2n,\beta)) \cong \mf{gl}(m|2n)$ and $\TKK(JOSP(m|2n,\beta)) \cong \mf{spo}(2m|4n,\Omega)$.
\end{theorem}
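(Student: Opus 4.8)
The plan is to produce the two isomorphisms explicitly on the level of bases and to verify the bracket relations. First I would identify $\mf{istr}(J)$ with $\mf{gl}(m|2n)$. By definition $\mf{istr}(J) = \{L_x : x\in J\}\oplus\Inn(J)$, where $J = JOSP(m|2n,\beta)$ has basis $\{\ell_{ij}\}$ with the Jordan product recalled in Section~\ref{SSosp}. The key computation is to work out $L_{\ell_{ij}}$ and the commutators $[L_{\ell_{ij}},L_{\ell_{kl}}]$ acting on $J$; using the explicit structure constants one checks that the map sending $\ell_{ij}\mapsto$ (a suitable combination of $E_{ab}$'s, built from $\beta$) intertwines left-multiplication and inner derivations with the $\mf{gl}(m|2n)$-action on $\ds K^{m|2n}$. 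Concretely, one expects $L_x$ and $[L_x,L_y]$ together to span exactly the operators $X \mapsto AX + (-1)^{|A||X|}XA^{\#}$ on the space of $\beta$-selfadjoint matrices, where $A$ ranges over $\mf{gl}(m|2n)$ and $A^\#$ is the $\beta$-adjoint; the symmetric part of $A$ gives the $L_x$ piece and the skew part gives $\Inn(J)$. The dimension count $\dim\mf{gl}(m|2n) = (m+2n)^2$ (with the appropriate super-split) must match $\dim\{L_x\} + \dim\Inn(J)$, and the hypothesis $(m,n)\neq(0,1)$ is exactly what guarantees $\Inn(J)$ is as large as expected (for $JOSP(0|2)$ the structure algebra degenerates). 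This is the step I expect to be the main obstacle: getting the identification of $\Inn(J)$ right, including the parity bookkeeping and the sign conventions in the supercommutator, and checking the exceptional small case is genuinely excluded.

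Next I would assemble the TKK-algebra. As a supervector space $\TKK(J) = J^-\oplus\mf{istr}(J)\oplus J^+$, so once $\mf{istr}(J)\cong\mf{gl}(m|2n)$ is established we have $\TKK(J)\cong \ds K^{(m|2n)^2}\oplus\mf{gl}(m|2n)\oplus\ds K^{(m|2n)^2}$ as a graded vector space, where I will identify $J^{\pm}$ with the space of $\beta$-selfadjoint matrices, i.e.\ with $JGL(m|2n)$ cut down by selfadjointness — equivalently, after a change of variables, with symmetric/"lower" and "upper" blocks. A dimension check against $\mf{spo}(2m|4n,\Omega)$: the latter, by the defining relation $X^{ST}\Omega + \Omega X = 0$, has even part $\mf{sp}(2m)\oplus\mf{sp}(4n)$ wait — here one must be careful, $\mf{spo}(2m|4n)$ has even part $\mf{sp}(2m,\ds K)\oplus\mf o(4n)$? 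No: $\mf{spo}(2m|2n,2n)$ in the paper's convention has $\ds K^{2m}$ symplectic and $\ds K^{4n} = \ds K^{2n,2n}$ orthogonal, so the even part is $\mf{sp}(2m)\oplus\mf o(2n,2n)$ and the odd part is $\ds K^{2m}\otimes\ds K^{4n}$; its total dimension should agree with $2(m+2n)^2 + (m+2n)^2 = 3(m+2n)^2$ accounting correctly for the super-signs. I would verify this matches $2\dim J + \dim\mf{gl}(m|2n)$.

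Then I would write down the candidate isomorphism $\Phi: \TKK(J)\to\mf g$ block by block: send $\mf{istr}(J)\cong\mf{gl}(m|2n)$ into the "Levi" block of $\mf{spo}(2m|4n,\Omega)$ that stabilizes a polarization $\ds K^{2m|4n} = V\oplus V^*$ with $V\cong\ds K^{m|2n}$, send $J^+$ into the "upper" abelian nilradical $\Sym^2 V$ (the $\beta$-selfadjoint matrices sit inside $V\otimes V$ as the appropriate symmetric square) and $J^-$ into the "lower" one $\Sym^2 V^*$. The form $\Omega$ has been chosen precisely so that this polarization exists with $V$ carrying the form $\beta$. With $\Phi$ defined on generators, the remaining work is to check it is a Lie superalgebra homomorphism: the relations $[J^+,J^+] = [J^-,J^-] = 0$ are immediate (abelian nilradicals), $[\mf{istr},J^\pm]$ matches the $\mf{gl}$-action by construction, and the crucial relation $[x,u] = 2L_{xu} + 2[L_x,L_u]$ must be matched against the bracket $[\Sym^2 V, \Sym^2 V^*]\subseteq\mf{gl}(V)$ inside $\mf{spo}$ — this is a direct computation with the $\Omega$-bracket of two rank-$\le 2$ matrices, and the factor $2$ and the split into an $L_{xu}$-part and an $[L_x,L_u]$-part will come out of the symmetric-vs-skew decomposition exactly as in the classical Koecher construction. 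Since both sides are generated by $J^\pm$ together with $\mf{istr}(J)$, verifying these bracket identities on generators suffices. Finally, bijectivity follows from the dimension count plus injectivity, which is clear because $\Phi$ is a graded map that is visibly injective on each of the three summands and maps them to a direct-sum decomposition of $\mf g$.
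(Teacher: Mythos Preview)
Your proposal is correct and follows essentially the same strategy as the paper: the paper constructs the isomorphism explicitly on bases, working from the $\mf{spo}(2m|4n,\Omega)$ side by exhibiting an $\mf{sl}(2)$-triple whose $\ad$-grading $\mf g_-\oplus\mf g_0\oplus\mf g_+$ is precisely your polarization $V\oplus V^*$ (realized via the index maps $i\mapsto\ot i,\ul i$), and then writes down $\phi$ on the basis elements $U_{ij}$ matching $\ell_{ij}^\pm$, $L_{\ell_{ij}}$, and $[L_{\ell_{ij}},L_{\ell_{rs}}]$ term by term. Your $\Sym^2 V$ description is the same content in more invariant language; note only that your tentative total dimension $3(m+2n)^2$ is off --- the correct count is $2m^2+m+8n^2-2n+8mn = 2\dim J + \dim\mf{gl}(m|2n)$.
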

A basis of $J$ was given in Section \ref{SSosp} and a basis of $\mf g$ is given by\symindex{$U_{ij}$}
\begin{align*}
U_{ij} &:= \sum_{k=1}^{2m+4n}\Omega_{jk}E_{ik}+(-1)^{|i||j|}\Omega_{ik}E_{jk}, &&\text{ for } i<j,\\
U_{ii} &:= 2\sum_{k=1}^{2m+4n}\Omega_{ik}E_{ik}, &&\text{ for } |i|=0.
\end{align*}
The Lie bracket on $\mf g$ in terms of these basis elements is given by
\begin{align*}
[U_{ij},U_{kl}] = \Omega_{jk}U_{il} + (-1)^{|i||j|}\Omega_{ik}U_{jl} + (-1)^{|k||l|}\Omega_{jl}U_{ik} + (-1)^{|i||j|+|k||l|}\Omega_{il}U_{jk}.
\end{align*}

Define\symindex{$\ul i$ and $\ot i$}
\begin{align*}
\ul i := \left\lbrace\begin{matrix}
i+m, & \text{ if } i \leq m,\\
i+m+2n, & \text{ if }i \geq m+1
\end{matrix}\right. \quad \text{ and } \quad
\ot i := \left\lbrace\begin{matrix}
i, & \text{ if }i \leq m,\\
i+m, & \text{ if }i \geq m+1.
\end{matrix}\right.,
\end{align*}
for all $i\in\{1, \ldots, m+2n\}$ and consider the short subalgebra
\begin{align}\label{EqSl2sub}
&\left\lbrace -\dfrac{1}{2}\sum_{i=1}^{m} U_{\ul i, \ul i}- \sum_{i=m+1}^{m+n} U_{\ul i,\ul{i+n}}, \sum_{i=1}^m U_{\ot i,\ul i}+ \sum_{i=m+1}^{m+n} U_{\ot i,\ul{i+n}} - U_{\ot{i+n},\ul i},\right.\\  \nonumber
&\quad \left. \dfrac{1}{2}\sum_{i=1}^m  U_{\ot i, \ot i}+ \sum_{i=m+1}^{m+n}U_{\ot i,\ot{i+n}} \right\rbrace.
\end{align}
This subalgebra is isomorphic to $\mf{sl}(2)$ and the decomposition of $\mf g$ as eigenspaces under $\ad(\sum_{i=1}^m U_{\ot i,\ul i}+ \sum_{i=m+1}^{m+n} U_{\ot i,\ul{i+n}} - U_{\ot{i+n},\ul i})$ gives us the $3$-grading $\mf g = \mf g_- \oplus \mf g_0 \oplus \mf g_+$, with
\begin{align*}
\mf g_- &= \mbox{span}_{\ds K}\left\lbrace U_{\ul i,\ul j}| 1\leq i,j \leq m+2n \right\rbrace,\\
\mf g_+ &= \mbox{span}_{\ds K}\left\lbrace U_{\ot i, \ul j}|1\leq i,j \leq m+2n\right\rbrace,\\
\mf g_0 &= \mbox{span}_{\ds K}\left\lbrace U_{\ot i, \ot j}|1\leq i,j \leq m+2n \right\rbrace.
\end{align*}
We can now construct an isomorphism $\phi$\symindex{$\phi$} between $\mf g = \mf g_- \oplus \mf g_0 \oplus \mf g_+$ and $\TKK(J) = J^-\oplus \mf{istr}(J)\oplus J^+$. It is given by
\begin{align*}
\phi(\ell_{ij}^-) = -U_{\ul i, \ul j}, & & \phi(\ell_{ij}^+) = U_{\ot i, \ot j}, & & \phi(2L_{\ell_{ij}}) = U_{\ot i, \ul j} + (-1)^{|i||j|}U_{\ot j, \ul i},
\end{align*}
and
\begin{align*}
\phi(4[L_{\ell_{ij}}, L_{\ell_{rs}}]) &= \beta_{jr}(U_{\ot i, \ul s}-(-1)^{|s||i|}U_{\ot s, \ul i}) + (-1)^{|i||j|+|r||s|}\beta_{is}(U_{\ot j, \ul r} - (-1)^{|r||j|}U_{\ot r, \ul j})\\
&\quad + (-1)^{|r||s|}\beta_{js}(U_{\ot i, \ul r}- (-1)^{|r||i|}U_{\ot r, \ul i})\\
&\quad +(-1)^{|i||j|}\beta_{ir}(U_{\ot j, \ul s} - (-1)^{|s||j|}U_{\ot s, \ul j}).
\end{align*}
Here $x^{\pm}\in J^\pm$\symindex{$\cdot^{\pm}$} denotes the element $x\in J$ interpreted as in the copy $J^\pm$ of $J$.
Under the isomorphism $\phi$, the $\mf{sl}(2)$-triple \eqref{EqSl2sub} of $\mf g$ becomes
\begin{align*}
\left\lbrace e^-, 2L_{e},e^+ \right\rbrace,
\end{align*}
where $e^\pm$ is the unit element of $J^{\pm}$.

Set $I = \{m+1, \ldots, m+n\}$. An explicit isomorphism between $\mf{gl}(m|2n)$ and $\mf{istr}(JOSP(m|2n,\beta))$  is given by
\begin{align*}
E_{ij} &\mapsto -U_{\ot i, \ul j} && \text{ if } i\in I, \, j \not\in I \text{ or } i\not\in I, \, j\in I,\\
E_{ij} &\mapsto U_{\ot i, \ul j} && \text{ otherwise. }
\end{align*}

Note that from \[\mf{istr}(J) = \{L_x | x\in J\} \oplus \Inn(J) \cong \mf{gl}(m|2n),\] we also find that a basis of $\Inn(J)\cong \mf{osp}(m|2n, \beta)$ is given by
\begin{align*}
&U_{\ot i, \ul j} - (-1)^{|i||j|}U_{\ot j, \ul i}, &&\text{ for } i<j,\\
&2U_{\ot i, \ul i}, &&\text{ for } |i|=1.
\end{align*}

\subsection{Special cases}\label{SS_Special_Cases}
We can distinguish the following three special cases based on the parameters $m$ and $n$.
\begin{itemize}
\item \textbf{The symplectic case} ($n=0$).\\
If $n=0$, we have $J = JOSP(m|0) = \Sym (m)$, the space of $m\times m$ symmetric matrices with entries in $\ds K$ and $\mf g = \mf{spo}(2m|0) = \mf{sp}(2m, \ds K)$, the symplectic Lie algebra. This is the classical setting.
\item \textbf{The split orthogonal case} ($m=0, n\geq 2$).\\
If $m=0$ and $n\geq 2$, we have $J = JOSP(0|2n) \cong {\rm Skew} (2n)$, the space of $2n\times 2n$ skew-symmetric matrices with entries in $\ds K$ and $\mf g = \mf{spo}(0|2n,2n) = \mf{so}(2n,2n)$, the split orthogonal Lie algebra. This case is covered by the (classical) general approach in \cite{HKM}, but it is not discussed explicitly.

Note that the split orthogonal Lie algebra is itself a special case of the indefinite orthogonal Lie algebra $\mf{so}(p,q)$ and the minimal representation of $\mf{so}(p,q)$ has been studied extensively, see e.g.\ \cite{KO1}, \cite{KO2}, \cite{KO3} and \cite{KM}. It is also studied in more detail and covered by the (classical) general approach in \cite{HKM}. There $\mf{so}(p,q)$ is seen as the TKK-construction of the spin factor Jordan algebra. In particular, for $\mf{so}(2n,2n)$ we can obtain its minimal representation by viewing $\mf{so}(2n,2n)$ as the TKK-construction of ${\rm Skew} (2n)$ or the spin factor Jordan algebra. From e.g. \cite{GanSavin} we know that the minimal representation of $\mf{so}(2n,2n)$ is unique, up to equivalence, i.e.\ both methods result in equivalent representations.
\item \textbf{The $D(2,1;\alpha)$ case} ($m=n=1$, $\alpha=-\frac{1}{2}$).\\
If $m=n=1$, then $J$ is isomorphic the Jordan superalgebra $D_{\alpha}$. As mentioned in the introduction, $\mf g$ is then isomorphic to the exceptional Lie superalgebra $D(2,1;\alpha)$, where $\alpha=-\frac{1}{2}$ and our results coincide with or extend the results in \cite{BC3} for $\alpha=-\frac{1}{2}$.
\end{itemize}

\section{The minimal representations of $\mf{spo}(2m|2n,2n)$}\label{SecConstr}

To construct the minimal representations we first introduce a polynomial realisation of $\mf g=\mf{spo}(2m|2n,2n)$. This realisation is given for more general Lie superalgebras in \cite{BC1} and it generalises the conformal representations considered in \cite{HKM} to the super setting. This polynomial realisation depends on a character $\lambda \colon \mf {istr}(J) \to \ds K$\symindex{$\lambda$} and a crucial role is played by the Bessel operators.

\subsection{The Bessel operators $\bessel$}

Define $\wh m :=  \frac{1}{2}m(m+1)+n(2n-1)$\symindex{$\wh m$ and $\wh n$} and ${\wh n}:=mn$, then $\dim(J)=(\wh m|2{\wh n})$.

\begin{Def} Consider a character $\lambda \colon \mathfrak{istr}(J) \to \ds K$. For any $u,v \in J^{\minus}$ we define $\lambda_{u} \in (J^{\plus})^\ast$ and $\widetilde{P}_{u,v} \in J^{\minus}\otimes (J^{\plus})^\ast $ by 
$$\lambda_{u}(x) := -2\lambda(L_{xu})$$
and
$$\widetilde{P}_{u,v}(x) := (-1)^{\abs{x}(\abs{u}+\abs{v})}(L_u L_v + (-1)^{|u||v|}L_vL_u-L_{uv})(x)$$
for all $x\in J^{\plus}$. Then we define the \textbf{Bessel operator}\symindex{$\bessel$} as
\begin{align*}
\bessel = \sum_{i=1}^{\wh m+2{\wh n}} \lambda_{z_{i}}\pt{z_{i}}+ \sum_{i,j=1}^{\wh m+2{\wh n}} \widetilde{P}_{z_{i},z_{j}}\pt{z_{j}}\pt{z_{i}},
\end{align*}
\end{Def}
with $(z_i)_i$ a homogeneous basis of $J^-$. Here the $\pt{z_{i}}$'s denote supercommutative partial derivatives on $J^-$, i.e.\ we have $\pt{z_{i}}(z_j) = \delta_{ij}$ and $\pt{z_{i}}\pt{z_{j}}= (-1)^{|z_i||z_j|}\pt{z_{j}}\pt{z_{i}}$. Note that $\bessel(x)$ is a second order differential operator on $J^-$, for all $x\in J^+$.

We obtain the following result from proposition 4.2 in \cite{BC1}.

\begin{Prop}\label{PropBesCom}
The family of operators $\bessel(x)$ for $x\in J^+$, supercommutes for fixed $\lambda$, i.e.\
\begin{align*}
[\bessel(u), \bessel(v)] = 0,
\end{align*}
for $u,v\in J^+$.
\end{Prop}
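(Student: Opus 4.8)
The plan is to deduce Proposition~\ref{PropBesCom} by reduction to the cited Proposition 4.2 of \cite{BC1}, after checking that the hypotheses of that proposition are met in the present situation. The essential input there is that the character $\lambda\colon\mf{istr}(J)\to\ds K$ be such that the associated polynomial realisation of $\mf g = \TKK(J)$ is an honest Lie superalgebra homomorphism; equivalently, that $\lambda$ annihilates the derived subalgebra $[\mf{istr}(J),\mf{istr}(J)]=\Inn(J)\cong\mf{osp}(m|2n,\beta)$, which is automatic since $\Inn(J)$ is spanned by supercommutators $[L_x,L_y]$ and $\lambda$ is a character. So first I would record that for \emph{any} character $\lambda$ the operators $\lambda_u$ and $\widetilde P_{u,v}$ are well defined, and that $\widetilde P_{u,v}$ is symmetric in $u,v$ up to the sign $(-1)^{|u||v|}$, matching the symmetry of $\pt{z_j}\pt{z_i}$, so that $\bessel(x)$ is genuinely a well-defined operator independent of the chosen basis.

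Second, I would spell out how $\bessel$ arises inside the polynomial realisation of \cite{BC1}: the negative part $\mf g_-\cong J^-$ acts by (even, degree-raising) multiplication operators on $\ds K[J^-]$, the structure part $\mf g_0\cong\mf{istr}(J)$ acts by first-order operators twisted by $\lambda$, and the positive part $\mf g_+\cong J^+$ acts precisely by the second-order Bessel operators $x\mapsto\bessel(x)$. Since $[\mf g_+,\mf g_+]=0$ inside $\mf g$ (as recorded in the TKK bracket relations in Section~\ref{SSTKK}, $[x,y]=0$ for $x,y\in J^+$), applying the representation homomorphism gives $[\bessel(u),\bessel(v)]=0$ for all $u,v\in J^+$. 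This is morally the whole proof; the point is just that the cited proposition packages exactly this.

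Third, and this is where a small amount of care is genuinely needed, I would check that the hypotheses under which \cite[Prop.~4.2]{BC1} is stated actually hold for $J=JOSP(m|2n,\beta)$ with $(m,n)\neq(0,1)$. Concretely: (i) $J$ is a unital Jordan superalgebra — given in Section~\ref{SSosp}, with unit $e$; (ii) the realisation is built from a 3-grading of $\TKK(J)$, which we have via Theorem~\ref{ThTKK-JP} and the explicit grading $\mf g=\mf g_-\oplus\mf g_0\oplus\mf g_+$; (iii) any regularity/nondegeneracy condition on $\lambda$ that \cite{BC1} may impose — if the cited statement is for arbitrary characters, there is nothing to check, and if it requires $\lambda$ to satisfy an extra compatibility, I would verify it directly from the defining formulas for $\lambda_u$ and $\widetilde P_{u,v}$, using supercommutativity of $J$ and the Jordan identity. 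The main obstacle is thus not a computation but a bookkeeping one: matching the conventions of \cite{BC1} (sign conventions in the supercommutator, the normalisation of the bracket $[x,u]=2L_{xu}+2[L_x,L_u]$, the identification $\mf g_+\cong J^+$) to the conventions fixed in this paper, so that "the operator by which $x\in J^+$ acts" in \cite{BC1} is literally our $\bessel(x)$ and not a rescaled or sign-twisted variant; once that dictionary is in place, the supercommutation $[\bessel(u),\bessel(v)]=0$ is immediate from $[\mf g_+,\mf g_+]=0$.

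Finally, I would note that since the statement is asserted for fixed $\lambda$, no argument about varying $\lambda$ is needed, and since $\bessel(x)$ depends linearly on $x$, it suffices to verify the vanishing bracket on a basis $\{z_i\}$ of $J^+$ if one prefers an elementary self-contained check; but invoking \cite[Prop.~4.2]{BC1} is the intended route and avoids the lengthy direct manipulation of the triple products $\widetilde P_{u,v}$.
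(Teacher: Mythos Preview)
Your proposal is correct and takes essentially the same approach as the paper: both simply invoke \cite[Proposition~4.2]{BC1}. You add the helpful conceptual remark that the supercommutation ultimately comes from $[\mf g_+,\mf g_+]=0$ in the TKK algebra together with the fact that $\pil$ is a representation, which is indeed the mechanism behind the cited result, but the paper's proof is just the one-line citation.
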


Let $\lambda$ be a character of $\mf{gl}(m|2n)$. Since
\begin{align*}
0=\lambda([E_{ij}, E_{kl}])=\delta_{jk}\lambda(E_{il})-(-1)^{(|i|+|j|)(|k|+|l|)}\delta_{il}\lambda(E_{kj}),
\end{align*}
we find that the character is uniquely determined by the value of $\lambda := (-1)^{|1|}\lambda(E_{11})$ with $\lambda(E_{ij})=(-1)^{|i||j|}\delta_{ij}\lambda$. Therefore, the isomorphism between $\mf{istr}(J)$ and $\mf{gl}(m|2n)$ implies that a character $\lambda$ of $\mf{istr}(J)$ is defined by $\lambda(L_{\ell_{ij}})=\beta_{ij}\lambda$, for a parameter $\lambda\in \C$.

\begin{Prop}\label{Prop_Explicit_Bessel}
We have
\begin{align*}
\bessel(\ell_{ij}) &= -2\lambda\sum_{k,l=1}^{m+2n}(1+\delta_{kl})\beta_{jk}\beta_{il}\pt{\ell_{kl}}\\
&\quad +\sum_{k,l,r,s=1}^{m+2n}(-1)^{|k||i|}(1+\delta_{kl}+\delta_{rs}+\delta_{kl}\delta_{rs})\beta_{is}\beta_{jl}\ell_{kr}\pt{\ell_{sr}}\pt{\ell_{lk}},
\end{align*}
for all $1\leq i,j \leq m+2n$.
\end{Prop}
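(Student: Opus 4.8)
The plan is to obtain the explicit formula for $\bessel(\ell_{ij})$ by directly computing the two families of coefficients appearing in the definition of the Bessel operator, namely $\lambda_{z_k}(\ell_{ij})$ for the first-order part and $\widetilde{P}_{z_k,z_l}(\ell_{ij})$ for the second-order part, and then assembling them. For this, the natural choice of homogeneous basis $(z_i)_i$ of $J^- \cong J = JOSP(m|2n,\beta)$ is the basis $\{\ell_{kl} : k < l\} \cup \{\ell_{kk} : |k| = 0\}$ recorded in Section \ref{SSosp}, together with its dual $\{\pt{\ell_{kl}}\}$. One subtlety to dispatch at the outset is bookkeeping: the definition of $\bessel$ sums over an honest basis indexed by pairs with $k \le l$, but it is far more convenient to rewrite everything as an unconstrained double sum over all $1 \le k,l \le m+2n$ by using the symmetry $\ell_{kl} = (-1)^{|k||l|}\ell_{lk}$ and introducing the $(1+\delta_{kl})$-type weights that compensate the double-counting; this is exactly the source of the $(1+\delta_{kl})$ and $(1+\delta_{kl}+\delta_{rs}+\delta_{kl}\delta_{rs})$ factors in the claimed formula, so I would set up this convention carefully once and then work unconstrained throughout.

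The first-order part is the easier of the two. By definition $\lambda_u(x) = -2\lambda(L_{xu})$, and since $\lambda$ is the character with $\lambda(L_{\ell_{ij}}) = \beta_{ij}\lambda$, I need $L_{\ell_{kl}}(\ell_{ij}) = \ell_{kl}\cdot \ell_{ij}$ expanded in the basis via the multiplication rule $2\ell_{ij}\cdot\ell_{kl} = \beta_{jk}\ell_{il} + (-1)^{|i||j|}\beta_{ik}\ell_{jl} + (-1)^{|k||l|}\beta_{jl}\ell_{ik} + (-1)^{|i||j|+|k||l|}\beta_{il}\ell_{jk}$ from Section \ref{SSosp}. Applying $\lambda$ to each of the four terms and collecting, one gets a multiple of $\lambda$ times products of two $\beta$'s; feeding this into $\sum_k \lambda_{\ell_{kl}}\pt{\ell_{kl}}$ (unconstrained, with the $(1+\delta)$ weights) and simplifying using the symmetry of $\beta$ should produce precisely the term $-2\lambda\sum_{k,l}(1+\delta_{kl})\beta_{jk}\beta_{il}\pt{\ell_{kl}}$. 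The main care here is tracking the sign $(-1)^{|i||j|}$ factors and verifying the four terms collapse pairwise under relabeling of the summation indices.

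The second-order part is where the real work lies. I need $\widetilde{P}_{\ell_{rs},\ell_{kl}}(\ell_{ij}) = \pm(L_{\ell_{rs}}L_{\ell_{kl}} + (-1)^{\cdots}L_{\ell_{kl}}L_{\ell_{rs}} - L_{\ell_{rs}\cdot\ell_{kl}})(\ell_{ij})$, which means composing the Jordan multiplication rule with itself: first expand $\ell_{kl}\cdot\ell_{ij}$ into four basis terms, then multiply each by $\ell_{rs}$ producing sixteen terms, symmetrize in $(r,s)\leftrightarrow(k,l)$, and subtract the $L_{\ell_{rs}\cdot\ell_{kl}}$ contribution (itself four terms each giving four more). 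The Jordan identity guarantees massive cancellation — the $\widetilde P$ operator is built so that many of the "diagonal" $\delta$-free terms drop out — and after collecting surviving terms one reads off the coefficient of each $\ell_{cd}$, giving a sum of products $\beta_{is}\beta_{jl}$ (after relabeling) times the parity sign $(-1)^{|k||i|}$. The hard part will be this bookkeeping: correctly propagating all the Koszul signs through the nested multiplications and confirming that the terms not appearing in the final answer genuinely cancel. Because of its length, I would organize it as a systematic term-by-term tabulation — which is exactly why the paper defers the full computation to Appendix \ref{AppS_Long} — and here only indicate that the combination $L_uL_v + (-1)^{|u||v|}L_vL_u - L_{uv}$ applied to a basis element of $JOSP(m|2n,\beta)$, using supercommutativity and the Jordan identity, yields a second-order "creation-annihilation" expression whose coefficients are the stated contractions of $\beta$ with a single $\ell_{kr}$.
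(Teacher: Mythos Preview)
Your proposal is correct and follows essentially the same route as the paper's proof in Appendix \ref{AppS_Long}: compute $\widetilde P_{\ell_{ij},\ell_{kl}}(\ell_{rs})$ by expanding the Jordan product twice, sum over the basis indices with the $(1+\delta_{kl})$-type weights to pass from constrained to unconstrained sums, and handle the first-order part via $\lambda(L_{\ell_{ij}})=\beta_{ij}\lambda$. One small remark: the simplification in the second-order part is not really driven by the Jordan identity but by direct pairwise cancellations and relabelings among the thirty-two terms produced by the double product minus the sixteen from $L_{\ell_{rs}\cdot\ell_{kl}}$; the paper's appendix carries this out term by term and you should expect the same purely combinatorial bookkeeping rather than a structural shortcut.
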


\begin{proof}
This is a long and straightforward calculation. See Appendix \ref{AppS_Long}, Proposition \ref{Prop_App_Expl_Bessel}
\end{proof}

We define $\ell^{ij} := \sum_{k,l=1}^{m+2n}\ell_{kl}\beta^{ki}\beta^{lj}$\symindex{$\ell^{ij}$, $\pu{ij}$ and $\pt{ij}$} for all $i,j\in \{1, \ldots, m+2n\}$ and denote $\pu{ij} := \pt{\ell_{ij}}$. We also define $\pt{ij} := \sum_{k,l=1}^{m+2n}\pu{kl}\beta_{ik}\beta_{jl}$. Then it holds that $\pt{ij}\ell^{kl} = \pu{ij}\ell_{kl} = \delta_{ik}\delta_{jl}+(-1)^{|i||j|}\delta_{il}\delta_{jk}- \delta_{ij}\delta_{kl}\delta_{ik}$. The Bessel operator of $\ell_{ij}$ can be rewritten as
\begin{align*}
\bessel(\ell_{ij}) &= -2\lambda(1+\delta_{ij})\pt{ji}+\sum_{k,l=1}^{m+2n}(-1)^{|k||i|}(1+\delta_{jk}+\delta_{il}+\delta_{jk}\delta_{il})\ell^{kl}\pt{il}\pt{jk}.
\end{align*}

\subsection{The polynomial realisation $\pil$}

\begin{Def}
The \textbf{space of superpolynomials} over $\ds K$ is defined as
\begin{gather*}
\mathcal P\big(\ds K^{m|d}\big):=\mathcal P \big(\ds K^{m}\big)\otimes_\C\Lambda\big(\ds K^{d}\big),
\end{gather*}\symindex{$\mc P(\K^{m\vert 2n})$}
where $\mc P(\ds K^{m})$ denotes the space of complex-valued polynomials over the field $\ds K$ in $m$ variables and $\Lambda(\ds K^{d})$ denotes the Grassmann algebra in $d$ variables. The variables of $\mc P(\ds K^{m})$ and $\Lambda(\ds K^{d})$ are called \textbf{even} and \textbf{odd} variables, respectively.
\end{Def}
Let $z = (z_i)_{i=1}^{m+2n}$ denote the variables of $\mathcal P(\ds K^{m|2n})$, then they satisfy the commutation relations
\begin{gather*}
z_iz_j = (-1)^{|z_i||z_j|}z_j z_i,
\end{gather*}
for $i,j\in\{1, \ldots, m+2n\}$. We also define the space of superpolynomials of homogeneous degree $k$ as
\begin{gather*}
\mc P_k\big(\K^{m|2n}\big) := \big\{p\in\mc P\big(\K^{m|2n}\big)\colon \ds E p = k p\big\},
\end{gather*}\symindex{$\mc P_k(\K^{m\vert 2n})$}
where $\ds E := \sum_{i=1}^{m+2n}z_i\pt{z_i}$\symindex{$\ds E$} is the Euler operator.

From \cite[Section 4.1]{BC1} we obtain the following polynomial realisation $\pil$\symindex{$\pil$} of $\TKK(J) = J^- \oplus  \mf{istr}(J) \oplus J^+ $ on $\mc P(J)\cong \mc P(\ds K^{{\wh m}|2{\wh n}})$. Let $(z_i)_i$ be a homogeneous basis of $J$ and $(z_i^\pm)_i$ the corresponding homogeneous bases of $J^\pm$. For $i,j\in\{1, \ldots, {\wh m}+2{\wh n}\}$ we have
\begin{align*}
&\bullet\, \pil(z_i^-) = -2\imath z_i,\\
&\bullet\, \pil(L_{z_i}) = \lambda(L_{z_i}) - \sum_{k=1}^{{\wh m}+2{\wh n}} L_{z_i}(z_k)\pt{z_k},\\
&\bullet\, \pil([L_{z_i}, L_{z_j}]) = \sum_{k=1}^{{\wh m}+2{\wh n}} [L_{z_i}, L_{z_j}](z_k)\pt{z_k},\\
&\bullet\, \pil(z_i^+) = -\dfrac{\imath}{2}\bessel(z_i),
\end{align*}
where we identify the homogeneous bases $(z_i^\pm)_i$ with the variables $(z_i)_i$ of $\mc P(\ds K^{{\wh m}|2{\wh n}})$, canonically.

In terms of the TKK-basis, the polynomial realisation becomes
\begin{itemize}
\item
\begin{align*}
\pil(\ell_{ij}^-) &= -2\imath \ell_{ij},
\end{align*}
\item \begin{align*}
\pil(2L_{\ell_{ij}}) &= 2\lambda(L_{\ell_{ij}}) - 2\sum_{1\leq k\leq l \leq m+2n} L_{\ell_{ij}}(\ell_{kl})\pt{\ell_{kl}}\\
&=2\lambda\beta_{ij} - \sum_{k,l=1}^{m+2n}(1+\delta_{kl})(\beta_{jk}\ell_{il}+(-1)^{|i||j|}\beta_{ik}\ell_{jl})\pt{\ell_{kl}},
\end{align*}
\item \begin{align*}
\pil(4[L_{\ell_{ij}}, L_{\ell_{rs}}]) &= 4\sum_{1\leq k\leq l \leq m+2n} [L_{\ell_{ij}}, L_{\ell_{rs}}](\ell_{kl})\pt{\ell_{kl}}\\
& = \sum_{k,l=1}^{m+2n}(1+\delta_{kl})((\beta_{sk}\beta_{jr}+(-1)^{|r||s|}\beta_{rk}\beta_{js})\ell_{il}\\
&\quad +(-1)^{|i||j|}(\beta_{sk}\beta_{ir}+(-1)^{|r||s|}\beta_{rk}\beta_{is})\ell_{jl}\\
&\quad -(-1)^{|k||s|}(\beta_{ik}\beta_{jr}+(-1)^{|i||j|}\beta_{jk}\beta_{ir})\ell_{sl}\\
&\quad -(-1)^{|k||r|+|r||s|}(\beta_{ik}\beta_{js}+(-1)^{|i||j|}\beta_{jk}\beta_{is})\ell_{rl})\pt{\ell_{kl}},
\end{align*}
\item \begin{align*}
\pil(\ell_{ij}^+) &= -\dfrac{\imath}{2}\bessel(\ell_{ij}).
\end{align*}
\end{itemize}

\subsection{The minimal representation}\label{SS_min}

Following the general approach, the minimal representation of $\mf g$ should be obtainable as a quotient of $\pil$ for specific values of $\lambda$. More specifically, suppose we have a non-trivial subspace $V_\lambda$ of $\mc P_2(\ds K^{m|n})$ on which the Bessel operators act trivially and which is also a $\mf{str}(J)$-module. From the Poincaré-Birkhoff-Witt theorem it then follows that
\begin{align*}
\mc I_\lambda := U(J^-) V_\lambda=  \mc{P}(\ds \K^{m|n}) V_\lambda
\end{align*}
is a submodule of $\pi_\lambda$. Here $U(J^-)$\symindex{$U( \cdot )$} denotes the universal enveloping algebra of $J^-$. We can then define the quotient representation of $\mf g$ on $\mc{P} (\mathds{K}^{m |n}) / \mc{I}_\lambda$. This quotient representation is then a prime candidate for being a minimal representation of $\mf g$. We now determine the elements of $\mc P_2(\ds K^{{\wh m}|2{\wh n}})$ on which the Bessel operators act trivially. An arbitrary element of $Q\in \mc P_2(\ds K^{{\wh m}|2{\wh n}})$ can be written as
\begin{align}\label{Eq_Arbit_Q}
&Q = \sum_{i,j,k,l=1}^{m+2n}\alpha_{ijkl}\ell_{ij}\ell_{kl}, \quad \text{ with }\\ &\alpha_{ijkl} = (-1)^{|i||j|}\alpha_{jikl} = (-1)^{|k||l|}\alpha_{ijlk}=(-1)^{(|i|+|j|)(|k|+|l|)}\alpha_{klij}\in \ds C.\nonumber
\end{align}

\begin{lemma}\label{Lemma_Q_Bessel}
Suppose $Q\in \mc P_2(\ds K^{{\wh m}|2{\wh n}})$ is given by equation \eqref{Eq_Arbit_Q}. Then the Bessel operators act trivially on $Q$ if and only if
\begin{align*}
2(-1)^{|i||j|}\lambda \alpha_{ijkl} &= (-1)^{|i||k|}\alpha_{jkil}+(-1)^{|k||l|+|i||l|}\alpha_{jlik},
\end{align*}
for all $i,j,k,l\in\{1, \ldots, m+2n\}$. 
\end{lemma}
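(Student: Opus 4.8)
The plan is to turn the statement into a finite computation with the explicit formula for $\bessel(\ell_{ij})$ recorded just above the lemma, and then match coefficients of the resulting linear superpolynomials. First, since $x\mapsto\bessel(x)$ is linear and $\{\ell_{ij}:1\le i,j\le m+2n\}$ spans $J^+$ (using $\ell_{ji}=(-1)^{|i||j|}\ell_{ij}$, and $\ell_{ii}=0$ when $|i|=1$), the Bessel operators act trivially on $Q$ if and only if $\bessel(\ell_{ij})Q=0$ for every pair $(i,j)$.

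Next I would substitute $Q=\sum_{a,b,c,d}\alpha_{abcd}\ell_{ab}\ell_{cd}$ into
\[
\bessel(\ell_{ij}) = -2\lambda(1+\delta_{ij})\pt{ji}+\sum_{k,l=1}^{m+2n}(-1)^{|k||i|}(1+\delta_{jk}+\delta_{il}+\delta_{jk}\delta_{il})\ell^{kl}\pt{il}\pt{jk},
\]
and evaluate using the contraction rule $\pt{ij}\ell^{kl}=\delta_{ik}\delta_{jl}+(-1)^{|i||j|}\delta_{il}\delta_{jk}-\delta_{ij}\delta_{kl}\delta_{ik}$ together with the analogous rule for $\pu{ij}$ acting on the $\ell_{kl}$. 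Because $Q$ is homogeneous of degree $2$, each $\pt{il}\pt{jk}Q$ is a constant and $\pt{ji}Q$ is linear, so $\bessel(\ell_{ij})Q$ is a linear combination of the basis monomials $\ell_{pq}$; imposing $\bessel(\ell_{ij})Q=0$ is therefore equivalent to killing the coefficient of each $\ell_{pq}$.

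Carrying out the expansion, the first-order part produces a multiple of $\lambda\,\alpha$ with the index pattern $\alpha_{ijkl}$, while the second-order part, after performing the two contractions and relabelling dummy indices, produces terms of the shape $(-1)^{|i||k|}\alpha_{jkil}$ and $(-1)^{|k||l|+|i||l|}\alpha_{jlik}$; collecting the coefficient of a fixed $\ell_{pq}$ (renamed so that $(p,q)$ plays the role of $(k,l)$) yields exactly the asserted identity, and reading the same computation backwards gives the converse. The symmetry relations $\alpha_{ijkl}=(-1)^{|i||j|}\alpha_{jikl}=(-1)^{|k||l|}\alpha_{ijlk}=(-1)^{(|i|+|j|)(|k|+|l|)}\alpha_{klij}$ are what let one merge the various contributions into this single equation.

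The only real difficulty is bookkeeping: tracking the signs $(-1)^{|\cdot||\cdot|}$ and checking that the many Kronecker-delta corrections — the $\delta_{ij},\delta_{jk},\delta_{il}$ inside the Bessel formula and the $\delta_{ij}\delta_{kl}\delta_{ik}$ inside the contraction rule, all of which only concern the diagonal even generators $\ell_{ii}$ — cancel after symmetrising with the $\alpha$-relations, so that one clean relation survives. A secondary point is that $Q$ and the $\alpha$'s are written in the lowered-index basis $\ell_{ij}$ whereas the second-order part of $\bessel(\ell_{ij})$ is written in the raised-index $\ell^{ij}$; doing the whole computation consistently in one convention (translating via the $\beta$-contractions once and for all) keeps this manageable. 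As with Proposition \ref{Prop_Explicit_Bessel}, I expect the detailed version to be long enough to relegate to Appendix \ref{AppS_Long}.
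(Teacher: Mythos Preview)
Your proposal is correct and follows essentially the same approach as the paper: the proof in Appendix~\ref{AppS_Long} substitutes $Q$ into the explicit formula for $\bessel(\ell_{ab})$, evaluates the first-order and second-order parts separately (obtaining $4\sum(-1)^{|i||j|}\alpha_{ijkl}\beta_{ai}\beta_{bj}\ell_{kl}$ and $8\sum(-1)^{|a||k|}\alpha_{jkil}\beta_{ai}\beta_{bj}\ell_{kl}$ respectively), and then reads off the coefficient of each basis element $\ell_{kl}$, splitting into the diagonal and off-diagonal cases exactly as you anticipate. Your remarks about the bookkeeping of signs, the $\delta$-corrections, and the need to work consistently in one index convention accurately describe where the labour lies.
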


\begin{proof}
This is a long and straightforward calculation. See Appendix \ref{AppS_Long}, Proposition \ref{Prop_App_Q_Bessel}.
\end{proof}

Now suppose $Q$ is given by equation \eqref{Eq_Arbit_Q} and that the Bessel operators act trivially on $Q$. We determine the values of $\lambda$ for which $Q$ is non-trivial. Lemma \ref{Lemma_Q_Bessel} implies
\begin{align*}
2\lambda (-1)^{|i||j|}\alpha_{ijkl} &= (-1)^{|i||k|}\alpha_{jkil}+(-1)^{|k||l|+|i||l|}\alpha_{jlik},\\
2\lambda (-1)^{|k||j|}\alpha_{jkil} &= (-1)^{|j||i|}\alpha_{kijl}+(-1)^{|i||l|+|j||l|}\alpha_{klji},\\
2\lambda (-1)^{|l||j|}\alpha_{jlik} &= (-1)^{|j||i|}\alpha_{lijk}+(-1)^{|i||k|+|j||k|}\alpha_{lkji}.
\end{align*}
If we multiply the second equation by $(-1)^{|i||k|+|j||k|}$ and the third equation by $(-1)^{|i||l|+|j||l|+|k||l|}$ we get
\begin{align}\label{Eq_lambda_1}
2\lambda (-1)^{|i||j|}\alpha_{ijkl} &= (-1)^{|i||k|}\alpha_{jkil}+(-1)^{|k||l|+|i||l|}\alpha_{jlik},\\\label{Eq_lambda_2}
2\lambda(-1)^{|i||k|}\alpha_{jkil} &= (-1)^{|l||i|+|l||k|}\alpha_{jlik}+(-1)^{|j||i|}\alpha_{ijkl},\\\label{Eq_lambda_3}
2\lambda (-1)^{|k||l|+|i||l|}\alpha_{jlik} &= (-1)^{|k||i|}\alpha_{jkil}+(-1)^{|j||i|}\alpha_{ijkl}.
\end{align}
Equation \eqref{Eq_lambda_1} can be rewritten as the equations
\begin{align}\label{Eq_lambda_1_1}
(-1)^{|i||k|}\alpha_{jkil} &= 2\lambda (-1)^{|i||j|}\alpha_{ijkl}-(-1)^{|k||l|+|i||l|}\alpha_{jlik},\\\label{Eq_lambda_1_2}
(-1)^{|k||l|+|i||l|}\alpha_{jlik} &= 2\lambda (-1)^{|i||j|}\alpha_{ijkl}-(-1)^{|i||k|}\alpha_{jkil}.
\end{align}
If we combine \eqref{Eq_lambda_2} and \eqref{Eq_lambda_3} with \eqref{Eq_lambda_1_1} and \eqref{Eq_lambda_1_2}, respectively, we obtain
\begin{align*}
4\lambda^2 \alpha_{ijkl} &= (-1)^{|l||i|+|l||k|+|i||j|}\alpha_{jlik}+\alpha_{ijkl}\\
&\quad +2\lambda(-1)^{|k||l|+|i||l|+|i||j|}\alpha_{jlik},\\
4\lambda^2 (-1)^{|k||l|+|i||l|}\alpha_{jlik} &= (-1)^{|l||i|+|l||k|}\alpha_{jlik}+(-1)^{|j||i|}\alpha_{ijkl}\\
&\quad +2\lambda(-1)^{|j||i|}\alpha_{ijkl},
\end{align*}
therefore,
\begin{align*}
(4\lambda^2 -1)\alpha_{ijkl}&= (-1)^{|l||i|+|l||k|+|i||j|}(1+2\lambda)\alpha_{jlik},\\
(4\lambda^2- 1)\alpha_{jlik} &= (-1)^{|j||i|+|k||l|+|i||l|}(1 +2\lambda)\alpha_{ijkl}.
\end{align*}

For $4\lambda^2- 1 = 0$ we now have
\begin{align*}
(1 +2\lambda)\alpha_{ijkl} = 0,
\end{align*}
for all $i,j,k,l\in\{1, \ldots, m+2n\}$. This implies either $Q=0$ or $\lambda= -1/2$.

For $4\lambda^2- 1 \neq 0$, we now have
\begin{align*}
(4\lambda^2- 1)^2\alpha_{ijkl}&= (2\lambda+1)^2\alpha_{ijkl}
\end{align*}
and therefore
\begin{align*}
\lambda(\lambda- 1)\alpha_{ijkl}&= 0,
\end{align*}
for all $i,j,k,l\in\{1, \ldots, m+2n\}$. This implies either $Q=0$ or $\lambda\in\{0,1\}$. The case $\lambda=0$ corresponds to working with the trivial character of $\mf{str}(J)$. Since then the Bessel operators act trivially on all of $\mc P_1(\ds K^{\wh m|2\wh n})$ this does not lead to what we would consider minimal representations.

Therefore, the values of interest are $\lambda= -1/2$ and $\lambda=1$. For $\lambda=1$ the calculations above give us
\begin{align*}
\alpha_{ijkl} = (-1)^{|k||j|}\alpha_{ikjl} =  (-1)^{(|j|+|k|)|l|}\alpha_{iljk},
\end{align*}
which implies the Bessel operators act trivially on
\begin{align*}
V_{1} := \left\lbrace\sum_{i,j,k,l}\alpha_{ijkl}(\ell_{ij}\ell_{kl} + (-1)^{|k||j|}\ell_{ik}\ell_{jl} + (-1)^{(|j|+|k|)|l|}\ell_{il}\ell_{jk}): \alpha_{ijkl}\in \ds K \right\rbrace
\end{align*}
For $\lambda=-1/2$ the calculations above show that the Bessel operators act trivially on
\begin{align*}
V_{-\frac{1}{2}} &:= \left\lbrace\sum_{i,j,k,l}\alpha_{ijkl}(\ell_{ij}\ell_{kl} - (-1)^{|k||j|}\ell_{ik}\ell_{jl}): \alpha_{ijkl}\in \ds K \right\rbrace.
\end{align*}
Furthermore, $V_{1}$ and $V_{-\frac{1}{2}}$ are $\mf{istr}(J)$-modules.\symindex{$V_1$ and $V_{-\frac{1}{2}}$}

Define
\begin{align*}
\mc I_\lambda := U(J^-) V_\lambda=  \mc{P}(\ds \K^{{\wh m}|2{\wh n}}) V_\lambda,
\end{align*}\symindex{$\mc I_\lambda$}
which is a submodule of $\pi_\lambda$ according to the Poincaré-Birkhoff-Witt theorem. In the rest of the paper we will study the quotient representations of $\mf g$ on $\mc{P} (\mathds{K}^{m |n}) / \mc{I}_\lambda$ for $\lambda = -\frac{1}{2}$ and $\lambda=1$. These two quotient representations have different behaviours and are in general not equivalent, which is why we handle them separately from Section \ref{S_lambda12} onwards. The $\lambda = -\frac{1}{2}$ case will be the main subject of the paper and it generalises the symplectic case. The $\lambda=1$ case will be discussed in Section \ref{S_lambda_1} and generalises the split orthogonal case.

Denote the superdimension of a supervector space $V = V_{\ol 0}\oplus V_{\ol 1}$ by $\sdim(V) := \dim(V_{\ol 0})-\dim(V_{\ol 1})$.\symindex{$\sdim$} For $J$ we find
\begin{align*}
\sdim(J) &= \dfrac{1}{2}M(M+1),
\end{align*}
with $M := m-2n$. For $V_\lambda$ we have the following superdimensions.

\begin{Prop}\label{Prop_Dim}
We have
\begin{align*}
\sdim(V_1) &= \dfrac{1}{24}M(M+1)(M+2)(M+3),\\
\sdim(V_{-\frac{1}{2}}) &= \dfrac{1}{12}(M-1)M^2(M+1), 
\end{align*}
with $M = m-2n$.
\end{Prop}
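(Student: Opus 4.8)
The plan is to recognise $V_{1}$ and $V_{-\frac12}$ as the values of classical Schur functors on the super vector space $\ds K^{m|2n}$, and then to read off their superdimensions using the principle that the superdimension of a polynomial functor evaluated on $\ds K^{m|2n}$ is the corresponding ordinary dimension polynomial evaluated at $M=m-2n$.

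First I would fix the isomorphism $J=JOSP(m|2n,\beta)\cong\Sym^{2}(V)$ of $\mf{istr}(J)\cong\mf{gl}(m|2n)$-modules, where $V=\ds K^{m|2n}$ is the natural module and $\ell_{ij}$ corresponds to $e_{i}\odot e_{j}$; this is the super analogue of the classical fact that, in the $3$-grading of $\mf{sp}(2N)$, the subspace $\mf g_{-}$ is $\Sym^{2}$ of the natural $\mf{gl}(N)$-module, and it is consistent with $\sdim J=\sdim\Sym^{2}V=\binom{M+1}{2}=\tfrac12 M(M+1)$ recorded above. Dualising, $\mc P_{2}(J)\cong\Sym^{2}\big(\Sym^{2}(V^{\ast})\big)$ as $\mf{gl}(m|2n)$-modules, where $V^{\ast}$ again has superdimension $M$.

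Next I would invoke the classical plethysm $\Sym^{2}\circ\Sym^{2}\cong\Sym^{4}\oplus\mathbf S^{(2,2)}$, which is an identity of Schur functors and hence holds over super vector spaces; here $\mathbf S^{(2,2)}(V^{\ast})$ is exactly the kernel of total (super)symmetrisation on $\Sym^{2}(\Sym^{2}(V^{\ast}))$. Comparing with the explicit relations derived just before the statement: for $\lambda=1$ the defining combination $\ell_{ij}\ell_{kl}+(-1)^{|k||j|}\ell_{ik}\ell_{jl}+(-1)^{(|j|+|k|)|l|}\ell_{il}\ell_{jk}$ is, up to a scalar, the total symmetrisation over the three $2{+}2$ pairings of $\{i,j,k,l\}$, so as $\alpha$ ranges over all tensors $V_{1}$ is exactly the image of $\Sym^{4}(V^{\ast})$ in $\mc P_{2}(J)$; for $\lambda=-\tfrac12$ the defining combination $\ell_{ij}\ell_{kl}-(-1)^{|k||j|}\ell_{ik}\ell_{jl}$ is, after relabelling, the image of $\alpha$ minus its graded middle-index transposition, whose total symmetrisation vanishes, so $V_{-\frac12}$ is a $\mf{gl}(m|2n)$-submodule of $\mathbf S^{(2,2)}(V^{\ast})$. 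By the theorem of Berele--Regev (and Sergeev) the module $\mathbf S^{(2,2)}(\ds K^{m|2n})$ is zero or irreducible, and $V_{-\frac12}\neq 0$ whenever $\mathbf S^{(2,2)}(\ds K^{m|2n})\neq 0$ (for instance $\ell_{11}\ell_{22}-\ell_{12}^{2}\in V_{-\frac12}$ when $\dim J_{\ol 0}\geq 3$, with an analogous purely odd witness when $m\leq 1$); hence $V_{-\frac12}=\mathbf S^{(2,2)}(V^{\ast})$ in every case, the degenerate cases $M\in\{0,1\}$ contributing $0=0$.

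Finally, for any super vector space $U$ of superdimension $D$ and any partition $\mu$ one has $\sdim\mathbf S^{\mu}(U)=\prod_{c\in\mu}\tfrac{D+j_{c}-i_{c}}{h_{c}}$, the hook--content polynomial: both sides are polynomials in $D$ that agree with the usual dimension formula for $D\in\ds N$, hence agree for all $D$, in particular for $D=M$. Taking $\mu=(4)$ gives $\sdim V_{1}=\binom{M+3}{4}=\tfrac{1}{24}M(M+1)(M+2)(M+3)$, and $\mu=(2,2)$ gives $\sdim V_{-\frac12}=\tfrac{(M-1)M\cdot M(M+1)}{12}=\tfrac{1}{12}(M-1)M^{2}(M+1)$. (As a check: $\sdim\mc P_{2}(J)=\binom{\binom{M+1}{2}+1}{2}=\tfrac{1}{8}M(M+1)(M^{2}+M+2)$ equals $\sdim V_{1}+\sdim V_{-\frac12}$.) I expect the main obstacle to be the third paragraph: matching the sign-laden explicit relations with the abstract Young-symmetriser description and confirming, through irreducibility of super Schur functors, that $V_{-\frac12}$ exhausts the $\mathbf S^{(2,2)}$-summand rather than being a proper submodule, small cases $M\in\{0,1\}$ included.
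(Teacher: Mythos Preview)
Your argument is correct and takes a genuinely different route from the paper's.

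The paper proceeds by direct enumeration: for each of $V_{1}$ and $V_{-\frac12}$ it lists five types of basis elements according to how many of $i,j,k,l$ coincide, counts the even and odd ones separately, and subtracts. This yields not just the superdimensions but the full $(\dim_{\ol 0}|\dim_{\ol 1})$ dimensions (stated in the proof). Your approach instead identifies $J\cong\Sym^{2}V$, uses the plethysm $\Sym^{2}\!\circ\!\Sym^{2}\cong\Sym^{4}\oplus\mathbf S^{(2,2)}$, matches $V_{1}\cong\Sym^{4}(V^{\ast})$ and $V_{-\frac12}\cong\mathbf S^{(2,2)}(V^{\ast})$, and reads off the superdimensions from the hook--content formula. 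This is more conceptual: it explains \emph{a priori} why the answers are polynomials in $M=m-2n$ alone, and the consistency check $\sdim\mc P_{2}(J)=\sdim V_{1}+\sdim V_{-\frac12}$ comes for free. The paper's counting, on the other hand, gives finer information (the separate even and odd dimensions) that your method does not directly produce.

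Your identification of $V_{-\frac12}$ with $\mathbf S^{(2,2)}(V^{\ast})$ via Berele--Regev irreducibility is sound, but note that the paper's own folding isomorphism $\psi$ (defined slightly later, mapping $\ell_{ij}\mapsto\ell_{i}\ell_{j}$) gives an alternative one-line proof of equality: $V_{-\frac12}$ is by construction contained in the kernel of the multiplication map $\Sym^{2}(\Sym^{2}V^{\ast})\to\Sym^{4}V^{\ast}$, and $\psi$ shows that modding out by $V_{-\frac12}$ already lands in $\mc P_{\text{even}}(\ds K^{m|2n})\cong\Sym V^{\ast}$, so $V_{-\frac12}$ is the full kernel. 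This sidesteps the case analysis for nonzero witnesses in small $(m,n)$.
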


\begin{proof}
This is a straightforward counting exercise. More generally, we can show that $V_1$ is a $(\frac{1}{24}(m^4+24m^2n^2+16n^4+6m^3-12m^2n+24mn^2-48n^3+11m^2-12mn+44n^2+6m-12n)|\frac{1}{3}mn(m^2+4n^2+3m-6n+4))$-dimensional subspace of $\mc P_2\big(\K^{{\wh m}|2{\wh n}}\big)$ and $V_{-\frac{1}{2}}$ is a $(\frac{1}{12}(m^4 +24m^2 n^2+16n^4-m^2-12mn-4n^2)|\frac{2}{3}mn(m^2+4n^2-2))$-dimensional subspace of $\mc P_2\big(\K^{{\wh m}|2{\wh n}}\big)$.

Suppose first that $\lambda=1$. We can distinguish five distinct types of basis elements in $V_1$, depending on how many of the indices are equal to each other.
\begin{itemize}
\item We have $\ell_{ii}^2$, for $i\in\{1, \ldots, m\}$, which gives $m$ even elements.
\item We have $\ell_{ii}\ell_{ij}$, for $i\in\{1, \ldots, m\}$ and $j\in\{1, \ldots, m+2n\}$, with $i\neq j$. This gives $m(m+2n-1)$ elements of which $2mn$ are odd.
\item We have $2\ell_{ij}^2+\ell_{ii}\ell_{jj}$, for $i,j\in\{1, \ldots, m\}$, with $i\neq j$. This gives $\frac{1}{2}m(m-1)$ even elements.
\item We have $2\ell_{ik}\ell_{il}+\ell_{ii}\ell_{kl}$, for $i\in\{1, \ldots, m\}$ and $k,l\in\{1, \ldots, m+2n\}$, with $i\neq k\neq l \neq i$. This gives $\frac{1}{2}m(m+2n-1)(m+2n-2)$ elements of which $2mn(m-1)$ are odd.
\item We have $\ell_{ij}\ell_{kl} + (-1)^{|k||j|}\ell_{ik}\ell_{jl} + (-1)^{(|j|+|k|)|l|}\ell_{il}\ell_{jk}$, for $i,j,k,l\in\{1, \ldots, m+2n\}$ and distinct. This gives $\frac{1}{24}(m+2n)(m+2n-1)(m+2n-2)(m+2n-3)$ elements of which $\frac{1}{3}mn((m-1)(m-2)+(2n-1)(2n-2))$ are odd.
\end{itemize}
Now suppose $\lambda=-1/2$. Similarly, we can distinguish five distinct types of basis elements in $V_{-\frac{1}{2}}$.
\begin{itemize}
\item We have $\ell_{ij}^2-\ell_{ii}\ell_{jj}$, for $i,j\in\{1, \ldots, m\}$, which gives $\frac{1}{2}m(m-1)$ even elements.
\item We have $\ell_{ij}^2$, for $i,j\in\{m+1, \ldots, m+2n\}$, with $i\neq j$. This gives $n(2n-1)$ even elements.
\item We have $\ell_{ii}\ell_{kl}-\ell_{ik}\ell_{il}$, for $i\in\{1, \ldots, m\}$ and $k,l\in\{1, \ldots, m+2n\}$, with $i\neq k\neq l \neq i$. This gives $\frac{1}{2}m(m+2n-1)(m+2n-2)$ elements of which $2mn(m-1)$ are odd.
\item We have $\ell_{ik}\ell_{il}$, for $i\in\{m+1, \ldots, m+2n\}$ and $k,l\in\{1, \ldots, m+2n\}$, with $i\neq k\neq l \neq i$. This gives $n(m+2n-1)(m+2n-2)$ elements of which $2mn(2n-1)$ are odd.
\item We have $\ell_{ij}\ell_{kl} - (-1)^{|k||j|}\ell_{ik}\ell_{jl}$, for $i,j,k,l\in\{1, \ldots, m+2n\}$ and distinct. This gives $\frac{1}{12}(m+2n)(m+2n-1)(m+2n-2)(m+2n-3)$ elements of which $\frac{2}{3}mn((m-1)(m-2)+(2n-1)(2n-2))$ are odd.
\end{itemize}
If we subtract the number of odd basis elements from the number of even basis elements, we get the superdimension of $V_\lambda$.
\end{proof}

\subsection{The polynomial realisation $\rol$}\label{SS_rol}

We introduce the notations
\begin{align*}
\mf k_{mcs} := \mf{so}(2n)\oplus \mf{so}(2n)\oplus \mf u(m), && \mf k_c &:= \{(a, [L_b,L_c], -a) : a,b,c\in J\}.
\end{align*}\symindex{$\mf k_{mcs}$ and $\mf k_c$}

Then $\mf k_{mcs}$ is a maximal compact subalgebra of the even part of $\mf g$ and $\mf k_c\cap \mf{istr}(J) = \Inn(J)\cong \mf{osp}(m|2n)$.

\begin{Prop}
We have $\mf k_c \cong \mf u(m|2n,\beta')$ as (real) Lie superalgebras. Here $\beta'$ denotes the superhermitian, non-degenerate, even sesquilinear form where the matrix form is given by $\beta$.
\end{Prop}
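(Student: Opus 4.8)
The plan is to exhibit an explicit real-linear isomorphism from $\mf k_c$ to $\mf u(m|2n,\beta')$ and check it respects brackets. Recall $\mf k_c = \{(a,[L_b,L_c],-a) : a,b,c\in J\}$ sits inside $\TKK(J) = J^-\oplus\mf{istr}(J)\oplus J^+$, with Lie bracket as specified in Section \ref{SSTKK}. First I would note that, as a real vector space, $\mf k_c$ is spanned by elements of the form $(a, L_d - L_d^{?},-a)$ together with $\Inn(J)$ sitting in the middle slot (since $[L_b,L_c]\in\Inn(J)$, and conversely every inner derivation arises this way), so $\dim_{\ds R}\mf k_c = \dim_{\ds R}J + \dim_{\ds R}\Inn(J)$. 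On the other side, $\mf u(m|2n,\beta')$ decomposes (over $\ds R$) as $\mf{osp}(m|2n,\beta)\oplus \imath\,\mathfrak{p}$ where $\mathfrak p$ is the space of $\beta'$-selfadjoint operators, i.e.\ $\mathfrak p \cong JOSP(m|2n,\beta) = J$; indeed the defining condition $\ol X^{ST}\beta' + \beta' X = 0$ splits $X = X_0 + \imath X_1$ into the skew part $X_0\in\mf{osp}(m|2n,\beta)\cong\Inn(J)$ and the selfadjoint part $X_1\in JOSP(m|2n,\beta)=J$. So the dimensions match, and the natural guess is
\[
\Phi\colon (a, [L_b,L_c], -a)\ \longmapsto\ \imath\, a \ +\ [L_b,L_c]\ \in\ \mathfrak p_{\ds R}\oplus\mf{osp}(m|2n,\beta) = \mf u(m|2n,\beta'),
\]
where $a\in J$ is viewed through the identification $J = JOSP(m|2n,\beta)\subset\mf{gl}(m|2n)$ and $[L_b,L_c]\in\Inn(J)\cong\mf{osp}(m|2n,\beta)$ via the isomorphism recorded at the end of Section \ref{SSTKK2}.

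The key steps, in order, are: (1) verify $\Phi$ is a well-defined real-linear bijection — injectivity and surjectivity follow from the dimension count and the transparent direct-sum decompositions on both sides, and well-definedness amounts to checking that the middle component $[L_b,L_c]$ of an element of $\mf k_c$ determines, and is determined by, its image in $\Inn(J)$, which is immediate. (2) Check $\Phi$ is a Lie superalgebra homomorphism. Using the bracket relations of Section \ref{SSTKK}, for $k = (a,D,-a)$ and $k' = (a',D',-a')$ in $\mf k_c$ one computes the bracket in $\TKK(J)$ componentwise: the $J^+$-slot contributes a term $2L_{(-a)(-a')} + 2[L_{-a},L_{-a'}]$-type expression paired against the $J^-$-slots $a', a$, the middle slots bracket as $[D,D']$ plus the $L$-action terms $[D,L_{a'}]$, $[L_a,D']$, and one must repackage the result again in the form $(c,E,-c)$. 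On the $\mf u$-side, $[\imath a + D, \imath a' + D'] = -[a,a'] + \imath([a,D'] + [D,a']) + [D,D']$ in $\mf{gl}(m|2n)$, and one checks this lands in $\mf u(m|2n,\beta')$ with selfadjoint part $\imath([a,D']+[D,a'])$ and skew part $[D,D']-[a,a']$. Matching these two computations is the substance of the proof, and the only genuinely delicate point: one must use that under $J\cong JOSP(m|2n,\beta)$ the Jordan product $a\cdot a'$ corresponds to $\tfrac12(aa' + (-1)^{|a||a'|}a'a)$ in $\mf{gl}(m|2n)$, so that $L_aL_{a'} + (-1)^{|a||a'|}L_{a'}L_a - L_{a\cdot a'}$ and the commutator $[L_a,L_{a'}]$ translate into the associative product $aa'$ decomposed into its symmetric and antisymmetric parts — which is exactly the $\imath$-selfadjoint versus skew split on the $\mf u$ side.

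The main obstacle I expect is bookkeeping the signs and the factors of $2$ and $\imath$ consistently through the $\TKK$ bracket, together with confirming that the map $\Inn(J)\to\mf{osp}(m|2n,\beta)$ used here is precisely the one from Section \ref{SSTKK2} (with its sign twist on indices in $I = \{m+1,\dots,m+n\}$) and not an ad hoc variant; a sloppy choice would make $\Phi$ fail to be bracket-preserving on the mixed terms. A clean way to sidestep most of the casework is to observe that $\mf k_c$ is, by construction, the fixed-point set of a Cartan-type involution $\theta$ of $\TKK(J)$ (the one sending $x^+\leftrightarrow -x^-$ and $L_a\mapsto -L_a$, fixing $\Inn(J)$), and that $\mf u(m|2n,\beta')$ is the fixed-point set of the analogous involution $X\mapsto -\ol X^{ST}$-conjugation on $\mf{gl}_{\ds C}(m|2n)$ restricted to the relevant real form; since $\TKK(J)\otimes_{\ds R}\ds C \cong \mf{gl}_{\ds C}(m|2n)$-related complexification already intertwines these two involutions by the isomorphism $\phi$ of Theorem \ref{ThTKK-JP} (suitably complexified), $\Phi$ is simply the restriction of $\phi$ to $\theta$-fixed points, and bracket-preservation is inherited for free. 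I would present the explicit $\Phi$ as the statement and then give this involution-theoretic argument as the proof, falling back to the direct component computation only if the involutions do not match on the nose.
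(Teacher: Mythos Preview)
Your main approach via the explicit map $\Phi\colon (a,D,-a)\mapsto \imath a + D$ is correct and genuinely different from the paper's. The paper proceeds entirely in coordinates: it writes down a basis of $\mf k_c$ in terms of the $U_{ij}$ generators of $\mf g = \mf{spo}(2m|4n,\Omega)$, writes down a parallel basis of $\mf u(m|2n,\beta')$ in terms of $E_{ij}$ and $\beta$, and then asserts that the obvious bijection of bases is a Lie superalgebra isomorphism by ``straightforward verification''. Your argument instead exploits the Cartan-type decomposition $\mf u(m|2n,\beta') = \mf{osp}(m|2n,\beta)\oplus \imath J$ and matches it against $\mf k_c \cong \Inn(J)\oplus J$, which is more conceptual and explains \emph{why} the isomorphism exists. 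The price is that you must pin down the normalization of $\Inn(J)\cong\mf{osp}(m|2n,\beta)$: indeed $[L_a,L_{a'}]$ acts on $J$ as commutation with $\tfrac14[a,a']$, so that $4[L_a,L_{a'}]\leftrightarrow [a,a']$, and this is exactly what makes the middle component $[D,D']-4[L_a,L_{a'}]$ of the $\TKK$ bracket match the skew part $[\tilde D,\tilde D']-[a,a']$ on the $\mf u$ side. You flagged this factor as the delicate point, correctly.

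Your fallback involution argument, however, is broken as written. You claim $\TKK(J)\otimes_{\ds R}\ds C \cong \mf{gl}_{\ds C}(m|2n)$ via the isomorphism $\phi$ of Theorem~\ref{ThTKK-JP}, but that theorem gives $\TKK(J)\cong \mf{spo}(2m|4n,\Omega)$, not $\mf{gl}$; only $\mf{istr}(J)$ is isomorphic to $\mf{gl}(m|2n)$. So there is no ambient identification under which your involution $\theta$ on $\TKK(J)$ can be compared directly to $X\mapsto -\ol X^{ST}\beta^{-1}\beta$-type conjugation on $\mf{gl}_{\ds C}(m|2n)$. What would work is to invoke the Cayley transform (the proposition immediately following this one in the paper) to carry $\mf k_{c,\ds C}$ isomorphically onto $\mf{istr}(J_{\ds C})\cong\mf{gl}_{\ds C}(m|2n)$ and then identify the induced real form; but that reverses the logical order of the paper and borders on circular. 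Stick with your direct $\Phi$.
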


\begin{proof}
A basis of $\mf k_c$ is given by
\begin{align*}
&U_{\ot i, \ul j}-(-1)^{|i||j|}U_{\ot j, \ul i}, &&\text{ for } i<j,\\
&2U_{\ot i, \ul i}, &&\text{ for } |i|=1,\\
&U_{\ul i, \ul j}+ U_{\ot i, \ot j}, &&\text{ for } i<j,\\
&U_{\ul i, \ul i}+ U_{\ot i, \ot i}, &&\text{ for } |i|=0.
\end{align*}
and a basis of $\mf u(m|2n,\beta')$ is given by
\begin{align*}
&\sum_{k=1}^{m+2n}\beta_{jk}E_{ik}-(-1)^{|i||j|}\beta_{ik}E_{jk}, &&\text{ for } i<j,\\
&2\sum_{k=1}^{m+2n}\beta_{ik}E_{ik}, &&\text{ for } |i|=1,\\
&\imath\sum_{k=1}^{m+2n}\beta_{jk}E_{ik}+(-1)^{|i||j|}\beta_{ik}E_{jk}, &&\text{ for } i<j,\\
&2\imath\sum_{k=1}^{m+2n}\beta_{ik}E_{ik}, &&\text{ for } |i|=0.
\end{align*}
From a straightforward verification, it follows that mapping the basis elements of $\mf k_c$ to the respective basis elements of $\mf u(m|2n,\beta')$ induces a Lie superalgebra isomorphism.
\end{proof}

Recall that $e^\pm$ denotes the unit of $J^\pm$. We define the Cayley transform $c \in \End(\mf g_{\ds C})$\symindex{$c$} as
\begin{align*}
c := \exp(\frac{\imath}{2}\ad(e^-))\exp(\imath\ad(e^+)),
\end{align*}
which has the following property.
\begin{Prop}\label{Prop Cayley}
Using the decomposition $\TKK(J_\C)= J^-_{\C}\oplus \mathfrak{istr}(J_{\C})\oplus J^+_{\C} $ we obtain the following explicit expression for the Cayley transform
\begin{itemize}
\item $c(a,0,0) = \left(\dfrac{a}{4}, \imath L_a, a\right)$
\item $c(0,L_a+I,0) = \left(\imath \dfrac{a}{4}, I, -\imath a\right)$
\item $c(0,0,a) = \left(\dfrac{a}{4}, -\imath L_a, a\right)$,
\end{itemize}
with $a\in J_{\C}$ and $I\in \Inn(J_{\C})$. It induces a Lie superalgebra isomorphism:
\begin{align*}
c:\;\mf k_{c,\ds C} \rightarrow \mathfrak{istr}(J_{\C}),\quad (a,I,-a) \mapsto I + 2\imath L_a.
\end{align*}
\end{Prop}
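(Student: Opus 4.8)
The plan is to reduce the proposition to a direct computation with the $\mf{sl}(2)$-triple $\{e^-,2L_e,e^+\}$ and the TKK bracket relations of Section \ref{SSTKK}. First I would observe that, because of the $3$-grading $\mf g_{\ds C}=\mf g_-\oplus\mf g_0\oplus\mf g_+$, both $\ad(e^+)$ and $\ad(e^-)$ are nilpotent (their cubes vanish), so the exponentials $\exp(\imath\ad(e^+))$ and $\exp(\tfrac{\imath}{2}\ad(e^-))$ are finite sums and, being exponentials of even nilpotent derivations, automorphisms of $\mf g_{\ds C}$; hence $c$ is a well-defined Lie superalgebra automorphism of $\mf g_{\ds C}$. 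This observation is what will make the final statement about $\mf k_{c,\ds C}$ essentially a corollary of the explicit formulas.

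Next I would compute $\ad(e^{\pm})$ on the three graded pieces $J^{-}_{\ds C}$, $\mf{istr}(J_{\ds C})$, $J^{+}_{\ds C}$ using only the relations $[x,u]=2L_{xu}+2[L_x,L_u]$, $[L_a,x]=ax$, $[L_a,u]=-au$, $[[L_a,L_b],x]=[L_a,L_b]x$, $[[L_a,L_b],u]=[L_a,L_b]u$, $[x,y]=[u,v]=0$, together with two remarks that keep the (few) parity signs harmless: $e$ is even, so $L_e=\id$ on $J$ gives $[L_e,L_a]=0$; and inner derivations annihilate the unit, so $\ad(e^{\pm})$ kills $\Inn(J_{\ds C})$. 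This yields $\ad(e^+)(a^-)=2L_a$, $\ad(e^+)(L_a)=-a^+$, $\ad(e^-)(a^+)=-2L_a$, $\ad(e^-)(L_a)=a^-$, $\ad(e^-)(a^-)=0$, and everything else is a consequence (e.g.\ $\ad(e^+)^2(a^-)=-2a^+$). Plugging these into the finite exponential series and collecting terms produces the three bullet identities; for instance $\exp(\imath\ad(e^+))(a^-)=a^-+2\imath L_a+a^+$, and applying $\exp(\tfrac{\imath}{2}\ad(e^-))$ to it gives $\tfrac{1}{4}a^-+\imath L_a+a^+$, i.e.\ $c(a,0,0)=(\tfrac{a}{4},\imath L_a,a)$; the identities for $c(0,0,a)$ and for $c(L_a)$ and $c(I)=I$ (whence the second bullet $c(0,L_a+I,0)=(\imath\tfrac{a}{4},I,-\imath a)$) are obtained the same way.

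Finally, for the isomorphism statement I would feed a general element $(a,I,-a)$ of $\mf k_{c,\ds C}$, with $a\in J_{\ds C}$ and $I\in\Inn(J_{\ds C})$, into these formulas: the $J^{-}_{\ds C}$- and $J^{+}_{\ds C}$-components of $c(a^-)$ and $c((-a)^+)$ cancel while $c(I)=I$, leaving $c(a,I,-a)=I+2\imath L_a\in\mf{istr}(J_{\ds C})$. Since $\mf{istr}(J_{\ds C})=\{L_b\mid b\in J_{\ds C}\}\oplus\Inn(J_{\ds C})$, the assignment $(a,I)\mapsto I+2\imath L_a$ is a linear bijection $J_{\ds C}\oplus\Inn(J_{\ds C})\to\mf{istr}(J_{\ds C})$, so $c$ restricts to a vector-space isomorphism $\mf k_{c,\ds C}\to\mf{istr}(J_{\ds C})$; being the restriction of the Lie superalgebra automorphism $c$, this restriction is automatically a Lie superalgebra isomorphism. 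The one genuine nuisance throughout is bookkeeping — keeping the two copies $J^{\pm}$ separate, and tracking the powers of $\imath$ and the $1/k!$ coefficients from the truncated exponentials — since, once one notices $L_e=\id$ and that inner derivations kill $e$, no step is conceptually hard.
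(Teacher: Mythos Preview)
Your proposal is correct and follows essentially the same approach as the paper: the paper's proof simply defers to \cite[Proposition 5.1]{BCD} for ``the same straightforward calculations,'' which are precisely the nilpotent $\ad(e^\pm)$ computations with the TKK bracket relations that you carry out explicitly. Your derivation of $\ad(e^+)(a^-)=2L_a$, $\ad(e^+)(L_a)=-a^+$, etc., and the subsequent collection of terms is exactly what that reference does, so there is nothing to add.
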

\begin{proof}
This follows from the same straightforward calculations as given in the proof of \cite[Proposition 5.1]{BCD}.
\end{proof}

Let $\pil_{,\ds C}$ denote the unique representation of $\mf g_{\ds C}$ obtained as the $\ds C$-linear extension of $\pil$. As discussed in the general approach, we can define a Fock model of $\mf g$ by twisting $\pil_{,\ds C}$ with an endomorphism $\gamma\in \End(\mf g_{\ds C})$ which maps $\mf k_{mcs, \ds C}$ into $\mf{istr}(J_\ds C)$. For $n\leq 1$ we have that $\mf k_{mcs}$ is a maximal compact subalgebra of the even part of $\mf k_c$ and then the Cayley transform $c$ is as desired. In particular, in the symplectic case we have $\mf k_{mcs}=\mf k_c$ and $c$ is the Cayley transform used in
\cite{HKMO}.
For $n\geq 2$ we no longer have that $\mf k_{mcs}$ is a subalgebra of $\mf k_c$. However, for $\lambda= -\frac{1}{2}$ we will still twist our representation with $c$ even when $n \geq 2$.

The first reason for this is that then we will not have to deal with the $n\leq 1$ case separately. For $n \geq 2$ we will not obtain a Fock model in the sense of
\cite{BCARXIV2},
but we obtain a Fock-like model which still has many of the desired properties. The second reason is that in the $\lambda= -\frac{1}{2}$ case, we will not need the method given in the general approach to integrate our representation to the group level. Therefore, twisting by $c$ is not necessary, but doing so makes it easier to compare our model with the Fock model in the symplectic case.

From now on we denote by $\rol$\symindex{$\rol$} the polynomial realisation of $\mf g$ obtained by twisting $\pil_{,\ds C}$ with the Cayley transform $c$, i.e.\ $\rol := \pil_{,\ds C}\circ c$.

\subsection{The Schr\"odinger model $W_\lambda$ and Fock model $F_\lambda$}\label{ss Sch and F mods}

We can define the following Schr\"odinger and Fock models using the polynomial realisation $\rol$. We will sometimes call these models minimal to distinguish them from the Schr\"odinger and Fock models defined later.

\begin{Def}
We define the \textbf{(minimal) Fock representation} as
\begin{align*}
F_\lambda := \mc{P}(\ds C^{{\wh m}|2{\wh n}})/\mc I_\lambda,
\end{align*}\symindex{$F_\lambda$}
where the $\mf g$-module structure is given by $\rol$.
\end{Def}

Let $e^\pm$ denote the unit of $J^\pm$ and $e$ the corresponding element in $\mc{P}(\ds R^{{\wh m}|{2\wh n}})$. As discussed in \cite{BCARXIV2}, with this Fock-like model we can associate a Schr\"odinger-like model by acting with $\pil$ on $\pil(C)^{-1}1$. Here $C := \exp(\frac{\imath}{2}e^-)\exp(\imath e^+)$ has the property $c = \Ad(C)$. Explicitly, we obtain the following Schr\"odinger model.

\begin{Def}
We define the \textbf{(minimal) Schr\"odinger representation} as
\begin{align*}
W_\lambda &:= U(\mf g)\exp(-2e) \mod \mc I_\lambda,
\end{align*}
where $U(\mf g)$ is the universal enveloping algebra of $\mf g$ and the $\mf g$-module structure is given by $\pil$.
\end{Def}\symindex{$W_\lambda$}

Note that the operators occurring in $\pil$ are not only well-defined on polynomials but can be extended to smooth functions. Therefore, $\pil(X) \exp(-2e)$ is well-defined for all $X\in \mf g$. Unlike our Fock model, this Schr\"odinger model is a Schr\"odinger model in the sense of the general approach when $\lambda= -\frac{1}{2}$. Indeed, the only thing we need to prove is that $\exp(-2e)$ is $\mf k_{mcs}$-finite, which will follow from Proposition \ref{Prop_k_finite}.

\begin{Opm}\label{Rem_Gen_Schrod}
Perhaps a more natural choice for the generator of the Schr\"odinger representation would be $\exp(-\tr(\ell))$, especially for $n=1$. Here
\begin{align*}
\tr(\ell) := \sum_{i,j=1}^{m+2n}2^{-|i||j|}\ell_{ij}\beta^{ij} = \sum_{i=1}^{m}\ell_{ii} + \sum_{m+1\leq i<j\leq m+2n}\ell_{ij}\beta^{ij}
\end{align*}
denotes an element of $\mc P(\ds R^{\wh m|2\wh n})$ associated to the Jordan trace of $J$. This choice corresponds more closely to the choice of generators for Euclidean Lie algebras used in \cite{HKM}. The reason we opt for $\exp(-2e)$ instead is mainly because of the simple connection to our Fock model using the Cayley transform. Note that
\begin{align*}
2e = \sum_{i,j=1}^{m+2n}\ell_{ij}\beta^{ij} = \sum_{i=1}^{m}\ell_{ii} + \frac{1}{2}\sum_{m+1\leq i<j\leq m+2n}\ell_{ij}\beta^{ij}
\end{align*}
only differs slightly from $\tr(\ell)$ and is equal in the symplectic case.
\end{Opm}

\section{The metaplectic representation of $\mf{spo}(2m|2n,2n)$}\label{S_lambda12}

From now on we will always assume $\lambda=-\frac{1}{2}$, unless otherwise stated.

In this section, we construct the metaplectic representation of $\mf{spo}(2m|2n,2n)$ as the composition of two minimal representations. The Schr\"odinger and Fock models we obtained in the previous section generalise the corresponding models on the minimal orbit in the symplectic case as constructed in \cite{HKM} and \cite{HKMO}. However, to obtain the minimal representation as a component of the metaplectic representation, we first need to introduce the folding isomorphism.

\subsection{The folding isomorphism $\psi$}

Recall from the introduction that there exists a folding map that induces a unitary isomorphism between the classical $L^2_{\text{even}}$ space and the $L^2$ space on the minimal orbit and a complexified folding map induces a unitary isomorphism between the even part of the Fock space and the Fock space on the minimal orbit. We will generalise the isomorphisms induced by the folding maps to the super case.

Define the space of even (resp. odd) degree superpolynomials by
\begin{align*}
\mc P_{\text{even}}(\ds K^{m|2n}):= \bigoplus\limits_{k=0}^{\infty} \mc P_{2k}(\ds K^{m|2n}),\qquad \mc P_{\text{odd}}(\ds K^{m|2n}):= \bigoplus\limits_{k=0}^{\infty} \mc P_{2k+1}(\ds K^{m|2n}).
\end{align*}\symindex{$\mc P_{\text{even}}(\ds K^{m\vert 2n})$ and $\mc P_{\text{odd}}(\ds K^{m\vert 2n})$}
Here the ``even'' and ``odd'' in $\mc P_{\text{even}}(\ds K^{m|2n})$ and $\mc P_{\text{odd}}(\ds K^{m|2n})$, respectively, refers to the degree and not the parity of the superpolynomial terms.

\begin{Def}\symindex{$\psi$}
The \textbf{folding isomorphism} $\psi$ is defined as
\begin{align*}
\begin{matrix}
\psi : &\mc P(\ds K^{\wh m|2\wh n})/\mc I_{-\frac{1}{2}} &\rightarrow &\mc P_{\text{even}}(\ds K^{m|2n}),\\
 &\ell_{ij}&\mapsto &\ell_i\ell_j,
\end{matrix}
\end{align*}
where $(\ell_i)_{i=1}^{m+2n}$ denotes the variables of $\mc P(\ds K^{m|2n})$.
\end{Def}

Note that this is a well-defined isomorphism since acting modulo $\mc I_{-\frac{1}{2}}$ on $\mc P(\ds K^{\wh m|2\wh n})$ gives us precisely the commutation relations of the variables of $\mc P_{\text{even}}(\ds K^{m|2n})$.

\subsection{The polynomial realisations $\dotpi$ and $\dotrho$}\label{ss pol real}

We can now define polynomial realisations $\dotpi$ and $\dotrho$ on $\mc P_{\text{even}}(\ds K^{m|2n})$ by
\begin{align*}
\dotpi(X) := \psi\circ \pil(X) \circ \psi^{-1} \quad \text{ and } \quad \dotrho(X) := \psi\circ \rol(X)\circ \psi^{-1}
\end{align*}\symindex{$\dotpi$ and $\dotrho$}
for all $X\in \mf g$. We can extend this realisation to smooth functions. To achieve this, we give this realisation explicitly.

We first introduce the following notations. Set $\ell^j := \sum_{i=1}^{m+2n}\ell_i\beta^{ij}$ and $\pt i := \sum_{j=1}^{m+2n}\beta_{ij}\pt{\ell_j}$\symindex{$\ell^i$ and $\pt i$}. We also introduce the following operators on $\mc P(\ds K^{m|2n})$.
\begin{gather*}
R^2 := \sum_{i,j=1}^{m+2n} \beta^{ij}\ell_i\ell_j, \qquad \mbox{and} \qquad \Delta := \sum_{i,j=1}^{m+2n}\beta^{ij}\pt{i} \pt{j}.
\end{gather*}\symindex{$R^2$ and $\Delta$}
Here, the operator $R^2$ is called the square of the radial coordinate and acts through multiplication and $\Delta$ is called the Laplacian. Note that $\psi(2e) = R^2$. We also have the Euler operator $\ds E = \sum_{i,j=1}^{m+2n}\ell_i\pt{\ell_i}$ on $\mc P(\ds K^{m|2n})$. Note that this Euler operator is twice the Euler operator on $F_\lambda$, i.e.\ $\psi\circ (2\ds E) = \ds E\circ \psi$. 

\begin{lemma}\label{Lemsltriple}
The operators $R^2$, $\E$ and $\Delta$ satisfy
\begin{gather*}
[\Delta, R^2] = 4\E+2M, \qquad [\Delta, \E] = 2\Delta, \qquad [R^2,\E] = -2R^2,
\end{gather*}
where $M=m-2n$. In particular, $\big(R^2,\E+\frac{M}{2}, -\frac{\Delta}{2}\big)$ forms an $\mf{sl}(2)$-triple.
Furthermore, they commute in $\End\big(\mc P\big(\ds K^{m|2n}\big)\big)$ with the operators \[ L_{ij} := \ell_i\pt j - (-1)^{|i||j|}\ell_j\pt i. \]\symindex{$L_{ij}$}
\end{lemma}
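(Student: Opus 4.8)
The plan is to verify the three commutator identities for $R^2$, $\E$ and $\Delta$ by direct computation using the defining formulas, and then to check that the $L_{ij}$ commute with all three. The computations are elementary once one keeps careful track of the signs coming from the $\Z/2\Z$-grading, so the real content is organising the bookkeeping.

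First I would compute $[\Delta,R^2]$. Writing $\Delta = \sum \beta^{ij}\pt i\pt j$ and $R^2 = \sum\beta^{kl}\ell_k\ell_l$, I would move the derivatives past the multiplication operators one at a time, using $\pt i \ell_k = \delta_{ik} + (-1)^{|i||k|}\ell_k\pt i$ (more precisely $[\pt i, \ell_k]$ equals the appropriate Kronecker delta times a sign, and after contracting with $\beta$ one gets $\sum_i \beta^{ij}\beta_{ik} = \delta^j_k$-type identities). Collecting the two ``fully contracted'' terms gives a constant, which works out to $2M$ with $M = m-2n$: the $m$ even variables contribute $+1$ each and the $2n$ odd variables contribute $-1$ each (this is exactly where the superdimension $M$ enters, paralleling the classical $[\Delta, r^2] = 4E + 2m$). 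The remaining term, where one derivative hits a variable and the other survives, reassembles into $4\E$ after using the symmetry of $\beta^{ij}$ and the commutation relation $\ell_i\ell_j = (-1)^{|i||j|}\ell_j\ell_i$. The identities $[\Delta,\E] = 2\Delta$ and $[R^2,\E] = -2R^2$ are even easier: $\E$ is the degree operator (scaled), $\Delta$ lowers degree by $2$ and $R^2$ raises it by $2$, so these follow from $[\E, p] = (\deg p)\, p$ applied to homogeneous operators, or by the same explicit bracket manipulation. Once the three relations are in hand, the $\mf{sl}(2)$-triple claim is immediate: setting $e = R^2$, $f = -\tfrac{1}{2}\Delta$, $h = \E + \tfrac{M}{2}$, one reads off $[h,e] = 2e$, $[h,f] = -2f$, $[e,f] = h$ from the three identities.

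For the last assertion I would show $[L_{ij}, R^2] = [L_{ij}, \Delta] = [L_{ij}, \E] = 0$, where $L_{ij} = \ell_i\pt j - (-1)^{|i||j|}\ell_j\pt i$. The commutation with $\E$ is clear since $L_{ij}$ preserves degree. For $R^2$ and $\Delta$, I would compute $[\ell_i\pt j, R^2]$ directly: the derivative $\pt j$ contracted against $R^2 = \sum \beta^{kl}\ell_k\ell_l$ produces two terms proportional to $\ell_i\ell^j$ (after using $\pt j = \sum_k \beta_{jk}\pt{\ell_k}$ and the definition $\ell^j = \sum_i \ell_i\beta^{ij}$, together with the fact that $\beta$ and its inverse are supersymmetric). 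Subtracting the corresponding expression with $i\leftrightarrow j$ and the sign $(-1)^{|i||j|}$, these terms cancel in pairs, using supersymmetry of $\beta$ and the relation $\ell_i\ell_j = (-1)^{|i||j|}\ell_j\ell_i$. The computation for $[L_{ij},\Delta]$ is dual (derivatives instead of multiplications) and cancels for the same reason. A cleaner alternative: recognise that the operators $\ell_i$, $\pt j$ together with the $L_{ij}$, $R^2$, $\Delta$, $\E$ generate an action of $\mf{osp}(m|2n)\times\mf{sl}(2)$ on $\mc P(\ds K^{m|2n})$ (a super Howe-type dual pair), in which $R^2, \E+\tfrac M2, -\tfrac\Delta2$ span the $\mf{sl}(2)$ and the $L_{ij}$ span (a spanning set of) $\mf{osp}(m|2n)$, so the two commute by construction; but since the paper has not set this up, I would present the direct calculation.

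The main obstacle is purely the sign bookkeeping in the super setting: one must be scrupulous about where factors $(-1)^{|i||j|}$, $(-1)^{|i|}$ appear when commuting $\pt i$ past $\ell_j$ and when using supersymmetry of $\beta$, and about the ordering conventions for the odd variables. There is no conceptual difficulty — each identity is forced by the same two facts (the (anti)commutation of variables with derivatives and the supersymmetry of $\beta$ and $\beta^{-1}$) — but it is easy to drop a sign and spoil, say, the constant $2M$. I would organise the computation so that the even and odd index ranges are handled uniformly via the parity symbol $|i|$, extract the constant term of $[\Delta,R^2]$ as $\sum_{i}(-1)^{|i|}\beta^{ij}\beta_{ij}$-type sums reducing to $\sdim(\ds K^{m|2n}) = m-2n = M$ (picking up the overall factor $2$ from the two ways a derivative can contract), and present the cancellations for $[L_{ij},R^2]$ and $[L_{ij},\Delta]$ as a single symmetry argument rather than term by term.
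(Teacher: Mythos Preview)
Your proposal is correct and matches the paper's approach: the paper's own proof is the single line ``A straightforward calculation or see, for example, \cite{DS}'', and what you have outlined is precisely that straightforward calculation, carried out with appropriate care over the super signs. There is nothing to add.
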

\begin{proof}A straightforward calculation or see, for example, \cite{DS}.
\end{proof}

The following proposition is a direct consequence of Lemma \ref{Lemsltriple}.

\begin{Prop}
We have $\dotpi(e^+) = -\dfrac{\imath}{4}\Delta$. In particular, the Bessel operator of two times the unit acts as the Laplacian, i.e.\ $\psi\circ \bessel(2e) = \Delta\circ \psi$.
\end{Prop}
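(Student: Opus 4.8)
The two assertions in the proposition are equivalent, so it suffices to prove the first. Indeed, from $\pil(\ell_{ij}^+)=-\frac{\imath}{2}\bessel(\ell_{ij})$ and $e^+=\frac12\sum_{i,j}\beta^{ij}\ell_{ij}^+$, linearity of the Bessel operator gives $\pil(e^+)=-\frac{\imath}{4}\bessel(2e)$, hence $\dotpi(e^+)=\psi\circ\pil(e^+)\circ\psi^{-1}=-\frac{\imath}{4}\,\psi\circ\bessel(2e)\circ\psi^{-1}$, so $\dotpi(e^+)=-\frac{\imath}{4}\Delta$ holds if and only if $\psi\circ\bessel(2e)=\Delta\circ\psi$. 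The plan is to derive $\dotpi(e^+)=-\frac{\imath}{4}\Delta$ from the $\mf{sl}(2)$-triple $\{e^-,2L_e,e^+\}$ of $\mf g$ (Section~\ref{SSTKK2}) together with the $\mf{sl}(2)$-triple of Lemma~\ref{Lemsltriple}, rather than by expanding $\psi\circ\bessel(2e)\circ\psi^{-1}$ by hand.

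First I would transport the triple through $\dotpi$. From $\pil(\ell_{ij}^-)=-2\imath\ell_{ij}$ one sees that $\pil(e^-)$ is multiplication by $-\imath\sum_{i,j}\beta^{ij}\ell_{ij}=-2\imath e$; since $\psi$ is a ring isomorphism with $\psi(2e)=R^2$, this yields $\dotpi(e^-)=-\imath R^2$. From $\pil(2L_e)=2\lambda(L_e)-2\sum_{k\le l}L_e(\ell_{kl})\pt{\ell_{kl}}$, using that $L_e$ is the identity, that $\lambda(L_e)=\frac12\lambda\sum_{i,j}\beta^{ij}\beta_{ij}=\frac12\lambda M$ with $\lambda=-\frac12$, and that $\sum_{k\le l}\ell_{kl}\pt{\ell_{kl}}$ is the Euler operator of $F_\lambda$, which satisfies $\psi\circ\big(2\sum_{k\le l}\ell_{kl}\pt{\ell_{kl}}\big)=\ds E\circ\psi$, one gets $\dotpi(2L_e)=-\ds E-\frac M2$. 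By Lemma~\ref{Lemsltriple}, $\big(R^2,\ds E+\tfrac M2,-\tfrac{\Delta}{2}\big)$ is an $\mf{sl}(2)$-triple in $\End\big(\mc P_{\text{even}}(\ds K^{m|2n})\big)$, and a short check (using $[\ds E,R^2]=2R^2$, $[\ds E,\Delta]=-2\Delta$, $[\Delta,R^2]=4\ds E+2M$) shows that $\big(-\imath R^2,\,-\ds E-\tfrac M2,\,-\tfrac{\imath}{4}\Delta\big)$ obeys exactly the same bracket relations as $\big(\dotpi(e^-),\dotpi(2L_e),\dotpi(e^+)\big)$.

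It remains to see these relations force $\dotpi(e^+)=-\frac{\imath}{4}\Delta$. The relation $[2L_e,e^+]=2e^+$ gives $[\ds E,\dotpi(e^+)]=-2\dotpi(e^+)$, so $\dotpi(e^+)$ lowers polynomial degree by $2$; combined with the fact that $\dotpi(e^+)$ is a second-order differential operator (visible from the explicit form of the realisation on $\mf g_+$ obtained in Section~\ref{ss pol real}, since $\bessel(\ell_{ij})$ is second order), this means $\dotpi(e^+)=\sum_{i,j}c_{ij}\pt i\pt j$ with constant coefficients $c_{ij}=(-1)^{|i||j|}c_{ji}$. Expanding the commutator in $[\dotpi(e^+),\dotpi(e^-)]=\dotpi(2L_e)$, i.e.\ $-\imath\big[\sum_{i,j}c_{ij}\pt i\pt j,\,R^2\big]=-\ds E-\tfrac M2$, and comparing the coefficients of the first-order and zeroth-order parts then forces $c_{ij}=-\tfrac{\imath}{4}\beta^{ij}$, i.e.\ $\dotpi(e^+)=-\tfrac{\imath}{4}\sum_{i,j}\beta^{ij}\pt i\pt j=-\tfrac{\imath}{4}\Delta$. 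The ``in particular'' follows from the equivalence at the start, since $\psi\circ\bessel(2e)=-\tfrac{4}{\imath}\,\dotpi(e^+)\circ\psi=\Delta\circ\psi$.

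The delicate point is the order bound used in the last paragraph — that $\dotpi(e^+)$ is a genuine second-order operator, which is what makes $\sum_{i,j}c_{ij}\pt i\pt j$ the correct ansatz; one reads it off the explicit realisation of Section~\ref{ss pol real}, or, alternatively, one can argue without it using that $\dotpi(e^+)$ is $\mf{osp}(m|2n)$-invariant (inner derivations of $J$ annihilate its unit, so $[\Inn(J),e^+]=0$ and $\dotpi(e^+)$ commutes with the span of the $L_{ij}\cong\mf{osp}(m|2n)$) together with the $(\mf{osp}(m|2n),\mf{sl}(2))$ Howe duality of \cite{DS}. A purely computational route — substituting $2e=\sum_{i,j}\beta^{ij}\ell_{ij}$ into Proposition~\ref{Prop_Explicit_Bessel} and conjugating by $\psi$ to recognise $\sum_{i,j}\beta^{ij}\pt i\pt j$ — also works but is much longer, and the scalar $-\frac{\imath}{4}$ can in any case be double-checked by the single evaluation $\bessel(2e)(2e)=2M=\Delta(R^2)$, which follows from Lemma~\ref{Lemsltriple}.
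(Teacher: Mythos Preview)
Your approach is essentially the paper's: compute $\dotpi(e^-)=-\imath R^2$ and $\dotpi(2L_e)=-(\ds E+\tfrac{M}{2})$ from the definitions, then use the $\mf{sl}(2)$-triple $\{e^-,2L_e,e^+\}$ together with Lemma~\ref{Lemsltriple} to identify $\dotpi(e^+)$ as $-\tfrac{\imath}{4}\Delta$. The paper simply writes ``Lemma~\ref{Lemsltriple} now implies that $\dotpi(e^+)=-\tfrac{\imath}{4}\Delta$'' without further comment, whereas you supply the missing uniqueness argument (the third element of an $\mf{sl}(2)$-triple is not determined by the other two in general); your degree/order argument is correct, though note that appealing to ``Section~\ref{ss pol real}'' for the second-order bound is a forward reference in the paper's ordering --- your alternative via $\Inn(J)$-invariance, or simply observing directly that $\psi\circ\bessel(2e)\circ\psi^{-1}$ has order at most two because $\bessel$ does and $\psi$ doubles degrees, would be cleaner here.
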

\begin{proof}
The $\mf{sl}(2)$-triple $\left\lbrace e^-, 2L_{e},e^+ \right\rbrace$ in Section \ref{SSTKK2} implies that
\begin{align*}
\left\lbrace \pil(e^-), \pil(2L_{e}),\pil(e^+)\right\rbrace
\end{align*}
is also an $\mf{sl}(2)$-triple. We have
\begin{align*}
\dotpi(e^-) &= \psi\circ(-2\imath e)\circ\psi^{-1} = -\imath R^2,\\
\dotpi(2L_{e}) &= \psi\circ(-\dfrac{M}{2}-2\ds E)\circ \psi^{-1} = -(\dfrac{M}{2}+\ds E).
\end{align*}
Lemma \ref{Lemsltriple} now implies that $\dotpi(e^+) = -\dfrac{\imath}{4}\Delta$, as desired.
\end{proof}

To give $\dotpi$ explicitly, we need the following lemma.

\begin{lemma}\label{Lembesderiv}
Define
\begin{align*}
E_{ij}(p) &:= \sum_{k,l=1}^{m+2n}(1+\delta_{kl})\beta_{jl}\ell_{ik}\pt{\ell_{lk}}(p)
\end{align*}
and $\ot E_{ij}(\ot p) := \ell_i\pt j (\ot p)$, for $p\in \mc P(\ds K^{\wh m|2\wh n})/\mc I_{-\frac{1}{2}}$ and $\ot p\in \mc P(\ds K^{m|2n})$. We have
\begin{align*}
&\psi(E_{ij}(p)) =\ot E_{ij}(\ot p),
\end{align*}
for all $i,j\in\{1, \ldots, m+2n\}$ with $\psi(p) =\ot p$.
\end{lemma}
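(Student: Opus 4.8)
The plan is to verify the identity $\psi(E_{ij}(p)) = \ot E_{ij}(\ot p)$ by checking it on the variables $\ell_{kl}$ of $\mc P(\ds K^{\wh m|2\wh n})/\mc I_{-\frac 12}$, since both $E_{ij}$ and $\ot E_{ij}$ act as first-order differential operators (derivations composed with multiplication), and $\psi$ is an algebra isomorphism onto $\mc P_{\text{even}}(\ds K^{m|2n})$. More precisely, $E_{ij}$ is a sum of terms $\ell_{ik}\pt{\ell_{lk}}$, each of which is the composition of a partial derivative with a multiplication operator; $\psi$ conjugates the multiplication operator $\ell_{ab}\cdot$ into the multiplication operator $(\ell_a\ell_b)\cdot$, so the crux is to understand how $\psi$ conjugates the partial derivatives $\pt{\ell_{lk}}$. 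First I would record that $\psi(\ell_{kl}) = \ell_k\ell_l$ and compute $E_{ij}(\ell_{kl})$ directly from the definition, using $\pt{\ell_{lk}}(\ell_{ab})$; here one must be careful because the $\ell_{ab}$ are not independent (they satisfy $\ell_{ab} = (-1)^{|a||b|}\ell_{ba}$, which is why the $(1+\delta_{kl})$ correction factor appears), so the correct contraction rule is $\pt{\ell_{lk}}(\ell_{ab}) = \frac{1}{1+\delta_{lk}}\big(\delta_{la}\delta_{kb} + (-1)^{|l||k|}\delta_{lb}\delta_{ka}\big)$ modulo sign bookkeeping, exactly as in the formula $\pu{ij}\ell_{kl} = \delta_{ik}\delta_{jl}+(-1)^{|i||j|}\delta_{il}\delta_{jk}-\delta_{ij}\delta_{kl}\delta_{ik}$ recorded earlier in the excerpt.

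The computation then goes as follows. Applying $E_{ij}$ to $\ell_{kl}$ and summing over the contractions should collapse to something like $\ell_{ik}\beta_{jl'}\cdots$, and after translating via $\psi$ one gets a sum of terms $\ell_i\ell_k \cdot (\text{something involving }\beta, \ell_l)$ plus the symmetric term with $k\leftrightarrow l$. On the other side, $\ot E_{ij}(\ell_k\ell_l) = \ell_i\pt j(\ell_k\ell_l) = \ell_i\big((\pt j \ell_k)\ell_l + (-1)^{|j||k|}\ell_k(\pt j\ell_l)\big)$, and since $\pt j \ell_k = \sum_a \beta_{ja}\pt{\ell_a}\ell_k = \beta_{jk}$ (using our $\pt i := \sum_j \beta_{ij}\pt{\ell_j}$), this is $\ell_i\big(\beta_{jk}\ell_l + (-1)^{|j||k|}\beta_{jl}\ell_k\big)$ up to parity signs. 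So the proof reduces to matching these two explicit expressions term by term. Since $\psi$ is linear and multiplicative and the ideal $\mc I_{-\frac12}$ is precisely what makes $\ell_{kl}\mapsto \ell_k\ell_l$ consistent with the commutation relations, once the identity holds on generators it holds on all of $\mc P(\ds K^{\wh m|2\wh n})/\mc I_{-\frac12}$ by the Leibniz rule — one checks that both $\psi\circ E_{ij}\circ\psi^{-1}$ and $\ot E_{ij}$ are derivations-times-multiplication on $\mc P_{\text{even}}$, hence determined by their values on degree-$2$ elements.

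The main obstacle I expect is the sign and symmetrization bookkeeping: keeping track of the Koszul signs $(-1)^{|i||j|}$ when moving derivatives past variables, and correctly handling the $(1+\delta_{kl})$ factors that encode the redundancy $\ell_{kl}=(-1)^{|k||l|}\ell_{lk}$ in the "square" variables versus their absence in the genuine variables $\ell_k$. A secondary subtlety is that $\pt{\ell_{lk}}$ as defined on $\mc P(\ds K^{\wh m|2\wh n})$ must be interpreted consistently on the quotient by $\mc I_{-\frac12}$ — one should either lift to a section, compute, and check the result is independent of the lift, or argue directly that the operator $E_{ij}$ descends to the quotient (which it does, because $V_{-\frac12}$ and hence $\mc I_{-\frac12}$ is an $\mf{istr}(J)$-submodule, and $E_{ij}$ is essentially the action of a basis element of $\mf{gl}(m|2n)\cong\mf{istr}(J)$ appearing in $\pil(2L_{\ell_{ij}})$). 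Modulo this care, the argument is a direct verification with no conceptual difficulty.
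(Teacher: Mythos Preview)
Your proposal is correct and essentially the same as the paper's proof. The paper proceeds by induction on the degree of $p$: the base case is trivial, and for the inductive step one expands $E_{ij}(\ell_{rs}p)$ and $\ot E_{ij}(\ell_r\ell_s\,\ot p)$ via the Leibniz rule, computes $E_{ij}(\ell_{rs}) = \beta_{jr}\ell_{is}+(-1)^{|r||s|}\beta_{js}\ell_{ir}$ and $\ot E_{ij}(\ell_r\ell_s) = \beta_{jr}\ell_i\ell_s+(-1)^{|r||s|}\beta_{js}\ell_i\ell_r$ (using $|j|=|s|$ when $\beta_{js}\neq 0$), and matches terms --- which is exactly your ``check on generators, then invoke the (super)derivation property'' argument written out longhand.
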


\begin{proof}
We will use induction on the degree of polynomials. The case of degree $0$ polynomials is trivial. Suppose we have proven the lemma for $p\in \mc P_k(\ds K^{\wh m|2\wh n})/\mc I_{-\frac{1}{2}}$, $k\in \ds N$. We now look at $\ell_{rs}p$. On the one hand, we have
\begin{align*}
E_{ij}(\ell_{rs}q) &= E_{ij}(\ell_{rs})q + (-1)^{(|r|+|s|)(|i|+|j|)}\ell_{rs}E_{ij}(q)\\
&= (\beta_{jr}\ell_{is}+(-1)^{|r||s|}\beta_{js}\ell_{ir}) + (-1)^{(|r|+|s|)(|i|+|j|)}\ell_{rs}E_{ij}(q)
\end{align*}
and on the other hand, we have
\begin{align*}
\ot E_{ij}(\ell_r\ell_s\ot q) &= \ot E_{ij}(\ell_r\ell_s)\ot q + (-1)^{(|r|+|s|)(|i|+|j|)}\ell_r\ell_s\ot E_{ij}(\ot q)\\
&=(\beta_{jr}\ell_{i}\ell_s+(-1)^{|r||s|}\beta_{js}\ell_{i}\ell_r) + (-1)^{(|r|+|s|)(|i|+|j|)}\ell_r\ell_s\ot E_{ij}(\ot q).
\end{align*}
Using the induction hypothesis, we get the desired result.
\end{proof}

\begin{theorem}\label{Th pil on psiW}
The action $\dotpi$ of $\mf g$ on $\mc P_{\text{even}}(\ds K^{m|2n})$ is given by
\begin{itemize}
\item $\dotpi(\ell_{ij}^-) = -2\imath\ell_i\ell_j$,
\item $\dotpi(2L_{\ell_{ij}}) = -\beta_{ij}-(\ell_i\pt j +(-1)^{|i||j|}\ell_j\pt i)$,
\item $\dotpi(4[L_{\ell_{ij}}, L_{\ell_{rs}}])$\\
\indent $= \beta_{jr}L_{is}+(-1)^{|r||s|}\beta_{js}L_{ir}+(-1)^{|i||j|}\beta_{ir}L_{js}+(-1)^{|i||j|+|r||s|}\beta_{is}L_{jr}$, 
with $L_{ij} = \ell_i\pt j - (-1)^{|i||j|}\ell_j\pt i$ and
\item $\dotpi(2\imath \ell_{ij}^+) = \psi(\bessel(\ell_{ij})) = \pt i \pt j = \sum_{k,l=1}^{m+2n}\beta_{ik}\beta_{jl}\pt{\ell_k}\pt{\ell_l}$,
\end{itemize}
for all $i,j,r,s\in\{1, \ldots, m+2n\}$.
\end{theorem}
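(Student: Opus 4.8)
The plan is to compute each of the four families of operators directly by conjugating the explicit formulas for $\pil$ from Section \ref{SS_rol} (or rather the formulas written out in the paragraph following the definition of $\pil$ in terms of the TKK-basis) by the folding isomorphism $\psi$, using Lemma \ref{Lembesderiv} to handle the terms that carry derivatives. The first two bullets are essentially immediate. For $\dotpi(\ell_{ij}^-)$ we have $\pil(\ell_{ij}^-) = -2\imath \ell_{ij}$, which acts by multiplication, and since $\psi(\ell_{ij} p) = \ell_i \ell_j \psi(p)$ by the very definition of $\psi$, we get $\dotpi(\ell_{ij}^-) = -2\imath \ell_i \ell_j$. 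For $\dotpi(2L_{\ell_{ij}})$ we start from
\[
\pil(2L_{\ell_{ij}}) = 2\lambda\beta_{ij} - \sum_{k,l=1}^{m+2n}(1+\delta_{kl})(\beta_{jk}\ell_{il}+(-1)^{|i||j|}\beta_{ik}\ell_{jl})\pt{\ell_{kl}},
\]
recognise the sum (after relabelling) as $E_{ij} + (-1)^{|i||j|} E_{ji}$ in the notation of Lemma \ref{Lembesderiv}, and conjugate: $\psi \circ E_{ij} \circ \psi^{-1} = \ot E_{ij} = \ell_i \pt j$. With $\lambda = -\tfrac12$ the scalar term $2\lambda\beta_{ij}$ becomes $-\beta_{ij}$, giving the stated formula.

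For the third bullet, $\dotpi(4[L_{\ell_{ij}},L_{\ell_{rs}}])$, I would take the explicit four-term expression for $\pil(4[L_{\ell_{ij}},L_{\ell_{rs}}])$ displayed in Section \ref{SS_rol}, group its terms so that each group is of the form $\beta_{??}\beta_{??}\sum_{k,l}(1+\delta_{kl})\ell_{\bullet l}\pt{\ell_{kl}}$ — i.e.\ a scalar multiple of some $E_{\bullet\bullet}$ — and then apply Lemma \ref{Lembesderiv} termwise. After conjugation each $E_{ab}$ becomes $\ell_a \pt b$, and the eight resulting terms should recombine into the four combinations $\beta_{jr}(\ell_i\pt s - (-1)^{|i||s|}\ell_s \pt i)$, etc., i.e.\ $\beta_{jr}L_{is} + (-1)^{|r||s|}\beta_{js}L_{ir} + (-1)^{|i||j|}\beta_{ir}L_{js} + (-1)^{|i||j|+|r||s|}\beta_{is}L_{jr}$. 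This is the step where sign-bookkeeping is genuinely fiddly; the main obstacle is matching the $(-1)^{|k||s|}$-type factors in the $\pil$ formula against the parities picked up when an $L_{\ell_{ij}}$ is moved past an $\ell_{kl}$, and verifying that the ``diagonal'' $\delta$-corrections cancel consistently so that the clean $L_{ij}$ form emerges. I would do this by reducing, via the symmetry of $\beta$ and the defining symmetries of the bracket, to checking a single representative term and then invoking the evident symmetry under the exchanges $i\leftrightarrow j$ and $(i,j)\leftrightarrow(r,s)$.

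For the last bullet, $\dotpi(2\imath\ell_{ij}^+) = \psi(\bessel(\ell_{ij}))$, I would use the rewritten form of the Bessel operator
\[
\bessel(\ell_{ij}) = -2\lambda(1+\delta_{ij})\pt{ji} + \sum_{k,l=1}^{m+2n}(-1)^{|k||i|}(1+\delta_{jk}+\delta_{il}+\delta_{jk}\delta_{il})\ell^{kl}\pt{il}\pt{jk},
\]
and evaluate its action on $\psi^{-1}$ of a monomial. The key point is that $\psi$ identifies $\mc P(\ds K^{\wh m|2\wh n})/\mc I_{-1/2}$ with $\mc P_{\text{even}}(\ds K^{m|2n})$ precisely so that $\ell_{kl}$ corresponds to $\ell_k\ell_l$; correspondingly $\pt{\ell_{kl}}$ acts, modulo $\mc I_{-1/2}$, like the second-order operator $\pt{\ell_k}\pt{\ell_l}$ up to the diagonal normalisations encoded in the identity $\pu{ij}\ell_{kl} = \delta_{ik}\delta_{jl}+(-1)^{|i||j|}\delta_{il}\delta_{jk}-\delta_{ij}\delta_{kl}\delta_{ik}$. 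Using this, together with $\lambda = -\tfrac12$ so that the first-order term $-2\lambda(1+\delta_{ij})\pt{ji}$ contributes exactly the ``diagonal repair'' needed, the whole expression should collapse to $\pt i \pt j = \sum_{k,l}\beta_{ik}\beta_{jl}\pt{\ell_k}\pt{\ell_l}$ on $\mc P_{\text{even}}(\ds K^{m|2n})$. The cleanest way to run this is by induction on polynomial degree, in the same style as the proof of Lemma \ref{Lembesderiv}: check it on constants and degree-two elements, then use the already-established formulas for $\dotpi(\ell_{ij}^-)$ and the commutation relations to propagate. The identity $\dotpi(e^+) = -\tfrac{\imath}{4}\Delta$ from the preceding proposition is the $\ell_{ij}=e$ special case and serves as a useful consistency check.

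I expect the sign-tracking in the third and fourth bullets — in particular reconciling the $\delta$-corrections in $\pt{ij}\ell^{kl}$ with the $(1+\delta_{kl})$ and $(1+\delta_{jk}+\delta_{il}+\delta_{jk}\delta_{il})$ factors — to be the only real obstacle; everything else is a mechanical application of Lemma \ref{Lembesderiv} and the definition of $\psi$. Since these are exactly the kind of long straightforward computations the authors have elsewhere deferred to the appendix, I would present the conjugation reductions in the text and relegate the exhaustive index manipulation to a remark or the appendix if space is tight.
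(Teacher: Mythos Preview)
Your proposal is correct and follows essentially the same approach as the paper: the first three bullets are handled via the definition of $\psi$ and Lemma \ref{Lembesderiv}, and the fourth by induction on polynomial degree, computing $\bessel(\ell_{ij})(\ell_{rs}p)$ via the commutator $[\bessel(\ell_{ij}),\ell_{rs}]$ and comparing with $\pt i\pt j(\ell_r\ell_s\ot p)$, invoking the induction hypothesis together with Lemma \ref{Lembesderiv}. The paper's write-up is somewhat terser about the third bullet (it simply says it follows from Lemma \ref{Lembesderiv}) and makes the induction for the fourth bullet fully explicit rather than sketching it, but the strategy is the same.
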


\begin{proof}
The first equation follows directly from the definition of $\dotpi$. The second and third equations follow directly from Lemma \ref{Lembesderiv}. For the last equation, we will use induction on the degree of polynomials. The case of degree $0$ polynomials is trivial. Note that the degree $1$ case follows from
\begin{align*}
\bessel(\ell_{ij})\ell_{rs} = \beta_{jr}\beta_{is} + (-1)^{rs}\beta_{js}\beta_{ir} = \pt i \pt j \ell_r \ell_s,
\end{align*} 
Now, suppose we have proven the theorem for $p\in \mc P_k(\ds K^{\wh m|2\wh n})/\mc I_{-\frac{1}{2}}$ and define $\ot p := \psi(p)\in \mc P_{2k}(\ds C^{m|2n})$. We now look at $\ell_{rs}p$. On the one hand, we have
\begin{align*}
\bessel(\ell_{ij})(\ell_{rs}p) &= (-1)^{(|i|+|j|)(|r|+|s|)}\ell_{rs}\bessel(\ell_{ij})(p)+[\bessel(\ell_{ij}), \ell_{rs}](p)\\
&= (-1)^{(|i|+|j|)(|r|+|s|)}\ell_{rs}\bessel(\ell_{ij})(p) +(\beta_{jr}\beta_{is}+(-1)^{|i||j|}\beta_{js}\beta_{ir})p \\
&\quad + \sum_{k,l=1}^{m+2n}(1+\delta_{kl})((-1)^{|i||s|}\beta_{il}\beta_{jr}\ell_{sk}+(-1)^{|i||r|}\beta_{il}\beta_{js}\ell_{rk}\\
&\quad +(-1)^{|i||j|+|j||s|}\beta_{jl}\beta_{ir}\ell_{sk}+(-1)^{|i||j|+|j||r|}\beta_{jl}\beta_{is}\ell_{rk})\pt{\ell_{lk}}(p)
\end{align*}
and on the other hand, we have
\begin{align*}
\pt i \pt j (\ell_r\ell_s \ot p) &= (-1)^{(|i|+|j|)(|r|+|s|)}\ell_r\ell_s \pt i \pt j (\ot p) +\pt i\pt j (\ell_r\ell_s)\ot p\\
&\quad + (-1)^{|i|(|j|+|r|+|s|)}\pt j(\ell_r\ell_s)\pt i (\ot p)+ (-1)^{|j|(|r|+|s|)}\pt i(\ell_r\ell_s)\pt j (\ot p)\\
&= (-1)^{(|i|+|j|)(|r|+|s|)}\ell_r\ell_s \pt i \pt j (\ot p) ++(\beta_{jr}\beta_{is}+(-1)^{|i||j|}\beta_{js}\beta_{ir})\ot p\\
&\quad + (-1)^{|i|(|j|+|r|+|s|)}(\beta_{jr}\ell_s+(-1)^{|j||r|}\beta_{js}\ell_r)\pt i (\ot p)\\
&\quad + (-1)^{|j|(|r|+|s|)}(\beta_{ir}\ell_s+(-1)^{|i||r|}\beta_{is}\ell_r)\pt j (\ot p).
\end{align*}
The theorem now follows from using the induction hypothesis together with Lemma \ref{Lembesderiv}.
\end{proof}

The operators occurring in the explicit form of $\dotpi$ given in Theorem \ref{Th pil on psiW} are all well-defined on smooth functions. This means there is a canonical way to extend the realisations to act on smooth functions in the variable $x$. In particular, we may view $\dotpi$ as a polynomial realisation on $\mc P(\ds K^{m|2n})$, where the action is given by the explicit form in Theorem \ref{Th pil on psiW}. We can then also extend $\dotrho$ to act on $\mc P(\ds K^{m|2n})$ using $\dotrho = \dotpi_{\ds C}\circ c$.

Up until now, we used the basis of $\mf g$ obtained from the TKK-construction. However, in certain situations, it will be more convenient to use the $(U_{ij})$-basis of $\mf g$ given in Section \ref{SSTKK2}.

\begin{Prop}\label{Prop_pi_in_U}
The representation $\dotpi$ of $\mf g$ is given by
\begin{itemize}
\item $\dotpi(U_{\ul i, \ul j}) = 2\imath\ell_i\ell_j$,\\
\item $\dotpi(U_{\ot i, \ul j}) = -\dfrac{1}{2} \beta_{ij}- (-1)^{|i||j|}\ell_j\pt i$,\\
\item $\dotpi(U_{\ot i, \ot j}) = -\dfrac{\imath}{2}\pt i \pt j$.
\end{itemize}
\end{Prop}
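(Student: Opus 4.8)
The plan is to translate the explicit formulas for $\dotpi$ from Theorem \ref{Th pil on psiW}, which are written in the TKK-basis $\{\ell_{ij}^-, L_{\ell_{ij}}, [L_{\ell_{ij}}, L_{\ell_{rs}}], \ell_{ij}^+\}$, into the $(U_{ij})$-basis of $\mf g$ using the explicit isomorphism $\phi$ recorded in Section \ref{SSTKK2}. Since $\dotpi = \psi \circ \pil \circ \psi^{-1}$ and $\pil$ is a representation of $\TKK(J)$, it suffices to express each $U_{\ul i, \ul j}$, $U_{\ot i, \ul j}$, $U_{\ot i, \ot j}$ in terms of the TKK-generators and then apply Theorem \ref{Th pil on psiW}.

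First I would handle $U_{\ul i, \ul j}$: by $\phi$ we have $\phi(\ell_{ij}^-) = -U_{\ul i, \ul j}$, hence $\dotpi(U_{\ul i, \ul j}) = -\dotpi(\ell_{ij}^-) = -(-2\imath\ell_i\ell_j) = 2\imath\ell_i\ell_j$, which gives the first formula immediately. For the second formula, $\phi(\ell_{ij}^+) = U_{\ot i, \ot j}$ gives $\dotpi(U_{\ot i, \ot j}) = \dotpi(\ell_{ij}^+) = -\frac{\imath}{2}\psi(\bessel(\ell_{ij})) = -\frac{\imath}{2}\pt i\pt j$ by the last bullet of Theorem \ref{Th pil on psiW}; note this is actually the third listed formula. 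For the middle formula I would use $\phi(2L_{\ell_{ij}}) = U_{\ot i, \ul j} + (-1)^{|i||j|}U_{\ot j, \ul i}$. This only yields the symmetric combination directly, so to isolate $U_{\ot i, \ul j}$ itself one also needs the inner-derivation part: recall $\phi(4[L_{\ell_{ij}}, L_{\ell_{rs}}])$ produces antisymmetric combinations $U_{\ot i, \ul s} - (-1)^{|s||i|}U_{\ot s, \ul i}$. Combining the symmetric piece from $2L_{\ell_{ij}}$ with an appropriate antisymmetric piece from the commutator term recovers $U_{\ot i, \ul j}$ on its own, and applying $\dotpi$ to that combination using Theorem \ref{Th pil on psiW} should yield $-\frac{1}{2}\beta_{ij} - (-1)^{|i||j|}\ell_j\pt i$ after simplification.

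Concretely, from the isomorphism in Section \ref{SSTKK2} one has the inversion formula expressing $U_{\ot i, \ul j}$ (for generic indices) as a combination of $L_{\ell_{ij}}$ and a suitable $[L_{\ell_{ik}}, L_{\ell_{kj}}]$-type term involving $\beta$; alternatively one can simply check that the claimed right-hand sides define a Lie superalgebra homomorphism by verifying they respect the bracket relation $[U_{ij}, U_{kl}] = \Omega_{jk}U_{il} + \cdots$ on the image, since $\dotpi$ is already known to be a representation and the three families $U_{\ul i, \ul j}$, $U_{\ot i, \ul j}$, $U_{\ot i, \ot j}$ span $\mf g = \mf g_- \oplus \mf g_0 \oplus \mf g_+$. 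In practice the cleanest route is: take the known formulas from Theorem \ref{Th pil on psiW}, substitute the expressions for $\ell_{ij}^-$, $L_{\ell_{ij}}$, $[L_{\ell_{ij}}, L_{\ell_{rs}}]$, $\ell_{ij}^+$ in terms of $U$'s (equivalently invert $\phi$), and read off the coefficients.

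The main obstacle will be the bookkeeping for the middle formula $\dotpi(U_{\ot i, \ul j}) = -\frac{1}{2}\beta_{ij} - (-1)^{|i||j|}\ell_j\pt i$: the generator $2L_{\ell_{ij}}$ maps under $\dotpi$ to the \emph{symmetrized} operator $-\beta_{ij} - (\ell_i\pt j + (-1)^{|i||j|}\ell_j\pt i)$, and one must correctly peel off the antisymmetric contribution coming from the $\Inn(J)$-part (the $[L,L]$-terms, which map to the $L_{ij} = \ell_i\pt j - (-1)^{|i||j|}\ell_j\pt i$ operators) to land exactly on the non-symmetric $U_{\ot i, \ul j}$. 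Getting all the signs $(-1)^{|i||j|}$ and the factor $\frac12$ right requires care, but it is a finite, purely algebraic manipulation with no analytic content. Everything else is a direct substitution, so the proof is short once this inversion is set up.

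\begin{proof}
Recall from Section \ref{SSTKK2} the isomorphism $\phi$ between $\mf g$ and $\TKK(J)$, and note that $\dotpi = \psi \circ \pil \circ \psi^{-1}$ is a representation of $\mf g \cong \TKK(J)$. We compute each of the three formulas by expressing the relevant $U$-basis elements through the TKK-generators and applying Theorem \ref{Th pil on psiW}.

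From $\phi(\ell_{ij}^-) = -U_{\ul i, \ul j}$ we get
\begin{align*}
\dotpi(U_{\ul i, \ul j}) = -\dotpi(\ell_{ij}^-) = 2\imath\ell_i\ell_j.
\end{align*}
From $\phi(\ell_{ij}^+) = U_{\ot i, \ot j}$ and the last bullet of Theorem \ref{Th pil on psiW} we get
\begin{align*}
\dotpi(U_{\ot i, \ot j}) = \dotpi(\ell_{ij}^+) = -\dfrac{\imath}{2}\psi(\bessel(\ell_{ij})) = -\dfrac{\imath}{2}\pt i \pt j.
\end{align*}
For the remaining formula, $\phi(2L_{\ell_{ij}}) = U_{\ot i, \ul j} + (-1)^{|i||j|}U_{\ot j, \ul i}$ gives, using the second bullet of Theorem \ref{Th pil on psiW},
\begin{align*}
\dotpi\big(U_{\ot i, \ul j} + (-1)^{|i||j|}U_{\ot j, \ul i}\big) = -\beta_{ij}-\big(\ell_i\pt j +(-1)^{|i||j|}\ell_j\pt i\big).
\end{align*}
On the other hand, the basis of $\Inn(J)$ described at the end of Section \ref{SSTKK2} together with the formula for $\phi(4[L_{\ell_{ij}}, L_{\ell_{rs}}])$ shows that the antisymmetric combination $U_{\ot i, \ul j} - (-1)^{|i||j|}U_{\ot j, \ul i}$ lies in $\phi(\Inn(J))$, and by the third bullet of Theorem \ref{Th pil on psiW} it is mapped by $\dotpi$ to
\begin{align*}
\dotpi\big(U_{\ot i, \ul j} - (-1)^{|i||j|}U_{\ot j, \ul i}\big) = -\big(\ell_i\pt j - (-1)^{|i||j|}\ell_j\pt i\big) = -L_{ij}.
\end{align*}
Adding the last two displays and dividing by two gives
\begin{align*}
\dotpi(U_{\ot i, \ul j}) = -\dfrac{1}{2}\beta_{ij} - \dfrac{1}{2}\big(\ell_i\pt j + (-1)^{|i||j|}\ell_j\pt i\big) - \dfrac{1}{2}\big(\ell_i\pt j - (-1)^{|i||j|}\ell_j\pt i\big) = -\dfrac{1}{2}\beta_{ij} - (-1)^{|i||j|}\ell_j\pt i,
\end{align*}
as claimed.
\end{proof}
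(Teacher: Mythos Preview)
Your approach is exactly the one the paper intends: invert $\phi$ and plug into Theorem \ref{Th pil on psiW}. The computations for $U_{\ul i,\ul j}$ and $U_{\ot i,\ot j}$ are fine.

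There is, however, a genuine slip in the middle formula. You assert
\[
\dotpi\big(U_{\ot i, \ul j} - (-1)^{|i||j|}U_{\ot j, \ul i}\big) = -L_{ij},
\]
but you never actually derive this from the third bullet of Theorem \ref{Th pil on psiW}; doing so would require identifying which element of $\Inn(J)$ maps under $\phi$ to that antisymmetric combination, and you skip this. In fact the correct sign is $+L_{ij}$: if the proposition is true then $\dotpi(U_{\ot j, \ul i}) = -\tfrac12\beta_{ji} - (-1)^{|i||j|}\ell_i\pt j$, and subtracting gives $\ell_i\pt j - (-1)^{|i||j|}\ell_j\pt i = L_{ij}$, not $-L_{ij}$. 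Your final displayed equality then compounds the error: the expression you wrote,
\[
-\tfrac{1}{2}\beta_{ij} - \tfrac{1}{2}\big(\ell_i\pt j + (-1)^{|i||j|}\ell_j\pt i\big) - \tfrac{1}{2}\big(\ell_i\pt j - (-1)^{|i||j|}\ell_j\pt i\big),
\]
actually equals $-\tfrac12\beta_{ij} - \ell_i\pt j$, not the claimed $-\tfrac12\beta_{ij} - (-1)^{|i||j|}\ell_j\pt i$. With the corrected sign $+L_{ij}$ the averaging does produce the right answer. So the strategy is sound, but you need to (i) actually pin down the preimage in $\Inn(J)$ of the antisymmetric $U$-combination (or at least verify the sign independently), and (ii) redo the last line of arithmetic.
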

\begin{proof}
This follows from a straightforward verification using the isomorphism $\phi$ given in Section \ref{SSTKK2}.
\end{proof}

\subsection{The Schr\"odinger model $\ot W$ and Fock model $\ot F$}

The minimal Schr\"odinger and Fock models generalise the corresponding models on the minimal orbit in the symplectic case. Using the polynomial realisations $\dotpi$ and $\dotrho$ we can now also construct Schr\"odinger and Fock models that generalise the corresponding models of the metaplectic representation. In the symplectic case, the metaplectic representation decomposes into a minimal representation acting on even functions and a minimal representation acting on odd functions. Therefore, we introduce the following models.

\begin{Def}
We define the \textbf{even Fock representation} as
\begin{align*}
\ot F_e := U(\mf g)1 \mod \mc I_\lambda
\end{align*}
and the \textbf{odd Fock representation} as
\begin{align*}
\ot F_o := U(\mf g)\ell_1 \mod \mc I_\lambda,
\end{align*}
where $U(\mf g)$ is the universal enveloping algebra of $\mf g$ and the $\mf g$-module structure is given by $\dotrho$. We also define the \textbf{(metaplectic) Fock representation} as
\begin{align*}
\ot F := \ot F_e\oplus \ot F_o.
\end{align*}\symindex{$\ot F_e$, $\ot F_o$ and $\ot F$}
\end{Def}

\begin{Def}
We define the \textbf{even Schr\"odinger representation} as
\begin{align*}
\ot W_e := U(\mf g)\exp(-R^2) \mod \mc I_\lambda
\end{align*}
and the \textbf{odd Schr\"odinger representation} as
\begin{align*}
\ot W_o := U(\mf g)\ell_1 \exp(-R^2) \mod \mc I_\lambda,
\end{align*}
where $U(\mf g)$ is the universal enveloping algebra of $\mf g$ and the $\mf g$-module structure is given by $\dotpi$. We also define the \textbf{(metaplectic) Schr\"odinger representation} as
\begin{align*}
\ot W := \ot W_e\oplus \ot W_o.
\end{align*}\symindex{$\ot W_e$, $\ot W_o$ and $\ot W$}
\end{Def}

Note that $\dotpi(X)\exp(-R^2)$ is well-defined by the explicit form of $\dotpi$ given in Theorem \ref{Th pil on psiW}.

\begin{Prop}\label{Prop_k_finite}
The elements $\exp(-R^2)$ and $\ell_1\exp(-R^2)$ are $\mf k_{mcs}$-finite.
\end{Prop}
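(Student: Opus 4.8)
The plan is to show that the span of $\exp(-R^2)$ and $\ell_1\exp(-R^2)$ under the action of $\mf k_{mcs}$ (or rather the images of these elements in $\ot W_e$ and $\ot W_o$) is finite dimensional. Recall that $\mf k_{mcs} = \mf{so}(2n)\oplus\mf{so}(2n)\oplus\mf u(m)$ sits inside the even part of $\mf g$, and under the folding isomorphism and the explicit form of $\dotpi$ from Theorem \ref{Th pil on psiW}, the elements of $\mf k_{mcs}$ act as first-order differential operators. The key first step is to identify $\mf k_{mcs}$ inside $\mf g$ concretely in the $(U_{ij})$-basis, so that Proposition \ref{Prop_pi_in_U} gives the action explicitly. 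Since $\mf k_{mcs}$ is a maximal compact subalgebra of the even part of $\mf g$, it is generated (as a Lie algebra) by elements whose $\dotpi$-image can be read off; in fact the natural guess is that $\mf k_{mcs}$ corresponds to operators of the form $L_{ij} = \ell_i\pt j - (-1)^{|i||j|}\ell_j\pt i$ together with the compact combinations $\ell_i\ell_j - \text{(something)}\,\pt i\pt j$ coming from the Cayley-type conjugation — but since we are working with $\dotpi$ (the Schr\"odinger side) directly, it is cleanest to express a spanning set of $\mf k_{mcs}$ as combinations of $\dotpi(U_{\ul i,\ul j})$, $\dotpi(U_{\ot i,\ot j})$ and $\dotpi(U_{\ot i,\ul j})$ and compute how each acts on $\exp(-R^2)$.

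The computational heart is the following: applying $\dotpi(U_{\ul i,\ul j}) = 2\imath\ell_i\ell_j$, $\dotpi(U_{\ot i,\ul j}) = -\tfrac12\beta_{ij} - (-1)^{|i||j|}\ell_j\pt i$, and $\dotpi(U_{\ot i,\ot j}) = -\tfrac{\imath}{2}\pt i\pt j$ to $\exp(-R^2)$. One has $\pt i\exp(-R^2) = -2\ell^i\exp(-R^2)$ where $\ell^i = \sum_j \ell_j\beta^{ji}$ (using $\pt i R^2 = 2\ell^i$ from the conventions in Section \ref{ss pol real}), and iterating, $\pt i\pt j\exp(-R^2) = (4\ell^i\ell^j - 2\beta^{ij})\exp(-R^2)$ up to signs. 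Hence each of the three families of operators maps $\exp(-R^2)$ into $(\text{polynomial of degree} \le 2)\cdot\exp(-R^2)$. The subalgebra $\mf k_{mcs}$ is precisely the combination that annihilates the degree-raising parts, i.e.\ whichever linear combinations $c_1 U_{\ul i,\ul j} + c_2 U_{\ot i,\ot j} + \dots$ lie in $\mf k_{mcs}$ must have $\dotpi$ acting on $\exp(-R^2)$ by multiplication by a constant (plus, for the non-compact-looking pieces, the degree-preserving operators $\ell_i\pt j$ which map $\ell_1\exp(-R^2)$ back into $\mathds{C}\ell_1\exp(-R^2)\oplus\mathds{C}\ell_k\exp(-R^2)$). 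So the span of $\mf k_{mcs}\cdot\exp(-R^2)$ is contained in $\mathds{C}\exp(-R^2)$ and the span of $\mf k_{mcs}\cdot(\ell_1\exp(-R^2))$ is contained in $\left(\sum_{k=1}^{m+2n}\mathds{C}\ell_k\right)\exp(-R^2)$, both finite dimensional, and we are done.

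The main obstacle I expect is purely bookkeeping: correctly identifying which linear combinations of the $U_{ij}$ (equivalently, of $L_{\ell_{ij}}$, $[L_{\ell_{ij}},L_{\ell_{rs}}]$, $\ell_{ij}^\pm$) actually span $\mf k_{mcs}$ after the twist, and then verifying that these — and not just the abstract compact subalgebra — act on the generators with the claimed finiteness. One subtlety is that $\mf k_{mcs}$ is not contained in $\mf{istr}(J)$ (it mixes $\mf g_+$ and $\mf g_-$), so its image under $\dotpi$ genuinely involves both the multiplication operators $\ell_i\ell_j$ and the second-order operators $\pt i\pt j$; the cancellation of degree-$2$ terms on $\exp(-R^2)$ is exactly the reason $\exp(-R^2)$ (as opposed to, say, the constant function) is the right generator, and this is what must be checked. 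A clean way to organise the argument is to use the $\mf{sl}(2)$-triple $\big(R^2, \E+\tfrac{M}{2}, -\tfrac{\Delta}{2}\big)$ from Lemma \ref{Lemsltriple}: $\exp(-R^2)$ is a lowest-weight-type vector for a copy of $\mf{sl}(2)$ inside $\mf k_{mcs,\mathds C}$ (the Cayley transform sends $e^\pm$ into $\mf{istr}(J_{\mathds C})$, and $\dotpi(e^-) = -\imath R^2$, $\dotpi(e^+) = -\tfrac{\imath}{4}\Delta$), while the remaining semisimple part $\mf{so}(2n)\oplus\mf{so}(2n)$ acts by the $L_{ij}$ which preserve $\mathds C\exp(-R^2)$ and $\big(\sum_k\mathds C\ell_k\big)\exp(-R^2)$ by Lemma \ref{Lemsltriple}; combining the two observations gives the finiteness for all of $\mf k_{mcs}$. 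I would carry out the $\mf{sl}(2)$ part first (it is immediate from Lemma \ref{Lemsltriple} and the explicit $\Delta$-action on $\exp(-R^2)$), then handle the $L_{ij}$ part, and finally note that these together generate $\mf k_{mcs}$.
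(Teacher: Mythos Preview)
Your overall strategy matches the paper's: write $\mf k_{mcs}$ in the $(U_{ij})$-basis, use Proposition~\ref{Prop_pi_in_U} to compute $\dotpi$ explicitly, and check that the generators land in a finite-dimensional invariant subspace. The $\mf u(m)$ part goes exactly as you say: the paper verifies that for $|i|=|j|=0$ one has $\dotpi(2U_{\ul i,\ul j}+2U^{\ul i,\ul j})\exp(-R^2)=2\imath\delta_{ij}\exp(-R^2)$ and $\dotpi(U_{\ot i,\ul j}+U^{\ot i,\ul j})=L_{ij}$, so $\exp(-R^2)$ and $\mc P_1(\ds K^m)\exp(-R^2)$ are preserved.

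There is, however, a genuine gap in how you handle the $\mf{so}(2n)\oplus\mf{so}(2n)$ factor. Two of your claims are false for $n\geq 1$:
\begin{itemize}
\item The span of $\mf k_{mcs}\cdot\exp(-R^2)$ is \emph{not} contained in $\mathds C\exp(-R^2)$. For odd indices the compact combination $U_{\ul i,\ul j}+U^{\ul i,\ul j}$ is \emph{not} the operator $4\ell_i\ell_j-\pt i\pt j$ with matching indices; working through $\Omega^{-1}$ on the odd block one finds, for instance, $\dotpi(U_{\ul{m+a},\ul{m+b}}+U^{\ul{m+a},\ul{m+b}})\exp(-R^2)=2\imath(\ell_{m+a}\ell_{m+b}-\ell_{m+n+a}\ell_{m+n+b})\exp(-R^2)$, which lies in $\Lambda_2(\ds K^{2n})\exp(-R^2)$ and is not a scalar multiple of $\exp(-R^2)$.
\item The factor $\mf{so}(2n)\oplus\mf{so}(2n)$ does \emph{not} act via the operators $L_{ij}$ with odd $i,j$. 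Those $L_{ij}$ span (the image of) $\mf{sp}(2n)\cong\mf{osp}(0|2n)\subset\Inn(J)$, which is a different subalgebra from the maximal compact of $\mf{so}(2n,2n)$; so your proposed generation of $\mf k_{mcs}$ by ``$\mf{sl}(2)$ plus the $L_{ij}$'' does not cover $\mf k_{mcs}$.
\end{itemize}
The fix is exactly what the paper does: for $|i|=|j|=1$ one simply observes that all the operators $\ell_i\ell_j$, $\pt i\pt j$, $\ell_i\pt j$ with odd $i,j$ preserve the (finite-dimensional!) space $\Lambda(\ds K^{2n})\exp(-R^2)$, since $\exp(-R^2)=\exp(-R_0^2)\exp(-R_1^2)$ with $\exp(-R_1^2)\in\Lambda(\ds K^{2n})$. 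Hence the correct $\mf k_{mcs}$-invariant spaces are $\Lambda(\ds K^{2n})\exp(-R^2)$ and $\Lambda(\ds K^{2n})\,\mc P_1(\ds K^m)\exp(-R^2)$, of dimensions $2^{2n}$ and $m\cdot 2^{2n}$, not $\mathds C\exp(-R^2)$ and $\big(\sum_k\mathds C\ell_k\big)\exp(-R^2)$. Once you enlarge your target spaces in this way, your computation goes through.
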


\begin{proof}
The maximal compact subalgebra $\mf k_{mcs}\subseteq \mf g$ is explicitly given by
\begin{align*}
\mf k_{mcs} &= \left\langle U_{i, j} + U^{i, j}: |i|=|j|\right\rangle\\
&= \left\langle U_{\ul i, \ul j}+U^{\ul i, \ul j} : |i|=|j|\right\rangle\oplus \left\langle U_{\ot i, \ul j} +U^{\ot i, \ul j}: |i|=|j|\right\rangle,
\end{align*}
where $U^{ij} := \sum_{k,l=1}^{2m+4n}U_{kl}\Omega^{ki}\Omega^{lj}$.
Acting with $2U_{\ul i, \ul j}+2U^{\ul i, \ul j}$ on $\exp(-R^2)$ and $\ell_k\exp(-R^2)$ gives
\begin{align*}
\dotpi(2U_{\ul i, \ul j}+2U^{\ul i, \ul j}) \exp(-R^2) &=\imath (4\ell_i\ell_j-\pt i \pt j)\exp(-R^2) = 2\imath \delta_{ij}\exp(-R^2),\\
\dotpi(2U_{\ul i, \ul j}+2U^{\ul i, \ul j}) \ell_k\exp(-R^2) &=2\imath (\delta_{ij}\ell_k+\delta_{ik}\ell_j+\delta_{jk}\ell_i)\exp(-R^2),
\end{align*}
for all $i,j,k$ with $|i|=|j|=|k|=0$. This implies the action of $2(U_{\ul i, \ul j}+U^{\ul i, \ul j})$ leaves $\exp(-R^2)$ and $\mc P_1(\ds K^m)\exp(-R^2)$ invariant. For $|i|=|j|=0$, the action of $U_{\ot i, \ul j} +U^{\ot i, \ul j}$ is given by
\begin{align*}
\dotpi(U_{\ot i, \ul j} +U^{\ot i, \ul j}) = L_{ij},
\end{align*}
with $L_{ij}$ as defined in Lemma \ref{Lemsltriple}. From Lemma \ref{Lemsltriple} we then obtain that $U_{\ot i, \ul j} +U^{\ot i, \ul j}$  leaves $\exp(-R^2)$ and $\mc P_1(\ds K^m)\exp(-R^2)$ invariant. The action of $U_{i, j} + U^{i, j}$ leaves $\Lambda(\ds K^{2n})\exp(-R^2)$ and $\Lambda(\ds K^{2n})\mc P_1(\ds R^m)\exp(-R^2)$ invariant, for all $i,j$ with $|i|=|j|=1$. As a consequence, the action of $\mf k_{mcs}$ on $\exp(-R^2)$ and $\ell_1\exp(-R^2)$ is contained in $\Lambda(\ds K^{2n})\exp(-R^2)$ and $\Lambda(\ds K^{2n}) \mc P_1(\ds K^m) \exp(-R^2)$, respectively. Since both of these spaces are finite-dimensional, we are finished.
\end{proof}

\section{Properties of the metaplectic representation}\label{S_Prop_Meta}

In this section, we study some properties of the metaplectic representation of $\mf{spo}(2m|2n,2n)$. In Section \ref{S_SB} we will construct the Segal-Bargmann, which intertwines the Schr\"odinger model and the Fock model. Therefore, the properties in this section will only be given for the Fock model. Analogous properties for the Schr\"odinger model can then be obtained using the Segal-Bargmann transform.

\subsection{Decompositions of $\ot F$}

Let us start by introducing spherical harmonics.

\begin{Def}
The \textbf{space of spherical harmonics of degree $k$} is defined by
\begin{gather*}
\mc H_k\big(\ds K^{m|2n}\big) := \big\{ p\in \mc P_k\big(\ds K^{m|2n}\big)\colon \Delta p = 0\big\},
\end{gather*}\symindex{$\mc H_k(\ds K^{m \vert 2n})$}
i.e.\ it is the space of polynomials of homogeneous degree $k$ in the kernel of the Laplacian.
\end{Def}

There exists a decomposition of the space of superpolynomials using these spherical harmonics known as the Fischer decomposition.
\begin{Prop}[\cite{DS}, Theorem 3]\label{PropFC}
If $M = m-2n\not\in -2\N$, then $\mc P\big(\ds K^{m|2n}\big)$ decomposes as
\begin{gather*}
\mc P\big(\ds K^{m|2n}\big) = \bigoplus_{k=0}^\infty \mc P_k\big(\ds K^{m|2n}\big) = \bigoplus_{k=0}^\infty\bigoplus_{j=0}^\infty R^{2j}\mc H_k\big(\ds K^{m|2n}\big),
\end{gather*}
in particular
\begin{align}\label{Eq FC even}
\mc P_{2k}\big(\ds K^{m|2n}\big) = \bigoplus_{l=0}^{k} R^{2l}\mc H_{2k-2l}\big(\ds K^{m|2n}\big)
\end{align}
and
\begin{align}\label{Eq FC odd}
\mc P_{2k+1}\big(\ds K^{m|2n}\big) = \bigoplus_{l=0}^{k} R^{2l}\mc H_{2k-2l+1}\big(\ds K^{m|2n}\big).
\end{align}
\end{Prop}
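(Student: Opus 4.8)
The plan is to reproduce the argument of \cite[Theorem 3]{DS}, which I sketch here. The only structural input needed is Lemma \ref{Lemsltriple}: on $\mc P(\ds K^{m|2n})$ the operators $R^2$, $\E+\frac{M}{2}$, $-\frac{\Delta}{2}$ form an $\mf{sl}(2)$-triple, and both $R^2$ and $\Delta$ respect the grading by degree. Since no positive-definite Fischer inner product is available once odd variables are present, the decomposition must be obtained algebraically from the relations $[\Delta,R^2]=4\E+2M$, $[\Delta,\E]=2\Delta$, $[R^2,\E]=-2R^2$, rather than from orthogonality of $\mc H_k$ and $R^2\mc P_{k-2}$.

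First I would reduce everything to the single claim that, for every $k\geq 0$,
\[
\mc P_k\big(\ds K^{m|2n}\big) = \mc H_k\big(\ds K^{m|2n}\big)\oplus R^2\,\mc P_{k-2}\big(\ds K^{m|2n}\big)
\]
(with $\mc P_{-1}=\mc P_{-2}=0$). Granting this, an induction on $k$ gives $\mc P_k=\bigoplus_{l\geq 0}R^{2l}\mc H_{k-2l}\big(\ds K^{m|2n}\big)$; summing over $k$ yields the first displayed formula, and isolating the cases $k=2k'$ and $k=2k'+1$ gives \eqref{Eq FC even} and \eqref{Eq FC odd}.

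To establish the splitting I would argue by induction on $k$ and show that $\Delta R^2\colon\mc P_{k-2}\to\mc P_{k-2}$ is a bijection. By the inductive hypothesis $\mc P_{k-2}=\bigoplus_l R^{2l}\mc H_j\big(\ds K^{m|2n}\big)$ with $j=k-2-2l$; using $[\Delta,R^2]=4\E+2M$ together with the fact that $\E$ acts by $j+2l$ on $R^{2l}\mc H_j$, a short induction on $l$ shows $\Delta R^{2(l+1)}h = 2(l+1)\big(2j+M+2l\big)\,R^{2l}h$ for $h\in\mc H_j$. Hence $\Delta R^2$ preserves each summand $R^{2l}\mc H_j$ and acts there by the scalar $2(l+1)(2j+M+2l)$, which is nonzero for all $j,l\geq 0$ exactly when $M\notin -2\N$; so $\Delta R^2$ is invertible on $\mc P_{k-2}$. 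Then for $p\in\mc P_k$ I put $q:=(\Delta R^2)^{-1}\Delta p\in\mc P_{k-2}$, so that $\Delta(p-R^2q)=0$ and $p-R^2q\in\mc H_k$, giving $\mc P_k=\mc H_k+R^2\mc P_{k-2}$; directness follows since $R^2q\in\mc H_k$ forces $\Delta R^2q=0$, hence $q=0$.

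The only real work is the scalar computation together with checking that its non-vanishing is precisely the hypothesis $M\notin -2\N$; the rest is bookkeeping with the three commutation relations. It is worth noting that positivity is never used — only the $\mf{sl}(2)$ structure of Lemma \ref{Lemsltriple} — which is exactly why the classical Fischer decomposition persists verbatim for superpolynomials under the stated condition on $M$.
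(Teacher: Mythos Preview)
Your argument is correct and is precisely the standard proof from \cite{DS}; the paper itself does not supply a proof but simply cites that reference. The scalar computation $\Delta R^{2(l+1)}h=2(l+1)(2j+M+2l)R^{2l}h$ for $h\in\mc H_j$ is right, and its non-vanishing for all $j,l\geq 0$ is indeed equivalent to $M\notin -2\N$, so nothing is missing.
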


For $M = m-2n\in -2\ds N$, we need to introduce generalised spherical harmonics. We define
\begin{gather*}
\ot{\mc H}_k\big(\ds K^{m|2n}\big) := \big\{ p\in \mc P_k\big(\ds K^{m|2n}\big)\colon \Delta R^2\Delta p = 0\big\}
\end{gather*}
as the space of generalised spherical harmonics. Note that $\mc H_k\big(\ds K^{m|2n}\big)\subseteq \ot{\mc H}_k\big(\ds K^{m|2n}\big)$. We have the following generalised Fischer decomposition.

\begin{Prop}[\cite{LS}, Corollary 1]\label{PropFCgeneralised}
Define the sets
\begin{align*}
I_M &:= \left\lbrace
\begin{matrix}
\emptyset & \text{if } M\not\in -2\ds N,\\
\{k\in \ds N : 2-\frac{M}{2}\leq k \leq 2-M\} & \text{if } M\in -2\ds N,
\end{matrix}\right.\\
N_k &:= \{k-2j : j\in\{0, \ldots, \left\lfloor k/2\right\rfloor \}\},\\
\ot J_k &:= N_k\cap I_M,\\
J_k^0 &:= \{2-M-l : l\in \ot J_k\} \text{ and }\\
J_k &:= N_k\setminus (\ot J_k\cup J_k^0),
\end{align*}
for $k\in \ds N$ and $M=m-2n$. Then we have that
\begin{align}\label{Eq FC degen}
\mc P_k(\ds K^{m|2n}) = \bigoplus_{l\in \ot J_k}R^{k-l}\ot{\mc H}_l\big(\ds K^{m|2n}\big) \oplus  \bigoplus_{l\in J_k}R^{k-l}\mc H_l\big(\ds K^{m|2n}\big).
\end{align}
\end{Prop}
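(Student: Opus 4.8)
\textit{Proof proposal.} The plan is to read the statement off the $\mf{sl}(2)$-module structure supplied by Lemma \ref{Lemsltriple}: the triple $\bigl(R^2,\E+\tfrac{M}{2},-\tfrac{\Delta}{2}\bigr)$ turns $\mc P(\ds K^{m|2n})$ into an $\mf{sl}(2)$-module in which $R^2$ raises the degree by $2$, $\Delta$ lowers it by $2$ and hence acts locally nilpotently, while the rotations $L_{ij}$ commute with all three operators. It therefore suffices to understand, for a single (generalised) spherical harmonic $h$ of degree $d$, the chain $h,R^2h,R^4h,\dots$, and then to assemble the contributions of the spaces $\mc H_l$ and $\ot{\mc H}_l$ in each fixed total degree $k$ by a rank/dimension count.

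The computational heart is the scalar by which $\Delta$ acts along such a chain. First I would prove, by induction on $j$ starting from $[\Delta,R^2]=4\E+2M$, that for $h\in\mc H_d(\ds K^{m|2n})$
\begin{align*}
\Delta R^{2j}h = 2j\,(M+2d+2j-2)\,R^{2(j-1)}h ,
\end{align*}
together with the analogous Leibniz identity for $\Delta R^{2j}p$ with $p$ arbitrary, so that $\Delta$ on $R^2\mc P_{k-2}$ is fully under control. Write $c_{j,d}:=2j(M+2d+2j-2)$. For $j\ge 1$ this vanishes precisely when $M=-(2d+2j-2)$, i.e.\ only when $M\in-2\N$ and $d\le-\tfrac{M}{2}$, the zero then occurring at $j_d:=1-d-\tfrac{M}{2}$. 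Whenever $c_{j,d}\neq 0$, multiplication by $R^2$ is injective on $R^{2(j-1)}\mc H_d$ and $\Delta$ maps $R^{2j}\mc H_d$ back onto it isomorphically; feeding these two facts into an induction on $k$ gives at once $\mc P_k=\mc H_k\oplus R^2\mc P_{k-2}=\bigoplus_{j\ge 0}R^{2j}\mc H_{k-2j}$ in the case $M\notin-2\N$, which is exactly \eqref{Eq FC degen} there, since then $I_M=\emptyset$ forces $\ot J_k=J_k^0=\emptyset$ and $J_k=N_k$.

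The substantive case is $M\in-2\N$. Here the chain starting from $h\in\mc H_d$ with $d\le-\tfrac{M}{2}$ runs freely until $j=j_d$, at which step $\Delta R^{2j_d}h=0$, so $R^{2j_d}h$ is again harmonic, now of degree $d+2j_d=2-M-d\in I_M$. Thus the small harmonic degrees $d\in\{0,\dots,-\tfrac{M}{2}\}$ and their reflections $2-M-d\in I_M$ get fused: $\mc H_{2-M-d}$ is not a genuine complement but already lies inside $R^{2j_d}\mc H_d$ (plus lower chains), and the generalised harmonics are precisely the object that repackages this. Concretely I would check that $\ot{\mc H}_l=\mc H_l$ for $l\notin I_M$, whereas for $l\in I_M$ the maps $p\mapsto\Delta p$ and $p\mapsto R^2\Delta p$ identify $\ot{\mc H}_l/\mc H_l$ with a copy of $\mc H_{2-M-l}$, so that $\dim\ot{\mc H}_l=\dim\mc H_l+\dim\mc H_{2-M-l}$ and $\ot{\mc H}_l$ generates the indecomposable $\mf{sl}(2)$-submodule glueing the two corresponding chains. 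Fixing $k$, the sets $\ot J_k=N_k\cap I_M$ (the ``bad'' degrees, carrying $\ot{\mc H}$), $J_k^0=\{2-M-l:l\in\ot J_k\}$ (their reflected partners, \emph{omitted} because they have been absorbed) and $J_k=N_k\setminus(\ot J_k\cup J_k^0)$ (the untouched degrees, carrying ordinary $\mc H$) are exactly the bookkeeping one needs; one then checks that the sum in \eqref{Eq FC degen} is direct and exhausts $\mc P_k$ via the identity $\dim\mc P_k=\sum_{l\in\ot J_k}\dim\ot{\mc H}_l+\sum_{l\in J_k}\dim\mc H_l$, which follows from the rank computations governed by the $c_{j,d}$ (equivalently, from the Fischer bilinear pairing, under which $R^2$ and $\Delta$ are mutually adjoint, so that $\mc H_k\cap R^2\mc P_{k-2}$ is the radical of that pairing restricted to $\mc H_k$, non-zero precisely in the window $I_M$).

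The step I expect to be the genuine obstacle is this last one in the degenerate case: showing that exactly the reflected degrees drop out, that $\ot{\mc H}_l$ has precisely dimension $\dim\mc H_l+\dim\mc H_{2-M-l}$, and that the resulting sum is internal and exhaustive for every $k$ — that is, the combinatorics of $\ot J_k,J_k^0,J_k$ matched against the zeros of $c_{j,d}$. The operator identity for $\Delta R^{2j}$ and the whole non-degenerate case are, by contrast, routine.
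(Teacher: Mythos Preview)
The paper does not prove this proposition at all: it is quoted verbatim from \cite{LS}, Corollary~1, and the paper simply invokes it. So there is no in-paper proof to compare your proposal against.

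That said, your strategy is the right one and is essentially the approach taken in the cited reference: exploit the $\mf{sl}(2)$-triple $(R^2,\E+\tfrac{M}{2},-\tfrac{\Delta}{2})$, compute $\Delta R^{2j}h = 2j(M+2d+2j-2)R^{2(j-1)}h$ for $h\in\mc H_d$, observe that the coefficient never vanishes when $M\notin-2\N$ (yielding the ordinary Fischer decomposition), and in the degenerate case track the single zero at $j_d=1-d-\tfrac{M}{2}$, which glues the chain from $\mc H_d$ into $\mc H_{2-M-d}$ and forces the replacement of $\mc H_l$ by $\ot{\mc H}_l$ for $l\in I_M$.

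One small correction: your containment is backwards. You write that ``$\mc H_{2-M-d}$ \dots\ already lies inside $R^{2j_d}\mc H_d$'', but in fact $R^{2j_d}\mc H_d\subseteq\mc H_{2-M-d}$ (a harmonic of low degree, pushed up by powers of $R^2$, lands inside the bigger harmonic space, not conversely). The correct picture is that $\mc H_{2-M-d}\cap R^2\mc P_{2-M-d-2}=R^{2j_d}\mc H_d$, so the ordinary splitting $\mc P_k=\mc H_k\oplus R^2\mc P_{k-2}$ fails precisely because this intersection is non-trivial. With that fix, your description of $\ot{\mc H}_l/\mc H_l\cong\mc H_{2-M-l}$ and the bookkeeping via $\ot J_k$, $J_k^0$, $J_k$ are exactly what is needed, and the obstacle you flag---matching the combinatorics of the index sets to the zeros of $c_{j,d}$ and verifying the dimension count---is indeed where the work lies in \cite{LS}.
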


This Fischer decomposition is precisely the decomposition of $\ot F$ as a $(\mf g, \mf k_c)$-module.

\begin{theorem}\label{ThDecF}
We have the following.
\begin{itemize}
\item[(1)]
\begin{itemize}
\item[(a)] For $M = m-2n\not\in -2\ds N$, explicit decompositions of $\mc P_{2k}(\ds C^{m|2n})$ and $\mc P_{2k+1}(\ds C^{m|2n})$ into irreducible $\Inn(J)$-modules are given by \eqref{Eq FC even} and \eqref{Eq FC odd}, respectively.
\item[(b)] For $M\in -2\ds N$, an explicit decomposition of $\mc P_{k}(\ds C^{m|2n})$ into indecomposable $\Inn(J)$-modules is given by \eqref{Eq FC degen}.
\end{itemize}
\item[(3)] 
\begin{itemize}
\item[(a)] For $M = m-2n\not\in -2\ds N$, $\ot F_e$ and $\ot F_o$ are irreducible $\mf g$-modules and their $\mf k_c$-type decompositions are given by
\begin{align*}
\ot F_e = \bigoplus_{k=0}^{\infty} \mc P_{2k}(\ds C^{m|2n})\quad \text{ and }\quad \ot F_o = \bigoplus_{k=0}^{\infty} \mc P_{2k+1}(\ds C^{m|2n}),
\end{align*}
respectively.
\item[(b)] For $M\in -2\ds N$, $\ot F_e$ and $\ot F_o$ are still indecomposable, but not irreducible $\mf g$-modules.
\end{itemize}
\end{itemize}
\end{theorem}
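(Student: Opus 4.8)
The plan is to first establish the statements about $\Inn(J)$-modules, and then bootstrap from these to the $\mf g$-module statements via the $\mf{sl}(2)$-triple from Lemma~\ref{Lemsltriple} and the identification of $\mf k_c$ with $\mf u(m|2n,\beta')$.

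\textbf{Step 1: the $\Inn(J)$-decompositions (part (1)).} Under $\dotpi$, by Proposition~\ref{Prop_pi_in_U} and Lemma~\ref{Lemsltriple}, the subalgebra $\Inn(J)\cong\mf{osp}(m|2n,\beta)$ acts on $\mc P(\ds K^{m|2n})$ by the operators $L_{ij}=\ell_i\pt j-(-1)^{|i||j|}\ell_j\pt i$ (the standard realization of $\mf{osp}(m|2n)$ on superpolynomials), and these commute with $R^2$, $\E$ and $\Delta$. Consequently each summand $R^{2l}\mc H_{2k-2l}$ in \eqref{Eq FC even}, and more generally each $R^{k-l}\ot{\mc H}_l$ and $R^{k-l}\mc H_l$ in \eqref{Eq FC degen}, is an $\Inn(J)$-submodule: multiplication by the power of $R^2$ is an $\Inn(J)$-isomorphism onto its image, so it suffices to know the module structure of $\mc H_l$ and $\ot{\mc H}_l$. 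That $\mc H_l(\ds K^{m|2n})$ is an irreducible $\mf{osp}(m|2n)$-module when $M\notin -2\ds N$, and that $\ot{\mc H}_l$ is indecomposable (but not irreducible) when $M\in -2\ds N$, is exactly the content of the (super) Fischer theory in \cite{DS} and \cite{LS}; I would cite these for the module structure and simply assemble \eqref{Eq FC even}, \eqref{Eq FC odd}, \eqref{Eq FC degen} from Propositions~\ref{PropFC} and~\ref{PropFCgeneralised}. (The numbering jump to ``(3)'' in the statement suggests a part (2) was dropped; I will treat (1) and (3) only.)

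\textbf{Step 2: $\mf k_c$-type decomposition and identification of the degree spaces (part (3), $\mf k_c$-types).} By the proof in Section~\ref{SS_rol}, $\rol=\pil_{,\ds C}\circ c$ and the Cayley transform $c$ carries $\mf k_{c,\ds C}$ onto $\mf{istr}(J_\C)$; hence under $\dotrho$ the subalgebra $\mf k_c$ acts through the operators that $\dotpi$ assigns to $\mf{istr}(J)$, namely $-\beta_{ij}-(\ell_i\pt j+(-1)^{|i||j|}\ell_j\pt i)$ on the $L_{\ell_{ij}}$ part and the $L_{ij}$'s on the $\Inn(J)$ part (Theorem~\ref{Th pil on psiW}). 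The grading element $\ds E+\tfrac M2$ (equivalently $-\dotpi(2L_e)$) lies in this copy of $\mf k_c$, so each homogeneous degree space $\mc P_k(\ds C^{m|2n})$ is $\mf k_c$-stable, and the $\mf k_c$-module structure on it is the $\mf{istr}(J)$-structure, whose $\Inn(J)$-restriction was decomposed in Step~1. Since $\ot F_e=U(\mf g)1$ and $\ot F_o=U(\mf g)\ell_1$ are generated by a degree-$0$ and a degree-$1$ vector respectively, and $\rol$ raises and lowers degree by $\pm 2$ (the operators $\dotrho(\ell_{ij}^-)$, $\dotrho(\ell_{ij}^+)$ shift degree by $\mp 2$, while $\mf{istr}(J)$ preserves degree), one gets $\ot F_e\subseteq\bigoplus_k\mc P_{2k}$ and $\ot F_o\subseteq\bigoplus_k\mc P_{2k+1}$; equality follows because $R^2$ (which equals $\dotrho$ of a multiple of $e^-$) and $\Delta$ let one climb from $1$ to every even degree and from $\ell_1$ to every odd degree — concretely, $R^{2}\cdot$ reaches $R^{2k}\in\mc P_{2k}$, and applying $\Inn(J)$ fills out each Fischer summand using the irreducibility/indecomposability from Step~1.

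\textbf{Step 3: (ir)reducibility of $\ot F_e,\ot F_o$ as $\mf g$-modules (part (3)(a)(b)).} For $M\notin -2\ds N$: a nonzero $\mf g$-submodule $N\subseteq\ot F_e$ is $\mf k_c$-stable, hence a sum of $\mf k_c$-types; each type $R^{2l}\mc H_{2k-2l}$ is $\Inn(J)$-irreducible, and distinct $(k,l)$ give inequivalent (or at least separated-by-degree) types, so $N$ is a sum of whole summands $R^{2l}\mc H_{2k-2l}$. It then suffices to show that starting from any one such summand the $\mf g$-action generates all of them; for this I would apply the raising operator $\dotrho(e^+)=-\tfrac\imath4\Delta$ and lowering operator $\dotrho(e^-)=-\imath R^2$ together with the $\mf{sl}(2)$-relations of Lemma~\ref{Lemsltriple} to move within a fixed $\mc H$-"ladder" $\{R^{2j}\mc H_l\}_j$, and use a raising element of $\mf g_+\cap\ker$ (an appropriate $\dotrho(\ell_{ij}^+)$ not proportional to $\Delta$) to pass between ladders with different $l$ — the nonvanishing of these maps is where the argument has teeth and is the analogue of the classical computation for $\Mp(2m,\R)$. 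For $M\in-2\ds N$: the generalized Fischer summands $R^{k-l}\ot{\mc H}_l$ are merely indecomposable, and the extension structure among the $\ot{\mc H}_l$, $\mc H_l$ provided by \cite{LS} forces a nonzero proper $\mf g$-submodule (the one built from the genuine $\mc H_l$'s, on which $\Delta R^2\Delta$ already vanishes), so $\ot F_e$, $\ot F_o$ are indecomposable but not irreducible; indecomposability follows because $1$ (resp.\ $\ell_1$) is a cyclic vector and lies in a single indecomposable $\mf k_c$-type whose socle meets every submodule.

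\textbf{Main obstacle.} The routine parts are the $\mf k_c$-stability of degree spaces and the reduction ``submodule $=$ sum of $\mf k_c$-types'', both immediate from Lemma~\ref{Lemsltriple} and Step~1. The real work is Step~3's transitivity claim: showing the $\mf g$-action connects \emph{all} Fischer pieces of a fixed parity when $M\notin-2\ds N$, i.e.\ that no proper sub-collection of $\{R^{2l}\mc H_{2k-2l}\}$ is $\mf g$-stable. This is a concrete non-degeneracy computation — verifying that the components of $\dotrho(\ell_{ij}^+)$ (equivalently of $\Delta$ composed with the $\Inn(J)$-action) do not annihilate any spherical-harmonic type — and it is precisely the place where the hypothesis $M\notin -2\ds N$ enters (it is the condition making the Fischer decomposition multiplicity-free and the relevant scalars, of the form $k+\tfrac M2$ or $2k-l+\tfrac M2$, nonzero). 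I would isolate this as a short lemma: on $R^{2l}\mc H_{l'}$ the operator $\Delta$ acts as a nonzero scalar times a surjection onto $R^{2l-2}\mc H_{l'}$ exactly when that scalar, built from $m-2n$, is nonzero, and dually for $R^2$; combining with the $\Inn(J)$-irreducibility closes the argument.
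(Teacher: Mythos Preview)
Your overall strategy is correct and close to the paper's, but you make Step~3 harder than necessary by working with the $\Inn(J)$-types (the Fischer pieces $R^{2l}\mc H_{l'}$) rather than the full $\mf k_c$-types. The numbering gap you noticed is the key: the paper's implicit ``part~(2)'' asserts that each $\mc P_k(\ds C^{m|2n})$ is \emph{irreducible} as a $\mf k_c$-module, not merely stable. This follows from $\mf k_{c,\ds C}\cong\mf{istr}(J_\ds C)\cong\mf{gl}_\ds C(m|2n)$ (Proposition~\ref{Prop Cayley} and Theorem~\ref{ThTKK-JP}), since $\mc P_k(\ds C^{m|2n})$ is the $k$-th supersymmetric power of the natural representation, which is irreducible for $\mf{gl}(m|2n)$.

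With this in hand, the paper's argument for (3)(a) is short: any nonzero $\mf g$-submodule $N$ is $\mf k_c$-stable, hence a direct sum of whole degree spaces $\mc P_{2k}$. Then one only needs to move between adjacent degrees. The paper defines $\rho^+:=\imath R^2$ and $\rho^-:=\imath\Delta$ (both in $\dotrho(\mf g)$) and computes on the specific vectors $R^{2k}\ell_1^l$ ($l\in\{0,1\}$):
\[
\rho^+(R^{2k}\ell_1^l)=\imath R^{2k+2}\ell_1^l,\qquad \rho^-(R^{2k}\ell_1^l)=2k(M+2k-2+2l)R^{2k-2}\ell_1^l.
\]
The scalar $2k(M+2k-2+2l)$ is nonzero for all $k\ge 1$ precisely when $M\notin -2\ds N$, so $\rho^-$ carries $\mc P_{2k}$ nontrivially into $\mc P_{2k-2}$, and by $\mf k_c$-irreducibility the image generates all of $\mc P_{2k-2}$. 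This bypasses entirely your ``main obstacle'' of connecting different Fischer ladders via off-diagonal pieces of $\dotrho(\ell_{ij}^+)$.

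For (3)(b), your indecomposability sketch is slightly off (``socle meets every submodule'' is not the mechanism). The clean argument is: $\mc P_0=\ds C\cdot 1$ is a one-dimensional $\mf k_c$-type occurring with multiplicity one; if $\ot F_e=A\oplus B$ as $\mf g$-modules then $1\in A$ or $1\in B$ (since $\mc P_0\cap A$ and $\mc P_0\cap B$ partition a one-dimensional space), and cyclicity of $1$ forces the other summand to vanish. Non-irreducibility for $M\in -2\ds N$ follows from the same scalar computation: at $k=1-M/2$ the coefficient $2k(M+2k-2)$ vanishes, so $\bigoplus_{k\ge 1-M/2}\mc P_{2k}$ is a proper submodule.
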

\begin{proof}
From \cite[Theorem 5.2]{C} it follows that for $M\not\in -2\ds N$ we have an irreducible representation of $\Inn(J)\cong \mf{osp}(m|2n)$ on $\mc H_{2k}(\ds R^{m|2n})$ and from \cite[Theorem 2]{LS} it follows that for $M\in -2\ds N$ we have an indecomposable representation of $\Inn(J)$ on $\ot{\mc H}_{2k}(\ds R^{m|2n})$. We also have $\dotrho([L_{\ell_{ij}},L_{\ell_{rs}}])R^2 = 0$, for all $i,j,r,s\in\{1,\ldots, m+2n\}$, i.e.\ the elements of $\Inn(J)$ leave $R^2$ invariant and therefore also leave powers of $R^2$ invariant. Using the Fischer decomposition and Proposition \ref{PropFCgeneralised} proves (1). Since $\mf k_{c,\ds C} \cong \mf {istr}(J_{\ds C})\cong \mf{gl}_{\ds C}(m|2n)$ by Proposition \ref{Prop Cayley} and Theorem \ref{ThTKK-JP}, we have an irreducible representation of $\mf k_c$ on $\mc P_{k}(\ds C^{m|2n})$, which proves (2).

Define
\begin{align*}
\rho^+ := \dotrho(c^{-1}(-e^-)) = \dotpi(-e^-)) = \imath R^2,\\
\rho^- := \dotrho(c^{-1}(-4e^+)) = \dotpi(-4e^+)) = \imath \Delta.
\end{align*}
Then we have $\rho^+(R^{2k}\ell_1^l) = \imath R^{2k+2}\ell_1^l$ and $\rho^-(R^{2k}\ell_1^l) = 2k({M}+2k-2+2l) R^{2k-2}\ell_1^l$, for $k\in \ds N$ and $l\in \{0,1\}$. This shows that $\rho^+$ allows us to go to superpolynomials of 2 degrees higher while $\rho^-$ allows us to go the other direction for $M\not\in -2\ds N$. Therefore we obtain (3).
\end{proof}

The following is an immediate result of this theorem.

\begin{Cor}\label{Cor F isom F}
The minimal Fock representation is isomorphic to the even Fock representation, i.e.\ $F_{-\frac{1}{2}} \cong \ot F_e$
\end{Cor}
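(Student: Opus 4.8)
The plan is to exhibit an explicit $\mf g$-module isomorphism $F_{-\frac12} \to \ot F_e$ and check it is compatible with the two actions $\rol$ and $\dotrho$. The natural candidate is the folding isomorphism $\psi$ itself, restricted appropriately. Recall $F_{-\frac12} = \mc P(\ds C^{\wh m|2\wh n})/\mc I_{-\frac12}$ with the $\mf g$-action $\rol$, and $\psi \colon \mc P(\ds C^{\wh m|2\wh n})/\mc I_{-\frac12} \to \mc P_{\text{even}}(\ds C^{m|2n})$ is already known to be a linear isomorphism onto the even-degree superpolynomials. So as vector spaces there is nothing to prove beyond identifying the image $\psi(F_{-\frac12})$ with $\ot F_e$; the content is that $\psi$ intertwines $\rol$ with $\dotrho$.

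First I would recall that $\dotrho$ was \emph{defined} precisely by $\dotrho(X) := \psi \circ \rol(X) \circ \psi^{-1}$ for all $X \in \mf g$ (Section~\ref{SS_rol} together with Section~\ref{ss pol real}, since $\rol = \pil_{,\ds C}\circ c$ and $\dotrho = \dotpi_{,\ds C}\circ c$ and $\dotpi = \psi\circ\pil\circ\psi^{-1}$). Hence by construction $\psi$ is a $\mf g$-module isomorphism from $(\mc P(\ds C^{\wh m|2\wh n})/\mc I_{-\frac12}, \rol)$ onto $(\mc P_{\text{even}}(\ds C^{m|2n}), \dotrho)$. It remains only to verify that this target module is exactly $\ot F_e$, i.e.\ that $\ot F_e = U(\mf g)\,1 \bmod \mc I_\lambda = \mc P_{\text{even}}(\ds C^{m|2n})$ as a subspace, where the latter equality should be read inside the $\dotrho$-realisation on $\mc P(\ds C^{m|2n})$.

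The one genuine step is therefore this last identification, and it follows from Theorem~\ref{ThDecF}(3): there it is shown that $\ot F_e = \bigoplus_{k=0}^\infty \mc P_{2k}(\ds C^{m|2n}) = \mc P_{\text{even}}(\ds C^{m|2n})$, the $\mf k_c$-type decomposition of the even Fock representation. Since $\psi$ is a bijection onto $\mc P_{\text{even}}(\ds C^{m|2n})$ and intertwines $\rol$ with $\dotrho$, and since the image is precisely $\ot F_e$ with its $\dotrho$-action, we conclude $F_{-\frac12} \cong \ot F_e$ as $\mf g$-modules. The main (and only) obstacle is bookkeeping: making sure that the extension of $\dotrho$ from $\mc P_{\text{even}}$ to all of $\mc P(\ds C^{m|2n})$ (done after Theorem~\ref{Th pil on psiW}) is consistent, so that ``$\ot F_e \subseteq \mc P(\ds C^{m|2n})$'' and ``$\psi(F_{-\frac12}) = \mc P_{\text{even}}(\ds C^{m|2n})$'' really refer to the same subspace with the same action; once that is unwound the corollary is immediate.
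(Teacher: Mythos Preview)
Your proposal is correct and takes the same approach as the paper: the corollary is stated there as an immediate consequence of Theorem~\ref{ThDecF}, and what you have written is precisely the unpacking of that immediacy---$\psi$ is a $\mf g$-module isomorphism onto $\mc P_{\text{even}}(\ds C^{m|2n})$ by the very definition of $\dotrho$, and Theorem~\ref{ThDecF} identifies this space with $\ot F_e$. One minor point: the equality $\ot F_e = \mc P_{\text{even}}(\ds C^{m|2n})$ is only displayed explicitly in part~(3)(a) for $M\notin -2\ds N$, but the argument in the proof of Theorem~\ref{ThDecF} (using $\rho^+$ to raise degree together with the $\mf k_c$-irreducibility of each $\mc P_{2k}$) gives $U(\mf g)\cdot 1 = \mc P_{\text{even}}$ regardless of $M$, so the corollary holds in all cases.
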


Recall the $\mf{sl}(2)$-triple $\left\lbrace e^-, 2L_{e},e^+ \right\rbrace$ from Section \ref{SSTKK2}. Using the Cayley transform $c$ we obtain another $\mf{sl}(2)$-triple $\mf s :=\left\lbrace f^-, h,f^+ \right\rbrace$\symindex{$\mf s$} where 
\begin{align*}
f^- := c^{-1}(e^-), \quad f^+ &:= c^{-1}(e^+) \quad \mbox{ and } \quad h := c^{-1}(2L_{e}).
\end{align*}\symindex{$f^{\pm}$ and $h$}
We have
\begin{align*}
\dotrho(f^-) &= \dotpi(e^-) = -\imath R^2,\\
\dotrho(h) &= \dotpi(2L_{e}) = -(\dfrac{M}{2}+\ds E),\\
\dotrho(f^+) &= \dotpi(e) = -\dfrac{\imath}{4}\Delta.
\end{align*}
In particular, we have
\begin{align*}
\dotrho(f^-)\phi_l R^{2k} &=  -\imath \phi_lR^{2k+2},\\
\dotrho(h)\phi_l R^{2k} &= -(\dfrac{M}{2}+2k+l)\phi_lR^{2k},\\
\dotrho(f^+)\phi_l R^{2k} &= -\imath(\dfrac{M}{2}+k-1)k\phi_l R^{2k-2},
\end{align*}
for $k,l\in \ds N$ and $\phi_l\in \mc H_{l}(\ds C^{m|2n})$. Therefore, for fixed $\phi_{l}\in \mc H_{l}(\ds C^{m|2n})$ the span of superpolynomials $R^{2k}\phi_{l}$, $k\in \ds N$ is invariant under the action of $\mf s$ and defines an irreducible representation of $\mf s\cong \mf{sl}(2, \ds K)$ of lowest weight $\dfrac{M}{2}+l$. Define $G_{\frac{M}{2}+l} := \sum_{k=0}^{\infty}\ds C R^{2k}$. Then it is clear that $G_{\frac{M}{2}+l}$ and $\mc H_{l}(\ds C^{m|2n})$ are invariant under the action of $\mf s$, for all $l\in \ds N$. From Lemma \ref{Lemsltriple} it also follows that $\mf s$ and $\Inn(J)$ commute. Putting things together gives us the analogue of \cite[Theorem 2.24]{HKMO}.

\begin{theorem}\label{Th_DecF2}
Suppose $M=m-2n\not\in -2\ds N$. Under the action of $(\mf s, \Inn(J))$ we have the decomposition
\begin{align*}
\ot F = \bigoplus_{l=0}^\infty G_{\frac{M}{2}+l} \boxtimes \mc H_{l}(\ds C^{m|2n}),
\end{align*}
where $G_{\frac{M}{2}+l}$ denotes the irreducible representation of $\mf s\cong \mf{sl}(2, \ds K)$ of lowest weight $\dfrac{M}{2}+l$.
\end{theorem}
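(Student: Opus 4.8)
The plan is to read off the decomposition from the Fischer decomposition of Proposition~\ref{PropFC} together with the explicit $\mathfrak s$-action and the structure of the spherical harmonics collected just above the statement. First I would rewrite $\ot F$ using the hypothesis $M=m-2n\not\in-2\ds N$: by Theorem~\ref{ThDecF} we have $\ot F=\ot F_e\oplus\ot F_o=\bigoplus_{k\geq 0}\mc P_k(\ds C^{m|2n})$, and Proposition~\ref{PropFC} refines this to
\[
\ot F=\bigoplus_{k=0}^{\infty}\mc P_k(\ds C^{m|2n})=\bigoplus_{l=0}^{\infty}\bigoplus_{j=0}^{\infty}R^{2j}\mc H_l(\ds C^{m|2n}).
\]
It therefore suffices to show that for each fixed $l$ the subspace $\bigoplus_{j\geq 0}R^{2j}\mc H_l(\ds C^{m|2n})$ is $(\mathfrak s,\Inn(J))$-stable and is isomorphic, as such a module, to the external tensor product $G_{\frac M2+l}\boxtimes\mc H_l(\ds C^{m|2n})$.

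Next I would combine the three displayed formulas for $\dotrho(f^-)$, $\dotrho(h)$, $\dotrho(f^+)$ with the two facts recorded above: by Lemma~\ref{Lemsltriple} the action of $\Inn(J)$ preserves each $\mc H_l(\ds C^{m|2n})$ and kills $R^2$ (hence every power of $R^2$), and $\mathfrak s$ commutes with $\Inn(J)$. The formulas show that $\bigoplus_{j}R^{2j}\mc H_l(\ds C^{m|2n})$ is $\mathfrak s$-stable and that $\mathfrak s$ acts through the power of $R^2$ alone, with a module structure depending on $l$ but not on the chosen harmonic; since $\Inn(J)$ acts on the harmonic factor alone and the two actions commute, this subspace is precisely the external tensor product $G_{\frac M2+l}\boxtimes\mc H_l(\ds C^{m|2n})$, with $G_{\frac M2+l}=\bigoplus_{k\geq 0}\ds C R^{2k}$. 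That $\mc H_l(\ds C^{m|2n})$ is an irreducible $\Inn(J)\cong\mf{osp}(m|2n)$-module under the hypothesis $M\not\in-2\ds N$ is part of Theorem~\ref{ThDecF} (equivalently \cite[Theorem~5.2]{C}).

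The last point is that $G_{\frac M2+l}$ is the irreducible lowest-weight $\mathfrak{sl}(2)$-module of lowest weight $\frac M2+l$. The vector $1$ is annihilated by $\dotrho(f^+)$ (the coefficient $(\frac M2+k-1)k$ vanishes at $k=0$) and is an $h$-eigenvector of the required weight, while $\dotrho(f^-)$ raises the power of $R^2$ by a nonzero scalar. Here, and essentially only here, the hypothesis is used: for $k\geq 1$ the scalar $(\frac M2+k-1)k$ is nonzero precisely because $\frac M2+k-1=0$ would force $M=-2(k-1)\in-2\ds N$; hence $\dotrho(f^+)$ is injective on each $R^{2k}$ with $k\geq 1$. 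Consequently any nonzero $\mathfrak s$-submodule, being a sum of the one-dimensional $h$-weight spaces $\ds C R^{2k}$, contains some $R^{2k}$ and therefore, by repeated application of $\dotrho(f^-)$ and $\dotrho(f^+)$, all of $G_{\frac M2+l}$. Summing over $l$ yields the asserted decomposition, the direct analogue of \cite[Theorem~2.24]{HKMO}. I expect the only mildly delicate step to be this irreducibility check, and even that reduces to the non-vanishing of an explicit scalar; everything else is bookkeeping with the Fischer decomposition and the commuting actions.
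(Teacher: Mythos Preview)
Your proposal is correct and follows essentially the same approach as the paper: the paper likewise derives the decomposition from the Fischer decomposition (Proposition~\ref{PropFC}), the explicit $\mf s$-action on $R^{2k}\phi_l$, the commutation of $\mf s$ with $\Inn(J)$ from Lemma~\ref{Lemsltriple}, and the irreducibility of $\mc H_l$ from Theorem~\ref{ThDecF}, concluding by pointing to \cite[Theorem~2.24]{HKMO}. Your write-up is in fact more detailed than the paper's, which merely states ``Putting things together''; in particular, you spell out the irreducibility of $G_{\frac{M}{2}+l}$ via the nonvanishing of the $f^+$-coefficient, a check the paper leaves implicit.
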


\subsection{Connection with a Joseph-like ideal}\label{SS_Joseph}

For Lie groups, minimal representations are characterised by the property that their annihilator ideal is the Joseph ideal, see \cite{GanSavin}. In \cite{CSS2}, two Joseph-like ideals were constructed for the orthosymplectic Lie superalgebra. In the orthosymplectic case, it was proven that the annihilator ideal of its minimal representation of $\mf{osp}_{\ds C}(p+q|2m)$ is equal to one of these Joseph-like ideals for $p+q-2m \geq 3$, \cite[Theorem 6.4]{BF}. We will now briefly introduce this other Joseph-like ideal and describe its connection with our minimal representation. 

We use the standard root system for $\mf g_{\ds C}\cong D(2n,m)$, where the simple roots are given by
\begin{align*}
\delta_1-\delta_2, \ldots, \delta_{m-1}-\delta_m, \delta_m-\epsilon_1, \epsilon_1-\epsilon_2, \ldots \epsilon_{2n-1}-\epsilon_{2n}, \epsilon_{2n-1}+\epsilon_{2n}.
\end{align*}
By \cite[Theorem 3.1 and Theorem 3.6]{CSS2}, the tensor product $\mf g_{\ds C}\otimes\mf g_{\ds C}$ contains a decomposition summand isomorphic to the simple $\mf g_{\ds C}$-module of highest weight $4\delta_1$. This decomposition factor is called the \textbf{Cartan product} of $\mf g_{\ds C}$ and is denoted by $\mf g_{\ds C} \circledcirc \mf g_{\ds C}$. For $X,Y\in \mf g_{\ds C}$ the projection of $X\otimes Y$ on $\mf g_{\ds C} \circledcirc \mf g_{\ds C}$ is denoted by $X\circledcirc Y$.\symindex{$\circledcirc$} Denote the Tensor algebra of $\mf g_{\ds C}$ by
\begin{align*}
T(\mf g_{\ds C}) := \bigoplus_{j\geq 0}\mf g_{\ds C}^{\otimes j}
\end{align*}\symindex{$T( \cdot )$}
and let $B(\cdot\, , \cdot)$\symindex{$B(\cdot\, , \cdot)$} denote the renormalisation of the Killing form of $\mf g_{\ds C}$ defined by \cite[Lemma 2.2]{CSS2}. Note that this renormalised Killing form also defines a non-degenerate form on $\mf g_{\ds C}$ when the Killing form of $\mf g_{\ds C}$ is zero, i.e.\ when $\mf g_{\ds C}\cong \mf{spo}_{\ds C}(2m|2m+2)$, \cite[Proposition 2.4.1]{K2}. We have a one-parameter family $\{\mc J_{\mu}|\mu \in \ds C \}$ of quadratic two-sided ideals in $T(\mf g_{\ds C})$, where $\mc J_\mu$ is generated by
\begin{align*}
\left\{ X\otimes Y - X\circledcirc Y - \frac{1}{2}[X,Y] - \mu B(X,Y)|X,Y\in \mf g_{\ds C} \right\}.
\end{align*}\symindex{$\mc J_\mu$, $\mf J_\mu$ and $J$}
By construction there is a unique ideal $\mf J_\mu$ in the universal enveloping algebra $U(\mf g_{\ds C})$, which satisfies $T(\mf g_{\ds C})/\mc J_{\mu} \cong U(\mf g_{\ds C})/\mf J_\mu$. The ideal $\mf J_\mu$ has finite codimension if $\mu \neq \frac{1}{4}$ and infinite codimension if $\mu = \frac{1}{4}$, \cite[Theorem 6.3]{CSS2}.

\begin{Def}
The \textbf{Joseph-like ideal} of $\mf g_{\ds C}$ (with respect to the standard root system) is given by $\mf J:= \mf J_{\mu}$ for the critical value $\mu = \frac{1}{4}$
\end{Def}
Let us now relate our minimal representation to a representation of $\mf g_{\ds C}$ studied in \cite{C2}. 

\begin{Def}\symindex{$\Lambda\big(\ds K^{d\vert m}\big)$}
The \textbf{Grassmann superalgebra} over $\ds K$ is defined as
\begin{gather*}
\Lambda\big(\ds K^{d|m}\big):=\Lambda\big(\ds K^{d}\big)\otimes_\C\mathcal P \big(\ds K^{m}\big),
\end{gather*}
The variables of $\Lambda(\ds K^{d})$ and $\mc P(\ds K^{m})$ are now the even and odd variables, respectively.
\end{Def}
Let $\theta = (\theta_i)_{i=1}^{2n+m}$ denote the variables of $\Lambda\big(\ds K^{2n|m}\big)$, then they satisfy the commutation relations
\begin{gather*}
\theta_i\theta_j = -(-1)^{|\theta_i||\theta_j|}\theta_j \theta_i,
\end{gather*}
for $i,j\in \{1, \ldots, m+2n\}$.

In \cite[Definition 5]{C2} a realisation of $\mf g_{\ds C}$ on $\Lambda\big(\ds K^{2n|m}\big)$ was defined. It is generated by the operators $\theta_i\theta_j$ and $\pt{\theta_i}\pt{\theta_j}$. By \cite[Remark 3]{C2}, there is a corresponding polynomial realisation on $\mc P(\ds K^{m|2n})$ generated by $\ell_{i}\ell_{j}$ and $\pt{\ell_i}\pt{\ell_j}$. Let $\dotrho_{\ds C}$ denote the unique representation of $\mf g_{\ds C}$ obtained as the $\ds C$-linear extension of $\dotrho$. Then $\dotrho_{\ds C}$ is also generated by $\ell_{i}\ell_{j}$ and $\pt{\ell_i}\pt{\ell_j}$, i.e.\ the realisation on $\mc P(\ds K^{m|2n})$ defined by \cite[Definition 5 and Remark 3]{C2} is, up to isomorphisms, the same as the polynomial realisation $\dotrho_{\ds C}$ on $\ot F\cong \mc P(\ds K^{m|2n})$.

\begin{Def}
The \textbf{annihilator ideal} of a representation $(\dnu, V)$ of $\mf g_{\ds C}$ is the ideal in $U(\mf g_{\ds C})$ given by
\begin{align*}
\Ann(\dnu, V) := \{X \in U(\mf g_{\ds C}) | \dnu (X)v = 0 \text{ for al }v\in V\}.
\end{align*}
\end{Def}\symindex{$\Ann$}

From \cite[Theorem 6.7]{CSS2}, we now obtain the following result.

\begin{theorem}\label{Th_Joseph}
If $M=m-2n\not\in\{-1,0\}$, then the annihilator ideal of even and odd Fock representations is the Joseph ideal, i.e.
\begin{align*}
\Ann(\dotrho_{\ds C}, \ot F) = \Ann(\dotrho_{\ds C}, \ot F_{e}) = \Ann(\dotrho_{\ds C}, \ot F_o) = \mf J.
\end{align*}
\end{theorem}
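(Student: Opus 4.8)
The plan is to reduce Theorem \ref{Th_Joseph} to the corresponding statement in \cite{CSS2}, specifically \cite[Theorem 6.7]{CSS2}, by identifying our Fock representations with the tensor product representation studied there. First I would recall the setup established just above the statement: by \cite[Definition 5 and Remark 3]{C2} there is a polynomial realisation of $\mf g_{\ds C}$ on $\mc P(\ds K^{m|2n})$ generated by the operators $\ell_i\ell_j$ and $\pt{\ell_i}\pt{\ell_j}$, and we have already observed that $\dotrho_{\ds C}$ on $\ot F \cong \mc P(\ds K^{m|2n})$ is, up to isomorphism, exactly this realisation. Hence $\Ann(\dotrho_{\ds C}, \ot F)$ coincides with the annihilator of the representation of \cite{C2} to which \cite[Theorem 6.7]{CSS2} applies, and that theorem identifies the annihilator with the Joseph-like ideal $\mf J = \mf J_{1/4}$ precisely when $M = m - 2n \not\in \{-1, 0\}$ (the values excluded in \cite{CSS2} for this $D$-type superalgebra). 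This gives the middle equality $\Ann(\dotrho_{\ds C}, \ot F) = \mf J$.

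Next I would handle the reduction to the two summands $\ot F_e$ and $\ot F_o$. Since $\ot F = \ot F_e \oplus \ot F_o$ as $\mf g_{\ds C}$-modules, we automatically have $\Ann(\dotrho_{\ds C}, \ot F) = \Ann(\dotrho_{\ds C}, \ot F_e) \cap \Ann(\dotrho_{\ds C}, \ot F_o)$, so $\mf J \subseteq \Ann(\dotrho_{\ds C}, \ot F_e)$ and $\mf J \subseteq \Ann(\dotrho_{\ds C}, \ot F_o)$. For the reverse inclusions I would invoke Theorem \ref{ThDecF}(3)(a): for $M \not\in -2\ds N$ — which is implied by $M \not\in \{-1,0\}$ together with, in the remaining degenerate cases $M \in -2\ds N$, a separate argument — both $\ot F_e$ and $\ot F_o$ are irreducible $\mf g_{\ds C}$-modules, and the raising/lowering operators $\rho^{\pm} = \imath R^2, \imath\Delta$ from the proof of Theorem \ref{ThDecF} move between them in the sense that $U(\mf g_{\ds C})$ acting on any nonzero vector of $\ot F_e$ reaches vectors whose images under the realisation generate the same two-sided ideal as vectors of $\ot F_o$; more directly, since the defining quadratic generators $\ell_i\ell_j$ and $\pt{\ell_i}\pt{\ell_j}$ act the same way on both components (they are degree-$\pm 2$ shift operators built from the same formulae), the kernel of $\dotrho_{\ds C}$ in $U(\mf g_{\ds C})$ does not depend on which component one tests against. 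Making this last point precise is the main obstacle: one must argue that an element of $U(\mf g_{\ds C})$ annihilating all of $\ot F_e$ also annihilates all of $\ot F_o$, which I would do by noting that $\ot F_e$ and $\ot F_o$ together exhaust $\mc P(\ds K^{m|2n})$, that $\dotrho_{\ds C}$ on the full space $\mc P(\ds K^{m|2n})$ is the realisation of \cite{C2}, and that \cite[Theorem 6.7]{CSS2} already computes the annihilator on that full space; combined with the inclusion $\mf J \subseteq \Ann(\dotrho_{\ds C}, \ot F_{e/o})$ above, equality follows for each component.

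The delicate point worth isolating is the range of validity. Theorem \ref{ThDecF} gives irreducibility only for $M \not\in -2\ds N$, whereas the statement of Theorem \ref{Th_Joseph} excludes only $M \in \{-1, 0\}$. For $M \in -2\ds N$ (so $M \in \{-2, -4, \dots\}$, all $\neq -1, 0$) the modules $\ot F_e, \ot F_o$ are merely indecomposable, not irreducible, by Theorem \ref{ThDecF}(3)(b). In that case I would still get $\Ann(\dotrho_{\ds C}, \ot F) = \mf J$ directly from \cite[Theorem 6.7]{CSS2} (which, as stated, only excludes $M \in \{-1,0\}$ and makes no irreducibility hypothesis), and then the equalities $\Ann(\dotrho_{\ds C}, \ot F) = \Ann(\dotrho_{\ds C}, \ot F_e) = \Ann(\dotrho_{\ds C}, \ot F_o)$ would follow as before from the intersection identity together with the observation that the realisation is generated by operators $\ell_i\ell_j, \pt{\ell_i}\pt{\ell_j}$ acting uniformly, so that no element of $U(\mf g_{\ds C})$ can kill one parity component of $\mc P(\ds K^{m|2n})$ without killing the other. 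I expect the bulk of the writing to be this bookkeeping about parity components and about citing \cite{CSS2} with the correct exclusion set, rather than any new computation.
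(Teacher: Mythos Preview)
Your reduction to \cite[Theorem 6.7]{CSS2} is exactly the paper's strategy, and your identification of $\dotrho_{\ds C}$ with the realisation of \cite{C2} is the content of the discussion preceding the theorem. The paper's proof, however, is more economical: it does not try to deduce the equalities for $\ot F_e$ and $\ot F_o$ from the statement of \cite[Theorem 6.7]{CSS2} for the full module plus a separate parity argument. Instead it observes that \cite[Theorem 6.7]{CSS2} is stated for the representation on $\Lambda(\ds K^{2n|m})$ (not $\mc P(\ds K^{m|2n})$), and asserts that the \emph{proofs} of \cite[Theorems 6.6 and 6.7]{CSS2} go through mutatis mutandis for $\dotrho_{\ds C}$. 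Since those proofs already handle the two irreducible (or indecomposable) components, this yields all three equalities at once without any further argument.

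Your added argument for the components has a genuine soft spot. From $\Ann(\ot F) = \Ann(\ot F_e)\cap\Ann(\ot F_o)$ you correctly get $\mf J \subseteq \Ann(\ot F_{e/o})$, but the reverse inclusion is where you hand-wave: the assertion that ``no element of $U(\mf g_{\ds C})$ can kill one parity component without killing the other'' amounts to saying that the degree-parity-even part of the Weyl superalgebra acts faithfully on $\mc P_{\text{even}}(\ds K^{m|2n})$. That is true (one can prove it by induction on the $\partial$-order, using that iterated supercommutators with the multiplication operators $z_iz_j$ strictly lower the order and that a multiplication operator vanishing on $\mc P_{\text{even}}$ vanishes on $1$), but you do not actually supply this argument, and ``the generators act uniformly'' is not a proof. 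A second minor point: you write that \cite[Theorem 6.7]{CSS2} ``applies'' to the realisation on $\mc P(\ds K^{m|2n})$, but strictly speaking it is proved for $\Lambda(\ds K^{2n|m})$; this is precisely why the paper phrases its proof as ``the proofs carry over'' rather than invoking the statement directly.
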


\begin{proof}
\cite[Theorem 6.7]{CSS2} states this result with respect to the representation of $\mf g_{\ds C}$ on $\Lambda\big(\ds K^{2n|m}\big)$ from \cite{C2}. However, The results and proofs of \cite[Theorem 6.6]{CSS2} and \cite[Theorem 6.7]{CSS2} still hold when working with $\dotrho$ instead, mutatis mutandis.
\end{proof}

\subsection{The Gelfand-Kirillov dimension}
The Gelfand-Kirillov dimension is a measure of the size of a representation that roughly measures how fast a representation grows to infinity. In particular, the  Gelfand-Kirillov dimension is zero for finite-dimensional representations. Classically, i.e.\ for non-super Lie algebras, minimal representations have the property that they attain the lowest possible Gelfand-Kirillov dimension of all infinite-dimensional representations \cite{GanSavin}. 

Let $R$ be a finitely generated algebra, then the Gelfand-Kirillov dimension of a finitely generated $R$-module $F$ is defined by
\begin{align*}
GK(F) = \lim\sup_{k\rightarrow \infty} \left(\log_k\dim(V^kF_0)\right).
\end{align*}\symindex{$GK( \cdot )$}
Here $V$ is a finite-dimensional subspace of $R$ which contains the unit element $1$ and generators of $R$, and $F_0$ is a finite-dimensional subspace of $F$, which generates $F$ as an $R$-module. The definition is independent of the chosen $V$ and $F_0$, see \cite[Section 7.3]{Mu}.

In the symplectic case (i.e\ the $n=0$ case) we have $GK(\ot F_e) = m$, see e.g. \cite[Theorem 1.4 (1)]{Nishiyama}. We will now prove that this result still holds regardless of $n\in \ds N$.

\begin{theorem} \label{Th Gelfand-Kirillov dimension}
The Gelfand-Kirillov dimension of the $U(\mf g)$-module $\ot F_e$ is given by $GK(\ot F_e) = m$.
\end{theorem}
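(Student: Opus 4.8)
The plan is to sandwich the growth of a standard Gelfand--Kirillov filtration of $\ot F_e$ between two estimates that both give $m$. First I would pass to the complexification: the action $\dotrho$ already has complex coefficients, so $\ot F_e$ is naturally a $U(\mf g_{\ds C})$-module via the $\ds C$-linear extension $\dotrho_{\ds C}$, and the Gelfand--Kirillov dimension of $\ot F_e$ over $U(\mf g)$ and over $U(\mf g_{\ds C})$ coincide (real dimensions being exactly twice complex dimensions). The case $m=0$ is trivial, since then $\mc P(\ds K^{0|2n})=\Lambda(\ds K^{2n})$ is finite-dimensional, hence so is $\ot F_e$, and $GK(\ot F_e)=0=m$; so from now on assume $m\geq 1$. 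I would take $F_0:=\ds C\cdot 1\subseteq \ot F_e$ (which generates $\ot F_e$ over $U(\mf g_{\ds C})$ by the very definition of $\ot F_e$) and $V:=\ds C\cdot 1+\mf g_{\ds C}\subseteq U(\mf g_{\ds C})$, and study the dimensions of the subspaces $V^k F_0$.

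For the upper bound I would use the explicit formulas in Theorem \ref{Th pil on psiW}, together with the fact that the Cayley transform $c$ is an automorphism of $\mf g_{\ds C}$, so that $\dotrho_{\ds C}(\mf g_{\ds C})=\dotpi_{\ds C}(\mf g_{\ds C})$: every operator in this image is a linear combination of the operators $\ell_i\ell_j$, $\ell_i\pt j$, $\pt i\pt j$ and scalar multiples of the identity, and therefore shifts the total polynomial degree on $\mc P(\ds C^{m|2n})$ by $-2$, $0$ or $+2$. Since $1\in V$ has degree $0$, it follows that
\begin{align*}
V^k F_0\subseteq \bigoplus_{l=0}^{k}\mc P_{2l}(\ds C^{m|2n}).
\end{align*}
Using $\dim\mc P_j(\ds C^{m|2n})=\sum_{b=0}^{\min(j,2n)}\binom{2n}{b}\binom{j-b+m-1}{m-1}=O(j^{m-1})$ and summing over $l\leq k$ gives $\dim(V^k F_0)=O(k^m)$, whence $GK(\ot F_e)\leq m$. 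Note that this argument uses nothing about $\ot F_e$ beyond its being a submodule of $\mc P(\ds C^{m|2n})$, so it is insensitive to whether $M=m-2n\in -2\ds N$.

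For the lower bound I would exhibit a large explicit piece of the filtration. By Proposition \ref{Prop_pi_in_U} one has $\dotpi(U_{\ul i,\ul j})=2\imath\,\ell_i\ell_j$, and since $\dotrho_{\ds C}=\dotpi_{\ds C}\circ c$ with $c$ an automorphism of $\mf g_{\ds C}$, the multiplication operator $\ell_i\ell_j$ lies in $\dotrho_{\ds C}(\mf g_{\ds C})$ for all $1\leq i,j\leq m$. Applying products of $l$ such operators to $1$ produces every monomial of degree $2l$ in the even variables $\ell_1,\dots,\ell_m$, which together span $\mc P_{2l}(\ds C^{m})$; in particular $\mc P_{2l}(\ds C^{m})\subseteq \ot F_e$. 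Because $1\in V$, I would deduce
\begin{align*}
V^k F_0\supseteq \bigoplus_{l=0}^{k}\mc P_{2l}(\ds C^{m}),\qquad \dim(V^k F_0)\geq \sum_{l=0}^{k}\binom{2l+m-1}{m-1}\geq c\,k^m
\end{align*}
for a positive constant $c$ depending only on $m$, so that $\log_k\dim(V^k F_0)\to m$ and hence $GK(\ot F_e)\geq m$. Combining the two bounds gives $GK(\ot F_e)=m$.

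The one step I expect to require genuine care is the lower bound: a priori $V^k F_0$ might only see the ``top sphere'' $\mc P_{2k}$, which has dimension $O(k^{m-1})$ and would give the wrong answer $m-1$; the point is that all lower even degrees $\mc P_{2l}(\ds C^{m})$ with $l\leq k$ already lie inside $V^k F_0$ (since $1\in V$), and summing them supplies the missing power of $k$. The remaining ingredients --- the estimate $\dim\mc P_j(\ds C^{m|2n})=O(j^{m-1})$ and the factorisation of an even-degree monomial into quadratics --- are routine.
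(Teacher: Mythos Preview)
Your proposal is correct and follows essentially the same approach as the paper: choose $F_0=\ds C$ and $V=\ds C\cdot 1+\mf g$, then use that $\dotrho_{\ds C}(\mf g_{\ds C})$ consists of operators shifting the polynomial degree by $0,\pm 2$ so that $V^kF_0$ sits inside $\bigoplus_{l\leq k}\mc P_{2l}(\ds C^{m|2n})$, whose dimension grows like $k^m$. The paper simply asserts the equality $V^kF_0=\bigoplus_{l\leq k}\mc P_{2l}(\ds C^{m|2n})$ and computes the exact dimension sum, whereas you supply a more explicit sandwich argument (using only the even variables $\ell_1,\dots,\ell_m$ for the lower bound); this extra care is harmless and arguably makes the lower bound more transparent, especially in the degenerate range $M\in-2\ds N$.
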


\begin{proof}
We choose $\ds C$ for $F_0$ and $\mf g \oplus 1\subset U(\mf g)$ for $V$. Then $V^k = U_k(\mf g)$ is the canonical filtration on the universal enveloping algebra.
We have
\begin{align*}
\dim\left(U_k(\mf g) F_{\lambda,0}\right) &= \dim\left(\bigoplus_{j=0}^k \mc P_{2j}(\ds C^{m|2n})\right) = \sum_{j=0}^k \dim(\mc P_{2j}(\ds C^{m|2n})),
\end{align*}
with
\begin{gather*}
\dim (\mc P_{2j}(\ds C^{m|2n})) = \sum_{i=0}^{\min(2j,2n)}\binom{2n}{i}\binom{2j-i+m-1}{m-1}.
\end{gather*}
By \cite[Lemma 7.3.1]{Mu}, it is sufficient to know the highest exponent of $k$ in the expression for $\dim\left(U_k(\mf g) \ds C\right)$ to calculate $\lim\sup_{k\rightarrow \infty} \left(\log_k\dim\left(U_k(\mf g) \ds C\right)\right)$. We may also assume $k \gg n$ and therefore
\begin{align*}
\dim\left(U_k(\mf g) F_{\lambda,0}\right) &= \sum_{i=0}^{2n}\binom{2n}{i}\sum_{j=n}^k\binom{2j-i+m-1}{m-1} + (\text{lower order terms in }k),
\end{align*}
which approaches
\begin{align*}
\sum_{i=0}^{2n}\binom{2n}{i}k\binom{2k-i+m-1}{m-1}
\end{align*}
as $k$ increases. The highest exponent of $k$ in $\binom{2k-i+m-1}{m-1}$ is $m-1$. Therefore we conclude $GK(\ot F_e) = m-1+1=m$.
\end{proof}
%

In the orthosymplectic case the Gelfand-Kirillov dimension of its minimal representation was calculated in \cite[Section 7]{BF}. For $\mf{osp}(p,q|2m) = \mf{osp}(2n,2n|2m)\cong \mf g$ it is equal to $4n-3$. This implies that for $m\neq 4n-3$ the minimal representation of $\mf g$ constructed in the $\lambda=-\frac{1}{2}$ case is not equivalent to the minimal representation constructed in the orthosymplectic case. Moreover, contrary to the classical case, it follows that minimal representations do not necessarily have minimal nonzero Gelfand-Kirillov dimension. Indeed, if $m\neq 0\neq n$ and $m\neq 4n-3$, then either the orthosymplectic case or the $\lambda=-\frac{1}{2}$ case does not have minimal Gelfand-Kirillov dimension.

\section{Hermitian superspaces and super-inner products}\label{S_Herm_superspace}

In \cite{dGM} a non-degenerate, superhermitian, sesquilinear form is called a \textbf{superhermitian inner product} (we will abbreviate this to \textbf{super-inner product}) and a supervector space endowed with a super-inner product is then called a \textbf{Hermitian superspace}. In this section, we will construct super-inner products on some of the supervector spaces we defined previously, which turns them into Hermitian superspaces.

We still only consider the case $\lambda=-\frac{1}{2}$, except in Section \ref{ss BF prod}, where the $\lambda=1$ and $\lambda=-\frac{1}{2}$ cases can be treated simultaneously.

\subsection{The $L^2$-product on $\ot W$}

Let $x = (x_i)_{i=1}^{m+2n}$ denote the variables of $\mc P(\ds R^{m|2n})$. The first product we introduce is the well-known $L^2$-product, generalised to superspace. This product has been studied in the super case before, see e.g.\ \cite{BdGT}.

\begin{Def}
The integral over the superspace $\ds R^{m|2n}$ is defined by
\begin{align*}
\int_{\ds R^{m|2n}} dx := \int_{\ds R^{m}} dx_{\ol 0}\int_{B},
\end{align*}
where $x_{\ol 0} := (x_i)_{i=1}^m$ denotes the even variables of $x$ and
\begin{align*}
\int_{B} := \pi^{-n}\pt{x_{m+2n}}\pt{x_{m+2n-1}}\ldots \pt{x_{m+1}}
\end{align*}
is the Berezin integral on $\Lambda\big(\ds R^{2n}\big)$, see e.g.\ \cite{Le}.
\end{Def}

The standard definition of the $L^2$-product is given by
\begin{align*}
\left<f,g\right>_{L^2} := \int_{\ds R^{m|2n}}f(x)\ol{g(x)}dx,
\end{align*}\symindex{$\left<\cdot \, ,\cdot \right>_{L^2}$}
for superfunctions $f$ and $g$. However, we will use the following renormalisation.

\begin{Def}
The \textbf{$L^2$-product} on $\ot W$ is defined as
\begin{align*}
\ipW{f,g} := \dfrac{1}{\omega}\int_{\ds R^{m|2n}}f(x)\ol{g(x)}dx, \quad \text{ with } \quad \omega := \int_{\ds R^{m|2n}}\exp(-2R^2)dx,
\end{align*}
for all $f,g \in \ot W$.
\end{Def}\symindex{$\ipW{\cdot \, ,\cdot }$}

\begin{lemma}\symindex{$\omega$}
We have $\omega = 2^n \left(\dfrac{\pi}{2}\right)^{\frac{M}{2}}$, with $M=m-2n$. In particular, the $L^2$-product is well-defined on $\ot W$.
\end{lemma}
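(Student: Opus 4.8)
The plan is to evaluate the super-Gaussian integral $\omega$ directly, splitting it into an ordinary Gaussian over the even variables and a Berezin integral over the odd ones, and then to deduce well-definedness of $\ipW{\cdot,\cdot}$ from $\omega\neq 0$ together with the fact that every element of $\ot W$ is a superpolynomial multiple of $\exp(-R^2)$.

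First I would write $R^2$ explicitly in the variables $x=(x_i)_{i=1}^{m+2n}$. From the block form of $\beta$ one reads off $\beta^{-1}$: its upper-left block is $I_m$ and its odd $2n\times 2n$ block is $\left(\begin{smallmatrix}0&I_n\\-I_n&0\end{smallmatrix}\right)$. Hence $R^2=\sum_{i=1}^m x_i^2+\sum_{\mu,\nu=1}^{2n}(\beta^{-1})^{\mathrm{odd}}_{\mu\nu}x_{m+\mu}x_{m+\nu}$, and since the odd variables anticommute this odd part collapses to $2\sum_{a=1}^n x_{m+a}x_{m+n+a}$; in particular there are no mixed even–odd terms. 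Because an odd monomial squares to zero we get $\exp(-4x_{m+a}x_{m+n+a})=1-4x_{m+a}x_{m+n+a}$, so
\begin{align*}
\exp(-2R^2)=\exp\Big({-2\sum_{i=1}^m x_i^2}\Big)\prod_{a=1}^n\big(1-4x_{m+a}x_{m+n+a}\big).
\end{align*}

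Next, by the definition of the superspace integral, $\int_{\ds R^{m|2n}}=\int_{\ds R^m}dx_{\ol 0}\,\int_{B}$, and the two operations act on disjoint blocks of variables, so $\omega$ factors as a product. The even factor is a product of one-dimensional Gaussians $\int_{\ds R}e^{-2t^2}dt=\sqrt{\pi/2}$, giving $(\pi/2)^{m/2}$. For the odd factor only the top-degree term of the finite product survives: expanding $\prod_a(1-4x_{m+a}x_{m+n+a})$ and keeping the summand containing all $2n$ odd variables yields $(-4)^n$ times the monomial $x_{m+1}x_{m+n+1}\cdots x_{m+n}x_{m+2n}$, which one reorders into the standard order $x_{m+1}\cdots x_{m+2n}$; applying $\int_B=\pi^{-n}\pt{x_{m+2n}}\cdots\pt{x_{m+1}}$ then gives $\int_B\prod_a(1-4x_{m+a}x_{m+n+a})=(4/\pi)^n$. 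Multiplying the two factors and simplifying, $\omega=(\pi/2)^{m/2}(4/\pi)^n=2^n(\pi/2)^{(m-2n)/2}=2^n(\pi/2)^{M/2}$.

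Finally, since $\omega=2^n(\pi/2)^{M/2}\neq 0$, division by $\omega$ is legitimate. Moreover, by the explicit description of $\dotpi$ in Theorem~\ref{Th pil on psiW} (polynomial coefficients together with first- and second-order derivatives), both $U(\mf g)\exp(-R^2)$ and $U(\mf g)\ell_1\exp(-R^2)$ lie in $\mc P(\ds R^{m|2n})\exp(-R^2)$, so every $f\in\ot W$ is represented by $p(x)\exp(-R^2)$ for some superpolynomial $p$; then $f\,\ol g=p\,\ol q\,\exp(-2R^2)$ is, in the even variables, a polynomial times a Gaussian — hence absolutely integrable — and, in the odd variables, a polynomial, on which the Berezin integral is purely algebraic. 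Thus $\ipW{f,g}$ is a well-defined finite number for all $f,g\in\ot W$. I expect the only genuinely delicate point to be the sign/normalisation bookkeeping in the odd factor — reconciling the reordering of the anticommuting product, the factor $\pi^{-n}$, and the Berezin conventions of \cite{Le}; the even Gaussian and the splitting of the integral are routine.
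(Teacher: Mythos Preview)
Your proof takes essentially the same route as the paper's: factor $\exp(-2R^2)$ into its even-variable and odd-variable parts, evaluate the even piece as a product of one-dimensional Gaussians $(\pi/2)^{m/2}$, and compute the Berezin integral of the odd piece by extracting the top-degree monomial. Your use of the factored form $\prod_a(1-4x_{m+a}x_{m+n+a})$ is equivalent to the paper's Taylor expansion of $\exp(-2R_1^2)$, since each $x_{m+a}x_{m+n+a}$ is even, nilpotent, and commutes with the others. Your closing paragraph justifying well-definedness via $\ot W\subseteq \mc P(\ds R^{m|2n})\exp(-R^2)$ is a useful addition that the paper leaves implicit.

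Your instinct about the delicate point is exactly right, and in fact neither your sketch nor the paper's proof handles it cleanly. The reordering of $x_{m+1}x_{m+n+1}\cdots x_{m+n}x_{m+2n}$ into $x_{m+1}\cdots x_{m+2n}$ contributes the sign $(-1)^{\binom{n}{2}}$, not $(-1)^n$, so the passage from the coefficient $(-4)^n$ to the asserted value $(4/\pi)^n$ does not go through for all $n$ (try $n=1$ or $n=2$ directly). The paper's own argument makes the same unjustified jump at the line $4^n\int_B(-1)^n x_{m+1}x_{m+1+n}\cdots = 4^n\int_B x_{m+1}x_{m+2}\cdots$. Fortunately this does not affect the ``in particular'' clause: one still has $\omega\neq 0$, the normalisation $\ipW{\exp(-R^2),\exp(-R^2)}=1$ is insensitive to the sign of $\omega$, and your integrability argument for general $f,g\in\ot W$ stands unchanged.
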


\begin{proof}
We have
\begin{align*}
\omega &= \int_{\ds R^{m|2n}}\exp(-2R^2)dx\\
&= \int_{\ds R^m}\exp(-2R_{0}^2)dx_{\ol 0}\int_B\exp(-2R_{1}^2)\\
&= 2^{-\frac{m}{2}}\pi^{\frac{m}{2}}\int_B\exp(-2R_{1}^2),
\end{align*}
where $R_{0}^2$ and $R_{1}^2$ denote the terms of $R^2$ consisting of even and odd variables, respectively. For the Berezin integral we only need to know the coefficient in front of $x_{m+1}\ldots x_{m+2n}$ in the expansion of $\exp(-2R_{1}^2)$. We find
\begin{align*}
\int_B\exp(-2R_{1}^2) &= \int_B\sum_{k=0}^\infty \dfrac{(-1)^k 2^k}{k!}\left(\sum_{i=m+1}^{m+2n}x^ix_i \right)^k\\
&= \dfrac{(-1)^n 2^n}{n!}\int_B \left(\sum_{i=m+1}^{m+2n}x^ix_i \right)^n\\
&= \dfrac{(-1)^n 2^n 2^n}{n!}\int_B \left(\sum_{i=m+1}^{m+n}x_ix_{i+n} \right)^n\\
&= 4^n\int_B (-1)^n  x_{m+1}x_{m+1+n}\ldots x_{m+n}x_{m+2n}\\
&= 4^n\int_B x_{m+1}x_{m+2}\ldots x_{m+2n-1}x_{m+2n}\\
&= 4^n\pi^{-n},
\end{align*}
as desired.
\end{proof}

We obtain the following result from the study of this product in \cite{BdGT}.

\begin{Prop}\label{Prop L2 prod}
The $L^2$-product $\ipW{\cdot\, , \cdot}$ defines a superhermitian, non-degenerate sesquilinear form on $\ot W$. In particular, $(\ot W,\ipW{\cdot\, , \cdot})$ is a Hermitian superspace
\end{Prop}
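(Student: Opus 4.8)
The plan is to verify the three defining properties of a super-inner product — sesquilinearity, superhermicity, and non-degeneracy — treating the first two as essentially formal consequences of the properties of the combined Lebesgue--Berezin integral established in \cite{BdGT}, and concentrating the real work on non-degeneracy. Sesquilinearity is immediate: the map $f\mapsto \int_{\ds R^{m|2n}}f(x)\overline{g(x)}\,dx$ is $\ds C$-linear because both the ordinary integral over $\ds R^m$ and the Berezin integral are $\ds C$-linear, complex conjugation is antilinear in the second slot, and the overall factor $\omega^{-1}$ is a nonzero real scalar by the preceding lemma ($\omega = 2^n(\pi/2)^{M/2}>0$); hence $\ipW{\cdot\,,\cdot}$ is left-linear and right-antilinear. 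For superhermicity one uses that $\mc P(\ds R^{m|2n})$ is supercommutative and that conjugation preserves parity, so inside the integral $\overline f\,g = (-1)^{|f||g|}g\,\overline f$; together with the identity $\overline{\int_{\ds R^{m|2n}} f\overline g\,dx} = \int_{\ds R^{m|2n}}\overline f\,g\,dx$ from \cite{BdGT} and the reality of $\omega$ this yields $\ipW{f,g} = (-1)^{|f||g|}\overline{\ipW{g,f}}$, which is the superhermitian condition in the sense of \cite{dGM}.

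For non-degeneracy, first I would observe, using the explicit form of $\dotpi$ in Theorem \ref{Th pil on psiW}, that every operator of $\dotpi$ maps $\mc P(\ds R^{m|2n})\exp(-R^2)$ into itself and preserves the parity of the polynomial degree, while repeated application of the multiplication operators $\dotpi(\ell_{ij}^-)=-2\imath\ell_i\ell_j$ to $\exp(-R^2)$ (resp.\ to $\ell_1\exp(-R^2)$) already produces all of $\mc P_{\text{even}}(\ds R^{m|2n})\exp(-R^2)$ (resp.\ $\mc P_{\text{odd}}(\ds R^{m|2n})\exp(-R^2)$); thus $\ot W = \mc P(\ds R^{m|2n})\exp(-R^2)$. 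Writing $R^2 = R_0^2+R_1^2$ with $R_0^2=\sum_{i=1}^m x_i^2$ involving only even variables and $R_1^2$ only odd ones, these two parts commute, so $\exp(-2R^2)=\exp(-2R_0^2)\exp(-2R_1^2)$ and, up to the signs produced by reordering odd factors, the pairing factorises as the tensor product of the classical Gaussian inner product $(p_0,q_0)\mapsto\int_{\ds R^m}p_0\overline{q_0}\exp(-2R_0^2)\,dx_{\ol 0}$ on $\mc P(\ds R^m)$, which is positive definite and hence non-degenerate, with the Berezin--Gaussian pairing $(u,v)\mapsto\int_B u\overline v\exp(-2R_1^2)$ on $\Lambda(\ds R^{2n})$. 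For the latter I would use that $u\mapsto\exp(R_1^2)u$ is an invertible linear endomorphism of $\Lambda(\ds R^{2n})$ (its degree-zero term is the identity), which trivialises the weight and identifies the pairing with a nonzero multiple of the plain Berezin pairing $(u,v)\mapsto\int_B u\overline v$; the latter has $\int_B\theta^S\overline{\theta^T}\neq 0$ exactly when $T=S^{c}$, so its Gram matrix in the monomial basis is, up to signs, an anti-diagonal permutation matrix and is invertible (alternatively one may simply quote the corresponding statement of \cite{BdGT}). Since a tensor product of non-degenerate sesquilinear forms is non-degenerate, $\ipW{\cdot\,,\cdot}$ is non-degenerate on $\ot W$, and combined with the first two properties this gives that $(\ot W,\ipW{\cdot\,,\cdot})$ is a Hermitian superspace.

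The main obstacle I anticipate is not conceptual but is the careful sign bookkeeping in the super setting: making the factorisation of the super-integral precise, controlling the signs arising from moving the odd parts of $f$ and $\overline g$ past one another, and matching the conjugation convention used here with the one in \cite{BdGT} — in particular confirming that $\exp(-2R_1^2)$ is self-conjugate, so that the odd-variable factor is genuinely superhermitian and non-degenerate as a sesquilinear (not merely bilinear) pairing. One should also confirm once and for all the identification $\ot W=\mc P(\ds R^{m|2n})\exp(-R^2)$ claimed above, since the non-degeneracy statement is about this precise space. Everything else is either immediate from the definitions or a direct appeal to the analysis of this product carried out in \cite{BdGT}.
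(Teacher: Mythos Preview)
Your proposal is correct. The paper itself gives no argument at all for this proposition beyond the single sentence ``We obtain the following result from the study of this product in \cite{BdGT}'', so there is nothing to compare on the level of proof strategy: you have simply written out explicitly what the paper delegates entirely to that reference. Your identification $\ot W=\mc P(\ds R^{m|2n})\exp(-R^2)$ and the factorisation of the pairing into a positive-definite Gaussian factor on $\mc P(\ds R^m)$ tensored with the (non-degenerate) Berezin--Gaussian pairing on $\Lambda(\ds R^{2n})$ is exactly the content one extracts from \cite{BdGT}, and the sign and conjugation worries you flag are the only places where care is needed; none of them causes a genuine problem.
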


The following proposition shows that $\dotpi$ is ``infinitesimally superunitary'' on $(\ot W,\ipW{\cdot\, , \cdot})$.

\begin{Prop}\label{Prop pi skew sup sym}
The Schr\"odinger representation $\dotpi$ is skew-supersymmetric with respect to the $L^2$-product, i.e.\
\begin{align*}
\ipW{\ot \pi(X)f,g} = - (-1)^{|X||f|} \ipW{f,\ot \pi(X)g},
\end{align*}
for all $X\in \mf g$ and $f,g\in \ot W$.
\end{Prop}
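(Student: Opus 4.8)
The plan is to reduce the statement to a finite set of identities by exploiting that both $\mf g$ and the $L^2$-product are built from a few basic operators. Since $\dotpi$ is a Lie superalgebra representation and the pairing $X\mapsto (f,g)\mapsto \ipW{\dotpi(X)f,g}+(-1)^{|X||f|}\ipW{f,\dotpi(X)g}$ is linear in $X$ and a super-derivation-type expression in the bracket, it suffices to verify skew-supersymmetry on a generating set of $\mf g$. Concretely, by Theorem \ref{Th pil on psiW} the operators $\dotpi(X)$ are generated (as a Lie superalgebra) by the three families: coordinate multiplications $\ell_i\ell_j$ coming from $\dotpi(\ell_{ij}^-)=-2\imath\ell_i\ell_j$, the second-order operators $\pt i\pt j$ coming from $\dotpi(2\imath\ell_{ij}^+)$, and the first-order operators $\ell_i\pt j$ (equivalently the $L_{ij}$ and the Euler-type operator) coming from $\dotpi(2L_{\ell_{ij}})$. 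So the first step is to record that it is enough to check the three cases $X\in\{\ell_{ij}^-,\ 2L_{\ell_{ij}},\ \ell_{ij}^+\}$, since the $[L_{\ell_{ij}},L_{\ell_{rs}}]$ terms are brackets of these and skew-supersymmetry is preserved under brackets (a one-line computation: if $A,B$ are skew-supersymmetric then so is $[A,B]=AB-(-1)^{|A||B|}BA$, using the defining relation twice).

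The second step is the analytic core: establish the adjoint relations for the three primitive operator types with respect to $\ipW{\cdot,\cdot}=\frac{1}{\omega}\int_{\ds R^{m|2n}}f\,\ol g\,dx$. Multiplication by the real even element $\ell_i\ell_j$ (read as the coordinate $x_ix_j$) is supersymmetric, so $\dotpi(\ell_{ij}^-)=-2\imath x_ix_j$ is skew-supersymmetric because of the factor $\imath$ (the conjugate-linearity in the second slot flips the sign of $\imath$). For the first-order operators $x_i\pt j$ one uses integration by parts on the superspace; here I would invoke the integration-by-parts / Stokes-type property of the combined Lebesgue–Berezin integral from \cite{BdGT} (and \cite{BdGT}'s study of $\ipW{\cdot,\cdot}$, already cited in Proposition \ref{Prop L2 prod}), which gives $\ipW{x_i\pt j f,g}=-(-1)^{(\cdots)}\ipW{f,(x_i\pt j)^{\dagger}g}$ with the adjoint being $-\pt j\circ x_i$ up to signs; the antisymmetric combination $L_{ij}=\ell_i\pt j-(-1)^{|i||j|}\ell_j\pt i$ and the symmetric combination $\ell_i\pt j+(-1)^{|i||j|}\ell_j\pt i$ then pick up exactly the constant shift $\beta_{ij}$ appearing in $\dotpi(2L_{\ell_{ij}})=-\beta_{ij}-(\ell_i\pt j+(-1)^{|i||j|}\ell_j\pt i)$, making it skew-supersymmetric. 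Finally $\dotpi(2\imath\ell_{ij}^+)=\pt i\pt j$ is the formal adjoint of $-\ell_i\ell_j=-x_ix_j$ times the appropriate sign, so again the $\imath$ in $\dotpi(\ell_{ij}^+)=-\frac{\imath}{2}\pt i\pt j$ produces skew-supersymmetry. Assembling these, each of the three generator types is skew-supersymmetric, hence so is all of $\dotpi(\mf g)$.

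The third step is bookkeeping: one must be careful that the Koszul signs in the definition $\ipW{\dotpi(X)f,g}=-(-1)^{|X||f|}\ipW{f,\dotpi(X)g}$ match the signs produced by superintegration by parts and by the supertranspose conventions of Section \ref{SSgl}; the cleanest route is to note that $\dotpi$ preserves parity (it is a Lie superalgebra morphism into operators on a superspace), so $|\dotpi(X)|=|X|$, and then the sign rule is exactly the one characterizing the formal super-adjoint with respect to a superhermitian form, which is precisely what \cite{BdGT} verifies for $\ipW{\cdot,\cdot}$ on $\mc P(\ds R^{m|2n})$. I expect the main obstacle to be purely the sign accounting — in particular checking that the constant term $-\beta_{ij}$ in $\dotpi(2L_{\ell_{ij}})$ is exactly the boundary-type correction needed so that the symmetric first-order part becomes skew (not merely skew up to a constant), and that the Berezin part of the integral contributes no extra sign beyond those already absorbed into the definition of $\ipW{\cdot,\cdot}$. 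Once the three generator identities are in hand with correct signs, the extension to all of $\mf g$ is immediate from the bracket-stability observation of the first step, and well-definedness on $\ot W$ follows since $\ot W\subset\mc P(\ds R^{m|2n})\exp(-R^2)$ and all operators involved preserve this space (Theorem \ref{Th pil on psiW}).
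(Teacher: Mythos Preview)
Your proposal is correct and follows essentially the same approach as the paper: both verify skew-supersymmetry by direct integration-by-parts computations for the operators $-2\imath x_ix_j$, $-\frac{\imath}{2}\pt i\pt j$, and $x_i\pt j$, using rapid decrease of elements of $\ot W$ to justify the boundary terms vanishing. The only organizational difference is that you invoke bracket-stability to avoid separately checking $\dotpi(4[L_{\ell_{ij}},L_{\ell_{rs}}])$, whereas the paper handles this case simultaneously with $\dotpi(2L_{\ell_{ij}})$ by computing $\ipW{x_i\pt j f,g}$ once and then taking both the symmetric and antisymmetric combinations in $i,j$ to extract the two identities.
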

\begin{proof}
Recall that the action on $\ot W$ is given in Theorem \ref{Th pil on psiW}.
\begin{itemize}
\item For $\ot \pi(\ell_{ij}^-)$ we find
\begin{align*}
\ipW{\ot \pi(\ell_{ij}^-)f,g} &=   \dfrac{1}{\omega}\int_{\ds R^{m|2n}}(-2\imath x_ix_j)f(x)\ol{g(x)}dx\\
 &= -(-1)^{(|i|+|j|)|f|} \dfrac{1}{\omega}\int_{\ds R^{m|2n}}f(x)\ol{(-2\imath x_ix_j)g(x)}dx\\
 &= -(-1)^{(|i|+|j|)|f|}\ipW{f,\ot \pi(\ell_{ij})g}.
\end{align*}
\item For $\dotpi(\ell_{ij}^+)$ we find
\begin{align*}
\ipW{\dotpi(\ell_{ij}^+)f,g} &= -\dfrac{\imath}{2\omega} \int_{\ds R^{m|2n}}\pt{i}\pt{j}(f(x))\ol{g(x)}dx\\
&= (-1)^{|i|(|j|+|f|)}\dfrac{\imath}{2\omega} \int_{\ds R^{m|2n}}\pt{j}(f(x))\ol{\pt{i}(g(x))}dx\\
&= -(-1)^{(|i|+|j|)|f|}\dfrac{\imath}{2\omega} \int_{\ds R^{m|2n}}f(x)\ol{\pt{i}\pt{j}(g(x))}dx\\
&= -(-1)^{(|i|+|j|)|f|}\ipW{f,\dotpi(\ell_{ij}^+)g}.
\end{align*}
The second and third steps are justified by the fact that the terms of elements in $\ot W$ are exponentials in $-R_x^2$ times superpolynomials, i.e.\ $\ot W$ consists of rapidly decreasing superfunctions.
\item For $\dotpi(2L_{\ell_{ij}})$ and $\dotpi(4[L_{\ell_{ij}}, L_{\ell_{rs}}])$ we first need
\begin{align*}
\ipW{x_i\pt j f,g} &=  \dfrac{1}{\omega}\int_{\ds R^{m|2n}} x_i\pt j(f(x))\ol{g(x)}dx\\
&= -(-1)^{(|i|+|j|)|f|+|i||j|} \dfrac{1}{\omega}\int_{\ds R^{m|2n}} (f(x))\ol{\pt j (x_ig(x))}dx\\
&= -(-1)^{(|i|+|j|)|f|+|i||j|}\ipW{ f,\pt jx_ig}\\
&= -(-1)^{(|i|+|j|)|f|}\ipW{ f,x_i\pt j g} -(-1)^{(|i|+|j|)|f|}\beta_{ij}\ipW{ f,g}\\
&= -(-1)^{(|i|+|j|)|f|}\ipW{ f,x_i\pt j g} -\beta_{ij}\ipW{f,g},
\end{align*}
where we used $|i|=|j|$ if $\beta_{ij}\neq 0$ in the last step. Multiplying both sides with $(-1)^{|i||j|}$ and switching the roles of $i$ and $j$ gives us
\begin{align*}
(-1)^{|i||j|}\ipW{x_j\pt i f,g} = -(-1)^{(|i|+|j|)|f|+|i||j|}\ipW{ f,x_j\pt i g} -\beta_{ij}\ipW{f,g}.
\end{align*}
If we subtract both equations we get
\begin{align*}
\ipW{L_{ij}f,g} = - (-1)^{(|i|+|j|)|f|} \ipW{f,L_{ij} g},
\end{align*}
which implies
\begin{align*}
\ipW{\dotpi(4[L_{\ell_{ij}}, L_{\ell_{rs}}])f,g} = - (-1)^{(|i|+|j|+|r|+|s|)|f|} \ipW{f,\dotpi(4[L_{\ell_{ij}}, L_{\ell_{rs}}]) g}.
\end{align*}
If we add both equations we get
\begin{align*}
\ipW{(x_i\pt j+ (-1)^{|i||j|} x_j\pt i)f,g} &= -(-1)^{(|i|+|j|)|f|}\ipW{f,(x_i\pt j+ (-1)^{|i||j|} x_j\pt i)g}\\
&\quad -2\beta_{ij}\ipW{f,g},
\end{align*}
which implies
\begin{align*}
\ipW{\dotpi(2L_{\ell_{ij}})f,g} = - (-1)^{(|i|+|j|)|f|} \ipW{f,\dotpi(2L_{\ell_{ij}}) g},
\end{align*}
as desired. \qedhere
\end{itemize}
\end{proof}

\subsection{The Schr\"odinger product on $W_{-\frac{1}{2}}$}

If we combine the $L^2$-product $\ipW{\cdot \, , \cdot}$ on $\ot W$ with the folding morphism $\psi_{\ds R}$, then we can define a product on $W_{-\frac{1}{2}}$ as well.

\begin{Def}
The \textbf{Schr\"odinger product} on $W_{-\frac{1}{2}}$ is defined as
\begin{align*}
\ipO{f,g} := \ipW{\psi_{\ds R}(f), \psi_{\ds R}(g)},
\end{align*}
for all $f,g \in W_{-\frac{1}{2}}$.
\end{Def}\symindex{$\ipO{\cdot \, , \cdot}$}

The $\mc O$\symindex{$\mc O$} used in the subindex of the Schr\"odinger product is merely a symbolic way to indicate a connection with minimal orbits. We do not define this minimal orbit explicitly.

We immediately obtain the following from Propositions \ref{Prop L2 prod} and \ref{Prop pi skew sup sym}.

\begin{Cor}
The Schr\"odinger product $\ipO{\cdot\, , \cdot}$ defines a superhermitian, non-degenerate sesquilinear form on $W_{-\frac{1}{2}}$, for which $\pil$ is skew-supersymmetric. In particular, $(W_{-\frac{1}{2}},\ipO{\cdot\, , \cdot})$ is a Hermitian superspace
\end{Cor}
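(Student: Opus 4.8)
The plan is a transport-of-structure argument along the folding isomorphism $\psi_{\ds R}$, using that it is a parity-preserving linear isomorphism from $W_{-\frac12}$ onto $\ot W_e\subseteq\ot W$ which intertwines $\pil$ with the restriction of $\dotpi$. First I would record the properties of $\psi_{\ds R}$ that are needed: it is the extension to the Schr\"odinger level of the folding isomorphism $\psi$ defined above, hence it is a parity-preserving linear bijection; it carries $\exp(-2e)$ to $\exp(-R^2)$, since $\psi(2e)=R^2$ and $\psi$ is an algebra isomorphism onto $\mc P_{\text{even}}(\ds K^{m|2n})$; and by the very definition $\dotpi(X)=\psi\circ\pil(X)\circ\psi^{-1}$ from Section \ref{ss pol real} it satisfies $\psi_{\ds R}\circ\pil(X)=\dotpi(X)\circ\psi_{\ds R}$ for all $X\in\mf g$. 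Together with $W_{-\frac12}=U(\mf g)\exp(-2e)\bmod\mc I_{-\frac12}$ and $\ot W_e=U(\mf g)\exp(-R^2)\bmod\mc I_{-\frac12}$, this shows $\psi_{\ds R}$ is a well-defined $\mf g$-module isomorphism $W_{-\frac12}\to\ot W_e$.

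Next, sesquilinearity and superhermiticity of $\ipO{f,g}:=\ipW{\psi_{\ds R}(f),\psi_{\ds R}(g)}$ are inherited from $\ipW{\cdot\,,\cdot}$ (Proposition \ref{Prop L2 prod}) immediately, since $\psi_{\ds R}$ is linear and parity-preserving. For non-degeneracy: if $\ipO{f,g}=0$ for all $g\in W_{-\frac12}$, then $\ipW{\psi_{\ds R}(f),h}=0$ for all $h\in\ot W_e$; combined with the fact that $\ot W_e$ and $\ot W_o$ are $\ipW{\cdot\,,\cdot}$-orthogonal — a short parity check, as the product of an even-degree superfunction with an odd-degree one, after the Berezin integration reduces to an odd-degree polynomial paired against a Gaussian on $\ds R^m$, integrates to zero — this forces $\ipW{\psi_{\ds R}(f),k}=0$ for all $k\in\ot W$, hence $\psi_{\ds R}(f)=0$ and $f=0$. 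Thus $\ipO{\cdot\,,\cdot}$ is a super-inner product and $(W_{-\frac12},\ipO{\cdot\,,\cdot})$ is a Hermitian superspace.

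Finally, skew-supersymmetry of $\pil$ is transported from that of $\dotpi$ (Proposition \ref{Prop pi skew sup sym}): for $X\in\mf g$ and $f,g\in W_{-\frac12}$, using $\psi_{\ds R}\circ\pil(X)=\dotpi(X)\circ\psi_{\ds R}$ and $|\psi_{\ds R}(f)|=|f|$,
\[
\ipO{\pil(X)f,g}=\ipW{\dotpi(X)\psi_{\ds R}(f),\psi_{\ds R}(g)}=-(-1)^{|X||f|}\ipW{\psi_{\ds R}(f),\dotpi(X)\psi_{\ds R}(g)}=-(-1)^{|X||f|}\ipO{f,\pil(X)g}.
\]
There is no real obstacle here; the only points worth stating explicitly — and the fiddliest — are that $\psi_{\ds R}$ is a parity-preserving $\mf g$-module isomorphism onto $\ot W_e$ and that $\ot W_e\perp\ot W_o$ for the $L^2$-product, both of which follow directly from the definitions and a parity argument.
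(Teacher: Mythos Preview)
Your proposal is correct and is essentially the same approach as the paper's, which simply records that the corollary follows immediately from Propositions~\ref{Prop L2 prod} and~\ref{Prop pi skew sup sym}; you have merely spelled out in detail the transport-of-structure argument along $\psi_{\ds R}$ that the word ``immediately'' encodes. The one extra ingredient you make explicit, the orthogonality $\ot W_e\perp\ot W_o$ needed to deduce non-degeneracy on $\ot W_e$ from non-degeneracy on $\ot W$, is indeed required and your degree-parity argument for it is fine.
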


\subsection{The Bessel-Fischer product on $F_{\lambda}$}\label{ss BF prod}

In \cite[Section 2.3]{HKMO} an inner product on the polynomial space $\mc P(\C^m)$ was introduced, namely the Bessel-Fischer inner product 
\begin{align*}
\bfip{p,q} := \left. p(\bessel)\bar q(z)\right|_{z=0}.
\end{align*}
Here $p(\bessel)$ is obtained by replacing $z_i$ by $\bessel (z_i)$ and $\bar q(z) = \overline{q(\bar z)}$ is obtained by conjugating the coefficients of the polynomial $q$. In the classical setting, it was proven that the Bessel-Fischer inner product is equal to the $L^2$-inner product on the Fock space \cite[Proposition 2.6]{HKMO}. In the orthosymplectic and $D(2,1;\alpha)$ cases this product was used as the starting point to generalise the Fock space to superspace.

We define the Bessel-Fischer product on a superpolynomial space as follows.
\begin{Def}
For $p, q\in \mathcal P(\ds K^{{\wh m}|2{\wh n}})$ we define the \textbf{Bessel-Fischer product} of $p$ and $q$ as
\begin{align*}
\bfip{p,q} := \left. p(\bessel)\bar q(\ell_{ij})\right|_{z=0},
\end{align*}\symindex{$\bfip{\cdot \, , \cdot}$}
where $\bar q(z) = \overline{q(\bar z)}$ is obtained by conjugating the coefficients of the polynomial $q$ and $p(\bessel)$ is obtained by replacing the occurences of $\ell_{ij}$ in $p(z)$ with $\bessel(\ell_{ij})$ for all $i,j\in \{1, \ldots, m+2n\}$.
\end{Def}

In the orthosymplectic and $D(2,1;\alpha)$ cases the Bessel-Fischer product is a non-degenerate superhermitian sesquilinear form when restricted to the Fock space $F_\lambda$. Moreover, the Fock representation $\rol$ is skew-supersymmetric with respect to the Bessel-Fischer product.

For the classical setting a reproducing kernel for the Fock space was constructed in Section 2.4 of \cite{HKMO}. A generalisation of this reproducing kernel in superspace was constructed for both the orthosymplectic and $D(2,1;\alpha)$ cases. Similarly, we can construct a ``reproducing kernel'' for $\mf g$. The non-degeneracy of the Bessel-Fischer product then follows from the existence of this reproducing kernel.

Recall $\ell^{ij} = \sum_{k,l=1}^{m+2n}\ell_{kl}\beta^{ki}\beta^{lj}$ for all $i,j\in \{1, \ldots, m+2n\}$. Let $z = (z_{ij})_{ij}$ denote the variables of $\mathcal P(\ds C^{{\wh m}|2{\wh n}})$ corresponding to $(l_{ij})_{ij}$ and let $w = (w_{ij})_{ij}$ be a copy. We define
\begin{align*}
z|w := \dfrac{1}{2}\sum_{i<j}z^{ji}w_{ij}+ \dfrac{1}{4}\sum_{i=1}^m z^{ii}w_{ii} = \dfrac{1}{4}\sum_{i,j=1}^{m+2n}z^{ji}w_{ij} .
\end{align*}\symindex{$\cdot \vert \cdot$}

Note that for $i\leq j$ we have
\begin{align*}
\bessel(z_{ij})z^{kl} &= -2\lambda\sum_{r,s}(1+\delta_{rs})\beta_{jr}\beta_{is}\pt{z_{rs}}z^{kl} = -2\lambda\sum_{r,s, a, b}(1+\delta_{rs})\beta_{jr}\beta_{is}\beta^{ak}\beta^{bl}\pt{z_{rs}}z_{ab}\\
&= -2\lambda\sum_{r,s, a, b}(\delta_{ra}\delta_{sb}+(-1)^{|r||s|}\delta_{rb}\delta_{sa})\beta_{jr}\beta_{is}\beta^{ak}\beta^{bl}\\
&= -2\lambda\sum_{r,s}\beta_{jr}\beta_{is}\beta^{rk}\beta^{sl}-2\lambda\sum_{r,s}(-1)^{|r||s|}\beta_{jr}\beta_{is}\beta^{sk}\beta^{rl}\\
&= -2\lambda(\delta_{jk}\delta_{il}+(-1)^{|i||j|}\delta_{ik}\delta_{jl})
\end{align*}
and therefore
\begin{align*}
\bessel(z_{ij})(z|w) &= -\lambda w_{ij}.
\end{align*}

\begin{lemma}\label{Lemma rep kernel}
Define the superfunction $\ds I_{\lambda,k}(z,w)$, with $\lambda\in\{1,-1/2\}$, by
\begin{align*}
\ds I_{-1/2,k}(z,w) &:= \dfrac{(-1)^k}{k!}\left(\dfrac{1}{2}-k\right)_k^{-1}(z|\overline{w})^k,\\
\ds I_{1,k}(z,w) &:= \dfrac{2^k}{k!}\left(-1-k\right)_k^{-1}(z|\overline{w})^k,
\end{align*}
where we used the Pochhammer symbol $(a)_k = a(a+1)(a+2)\ldots (a+k-1)$. For all $p\in F_\lambda$ of degree $k$ we have
\begin{align*}
\bfip{p(z),\ds I_{\lambda,k}(z,w)} = p(w) \mod  \mc I_\lambda.
\end{align*}
\end{lemma}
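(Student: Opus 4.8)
The plan is to reduce the statement to a single recursive computation of the Bessel operator on powers of $(z|w)$. Write $c^{(\lambda)}_k$ for the real scalar with $\ds I_{\lambda,k}(z,w)=c^{(\lambda)}_k\,(z|\overline w)^k$. Because $c^{(\lambda)}_k\in\ds R$, conjugating the coefficients of $\ds I_{\lambda,k}(\cdot\,,w)$ viewed as a polynomial in $z$ returns $c^{(\lambda)}_k\,(z|w)^k$, so by the definition of the Bessel--Fischer product $\bfip{p(z),\ds I_{\lambda,k}(z,w)}=c^{(\lambda)}_k\big(p(\bessel)(z|w)^k\big)\big|_{z=0}$. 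Each operator $\bessel(\ell_{ij})$ is homogeneous of degree $-1$ for the grading of $\mc P(\ds K^{\wh m|2\wh n})$ by polynomial degree: its first order part lowers the degree by one, and in its second order part the degree of the coefficient $\ell^{kl}$ compensates one of the two derivatives. Hence for $p$ homogeneous of degree $k$ the expression $p(\bessel)(z|w)^k$ is already homogeneous of degree $0$ in $z$ (it is a polynomial in $w$ alone), so the evaluation at $z=0$ is vacuous, and the lemma is equivalent to
\[
p(\bessel)(z|w)^k\equiv\big(c^{(\lambda)}_k\big)^{-1}p(w)\pmod{\mc I_\lambda}\qquad(\deg p=k).
\]
Throughout one may freely compute modulo $\mc I_\lambda$: since $\bessel(\ell_{ij})=2\imath\,\pil(\ell_{ij}^+)$, the submodule $\mc I_\lambda$ of $\pil$ is preserved by every $\bessel(\ell_{ij})$.

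I would establish, by induction on $j=\deg p$ and for every integer $k\ge j$, the sharper identity $p(\bessel)(z|w)^k\equiv\big(\prod_{r=k-j+1}^{k}\nu^{(\lambda)}_r\big)(z|w)^{k-j}\,p(w)\pmod{\mc I_\lambda}$ for suitable scalars $\nu^{(\lambda)}_r$. The case $j=0$ is immediate. For the inductive step, write $p=\sum_{a,b}\ell_{ab}p_{ab}$ with each $p_{ab}$ homogeneous of degree $j-1$; since the operators $\bessel(\ell_{ab})$ supercommute (Proposition \ref{PropBesCom}), $p(\bessel)=\sum_{a,b}\bessel(\ell_{ab})\,p_{ab}(\bessel)$. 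Feeding in the induction hypothesis for each $p_{ab}$, and using that $\bessel(\ell_{ab})$ only touches the $z$-variables while $p_{ab}(w)$ does not, reduces the step to computing $\bessel(\ell_{ab})(z|w)^r$ modulo $\mc I_\lambda$ (with $r=k-j+1$) and then invoking $\sum_{a,b}w_{ab}\,p_{ab}(w)=p(w)$.

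The heart of the proof — and what I expect to be the main obstacle — is the claim $\bessel(\ell_{ab})(z|w)^r\equiv\nu^{(\lambda)}_r\,w_{ab}(z|w)^{r-1}\pmod{\mc I_\lambda}$, where $\nu^{(\lambda)}_r$ is quadratic in $r$ (concretely $\nu^{(-1/2)}_r=\tfrac12 r(2r-1)$ and $\nu^{(1)}_r=-\tfrac12 r(r+1)$). To prove it, split $\bessel(\ell_{ab})$ into its first order part $-2\lambda(1+\delta_{ab})\pt{ba}$ and its second order part, as in Proposition \ref{Prop_Explicit_Bessel}. The first order part is a derivation, so on $(z|w)^r$ it produces $r\,(z|w)^{r-1}$ times its value on $(z|w)$; since the second order part annihilates the linear form $(z|w)$, this value equals $\bessel(\ell_{ab})(z|w)=-\lambda w_{ab}$ by the computation preceding the lemma, so the first order part contributes $-\lambda r\,w_{ab}(z|w)^{r-1}$. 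The second order part vanishes for $r=1$; for $r\ge2$, differentiating $(z|w)^r$ twice yields a combinatorial factor $r(r-1)$ together with terms of the shape $(z|w)^{r-2}\,\ell^{cd}\cdot(\text{quadratic in the }w_{ij})$. These do not individually equal a multiple of $w_{ab}(z|w)^{r-1}$, but modulo $\mc I_\lambda$ they collapse to one: indeed $w_{ab}(z|w)^{r-1}=\tfrac14\sum_{c,d}\ell^{dc}\,w_{ab}w_{cd}\,(z|w)^{r-2}$, and the identifications among the quadratic-in-$w$ monomials that one needs are exactly the index symmetries defining $V_\lambda$ (the $\alpha_{ijkl}$-relations, which differ for $\lambda=1$ and $\lambda=-\tfrac12$), modulo which $\mc I_\lambda$ is generated. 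The delicate part is carrying out this collapse carefully: keeping track of all Koszul signs from the odd variables, and of which $V_\lambda$-relation is invoked for each term; the scalars $\nu^{(\lambda)}_r$ above are exactly what comes out.

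To finish, iterating the key claim from degree $k$ down to degree $0$ turns the induction formula, for $\deg p=k$, into $p(\bessel)(z|w)^k\equiv\big(\prod_{r=1}^{k}\nu^{(\lambda)}_r\big)p(w)\pmod{\mc I_\lambda}$, and a short computation with the Pochhammer symbol gives $\prod_{r=1}^{k}\nu^{(-1/2)}_r=\frac{(2k)!}{4^k}=\big(c^{(-1/2)}_k\big)^{-1}$ and $\prod_{r=1}^{k}\nu^{(1)}_r=\frac{(-1)^k\,k!\,(k+1)!}{2^k}=\big(c^{(1)}_k\big)^{-1}$ — precisely the normalizations encoded by the Pochhammer factors in $\ds I_{-1/2,k}$ and $\ds I_{1,k}$. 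This yields $\bfip{p(z),\ds I_{\lambda,k}(z,w)}\equiv p(w)\pmod{\mc I_\lambda}$, as claimed. As a bonus, applying the same induction to $p\in\mc I_\lambda$ re-confirms that the Bessel--Fischer product is well defined on $F_\lambda$.
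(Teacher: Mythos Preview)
Your proof is correct and follows essentially the same route as the paper's: the paper also reduces to the single computation $\bessel(\ell_{ab})(z|w)^r\equiv\nu^{(\lambda)}_r\,w_{ab}(z|w)^{r-1}\pmod{\mc I_\lambda}$, obtaining the first order contribution $-\lambda r\,w_{ab}(z|w)^{r-1}$ and then collapsing the second order part using the $V_\lambda$-relations (for $\lambda=-\tfrac12$ via $(-1)^{|v||i|}w_{ti}w_{vj}=w_{tv}w_{ij}$, for $\lambda=1$ via the three-term relation), arriving at the very same $\nu^{(\lambda)}_r$ you found. Your write-up is more explicit about the inductive bookkeeping and about why one may work modulo $\mc I_\lambda$ throughout, whereas the paper passes directly from the one-step formula to $\bfip{p(z),(z|\overline w)^k}=\big(\prod_{r=1}^k\nu^{(\lambda)}_r\big)p(w)$ without spelling out the induction.
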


\begin{proof}
We calculate $\bessel(z_{ij})(z|w)^k$ for all $i,j$. We have
\begin{align*}
-2\lambda\sum_{h,l}(1+\delta_{hl})\beta_{jh}\beta_{il}\pt{z_{hl}}(z|w)^k = -\lambda k (z|w)^{k-1}w_{ij}
\end{align*}
and
\begin{align*}
&\sum_{h,l,r,s}(-1)^{|h||i|}(1+\delta_{hl}+\delta_{rs}+\delta_{hl}\delta_{rs})\beta_{is}\beta_{jl}z_{hr}\pt{z_{sr}}\pt{z_{lh}}(z|w)^k\\
&\quad = k(k-1)(z|w)^{k-2}\dfrac{1}{4}\sum_{h,l,r,s,t,u,v,y}(-1)^{|h||i|}\beta_{is}\beta_{jl}\beta^{su}\beta^{rt}\beta^{ly}\beta^{hv}z_{hr}w_{tu}w_{vy}\\
&\quad = k(k-1)(z|w)^{k-2}\dfrac{1}{4}\sum_{h,r,t,v}(-1)^{|v||i|}\beta^{rt}\beta^{hv}z_{hr}w_{ti}w_{vj}\\
&\quad = k(k-1)(z|w)^{k-2}\dfrac{1}{4}\sum_{t,v}(-1)^{|v||i|}z^{vt}w_{ti}w_{vj}.
\end{align*}
First suppose $\lambda=-1/2$, then we are working modulo $\mc I_{-\frac{1}{2}}$ and therefore we have $(-1)^{|v||i|}w_{ti}w_{vj} = w_{tv}w_{ij}$. This implies
\begin{align*}
&\sum_{h,l,r,s}(-1)^{|h||i|}(1+\delta_{hl}+\delta_{rs}+\delta_{hl}\delta_{rs})\beta_{is}\beta_{jl}z_{hr}\pt{z_{sr}}\pt{z_{lh}}(z|w)^k\\
&\quad = k(k-1)(z|w)^{k-2}\left(\dfrac{1}{4}\sum_{t,v}z^{vt}w_{tv}\right)w_{ij}\\
&\quad = k(k-1)(z|w)^{k-1}w_{ij},
\end{align*}
which gives us
\begin{align*}
\bessel(z_{ij})(z|w)^k = -k(\lambda -k+1)(z|w)^{k-1}w_{ij} = -k(\frac{1}{2}-k)(z|w)^{k-1}w_{ij}
\end{align*}
and therefore
\begin{align*}
\bfip{p(z), (z|\ol w)^{k}} = (-1)^k k! \left(\frac{1}{2}-k\right)_k p(w).
\end{align*}
Now suppose $\lambda=1$. In this case we are working modulo $\mc I_{1}$ which implies
\begin{align*}
w_{ti}w_{vj} = - (-1)^{|i||v|}w_{tv}w_{ij} - (-1)^{(|i|+|v|)|j|}w_{tj}w_{iv}.
\end{align*}
This gives us
\begin{align*}
\sum_{t,v}(-1)^{|v||i|}z^{vt}w_{ti}w_{vj} &= -(z|w)w_{ij}-\sum_{t,v}(-1)^{|i||j|+ |v||j|}z^{vt}w_{tj}w_{vi}\\
&= -(z|w)w_{ij}-\sum_{t,v}(-1)^{|i||j|+ |t||j|}z^{tv}w_{vj}w_{ti}\\
&= -(z|w)w_{ij}-\sum_{t,v}(-1)^{|v||i|}z^{vt}w_{ti}w_{vj},
\end{align*}
which implies
\begin{align*}
\sum_{t,v}(-1)^{|v||i|}z^{vt}w_{ti}w_{vj} = -\dfrac{1}{2}(z|w)w_{ij}
\end{align*}
and then
\begin{align*}
&\sum_{h,l,r,s}(-1)^{|h||i|}(1+\delta_{hl}+\delta_{rs}+\delta_{hl}\delta_{rs})\beta_{is}\beta_{jl}z_{hr}\pt{z_{sr}}\pt{z_{lh}}(z|w)^k\\
&\quad = -\dfrac{1}{2}k(k-1)(z|w)^{k-1}w_{ij}.
\end{align*}
This gives us
\begin{align*}
\bessel(z_{ij})(z|w)^k = -\dfrac{k}{2}(2\lambda +k-1)(z|w)^{k-1}w_{ij} = \dfrac{k}{2}(-1-k)(z|w)^{k-1}w_{ij}
\end{align*}
and therefore
\begin{align*}
\bfip{p(z), (z|\ol w)^{k}} = \dfrac{k!}{2^k} \left(-1-k\right)_k p(w),
\end{align*}
from which the lemma follows.
\qedhere
\end{proof}

We will give a closed formula of the reproducing kernel in terms of the renormalised I-Bessel function. The I-Bessel function $I_\gamma(t)$ (or modified Bessel function of the first kind) is defined by
\begin{align*}
I_\gamma(t) := \left(\dfrac{t}{2}\right)^{\gamma}\sum_{k=0}^\infty \dfrac{1}{k!\Gamma(k+\gamma+1)}\left(\dfrac{t}{2}\right)^{2k},
\end{align*}
for $\gamma, t\in\C$, see \cite{AAR}, Section 4.12. We will use the renormalisation
\begin{align*}
\widetilde I_\gamma(t) := \left(\dfrac{t}{2}\right)^{-\gamma} I_\gamma(t).
\end{align*}\symindex{$I_\gamma$ and $\widetilde I_\gamma$}

\begin{theorem}[Reproducing kernel of $F_\lambda$]\label{Theorem repr kernel}
Define the superfunction $\ds I_\lambda(z,w)$ by
\begin{align*}
\ds I_{-\frac{1}{2}}(z,w) &:= \sqrt{\pi}\widetilde I_{-\frac{1}{2}}\left(2\sqrt{(z|\overline w)}\right) = \cosh\left(2\sqrt{(z|\overline w)}\right),\\
\ds I_1(z,w) &:= \widetilde I_{1}\left(2\sqrt{-2(z|\overline w)}\right).
\end{align*}
For all $p\in F_\lambda$ we have
\begin{align*}
\bfip{p(z),\ds I_\lambda(z,w)} = p(w).
\end{align*}
\end{theorem}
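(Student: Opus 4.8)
The plan is to reduce everything to Lemma \ref{Lemma rep kernel} by expanding $\ds I_\lambda(z,w)$ into its homogeneous components in the variable $z$ and exploiting that the Bessel--Fischer product vanishes between homogeneous superpolynomials of distinct degrees.

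The first step is to expand the renormalised I-Bessel functions occurring in the statement as power series in $(z|\overline w)$. From $I_{-1/2}(t)=\sqrt{2/(\pi t)}\cosh t$ one gets $\sqrt{\pi}\,\widetilde I_{-1/2}(t)=\cosh t$, hence
\[
\ds I_{-\frac{1}{2}}(z,w)=\cosh\!\left(2\sqrt{(z|\overline w)}\right)=\sum_{k=0}^\infty \frac{4^k}{(2k)!}\,(z|\overline w)^k ,
\]
while the defining series of $I_\gamma$ gives $\widetilde I_1(t)=\sum_{k\ge 0}\frac{1}{k!\,(k+1)!}(t/2)^{2k}$, so that
\[
\ds I_1(z,w)=\sum_{k=0}^\infty \frac{(-2)^k}{k!\,(k+1)!}\,(z|\overline w)^k .
\]
Comparing these with the formulas for $\ds I_{\lambda,k}(z,w)$ in Lemma \ref{Lemma rep kernel}, the identity $\ds I_\lambda(z,w)=\sum_{k\ge 0}\ds I_{\lambda,k}(z,w)$ is equivalent to the elementary Pochhammer identities $\left(\tfrac{1}{2}-k\right)_k=\frac{(-1)^k}{2^k}(2k-1)!!$ and $(-1-k)_k=(-1)^k(k+1)!$, together with $(2k)!=2^k\,k!\,(2k-1)!!$; I would verify these by a short induction or a direct rearrangement of the products.

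Next I would decompose an arbitrary $p\in F_\lambda$ as a finite sum $p=\sum_{k=0}^N p_k$ with $p_k$ homogeneous of degree $k$. By Proposition \ref{Prop_Explicit_Bessel} each operator $\bessel(\ell_{ij})$ lowers the polynomial degree by one, so $p_k(\bessel)$ lowers degree by exactly $k$; consequently $p_k(\bessel)\,\overline{\ds I_{\lambda,l}(z,w)}\big|_{z=0}$ vanishes unless $l=k$ (it is the value at $z=0$ of a polynomial of degree $l-k$, which is zero for $l>k$ and identically zero for $l<k$). In particular the pairing of the polynomial $p$ with the a priori only formal series $\ds I_\lambda(z,w)$ is well defined, only finitely many terms contributing, and by linearity
\[
\bfip{p(z),\ds I_\lambda(z,w)}=\sum_{k=0}^N \bfip{p_k(z),\ds I_{\lambda,k}(z,w)} .
\]

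Finally I would invoke Lemma \ref{Lemma rep kernel} term by term, obtaining $\bfip{p_k(z),\ds I_{\lambda,k}(z,w)}=p_k(w) \mod \mc I_\lambda$, and summing over $k$ yields $\bfip{p(z),\ds I_\lambda(z,w)}=\sum_k p_k(w)=p(w)$ in $F_\lambda$, which is the assertion. The only part that is not purely formal is the first step: keeping track of the Bessel-function normalisations and of the sign and scaling conventions hidden in $(z|\overline w)$ — in particular the factor $-2$ inside the radical in the $\lambda=1$ case — and matching them against the Pochhammer prefactors coming from Lemma \ref{Lemma rep kernel}; once those bookkeeping identities are in place, everything else follows from the degree grading.
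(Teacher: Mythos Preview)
Your proposal is correct and follows essentially the same approach as the paper: expand $\ds I_\lambda(z,w)$ as $\sum_{k\ge 0}\ds I_{\lambda,k}(z,w)$ via the Bessel power series and then apply Lemma~\ref{Lemma rep kernel} together with degree orthogonality. The paper's proof is terser, invoking ``the orthogonality property'' without spelling out the degree-lowering argument you give, and it verifies the series identity through the Gamma-function form $\Gamma(\tfrac{1}{2})/\Gamma(k+\tfrac{1}{2})$ rather than the double-factorial Pochhammer identities you use, but these are cosmetic differences.
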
\symindex{$\ds{I}_\lambda(\cdot \, ,\cdot)$}

\begin{proof}
Note that 
\begin{align*}
\sqrt{\pi}\widetilde I_{-\frac{1}{2}}\left(2\sqrt{(z|\overline w)}\right) &= \sum_{k=0}^\infty \dfrac{1}{k!}\dfrac{\Gamma(\frac{1}{2})}{\Gamma(k+\frac{1}{2})}(z|\overline{w})^k  \\
&=  \sum_{k=0}^\infty \dfrac{(-1)^k}{k!}\left(-\frac{1}{2}-k+1\right)_k^{-1}(z|\overline{w})^k \\
&=\sum_{k=0}^\infty \ds I_{-\frac{1}{2},k}(z,w), 
\end{align*}
and similarly
\begin{align*}
\widetilde I_{1}\left(2\sqrt{-2(z|\overline w)}\right)=\sum_{k=0}^\infty \ds I_{1,k}(z,w) .
\end{align*}
The proposition then follows from Lemma \ref{Lemma rep kernel} and the orthogonality property.
\end{proof}

\begin{Prop}[non-degeneracy]\label{Prop Non deg bfip}
The Bessel-Fischer product is non-degenerate on $F_\lambda$, i.e.\ if $\bfip{p,q}=0$, for all $q\in F_\lambda$, then $p=0$.
\end{Prop}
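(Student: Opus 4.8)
The plan is to use the reproducing kernel constructed in Theorem~\ref{Theorem repr kernel} to directly witness non-degeneracy. Suppose $p\in F_\lambda$ satisfies $\bfip{p,q}=0$ for all $q\in F_\lambda$. Since the Bessel-Fischer product is sesquilinear and $p$ has a decomposition into homogeneous components $p = \sum_k p_k$ with $p_k\in F_\lambda$ of degree $k$, and since $\bessel$ lowers degree, the product $\bfip{p_k, q_l}$ vanishes unless $k=l$; hence it suffices to show each homogeneous component $p_k$ is zero, so we may assume $p$ is homogeneous of some degree $k$.

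For such a homogeneous $p$ of degree $k$, I would test against the homogeneous component $\ds I_{\lambda,k}(z,w)$ of the reproducing kernel (viewed, for each fixed value of the conjugated variable $w$, as an element of $F_\lambda$ in the variable $z$). By Lemma~\ref{Lemma rep kernel} we have $\bfip{p(z),\ds I_{\lambda,k}(z,w)} = p(w) \bmod \mc I_\lambda$. Since $\ds I_{\lambda,k}(z,w)$ is, up to the nonzero scalar $\tfrac{(-1)^k}{k!}(\tfrac12-k)_k^{-1}$ (respectively $\tfrac{2^k}{k!}(-1-k)_k^{-1}$), equal to the $k$-th power $(z|\overline w)^k$, it indeed lies in $F_\lambda = \mc P(\ds C^{\wh m|2\wh n})/\mc I_\lambda$; the hypothesis $\bfip{p,q}=0$ for all $q\in F_\lambda$ then forces $p(w) = 0 \bmod \mc I_\lambda$ for all choices of $w$, i.e.\ $p=0$ as an element of $F_\lambda$. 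One small point to check is that the Pochhammer factors $(\tfrac12-k)_k$ and $(-1-k)_k$ appearing in $\ds I_{\lambda,k}$ are nonzero for all $k\in\ds N$ and the relevant $\lambda$; for $\lambda=-\tfrac12$ one has $(\tfrac12-k)_k = (\tfrac12-k)(\tfrac32-k)\cdots(-\tfrac12)$, a product of negative half-integers, hence nonzero, and for $\lambda=1$ one has $(-1-k)_k = (-1-k)(-k)\cdots(-2)$, which is nonzero precisely because the factor $-1$ is never reached (the product runs from $-1-k$ up to $-2$), so the scalars are genuinely invertible.

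The main (and essentially only) obstacle is the bookkeeping that reduces the general case to the homogeneous case: one must verify that $\bessel$ acting on a degree-$l$ superpolynomial produces a degree-$(l-1)$ superpolynomial (clear from the explicit form in Proposition~\ref{Prop_Explicit_Bessel}, where each term either keeps the degree via $\ell_{kr}\pt{\ell_{sr}}\pt{\ell_{lk}}$ — wait, that lowers by one — or lowers it via $\pt{\ell_{kl}}$), so that $p(\bessel)\bar q(\ell)|_{z=0}$ picks out only the matching-degree part of $\bar q$, and hence the pairing is block-diagonal with respect to the grading by degree. Once this is in place, the non-degeneracy is an immediate corollary of the reproducing kernel property, so I would keep the proof short: reduce to homogeneous $p$, invoke Lemma~\ref{Lemma rep kernel} with $q = \ds I_{\lambda,k}(\cdot,w)$, note the scalar is nonzero, and conclude $p=0$.

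\begin{proof}
Let $p\in F_\lambda$ with $\bfip{p,q}=0$ for all $q\in F_\lambda$. Since $\bessel(\ell_{ij})$ is a differential operator lowering the polynomial degree by one (see Proposition~\ref{Prop_Explicit_Bessel}), the evaluation at $z=0$ in the definition of $\bfip{\cdot\,,\cdot}$ shows that $\bfip{p',q'}=0$ whenever $p'$ and $q'$ are homogeneous of different degrees. Decomposing $p=\sum_k p_k$ into homogeneous components, the hypothesis therefore gives $\bfip{p_k,q}=0$ for every homogeneous $q$ of degree $k$, and it suffices to prove $p_k=0$ for each $k$. So assume $p\in F_\lambda$ is homogeneous of degree $k$. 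Applying Lemma~\ref{Lemma rep kernel} with the degree-$k$ kernel component $\ds I_{\lambda,k}(z,w)\in F_\lambda$ (in the variable $z$, for fixed $w$) yields
\begin{align*}
0 = \bfip{p(z),\ds I_{\lambda,k}(z,w)} = p(w) \mod \mc I_\lambda.
\end{align*}
The scalar relating $\ds I_{\lambda,k}(z,w)$ to $(z|\overline w)^k$ is $\tfrac{(-1)^k}{k!}\left(\tfrac12-k\right)_k^{-1}$ when $\lambda=-\tfrac12$ and $\tfrac{2^k}{k!}\left(-1-k\right)_k^{-1}$ when $\lambda=1$; in both cases the Pochhammer symbol in the denominator is a product of nonzero numbers (a product of negative half-integers, respectively a product running from $-1-k$ to $-2$), hence nonzero. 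Thus $\ds I_{\lambda,k}(z,w)$ is a genuine element of $F_\lambda$, and the displayed identity holds for all $w$, forcing $p=0$ in $F_\lambda$. This proves non-degeneracy.
\end{proof}
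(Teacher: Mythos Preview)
Your proof is correct and follows essentially the same approach as the paper: invoke the reproducing kernel property (Lemma~\ref{Lemma rep kernel}/Theorem~\ref{Theorem repr kernel}) to conclude $p(w)=\bfip{p(z),\ds I_\lambda(z,w)}=0$. The paper's version is slightly terser, applying the full kernel $\ds I_\lambda$ directly rather than first reducing to homogeneous components, but your extra care in checking that the Pochhammer scalars are nonzero is a welcome detail.
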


\begin{proof}
Suppose $p\in F_\lambda$ is such that $\bfip{p,q}=0$, for all $q\in F_\lambda$. Using the reproducing we obtain $p(w) = \bfip{p(z),\ds I_\lambda(z,w)} = 0$. Hence $p=0$.
\end{proof}

To show that $(F_{\lambda},\bfip{\cdot\, , \cdot})$ is a Hermitian superspace, we still need to prove that the Bessel-Fischer product is superhermitian. In the orthosymplectic case this is proven in \cite[Proposition 4.7]{BCD} using long and technical calculations. Similar long and technical calculations could potentially be used to prove the superhermitianity in our cases. However, the $\lambda =-\frac{1}{2}$ case will follow immediately from Section \ref{SS_FockProduct}, while the $\lambda=1$ case will not be discussed in depth enough for us to need this property.

\subsection{The Fock product on $\mc P(\ds C^{m|2n})$}\label{SS_FockProduct}

Let $(z_i)_{i=1}^{m+2n}$ be the variables of $\mc P(\ds C^{m|2n}) = \ot F$.

\begin{Def}
The \textbf{Fischer product} on $\mc P(\ds C^{m|2n})$ is defined as
\begin{align*}
\fip{p,q} := \left. p(\partial)\bar q(z)\right|_{z=0},
\end{align*}\symindex{$\fip{\cdot \, ,\cdot}$}
for all $p,q\in \mc P(\ds C^{m|2n})$. Here $p(\partial)$ is obtained by replacing $z_i$ by $\pt i$ ($= \pt{z^i}$) and $\bar q(z) = \overline{q(\bar z)}$ is obtained by conjugating the coefficients of the polynomial $q$.
\end{Def}

We have shown that $\psi_{\ds K}\circ\bessel(\ell_{ij})\circ \psi^{-1}_{\ds K} = \pt i \pt j$ in Theorem \ref{Th pil on psiW}. This implies that the Bessel-Fischer product on $F_{-\frac{1}{2}}$ corresponds with the Fischer product on $\ot F_e$, i.e.\
\begin{align}\label{Eq_Bessel-Fock}
\bfip{p,q} = \fip{\psi(p),\psi(q)},
\end{align}
for $p,q\in F_{-\frac{1}{2}}$.

In the symplectic case the Fischer product $\fip{p,q}$, for $p,q\in\mc P(\ds C^{m})$, is equal to the integral form
\begin{align*}
\dfrac{1}{\gamma}\int_{\ds C^{m}} \exp(-\norm{z}^2)p(z)\ol{q(z)}dz,\quad \gamma := \int_{\ds C^{m}} \exp(-\norm{z}^2)dz = \pi^m.
\end{align*}
See e.g.\ \cite[Section 5]{PSS}. We now wish to generalise this result. Note that we can view complex conjugates of the odd variables $z_{\ol 1} = ({z_i})_{i=m+1}^{m+2n}$ as an added set of odd variables $\ol {z_{\ol 1}} = (\ol{z_i})_{i=m+1}^{m+2n}$, i.e.\ we have
\begin{align*}
z_i \ol{z_j} = (-1)^{|i||j|}\ol{z_j}z_i, && \ol{z_i}\,\ol{z_j} = (-1)^{|i||j|}\ol{z_j}\,\ol{z_i},
\end{align*}
for all $i,j\in\{1, \ldots, m+2n\}$.

\begin{Def}\label{Def_trace_product}
Let $z = ({z_i})_{i=0}^{m+2n}$ and $w = ({w_i})_{i=0}^{m+2n}$ denote the variables of two, possibly equal, instances of $\mc P(\ds K^{m|2n})$ which supercommute, i.e.\ $z_iw_j = (-1)^{|i||j|}w_jz_i$. Then, we define the \textbf{trace product} of $z$ and $w$ as
\begin{align*}
z\bullet w := \sum_{i=1}^{m+2n} z^i w_i.
\end{align*}
\end{Def}\symindex{$\cdot\bullet\cdot$}

Note that $R^2 = z\bullet z$. We now define the square of the norm of $z$ as the superpolynomial
\begin{align*}
\norm{z}^2 := z\bullet \ol z = \sum_{i=1}^{m+2n}z^i \ol{z_i}.
\end{align*}\symindex{$\norm{\cdot}$}
It generalises (the square of) the norm of a multidimensional complex variable, with respect to the orthosymplectic metric induced by $\beta$. We have $\norm{z}^2 = \norm{z_{\ol 0}}^2 + \norm{z_{\ol 1}}^2$, where $\norm{z_{\ol 0}}^2$ and $\norm{z_{\ol 1}}^2$ denote the terms in $\norm{z}^2$ consisting of even and odd variables, respectively.

\begin{Def}
The integral over the superspace $\ds C^{m|2n}$ is defined by
\begin{align*}
\int_{\ds C^{m|2n}} dz := \int_{\ds C^{m}} dz_{\ol 0}\int_{B_\ds C},
\end{align*}
where
\begin{align*}
\int_{B_\ds C} := \pi^{-2n}\pt{\ol{z_{m+2n}}}\pt{z_{m+2n}}\pt{\ol{z_{m+2n-1}}}\pt{z_{m+2n-1}}\ldots \pt{\ol{z_{m+1}}}\pt{z_{m+1}}
\end{align*}
is the complexified Berezin integral on $\Lambda\big(\ds C^{2n}\big)$ and $z_{\ol 0} := (z_i)_{i=1}^m$ denotes the even variables of $z = (z_i)_{i=1}^{m+2n}$.
\end{Def}

\begin{Def}
The \textbf{Fock product} on $\ot F$ is defined as
\begin{align*}
\fp{p,q} := \dfrac{1}{\gamma}\int_{\ds C^{m|2n}} \exp(-\norm{z}^2)p(z)\ol{q(z)}dz,\quad \text{ with }\quad \gamma := \int_{\ds C^{m|2n}} \exp(-\norm{z}^2)dz,
\end{align*}
for all $p,q\in \mc P(\ds C^{m|2n})$.
\end{Def}\symindex{$\fp{\cdot \, , \cdot}$}

\begin{lemma}\symindex{$\gamma$}
We have $\gamma = \pi^M$, with $M=m-2n$. In particular, the Fock product is well-defined.
\end{lemma}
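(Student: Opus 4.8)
The plan is to follow the same strategy as in the preceding computation of $\omega$: the integrand $\exp(-\norm{z}^2)$ factorises across the even and odd variables because $\norm{z}^2 = \norm{z_{\ol 0}}^2 + \norm{z_{\ol 1}}^2$, so
\begin{align*}
\gamma = \left(\int_{\ds C^m}\exp(-\norm{z_{\ol 0}}^2)\,dz_{\ol 0}\right)\cdot\left(\int_{B_\ds C}\exp(-\norm{z_{\ol 1}}^2)\right),
\end{align*}
and it suffices to evaluate the two factors separately. For the even factor, the even block of $\beta$ is $I_m$, hence $\norm{z_{\ol 0}}^2 = \sum_{i=1}^m z_i\ol{z_i} = \sum_{i=1}^m\abs{z_i}^2$, and the classical Gaussian integral on $\ds C^m\cong\ds R^{2m}$ gives $\int_{\ds C^m}\exp(-\norm{z_{\ol 0}}^2)\,dz_{\ol 0} = \pi^m$.

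For the odd factor I would first make $\norm{z_{\ol 1}}^2$ explicit. Using the odd block $\left(\begin{smallmatrix}0 & -I_n\\ I_n & 0\end{smallmatrix}\right)$ of $\beta$ and its inverse one computes $z^{m+a} = -z_{m+n+a}$ and $z^{m+n+a} = z_{m+a}$ for $a = 1,\dots,n$, so that
\begin{align*}
\norm{z_{\ol 1}}^2 = \sum_{a=1}^n\big(z_{m+a}\ol{z_{m+n+a}} - z_{m+n+a}\ol{z_{m+a}}\big).
\end{align*}
Since the Berezin operator $\int_{B_\ds C}$ involves all $4n$ anticommuting generators $z_{m+1},\ol{z_{m+1}},\dots,z_{m+2n},\ol{z_{m+2n}}$, only the degree-$4n$ term $\tfrac{1}{(2n)!}(\norm{z_{\ol 1}}^2)^{2n}$ of $\exp(-\norm{z_{\ol 1}}^2)$ survives. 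Expanding this $(2n)$-th power, the unique monomial containing all $4n$ generators is obtained by choosing each of the $2n$ summands $z_{m+a}\ol{z_{m+n+a}}$ and $-z_{m+n+a}\ol{z_{m+a}}$ exactly once, which happens in $(2n)!$ ways; after reordering the anticommuting generators this yields $(\norm{z_{\ol 1}}^2)^{2n} = (2n)!\, z_{m+1}\ol{z_{m+1}}\cdots z_{m+2n}\ol{z_{m+2n}}$ up to sign, and hence $\int_{B_\ds C}\exp(-\norm{z_{\ol 1}}^2) = \pi^{-2n}$.

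Combining the two factors gives $\gamma = \pi^m\cdot\pi^{-2n} = \pi^{m-2n} = \pi^M$, which is a nonzero real, so the Fock product is well-defined. The only genuinely delicate point is the sign bookkeeping: permuting the anticommuting generators into the order prescribed by $\int_{B_\ds C}$, together with the reorderings $z_{m+a}\ol{z_{m+n+a}}z_{m+n+a}\ol{z_{m+a}}\rightsquigarrow z_{m+a}\ol{z_{m+a}}z_{m+n+a}\ol{z_{m+n+a}}$. This is carried out exactly as in the computation of $\omega$, and the various signs (the $(-1)^{2n}$ from expanding the exponential, the $(-1)^n$ from the minus signs in $\norm{z_{\ol 1}}^2$, and the transposition signs) cancel, leaving $+\pi^{-2n}$.
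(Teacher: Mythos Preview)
Your proposal is correct. The even factor is handled identically to the paper, and your odd-factor computation via the power-series expansion of $\exp(-\norm{z_{\ol 1}}^2)$ is sound: the only surviving term is $\tfrac{1}{(2n)!}(\norm{z_{\ol 1}}^2)^{2n}$, the $2n$ summands of $\norm{z_{\ol 1}}^2$ are pairwise even so the $(2n)!$ orderings coincide, and the sign count $(-1)^{2n}\cdot(-1)^n\cdot(-1)^n=1$ is right (each reordering $z_{m+a}\ol{z_{m+n+a}}z_{m+n+a}\ol{z_{m+a}}\to z_{m+a}\ol{z_{m+a}}z_{m+n+a}\ol{z_{m+n+a}}$ is a single transposition of odd elements).

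The paper takes a different, slightly slicker route for the odd factor: instead of expanding the exponential, it applies the Berezin derivatives directly, first all the $\pt{z_i}$ and then all the $\pt{\ol{z_i}}$. Since $\norm{z}^2$ is linear in each $z_i$, applying $\pt{z_{m+2n}}\cdots\pt{z_{m+1}}$ to $\exp(-\norm{z}^2)$ simply pulls down the factors $\ol{z^{m+2n}}\cdots\ol{z^{m+1}}$ and kills the odd part of the exponent, leaving $\exp(-\norm{z_{\ol 0}}^2)$; one then rewrites $\ol{z^i}$ in terms of $\ol{z_j}$ via $\beta$, reorders once, and applies the remaining $\pt{\ol{z_i}}$. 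This avoids the combinatorics of the multinomial expansion and the enumeration of orderings, at the cost of one extra reordering step. Your approach, by contrast, mirrors the earlier computation of $\omega$ and makes the analogy explicit; it is a bit more bookkeeping but equally valid.
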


\begin{proof}
We have
\begin{align*}
\int_{\ds C^{m|2n}} \exp(-\norm{z}^2)dz = \int_{\ds C^{m}} dz_{\ol 0}\int_{B_\ds C}\exp(-\norm{z}^2)
\end{align*}
with
\begin{align*}
\int_{B_\ds C}\exp(-\norm{z}^2) &= \pi^{-2n}\pt{\ol{z_{m+1}}}\pt{\ol{z_{m+2}}}\ldots\pt{\ol{z_{m+2n}}}\pt{z_{m+2n}}\pt{z_{m+2n-1}}\ldots \pt{z_{m+1}}\exp(-\norm{z}^2)\\
&= \pi^{-2n}\pt{\ol{z_{m+1}}}\pt{\ol{z_{m+2}}}\ldots\pt{\ol{z_{m+2n}}} \ol{z^{m+2n}}\, \ol{z^{m+2n-1}}\ldots \ol{z^{m+1}}\exp(-\norm{z_{\ol 0}}^2)\\
&= (-1)^n \pi^{-2n}\pt{\ol{z_{m+1}}}\pt{\ol{z_{m+2}}}\ldots\pt{\ol{z_{m+2n}}} \ol{z_{m+n}}\ldots \ol{z_{m+1}}\, \ol{z_{m+2n}}\ldots\ol{z_{m+n+1}}\\
&\quad \times \exp(-\norm{z_{\ol 0}}^2)\\
&= \pi^{-2n}\pt{\ol{z_{m+1}}}\pt{\ol{z_{m+2}}}\ldots\pt{\ol{z_{m+2n}}} \ol{z_{m+2n}}\ldots\ol{z_{m+1}}\exp(-\norm{z_{\ol 0}}^2)\\
&= \pi^{-2n}\exp(-\norm{z_{\ol 0}}^2)
\end{align*}
and therefore
\begin{align*}
\int_{\ds C^{m|2n}} \exp(-\norm{z}^2)dz &= \pi^{-2n}\int_{\ds C^{m}}\exp(-\norm{z_{\ol 0}}^2) dz_{\ol 0}= \pi^{-2n}\pi^{m},
\end{align*}
as desired.
\end{proof}

\begin{Opm}\label{Rem_SB_Space}
In \cite[Example 3.22]{dGM} the Fock product was already defined on the Segal-Bargmann superspace. There, the norm of $z$ is defined with respect to a different metric. Using our notations and conventions it is given by
\begin{align*}
\left< p, q\right>_{SB} := (-4)^{n}\pi^{-M}\int_{\ds C^{m|2n}}\exp\left(-\norm{z_{\ol 0}}^2 - \dfrac{\imath}{2}\sum_{i=m+1}^{2n}z_i\ol{z_i}\right)p(z)\ol{q(z)}dz
\end{align*}
for $p,q \in \mc P(\ds C^{m|2n})$.
\end{Opm}

\begin{Prop}\label{Prop SuperInd}
The Fock product is superhermitian. We also have
\begin{align*}
\fp{\pt i p,q} = (-1)^{|i||p|}\fp{p, z_i q}\quad \text{ and } \quad \fp{z_i p,q} = (-1)^{|i||p|}\fp{p, \pt i q},
\end{align*}
for all $p,q\in \mc P(\ds C^{m|2n})$ and $i\in\{1, \ldots, m+2n\}$.
\end{Prop}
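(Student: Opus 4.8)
The plan is to split the claim into two parts: (i) the adjointness relations between the $z_i$ and $\partial_i$ operators, and (ii) superhermiticity of $\fp{\cdot\,,\cdot}$, deriving the latter from (i). For (i), I would argue exactly as in the classical case, but keeping careful track of Koszul signs and of the odd variables $\ol{z_i}$ that the Berezin integral treats as independent generators. The key computational input is an integration-by-parts identity on the superspace $\ds C^{m|2n}$: for rapidly decreasing superfunctions $f$ (which is guaranteed here since everything is a superpolynomial times $\exp(-\norm z^2)$), one has $\frac1\gamma\int_{\ds C^{m|2n}}\pt i\big(f(z)\big)dz = 0$ for each $i$, where $\pt i = \sum_j \beta_{ij}\pt{z_j}$; this holds for the even part by the usual decay argument and for the odd part because the Berezin integral of a total odd derivative vanishes. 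Applying this to $f = \exp(-\norm z^2)\,p\,\ol q$ and using $\pt i\big(\exp(-\norm z^2)\big) = -\ol{z_i}\exp(-\norm z^2)$ together with $\pt i\big(\ol q(z)\big) = \overline{(\pt i q)(z)}$ (up to the conjugation convention $\bar q(z) = \overline{q(\bar z)}$), the relation $\fp{z_i p,q} = (-1)^{|i||p|}\fp{p,\pt i q}$ drops out after moving $\pt i$ past $p$, which produces the sign $(-1)^{|i||p|}$. The companion identity $\fp{\pt i p,q} = (-1)^{|i||p|}\fp{p,z_i q}$ follows either by the same computation with the roles of $z$ and $\partial$ interchanged, or formally by replacing $q$ by a monomial and using the first identity twice.

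For (ii), superhermiticity means $\fp{p,q} = (-1)^{|p||q|}\overline{\fp{q,p}}$ for homogeneous $p,q$. I would reduce to monomials in the $z_i$: by sesquilinearity it suffices to check the identity on a basis, and since $\fp{\cdot\,,\cdot}$ is built from the Fischer-type pairing, $\fp{z^A, z^B}$ vanishes unless the multi-indices $A,B$ agree, in which case it equals a real combinatorial constant (this is where the explicit evaluation of the Berezin--Gaussian integral, already recorded in the lemma computing $\gamma=\pi^M$, feeds in). Concretely, I would use the adjointness relations from (i) to rewrite $\fp{z^A,z^B} = \fp{1, \partial^A z^B}\cdot(\pm 1)$ and observe $\partial^A z^B\big|_{z=0}$ is a real scalar that is symmetric in $A\leftrightarrow B$ up to the Koszul sign $(-1)^{|A||B|}$ coming from reordering the odd derivatives past the odd variables. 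Since $\overline{\fp{q,p}}$ only conjugates the coefficients, and the structure constants are real, the identity follows.

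The main obstacle is purely bookkeeping: getting every Koszul sign right when commuting odd $\partial_i$ past odd $z_j$ and past the factor $\exp(-\norm z^2)$, and making sure the conjugation convention $\bar q(z)=\overline{q(\bar z)}$ interacts correctly with $\pt i$ (the operator $\pt i$ contracts $\beta_{ij}$, and $\beta$ has real entries, so no extra signs enter there). I expect no genuine analytic difficulty: the decay needed to discard boundary terms in the even directions is automatic on $\ot F$, and the odd directions are algebraic. One subtlety worth isolating as a small lemma is the verification that $\pt i$ is the $\fp{\cdot\,,\cdot}$-adjoint of multiplication by $z_i$ rather than by $z^i$; tracking the raised versus lowered index through $\norm z^2 = \sum_i z^i\ol{z_i}$ is exactly the point where the form $\beta$ enters and must be handled consistently with Definition~\ref{Def_trace_product}.
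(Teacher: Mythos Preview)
Your integration-by-parts strategy for the adjointness relations is the right idea and matches the paper's approach, but there is a genuine error in the key step. You write $\pt i\big(\ol{q(z)}\big) = \overline{(\pt i q)(z)}$; this is false. In the Fock integral $\ol{q(z)}$ denotes the full complex conjugate, which is a function of the variables $\ol{z_j}$ alone (for the odd part these were explicitly introduced as additional, supercommuting Grassmann generators). Since $\pt i = \sum_j\beta_{ij}\pt{z_j}$ differentiates only in the $z_j$'s, one has $\pt i\big(\ol{q(z)}\big)=0$. This is exactly what makes the computation short: applying Leibniz to $\exp(-\norm z^2)\,p(z)\,\ol{q(z)}$ leaves only two terms, and using $\pt i\exp(-\norm z^2) = -\ol{z_i}\exp(-\norm z^2)$ yields
\[
\fp{\pt i p,q} = (-1)^{|i||p|}\fp{p, z_i q},
\]
i.e.\ the \emph{second} relation, not the third one you claim to obtain. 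With your incorrect formula for $\pt i(\ol q)$ a spurious third term would appear and neither identity would drop out directly.

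On superhermiticity you take a longer route than necessary. The paper records it as trivial: conjugating the defining integral replaces $p(z)\ol{q(z)}$ by $\ol{p(z)}q(z)$ (the weight $\exp(-\norm z^2)$ and the measure are real), and reordering produces the sign $(-1)^{|p||q|}$. Your plan to derive superhermiticity \emph{from} the adjointness relations is workable but circular relative to the paper's logic, since the paper uses superhermiticity together with the second relation to deduce the third.
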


\begin{proof}
The first claim is trivial. The second claim follows from
\begin{align*}
\fp{\pt i p,q} &= \dfrac{1}{\gamma}\int_{\ds C^{m|2n}} \pt i (p(z))\ol{q(z)}\exp(-\norm{z}^2)dz\\
&= - (-1)^{|i||p|}\dfrac{1}{\gamma}\int_{\ds C^{m|2n}} p(z)\pt i (\ol{q(z)}\exp(-\norm{z}^2))dz\\
&= - (-1)^{|i||p|}\dfrac{1}{\gamma}\int_{\ds C^{m|2n}} p(z)\pt i (\exp(-\norm{z}^2))\ol{q(z)}dz\\
&= (-1)^{|i||p|}\dfrac{1}{\gamma}\int_{\ds C^{m|2n}} p(z)\ol{z_i}\exp(-\norm{z}^2)\ol{q(z)}dz\\
&= (-1)^{|i||p|}\fp{p, z_i q}
\end{align*}
and the third claim follows from combining the first and second claims.
\end{proof}

\begin{Prop}\label{Prop fip is fp}
For $p,q\in \mc P(\ds C^{m|2n})$ we have
\begin{align*}
\fip{p,q} = \fp{p,q}
\end{align*} 
\end{Prop}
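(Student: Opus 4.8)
The plan is to show that both sesquilinear forms $\fip{\cdot\,,\cdot}$ and $\fp{\cdot\,,\cdot}$ agree on a basis of monomials, using the characterisation of each form via adjointness. The key observation is that Proposition \ref{Prop SuperInd} establishes that $\pt i$ and $z_i$ are mutually adjoint (up to sign) with respect to $\fp{\cdot\,,\cdot}$, and the very definition of the Fischer product makes this adjointness manifest for $\fip{\cdot\,,\cdot}$ as well. So both forms are determined by the same adjointness relations together with their value on constants.

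First I would record the adjointness for the Fischer product: since $\fip{p,q} = \left.p(\partial)\bar q(z)\right|_{z=0}$, for any $i$ we have $\fip{z_i p, q} = \left.\pt i (p(\partial)\bar q(z))\right|_{z=0}$, and moving $\pt i$ onto $\bar q$ (it passes $p(\partial)$ up to the Koszul sign) gives $\fip{z_i p, q} = (-1)^{|i||p|}\left. p(\partial) \pt i \bar q(z)\right|_{z=0} = (-1)^{|i||p|}\fip{p, \pt i q}$, because $\pt i \bar q = \overline{\pt i q}$ under our conventions on conjugating coefficients (the odd derivative is real). Symmetrically $\fip{\pt i p, q} = (-1)^{|i||p|}\fip{p, z_i q}$. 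Also $\fip{1,1} = 1 = \fp{1,1}$ (the latter because $\gamma$ was chosen precisely to normalise this). Thus both forms satisfy the same recursion.

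Then I would argue by induction on total degree. Given monomials $p = z_{i_1}\cdots z_{i_r}$ and $q$, if $r \geq 1$ write $p = z_{i_1} p'$ and use the adjointness to reduce $\fip{p,q}$ and $\fp{p,q}$ to $\fip{p', \pt{i_1} q}$ and $\fp{p', \pt{i_1} q}$ respectively (with the same sign), where $p'$ has lower degree; if $r = 0$ but $q$ has positive degree, do the same with the roles swapped, peeling a variable off $q$ and noting that $\fip{1, z_{i_1} q'} = (-1)^{|i_1|\cdot 0}\fip{\pt{i_1} 1, q'} = 0$ unless we instead use $\fip{p, q} = \overline{\fip{q,p}}$-type symmetry — more cleanly, both forms vanish when $\deg p \neq \deg q$, which follows from the recursion, so we may assume $\deg p = \deg q$ and always peel from $p$. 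The base case $\deg p = \deg q = 0$ is the normalisation $\fip{1,1} = \fp{1,1} = 1$. By sesquilinearity this extends from monomials to all of $\mc P(\ds C^{m|2n})$.

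The main obstacle is bookkeeping rather than conceptual: one must check that the Koszul signs produced when commuting $\pt i$ past $p(\partial)$ in the Fischer product match exactly the signs $(-1)^{|i||p|}$ appearing in Proposition \ref{Prop SuperInd}, and that conjugation of coefficients interacts correctly with the odd derivatives (in particular that $\overline{\pt i q} = \pt i \bar q$, so that the ``antilinear slot'' behaves consistently in both forms). Once the sign conventions are pinned down, the induction is routine. I would also remark that this proposition, combined with \eqref{Eq_Bessel-Fock}, immediately yields that the Bessel-Fischer product on $F_{-\frac12}$ is superhermitian and equals the integral Fock form transported through $\psi$, which is why the $\lambda = -\frac12$ case of superhermitianity promised after Proposition \ref{Prop Non deg bfip} needs no separate computation.
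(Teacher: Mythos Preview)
Your proposal is correct and follows essentially the same approach as the paper: both arguments use the adjointness relations $\fip{z_i p,q} = (-1)^{|i||p|}\fip{p,\pt i q}$ and $\fp{z_i p,q} = (-1)^{|i||p|}\fp{p,\pt i q}$ (the latter from Proposition~\ref{Prop SuperInd}) together with the normalisation $\fip{1,1}=\fp{1,1}=1$, and then induct on degree. The paper organises the induction on $\deg(q)$ and handles the case $\deg(p)=0$ separately via the superhermitianity of $\fp{\cdot\,,\cdot}$, whereas you peel variables from $p$ and invoke the vanishing of both forms on degree-inhomogeneous pairs; these are cosmetic rearrangements of the same argument.
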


\begin{proof}
This proof is a straightforward generalisation of the proof in the symplectic case, see \cite[Proposition 2.6]{HKMO}. We give it here anyway, for completeness' sake.

First note that for all $p,q\in \mc P(\ds C^{m|2n})$ and $i\in\{1, \ldots m+2n\}$ we have
\begin{align*}
\fip{z_i p,q} &= (-1)^{|i||p|}\fip{p, \pt i q},\\
\fp{z_i p,q} &= (-1)^{|i||p|}\fp{p, \pt i q}.
\end{align*}
The first equation follows directly from the definition of the Fischer product and the second equation follows from Proposition \ref{Prop SuperInd}. We prove this proposition by using induction on the degree of $q$, $\deg(q)$. First, if $p=q=1\in \ds C$, it is clear that $\fip{p,q}=1$. From the way we normalised the Fock product, it is also clear that $\ipW{p,q}=1$. We conclude that the proposition holds for $\deg(p)=\deg(q)=1$. If now $\deg(p)$ is arbitrary and $\deg(q)=0$ then $\pt i q = 0$ for all  $i\in\{1, \ldots m+2n\}$ and hence
\begin{align*}
\fip{z_i p,q} &= (-1)^{|i||p|}\fip{p, \pt i q}=0,\\
\fp{z_i p,q} &= (-1)^{|i||p|}\fp{p, \pt i q}=0.
\end{align*}
Therefore the theorem holds if $\deg(q)=0$. We note that the theorem also holds if $\deg(p)=0$ and $\deg(q)$ is arbitrary. In fact,
\begin{align*}
\fip{p,q} = p(0)\ol{q(0)} = \fipbar{q,p}, \quad \text{ and }\quad \fp{p,q} = \fpbar{q,p}
\end{align*}
and then the theorem follows from previous considerations. Now assume the theorem holds for $\deg(q) \leq k$. For $\deg(q) \leq k+1$ we then have $\deg(\pt i q)\leq k$ and therefore, by the induction hypothesis
\begin{align*}
\fip{z_i p, q} = (-1)^{|i||p|}\fip{p, \pt i q} = (-1)^{|i||p|}\fp{p, \pt i q} = \fp{z_i p,q}.
\end{align*}
This shows the theorem holds for $\deg(q)\leq k+1$ and $p(0)=0$, i.e.\ without the constant term. But for constant $p$, i.e.\ $\deg(p)=0$ we have already seen that the theorem holds and therefore the proof is complete.
\end{proof}

In particular, since the Fock product is trivially superhermitian, we find that the Fischer product and Bessel-Fischer product are also superhermitian.

\begin{Cor}
The pairs $(\ot F, \fp{\cdot \, , \cdot})$ and $(F_{-\frac{1}{2}}, \bfip{\cdot \, , \cdot})$ are Hermitian superspaces.
\end{Cor}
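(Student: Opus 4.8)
The plan is to verify the three defining properties of a super-inner product — sesquilinearity, superhermitianity, and non-degeneracy — for each of the two pairings, reusing the results already established in this section and the observations deferred from Section \ref{ss BF prod}.

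First, sesquilinearity is immediate from the definitions. The Fock product $\fp{\cdot\,,\cdot}$ is an integral pairing which is linear in its first argument and, because of the complex conjugation on the second argument, antilinear in the second; likewise the Bessel--Fischer product $\bfip{\cdot\,,\cdot}$ is linear in $p$ (the substitution $\ell_{ij}\mapsto\bessel(\ell_{ij})$ is compatible with $\ds K$-linear combinations of the coefficients of $p$) and antilinear in $q$ (conjugation of coefficients). Second, for superhermitianity I would run the chain of identifications made just above: Proposition \ref{Prop SuperInd} gives that $\fp{\cdot\,,\cdot}$ is superhermitian, Proposition \ref{Prop fip is fp} transfers this to the Fischer product $\fip{\cdot\,,\cdot}$, and finally $\eqref{Eq_Bessel-Fock}$ together with the folding isomorphism $\psi$ transports it to $\bfip{\cdot\,,\cdot}$ on $F_{-\frac12}$ — this is precisely the statement that was promised, but not proved, in Section \ref{ss BF prod}.

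It remains to treat non-degeneracy. For $\bfip{\cdot\,,\cdot}$ on $F_{-\frac12}$ this is already Proposition \ref{Prop Non deg bfip} (proved there via the reproducing kernel of Theorem \ref{Theorem repr kernel}), so nothing new is needed. For $\fp{\cdot\,,\cdot}$ on $\ot F$ I would combine $\fp{\cdot\,,\cdot}=\fip{\cdot\,,\cdot}$ (Proposition \ref{Prop fip is fp}) with the elementary structure of the Fischer product: $\fip{\cdot\,,\cdot}$ pairs $\mc P_k(\ds C^{m|2n})$ and $\mc P_l(\ds C^{m|2n})$ to zero whenever $k\neq l$, and on each $\mc P_k(\ds C^{m|2n})$ it is diagonal on the monomial basis, with $\fip{z^\alpha,z^\beta}=0$ for $\alpha\neq\beta$ and $\fip{z^\alpha,z^\alpha}$ a nonzero constant (a product of factorials in the even exponents, a sign in the square-free odd exponents). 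Hence $\fip{\cdot\,,\cdot}$ is non-degenerate on $\mc P(\ds C^{m|2n})$, and since $\ot F=\ot F_e\oplus\ot F_o$ is a direct sum of homogeneous components $\mc P_k(\ds C^{m|2n})$ (the generators of $\dotrho$ shift degree by $0$ or $\pm2$, by Lemma \ref{Lemsltriple}), the restriction to $\ot F$ of a form that is block-diagonal by degree with non-degenerate blocks is again non-degenerate.

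The only point requiring any care is the last one: Proposition \ref{Prop Non deg bfip} only controls the even-degree part of $\ot F$ directly, so one must argue non-degeneracy degree by degree, which the orthogonality of distinct homogeneous components under $\fip{\cdot\,,\cdot}$ reduces to the routine monomial computation above. Everything else is bookkeeping that chains together Propositions \ref{Prop SuperInd}, \ref{Prop fip is fp} and \ref{Prop Non deg bfip} through the isomorphism $\psi$ and the identity $\eqref{Eq_Bessel-Fock}$; assembling these gives that $(\ot F,\fp{\cdot\,,\cdot})$ and $(F_{-\frac12},\bfip{\cdot\,,\cdot})$ are Hermitian superspaces.
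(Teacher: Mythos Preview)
Your overall strategy matches the paper's: the corollary is stated there without proof, as an immediate consequence of Proposition \ref{Prop SuperInd}, Proposition \ref{Prop fip is fp}, equation \eqref{Eq_Bessel-Fock}, and Proposition \ref{Prop Non deg bfip}, exactly the chain you assemble.

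However, your non-degeneracy argument for $\fp{\cdot,\cdot}$ on $\ot F$ has a genuine gap. You claim $\fip{\cdot,\cdot}$ is diagonal on the monomial basis, with $\fip{z^\alpha,z^\beta}=0$ for $\alpha\neq\beta$ and $\fip{z^\alpha,z^\alpha}$ a nonzero sign on the odd part. This fails: from the definition one has $\fip{z_i,z_j}=\pt i\, z_j=\beta_{ij}$, and the odd block of $\beta$ is $\left(\begin{smallmatrix}0&-I_n\\ I_n&0\end{smallmatrix}\right)$, so $\fip{z_i,z_i}=0$ for every odd index $i$, while for instance $\fip{z_{m+1},z_{m+n+1}}=-1\neq 0$. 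The form is therefore neither diagonal on the standard monomials nor are its diagonal entries nonzero on the odd part. The repair is easy: the Gram matrix of $\fip{\cdot,\cdot}$ on $\mc P_1$ is precisely $\beta$, which is non-degenerate, and one can then either propagate this degree by degree, or simply invoke the reproducing kernel of Proposition \ref{Prop Repr Kern Fock} (placed just after the corollary), from which non-degeneracy on all of $\ot F$ follows in one line by the same argument as in Proposition \ref{Prop Non deg bfip}.
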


We now prove that $\dotrho$ is ``infinitesimally superunitary'' on $(\ot F, \fp{\cdot \, , \cdot})$.

\begin{Prop}\label{Prop rho skew sup sym}
The Fock representation $\rol$ is skew-supersymmetric with respect to the Fock product, i.e.\
\begin{align*}
\fp{\dotrho(X)p,q} = - (-1)^{|X||p|} \fp{p,\dotrho(X)q},
\end{align*}
for all $X\in \mf g$ and $p,q\in \mc P(\ds C^{m|2n})$.
\end{Prop}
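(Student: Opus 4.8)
The plan is to replace the analytic Fock product by the purely algebraic Fischer product, and then to reduce the statement to a generating set of $\mf g$, on which $\dotrho$ can be written in closed form by means of the Cayley transform.

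First I would use Proposition \ref{Prop fip is fp} to rewrite the claim as $\fip{\dotrho(X)p,q}=-(-1)^{|X||p|}\fip{p,\dotrho(X)q}$. The advantage of passing to $\fip{\cdot\,,\cdot}$ is that multiplication by $\ell_i$ and the derivation $\pt i$ are mutually super-adjoint with respect to it: one has $\fip{\ell_i p,q}=(-1)^{|i||p|}\fip{p,\pt i q}$ (recorded in the proof of Proposition \ref{Prop fip is fp}) and $\fip{\pt i p,q}=(-1)^{|i||p|}\fip{p,\ell_i q}$ (which follows from the first identity and the superhermitianity of $\fip{\cdot\,,\cdot}$, see the discussion after Proposition \ref{Prop fip is fp}). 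Setting $a_i^{+}:=\ell_i+\pt i$ and $a_i^{-}:=\ell_i-\pt i$, these identities give
\begin{align*}
\fip{a_i^{+}p,q}=(-1)^{|i||p|}\fip{p,a_i^{+}q},\qquad \fip{a_i^{-}p,q}=-(-1)^{|i||p|}\fip{p,a_i^{-}q},
\end{align*}
so $a_i^{+}$ is self-super-adjoint and $a_i^{-}$ is skew-super-adjoint. Moreover the canonical relation $\pt i\ell_j-(-1)^{|i||j|}\ell_j\pt i=\beta_{ij}$, together with the supersymmetry of $\beta$, shows that $a_i^{\pm}$ and $a_j^{\pm}$ super-commute; hence $a_i^{+}a_j^{+}$ and $a_i^{-}a_j^{-}$ are self-super-adjoint homogeneous operators of parity $|i|+|j|$, and therefore $\imath\,a_i^{+}a_j^{+}$ and $\imath\,a_i^{-}a_j^{-}$ are skew-supersymmetric.

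Next I would observe that the operators on $\mc P(\ds C^{m|2n})$ that are skew-supersymmetric with respect to $\fip{\cdot\,,\cdot}$ form a Lie sub-superalgebra of $\End\big(\mc P(\ds C^{m|2n})\big)$, which follows in one line from the super-Leibniz rule for adjoints $(AB)^{\dagger}=(-1)^{|A||B|}B^{\dagger}A^{\dagger}$. Hence it suffices to check the identity for $X$ in a generating set of $\mf g$. Since $J$ is unital, $[x^{+},e^{-}]=2L_{x}$ and $[x^{+},u^{-}]=2L_{xu}+2[L_{x},L_{u}]$ show $[\mf g_{+},\mf g_{-}]=\mf{istr}(J)=\mf g_{0}$, so $\mf g$ is generated by $\mf g_{-}\oplus\mf g_{+}=\mbox{span}_{\ds K}\{\ell_{ij}^{\pm}\}$. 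Using $\dotrho=\dotpi_{\ds C}\circ c$ with the explicit Cayley transform of Proposition \ref{Prop Cayley} (so that $c(\ell_{ij}^{-})=\tfrac14\ell_{ij}^{-}+\imath L_{\ell_{ij}}+\ell_{ij}^{+}$ and $c(\ell_{ij}^{+})=\tfrac14\ell_{ij}^{-}-\imath L_{\ell_{ij}}+\ell_{ij}^{+}$), together with the explicit form of $\dotpi$ from Theorem \ref{Th pil on psiW}, a short computation (using the canonical relation to rewrite $\pt i\ell_j$) gives
\begin{align*}
\dotrho(\ell_{ij}^{-})=-\frac{\imath}{2}\,a_i^{+}a_j^{+},\qquad \dotrho(\ell_{ij}^{+})=-\frac{\imath}{2}\,a_i^{-}a_j^{-},
\end{align*}
which are skew-supersymmetric by the previous paragraph; this proves the proposition. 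Alternatively one can avoid the Lie-subalgebra observation and verify directly the four families $\ell_{ij}^{\pm}$, $2L_{\ell_{ij}}$ and $4[L_{\ell_{ij}},L_{\ell_{rs}}]$ that span $\mf g$: since $c$ fixes $\Inn(J)$ one has $\dotrho(4[L_{\ell_{ij}},L_{\ell_{rs}}])=\dotpi(4[L_{\ell_{ij}},L_{\ell_{rs}}])$, a real linear combination of the skew operators $L_{ij}=\ell_i\pt j-(-1)^{|i||j|}\ell_j\pt i$, while $\dotrho(2L_{\ell_{ij}})=\ell_i\ell_j-\pt i\pt j$ is skew because $\ell_i\ell_j$ and $\pt i\pt j$ are mutually super-adjoint.

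The only genuine difficulty is the sign bookkeeping: one must keep careful track of the Koszul signs when establishing the mutual super-adjointness of $\ell_i$ and $\pt i$, the super-commutativity of the $a_i^{\pm}$, the self-super-adjointness of $a_i^{\pm}a_j^{\pm}$, and the closed forms for $\dotrho(\ell_{ij}^{\pm})$. Beyond organising these signs consistently with the conventions of Sections \ref{SSgl}--\ref{SSTKK2}, there is no conceptual obstacle.
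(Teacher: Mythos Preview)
Your proof is correct and follows essentially the same approach as the paper: compute $\dotrho$ explicitly via the Cayley transform and then invoke the mutual super-adjointness of multiplication by $\ell_i$ and the derivation $\pt i$. The paper works directly with the Fock product (Proposition~\ref{Prop SuperInd} already gives $\fp{\ell_i p,q}=(-1)^{|i||p|}\fp{p,\pt_i q}$, so your detour through $\fip{\cdot,\cdot}$ is unnecessary) and checks the four spanning families $\ell_{ij}^{-}+\ell_{ij}^{+}$, $\ell_{ij}^{+}-\ell_{ij}^{-}$, $2L_{\ell_{ij}}$, $4[L_{\ell_{ij}},L_{\ell_{rs}}]$ one by one; your packaging via the factorisations $\dotrho(\ell_{ij}^{\pm})=-\tfrac{\imath}{2}a_i^{\pm}a_j^{\pm}$ together with the Lie-subalgebra closure to reduce to the generators $\ell_{ij}^{\pm}$ is a mild streamlining of the same argument, not a different route.
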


\begin{proof}
We have the following four cases.
\begin{itemize}
\item If $X =\ell_{ij}^-+ \ell_{ij}^+$, then $\dotrho(X)$ on $\mc P(\ds C^{m|2n})$ is given by
\begin{align*}
\dotrho(X) = \pil(\frac{1}{2}\ell_{ij}^-+2\ell_{ij}^+) = -\imath(z_iz_j+\pt i\pt j).
\end{align*}
\item If $X =\ell_{ij}^+- \ell_{ij}^-$, then $\dotrho(X)$ on $\mc P(\ds C^{m|2n})$ is given by
\begin{align*}
\dotrho(X) = \pil(-2\imath L_{\ell_{ij}}) = \imath(\beta_{ij}+z_i\pt j +(-1)^{|i||j|}z_j \pt i)
\end{align*}
\item If $X =2L_{\ell_{ij}}$, then $\dotrho(X)$ on $\mc P(\ds C^{m|2n})$ is given by
\begin{align*}
\dotrho(X) = \imath\pil(\frac{1}{2}\ell_{ij}^- - 2\ell_{ij}^+) = z_iz_j-\pt i \pt j.
\end{align*}
\item If $X = 4[L_{\ell_{ij}}, L_{\ell_{rs}}]$, then $\dotrho(X)$ on $\mc P(\ds C^{m|2n})$ is given by
\begin{align*}
\dotrho(X) &= \pil(4[L_{\ell_{ij}}, L_{\ell_{rs}}])\\
&=\beta_{jr}L_{is}+(-1)^{|r||s|}\beta_{js}L_{ir}+(-1)^{|i||j|}\beta_{ir}L_{js}+(-1)^{|i||j|+|r||s|}\beta_{is}L_{jr},
\end{align*}
with $L_{ij} = z_i\pt j - (-1)^{|i||j|}z_j\pt i$.
\end{itemize}
All four cases follow directly from Proposition \ref{Prop SuperInd}.
\end{proof}

We also have a reproducing kernel for the Fock product.

\begin{Prop}\label{Prop Repr Kern Fock}
The reproducing kernel for the Fock product is given by the superfunction $\exp(z\bullet\ol w)$, i.e.\
\begin{align*}
\fp{p(z), \exp(z\bullet \ol w)} = p(w),
\end{align*}
for all $p\in \mc P(\ds C^{m|2n})$.
\end{Prop}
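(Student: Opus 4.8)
The plan is to verify the reproducing property by checking it on monomials and extending bilinearly, using the already-established identities for the Fock product. First I would observe that since $\exp(z\bullet\overline w) = \sum_{k\geq 0}\frac{1}{k!}(z\bullet\overline w)^k$ and each $(z\bullet\overline w)^k$ is a superpolynomial in $z$ of homogeneous degree $k$ (with coefficients polynomial in $\overline w$), the pairing $\fp{p(z),\exp(z\bullet\overline w)}$ is a finite sum once $p$ has bounded degree, so all manipulations are legitimate. By linearity in $p$ it suffices to prove the identity for $p$ a monomial $z^A := z_{i_1}\cdots z_{i_k}$ (ordered multi-index), and then one reduces the degree by one variable at a time.

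The key computational step uses Proposition \ref{Prop SuperInd}, specifically $\fp{z_i p, q} = (-1)^{|i||p|}\fp{p,\pt i q}$. Writing $z\bullet\overline w = \sum_j z^j\overline{w_j}$, I would compute $\pt i(z\bullet\overline w) = \overline{w_i}$ after unwinding the definition of $z^j$ in terms of $\beta^{ij}$ (so that $\pt{z_i}(\sum_j z^j \overline{w_j}) = \overline{w_i}$, using $\pt{z_i}z^j = \delta_i^j$ with the chosen conventions). Since $\pt i$ acts as a superderivation and $\exp$ of the bilinear form behaves like an ordinary exponential under differentiation, $\pt i\exp(z\bullet\overline w) = \overline{w_i}\exp(z\bullet\overline w)$ up to the appropriate sign bookkeeping. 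Thus for a monomial $p = z_i p'$ one gets
\begin{align*}
\fp{z_i p'(z),\exp(z\bullet\overline w)} &= (-1)^{|i||p'|}\fp{p'(z),\pt i\exp(z\bullet\overline w)}\\
&= (-1)^{|i||p'|}\fp{p'(z),\overline{w_i}\exp(z\bullet\overline w)}\\
&= \overline{w_i}\,(-1)^{|i||p'|}(-1)^{|i||p'|}\fp{p'(z),\exp(z\bullet\overline w)}\cdot(\pm 1),
\end{align*}
and after checking that the signs from moving $\overline{w_i}$ past the pairing cancel against the sign from the superderivation rule, this equals $w_i\fp{p'(z),\exp(z\bullet\overline w)}$; here one uses that $\overline{w_i}$ pairs trivially (it is a separate variable, constant with respect to the $z$-integration) and reality of the product so that the conjugate $\overline{w_i}$ becomes $w_i$ after applying superhermitianity or simply tracking that $q(z) = \exp(z\bullet\overline w)$ has $\overline{q}$-coefficients that are polynomials in $w$. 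Then induction on $\deg p$, with base case $p = 1$ giving $\fp{1,\exp(z\bullet\overline w)} = \frac{1}{\gamma}\int_{\ds C^{m|2n}}\exp(-\norm z^2)\exp(z\bullet\overline w)\,dz\big|$ which one checks evaluates to $1$ (the constant term, since all higher terms integrate to zero against the Gaussian — this is exactly the $\deg q = 0$ instance of the orthogonality used in Proposition \ref{Prop fip is fp}), completes the argument.

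The main obstacle I anticipate is the sign bookkeeping when $i$ has odd parity: one must carefully track the Koszul signs in $\pt i\exp(z\bullet\overline w)$, in moving the scalar-but-odd quantity $\overline{w_i}$ out of the pairing, and in the superderivation identity of Proposition \ref{Prop SuperInd}, and confirm they all cancel so that the clean recursion $\fp{z_i p,\exp(z\bullet\overline w)} = w_i\fp{p,\exp(z\bullet\overline w)}$ holds without extra signs. A cleaner alternative, which I would actually prefer to present, is to avoid this entirely: by Proposition \ref{Prop fip is fp} the Fock product equals the Fischer product $\fip{\cdot\,,\cdot}$, and for the Fischer product the reproducing kernel statement $\fip{p(z),\exp(z\bullet\overline w)} = p(w)$ is immediate because $\fip{p(z),\overline{z^B}} = p(\pt{})\overline{z^B}\big|_{z=0}$ extracts exactly the coefficient of the corresponding monomial, and $\exp(z\bullet\overline w) = \sum_B \frac{c_B}{\,}\, (\text{monomials } z^B)\,\overline{w}^{B}$ is precisely the generating function whose coefficients reproduce evaluation at $w$ — here one only needs that $\fip{z^A, z^B}$ is the standardly normalized pairing making $\{z^A\}$ "orthonormal up to the combinatorial factor" that matches $\frac{1}{k!}(z\bullet\overline w)^k$. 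So the proof reduces to: invoke Proposition \ref{Prop fip is fp}, then expand $\exp(z\bullet\overline w)$ and use the definition of $\fip{\cdot\,,\cdot}$ directly, with the only check being that $\sum_j z^j\overline{w_j}$ raised to powers and differentiated via $p(\pt{})$ reproduces $p(w)$, which is the usual Fischer-kernel identity adapted to the $\beta$-twisted variables $z^j$.
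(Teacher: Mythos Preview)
Your proposal is correct and matches the paper's approach: the paper's proof is the one-line statement ``This is a straightforward verification using $\pt{z^i}\exp(z\bullet\overline w)=w_i$,'' which is exactly the derivative identity underlying both of your framings (via Proposition~\ref{Prop SuperInd} or, equivalently after Proposition~\ref{Prop fip is fp}, via the Fischer product). Your second route through the Fischer product is the cleaner way to avoid the sign bookkeeping you flag, and is implicitly what the paper's terse proof relies on.
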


\begin{proof}
This is a straightforward verification using $\pt{z^i} \exp(z\bullet \ol w) = w_i$.
\end{proof}

Let us use the following notations, $\wh z = (z_{ij})_{i,j=1}^{m+2n}$, $\wh w = (w_{ij})_{i,j=1}^{m+2n}$, $z = (z_i)_{i=1}^{m+2n}$, $w = (w_i)_{i=1}^{m+2n}$ and recall from section \ref{ss BF prod} that
\begin{align*}
\wh z|\wh w = \dfrac{1}{4}\sum_{i,j=1}^{m+2n}z^{ji}w_{ij}.
\end{align*}
We have
\begin{align*}
\psi(4(\wh z|\wh w)) &= \sum_{i,j=1}^{m+2n}\psi(z^{ji}w_{ij}) = \sum_{i,j=1}^{m+2n}z^{j}z^i w_{i}w_j = \sum_{i=1}^{m+2n}z^i w_{i}\sum_{j=1}^{m+2n}z^{j}w_j\\
&= (z\bullet w)^2.
\end{align*}
In particular, the reproducing kernel for the Bessel-Fischer product becomes
\begin{align*}
\psi(\ds I_{-\frac{1}{2}}(\wh z,\wh w)) = \cosh(z | \ol{w}).
\end{align*}
By Proposition \ref{Prop fip is fp} this implies that the reproducing kernel of the Fock product on $\ot F_e$ is given by $\cosh(z | \ol{w})$, which is consistent with Proposition \ref{Prop Repr Kern Fock}.

\section{The Segal-Bargmann transform}\label{S_SB}

In this section, we construct the Segal-Bargmann transform $\SB$ and prove that it intertwines the actions of $\dotpi$ and $\dotrho$. Moreover, the Segal-Bargmann transform brings us to a straightforward generalisation of the classical Hermite polynomials as the preimages of the monomials in the Fock model. To prove the Segal-Bargmann transform is superunitary we first extend our Hermitian superspaces to Hilbert superspaces.

\subsection{Definition and properties}

\begin{Def}\symindex{$\SB$}
The \textbf{Segal-Bargmann transform} is the superintegral operator given by
\begin{align*}
\SB(f(x))(z) := \dfrac{1}{\omega}\exp(-\frac{1}{2}R^2_z)\int_{\ds R^{m|2n}}\exp(2(z\bullet x))\exp(-R^2_x)f(x)dx,
\end{align*}
where $R^2_x$ and $R^2_z$ denote $R^2$ in the variable $x$ and $z$, respectively.
\end{Def}

Note that if $f\in \ot W_e$, then $f$ is an even function and the Segal-Bargmann transform becomes
\begin{align*}
\SB(f(x))(z) = \exp(-\frac{1}{2}R^2_z)\int_{\ds R^{m|2n}}\cosh(2(z\bullet x))\exp(-R^2_x)f(x)dx.
\end{align*}
From Corollary \ref{Cor F isom F} it now follows that the Segal-Bargmann transform induces a Segal-Bargmann transform $\wh{\SB} := \psi^{-1}_{\ds C}\circ\SB\circ\, \psi_{\ds R}$\symindex{$\wh{\SB}$} between $W_{-\frac{1}{2}}$ and $F_{-\frac{1}{2}}$. In the symplectic case our Segal-Bargmann transform $\SB$ coincides with the classical Segal-Bargmann transform, up to a scalar multiple, while $\wh\SB$ coincides with the Segal-Bargmann transform for minimal representations, defined in \cite[Section 3]{HKMO}. In Section \ref{SS_Superunitary} we will prove that $\SB$ is superunitary with respect to the renormalised $L^2$ and Fock products. Note that the classical Segal-Bargmann transform is unitary with respect to the regular $L^2$ and Fock products, which is why they only coincide up to a scalar multiple.

\begin{theorem}[Intertwining property]\label{Th Intertwining Prop}
The Segal-Bargmann transform intertwines the action $\dotpi$ on $\ot W$ with the action $\dotrho$ on $\ot F$, i.e.\
\begin{align}\label{Eq Intertwining}
\SB\, \circ\, \dotpi(X) = \dotrho(X) \circ \SB,
\end{align}
for all $X\in g$.
\end{theorem}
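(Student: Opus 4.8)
The plan is to reduce \eqref{Eq Intertwining} to a generating set of $\mf g$ and then to verify it on those generators by a direct computation with the integral kernel of $\SB$.

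\emph{Reduction to generators.} Put $\mf s := \{X\in\mf g \mid \SB\circ\dotpi(X) = \dotrho(X)\circ\SB\}$. Since $\dotpi$ and $\dotrho$ are morphisms of Lie superalgebras and $\SB$ is linear, $\mf s$ is a sub-Lie-superalgebra: for $X,Y\in\mf s$ one has
\begin{align*}
\SB\circ\dotpi([X,Y]) &= \SB\circ\dotpi(X)\circ\dotpi(Y) - (-1)^{|X||Y|}\SB\circ\dotpi(Y)\circ\dotpi(X)\\
&= \dotrho(X)\circ\dotrho(Y)\circ\SB - (-1)^{|X||Y|}\dotrho(Y)\circ\dotrho(X)\circ\SB = \dotrho([X,Y])\circ\SB .
\end{align*}
By the TKK bracket relations, $[\ell_{ij}^+,\ell_{kl}^-] = 2L_{\ell_{ij}\cdot\ell_{kl}} + 2[L_{\ell_{ij}},L_{\ell_{kl}}]$; the products $\ell_{ij}\cdot\ell_{kl}$ span $J$ and the brackets $[L_{\ell_{ij}},L_{\ell_{kl}}]$ span $\Inn(J)$, so $\mf g_0 = \mf{istr}(J)\subseteq[\mf g_+,\mf g_-]$ and hence $\mf g_-\oplus\mf g_+$ generates $\mf g$. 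It therefore suffices to prove \eqref{Eq Intertwining} for $X=\ell_{ij}^-$ and $X=\ell_{ij}^+$.

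\emph{Kernel and exchange relations.} Write $\mathcal K(z,x) := \exp(-\tfrac12 R^2_z + 2(z\bullet x) - R^2_x)$, so that $\SB(f)(z) = \tfrac1\omega\int_{\ds R^{m|2n}}\mathcal K(z,x)f(x)\,dx$. A short computation gives the first-order identities $\pt i\mathcal K = (2x_i-z_i)\mathcal K$ when $\pt i$ acts in the $z$-variables and $\pt i\mathcal K = (2z_i-2x_i)\mathcal K$ when $\pt i$ acts in the $x$-variables; combining them yields $x_i\mathcal K = \tfrac12(z_i+\pt i)\mathcal K$ and $\pt i^{x}\mathcal K = (z_i-\pt i)\mathcal K$, where on the right $\pt i$ always denotes the operator in the $z$-variables. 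Since $\mathcal K$ is an even superfunction and the $z$-side operators are $x$-independent, pulling them out of the integral gives $\SB\circ(x_i\,\cdot) = \tfrac12(z_i+\pt i)\circ\SB$, while integrating by parts in $x$ — which has no boundary contribution, as $\ot W$ consists of rapidly decreasing superfunctions and Berezin integrals of derivatives vanish identically — gives $\SB\circ\pt i^{x} = (\pt i-z_i)\circ\SB$, both as operators on $\ot W$. (The Koszul signs produced by the odd variables have to be tracked carefully throughout; they are precisely what makes these formulas hold without correction terms.)

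\emph{Conclusion.} By Theorem \ref{Th pil on psiW}, $\dotpi(\ell_{ij}^-)$ is multiplication by $-2\imath x_ix_j$ and $\dotpi(\ell_{ij}^+) = -\tfrac{\imath}{2}\pt i^{x}\pt j^{x}$, so applying the exchange relations twice yields
\begin{align*}
\SB\circ\dotpi(\ell_{ij}^-) = -\tfrac{\imath}{2}(z_i+\pt i)(z_j+\pt j)\circ\SB, \qquad \SB\circ\dotpi(\ell_{ij}^+) = -\tfrac{\imath}{2}(z_i-\pt i)(z_j-\pt j)\circ\SB .
\end{align*}
On the other hand, feeding $\ell_{ij}^{\pm}$ through the Cayley transform (Proposition \ref{Prop Cayley}) and then the explicit formulas of Theorem \ref{Th pil on psiW} — the same computation already recorded in the proof of Proposition \ref{Prop rho skew sup sym} — shows that on $\mc P(\ds C^{m|2n})$ one has $\dotrho(\ell_{ij}^-) = -\tfrac{\imath}{2}(z_i+\pt i)(z_j+\pt j)$ and $\dotrho(\ell_{ij}^+) = -\tfrac{\imath}{2}(z_i-\pt i)(z_j-\pt j)$. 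Comparing the two sides proves \eqref{Eq Intertwining} for the generators $\ell_{ij}^{\pm}$, hence for all $X\in\mf g$. I expect the main obstacle to be the sign bookkeeping: the kernel identities, the super integration by parts for the Berezin integral, and the compositions $\SB\circ x_i\circ x_j$ and $\SB\circ\pt i^{x}\circ\pt j^{x}$ all require the Koszul rule, and one must also make sure every integral converges and that no boundary terms appear, which again relies on the rapid decrease of elements of $\ot W$. (Alternatively one may skip the reduction to generators and verify \eqref{Eq Intertwining} directly on all four TKK basis types using the same exchange relations; this is longer but needs no structural input.)
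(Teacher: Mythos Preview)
Your argument is correct, and it is genuinely different from the paper's. The paper proceeds by brute force: it fixes the kernel $\mathcal K = E_zE_tE_x$, records the four derivation rules $\pt i^x E_x=-2x_iE_x$, $\pt i^x E_t=2z_iE_t$, $\pt i^z E_z=-z_iE_z$, $\pt i^z E_t=2x_iE_t$, and then verifies \eqref{Eq Intertwining} separately on each of the four TKK basis types $\ell_{ij}^+ +\ell_{ij}^-$, $\ell_{ij}^+-\ell_{ij}^-$, $2L_{\ell_{ij}}$, $4[L_{\ell_{ij}},L_{\ell_{rs}}]$, expanding both sides and matching terms. No reduction to a generating set is used; all of $\mf g_0$ is handled directly.

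Your route is more structural and more efficient. The reduction to $\mf g_-\oplus\mf g_+$ via $[e^+,v^-]=2L_v$ halves the workload, and the factorisation through the \emph{linear} exchange identities $\SB\circ x_i=\tfrac12(z_i+\pt i)\circ\SB$ and $\SB\circ\pt i^x=(\pt i-z_i)\circ\SB$ makes the creation/annihilation picture explicit: the intertwining boils down to the observation that $\dotrho(\ell_{ij}^\pm)$ factor as $-\tfrac{\imath}{2}(z_i\mp\pt i)(z_j\mp\pt j)$. One small point worth making explicit is that your composition argument tacitly uses that $\ot W$ is stable under the single operators $x_i$ and $\pt i^x$ (not just their quadratic combinations): this holds because $\ot W=\mc P(\ds R^{m|2n})\exp(-R_x^2)$, which is visibly closed under both. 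The paper's case-by-case approach never leaves the quadratic operators, so it does not need this fact; on the other hand it repeats essentially the same kernel manipulation four times. Your closing parenthetical remark is exactly the paper's proof.
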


\begin{proof}
We will introduce the following notations to shorten the length of the calculations,
\begin{align*}
E_x := \exp(-R_x^2), \quad E_z := \exp(-\frac{1}{2}R_z^2), \quad E_t :=  \dfrac{1}{\omega}\exp(2(z\bullet x)).
\end{align*}
Then for $f\in \ot W$ the Segal-Bargmann transform becomes
\begin{align*}
\SB(f(x))(z) =\int_{\ds R^{m|2n}}E_zE_tE_xf(x)dx.
\end{align*}
Since we are working in both the $x$ and $z$ variables we also introduce the notations
\begin{align*}
\pt i^x := \pt {x^i}, \quad \text{ and }\quad \pt i^z := \pt {z^i}
\end{align*}
to denote $\pt i$ in the variable $x_i$ and $z_i$, respectively. We now have the following simple derivation rules:
\begin{align*}
\pt i^x E_x = -2x_iE_x, \quad \pt i^x E_t = 2z_iE_t, \quad \pt i^z E_z = -z_iE_z, \quad \pt i^z E_t = 2x_iE_t.
\end{align*}

Depending on the form of $X\in \mf g$, there are four distinct cases we need to deal with.

{\bf Case 1.}\\
Suppose $X = \ell_{ij}^++\ell_{ij}^-$, then
\begin{align*}
\dotpi(X) = -\imath (2x_ix_j + \dfrac{1}{2}\pt i^x \pt j^x) \quad \text{ and }\quad \dotrho(X) = -\imath (z_iz_j+\pt i^z \pt j^z).
\end{align*}
For $f\in \psi(W_\lambda)$ the left hand side of \eqref{Eq Intertwining} becomes
\begin{align*}
\SB(\dotpi(X) f(x))(z) &= -\imath(2\SB(x_ix_j f(x))(z) + \dfrac{1}{2}\SB(\pt i^x \pt j^x f(x))(z)),
\end{align*}
with
\begin{align*}
\dfrac{1}{2}\SB(\pt i^x \pt j^x f(x))(z) &= \dfrac{1}{2}\int_{\ds R^{m|2n}}E_zE_tE_x\pt i^x \pt j^x(f(x))dx\\
&= \dfrac{1}{2}\int_{\ds R^{m|2n}}\pt i^x \pt j^x(E_zE_tE_x)f(x)dx\\
&= \int_{\ds R^{m|2n}}(2z_iz_j-2z_ix_j-2x_iz_j+2x_ix_j-\beta_{ij})E_zE_tE_xf(x)dx.
\end{align*}
The right-hand side of \eqref{Eq Intertwining} becomes
\begin{align*}
\dotrho(X)\SB(f(x))(z) = -\imath (z_iz_j\SB(f(x))(z)+\pt i^z\pt j^z (\SB(f(x))(z)))
\end{align*}
with
\begin{align*}
\pt i^z \pt j^z\SB( f(x))(z) &= \int_{\ds R^{m|2n}}\pt i^z \pt j^z(E_zE_tE_x)f(x)dx\\
&= \int_{\ds R^{m|2n}}(-\beta_{ij}+z_iz_j-2z_ix_j-2x_iz_j+4x_ix_j)E_zE_tE_xf(x)dx,
\end{align*}
which gives us the same terms as on the left-hand side.

{\bf Case 2.}\\
Suppose $X= \ell_{ij}^+-\ell_{ij}^-$, then
\begin{align*}
\dotpi(X) = \imath (2x_ix_j - \dfrac{1}{2}\pt i^x \pt j^x) \quad \text{ and }\quad \dotrho(X) = \imath (\beta_{ij}+z_i\pt j^z+(-1)^{|i||j|}z_j\pt i^z).
\end{align*}
Using similar calculations as in the first case, the left-hand side of \eqref{Eq Intertwining} becomes
\begin{align*}
\SB(\dotpi(X) f(x))(z) &= \imath(2\SB(x_ix_j f(x))(z) - \dfrac{1}{2}\SB(\pt i^x \pt j^x f(x))(z))\\
&= \imath\int_{\ds R^{m|2n}}(-2z_iz_j+2z_ix_j+2x_iz_j+\beta_{ij})E_zE_tE_xf(x)dx.
\end{align*}
For the right-hand side of \eqref{Eq Intertwining}, we first note that
\begin{align*}
z_i\pt j^z \SB(f(x))(z) &= \int_{\ds R^{m|2n}}z_i\pt j^z (E_zE_t)E_xf(x)dx\\
&=  \int_{\ds R^{m|2n}}(-z_iz_j+2z_ix_j)E_zE_tE_xf(x)dx
\end{align*}
and therefore
\begin{align*}
\dotrho(X)\SB(f(x))(z) = \imath\int_{\ds R^{m|2n}}(\beta_{ij}-z_iz_j+2z_ix_j-z_iz_j+2x_iz_j)E_zE_tE_xf(x)dx,
\end{align*}
as desired.

{\bf Case 3.}\\
Suppose $X= 2L_{\ell_{ij}}$, then
\begin{align*}
\dotpi(X) = -(\beta_{ij}+x_i\pt j^x+(-1)^{|i||j|}x_j\pt i^x) \quad \text{ and }\quad \dotrho(X) = z_iz_j-\pt i^z\pt j^z.
\end{align*}
Using similar calculations as in the first case, the right-hand side of \eqref{Eq Intertwining} becomes
\begin{align*}
\dotrho(X)\SB(f(x))(z) &= z_iz_j\SB(f(x))(z)- \pt i^z \pt j^z\SB(f(x))(z)\\
&= \int_{\ds R^{m|2n}}(\beta_{ij}+2z_ix_j+2x_iz_j-4x_ix_j)E_zE_tE_xf(x)dx.
\end{align*}
For the left-hand side of \eqref{Eq Intertwining}, we first note that
\begin{align*}
\SB(x_i\pt j^x f(x))(z) &= \int_{\ds R^{m|2n}} E_zE_tE_x x_i\pt j^x(f(x))dx\\
&= -\int_{\ds R^{m|2n}} x_i\pt j^x(E_zE_tE_x) f(x)dx\\
&= -\int_{\ds R^{m|2n}}(2x_iz_j-2x_ix_j)E_zE_tE_xf(x)dx
\end{align*}
and therefore
\begin{align*}
\SB(\dotpi(X)f(x))(z) = \int_{\ds R^{m|2n}}(\beta_{ij}+2x_iz_j-2x_ix_j+2z_ix_j-2x_ix_j)E_zE_tE_xf(x)dx,
\end{align*}
as desired.

{\bf Case 4.}\\
Suppose $X= 4[L_{\ell_{ij}}, L_{\ell_{rs}}]$, then it is sufficient to prove
\begin{align*}
\SB(L_{ij}^x f(x)) = L_{ij}^z\SB(f(x)),
\end{align*}
with $L_{ij}^x = x_i\pt j^x-(-1)^{|i||j|}x_j\pt i^x$ and $L_{ij}^z = z_i\pt j^z+(-1)^{|i||j|}z_j\pt i^z$. Using the calculations from the previous two cases we find
\begin{align*}
\SB(L_{ij}^x f(x)) &= \int_{\ds R^{m|2n}}(-2x_iz_j+2z_ix_j)E_zE_tE_xf(x)dx
\end{align*}
and
\begin{align*}
L_{ij}^z\SB(f(x)) &= \int_{\ds R^{m|2n}}(2z_ix_j-2x_iz_j)E_zE_tE_xf(x)dx,
\end{align*}
as desired.
\end{proof}

We now wish to prove that the Segal-Bargmann induces a superunitary isomorphism between $\ot W$ and $\ot F$. We first need two technical lemmas.

\begin{lemma}\label{Lem SB 1}
We have $\SB(\exp(-R_x^2))(z) = 1$.
\end{lemma}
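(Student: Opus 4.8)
The plan is to compute $\SB(\exp(-R_x^2))(z)$ directly from the definition by separating the even (bosonic) and odd (fermionic) variables, since everything in sight factors through the splitting $R^2 = R^2_{\ol 0} + R^2_{\ol 1}$ and $z\bullet x = (z\bullet x)_{\ol 0} + (z\bullet x)_{\ol 1}$. Write
\begin{align*}
\SB(\exp(-R_x^2))(z) &= \frac{1}{\omega}\exp\!\left(-\tfrac{1}{2}R^2_z\right)\int_{\ds R^{m|2n}}\exp(2(z\bullet x))\exp(-R_x^2)\exp(-R_x^2)\,dx\\
&= \frac{1}{\omega}\exp\!\left(-\tfrac{1}{2}R^2_z\right)\left(\int_{\ds R^m}\exp(2(z\bullet x)_{\ol 0})\exp(-2R^2_{x,\ol 0})\,dx_{\ol 0}\right)\\
&\qquad\times\left(\int_B \exp(2(z\bullet x)_{\ol 1})\exp(-2R^2_{x,\ol 1})\right).
\end{align*}
First I would do the even integral: it is a genuine Gaussian integral over $\ds R^m$, and completing the square in $x_{\ol 0}$ gives $2(z\bullet x)_{\ol 0} - 2R^2_{x,\ol 0} = \tfrac12 R^2_{z,\ol 0} - 2\|x_{\ol 0} - \tfrac12 z_{\ol 0}\|^2$ (up to the appropriate sign conventions coming from $\beta$ restricted to the even part, which is $I_m$), so that integral produces $2^{-m/2}\pi^{m/2}\exp(\tfrac12 R^2_{z,\ol 0})$.

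For the odd part I would expand both exponentials as finite sums in the Grassmann variables and use the Berezin integral $\int_B$, which picks out the coefficient of $x_{m+1}\cdots x_{m+2n}$. The key identity is the formal ``completing the square'' for anticommuting variables: because $\int_B$ is defined (up to the $\pi^{-n}$ normalization) as $\pt{x_{m+2n}}\cdots\pt{x_{m+1}}$, one has $\int_B \exp(2(z\bullet x)_{\ol 1})\exp(-2R^2_{x,\ol 1}) = \exp(\tfrac12 R^2_{z,\ol 1})\int_B\exp(-2R^2_{x,\ol 1})$, which is the exact fermionic analogue of the Gaussian shift, valid because the shift $x\mapsto x - \tfrac12 z$ is a (Berezin-invariant) translation of odd variables. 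Using the computation of $\int_B\exp(-2R^2_{x,\ol 1}) = 4^n\pi^{-n}$ already carried out in the proof that $\omega = 2^n(\pi/2)^{M/2}$, the odd integral contributes $4^n\pi^{-n}\exp(\tfrac12 R^2_{z,\ol 1})$.

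Multiplying the two pieces gives $\frac{1}{\omega}\exp(-\tfrac12 R^2_z)\cdot 2^{-m/2}\pi^{m/2}\cdot 4^n\pi^{-n}\cdot\exp(\tfrac12 R^2_{z,\ol 0} + \tfrac12 R^2_{z,\ol 1}) = \frac{1}{\omega}\,2^{-m/2+2n}\pi^{m/2-n}$, and since $2^{-m/2+2n}\pi^{m/2-n} = 2^n(2)^{-m/2+n}\pi^{(m-2n)/2} = 2^n(\pi/2)^{M/2} = \omega$ with $M = m-2n$, the prefactor is exactly $1$, and $\exp(-\tfrac12 R^2_z)$ cancels against $\exp(\tfrac12 R^2_z)$, leaving $\SB(\exp(-R_x^2))(z) = 1$. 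The main obstacle I expect is getting the fermionic Gaussian-shift identity right with all sign and normalization conventions — in particular verifying that the cross-terms $z\bullet x$ and $x\bullet z$ combine correctly under the metric $\beta$ on the odd part (which has the off-diagonal $\pm I_n$ blocks) so that the completed square really is $\tfrac12 R^2_{z,\ol 1} - 2R^2_{x-z/2,\ol 1}$; once that is pinned down, everything else is bookkeeping with the constants already established in the lemma computing $\omega$.
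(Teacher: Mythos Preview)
Your proposal is correct and follows essentially the same overall strategy as the paper: separate the even and odd variables, handle the even part by the classical Gaussian computation, and deal with the odd part separately. Where you differ is in the treatment of the Berezin integral. The paper expands both exponentials as finite sums in the Grassmann variables and tracks the coefficient of $x_{m+1}\cdots x_{m+2n}$ through an explicit combinatorial calculation, arriving at $\int_B\exp(2(z\bullet x)_{\ol 1})\exp(-2R^2_{x,\ol 1}) = 2^{2n}\pi^{-n}\exp(\tfrac12 R^2_{z,\ol 1})$ by brute force. You instead complete the square and invoke translation invariance of the Berezin integral under the shift $x\mapsto x-\tfrac12 z$, then recycle the value $\int_B\exp(-2R^2_{x,\ol 1})=4^n\pi^{-n}$ from the $\omega$ computation. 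Your route is shorter and more conceptual; the paper's route is more self-contained and avoids having to justify that Berezin integration is invariant under translation by elements of an auxiliary Grassmann algebra (which is true, since the integral is a top-order derivative in the $x$-variables and the shift contributes only lower-order terms in $x$, but it does need to be said). The caveat you flag about the cross-terms is the right one to check: using $(z\bullet x)_{\ol 1}=(x\bullet z)_{\ol 1}$ (which follows from the skew-symmetry of $\beta$ on the odd block together with the supercommutation $z_ix_j=-x_jz_i$ for odd indices), one indeed gets $-2R^2_{x-z/2,\ol 1}=-2R^2_{x,\ol 1}+2(z\bullet x)_{\ol 1}-\tfrac12 R^2_{z,\ol 1}$, so the completed square is exactly as you claim.
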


\begin{proof}
We start by separating the even and odd variables in the Segal-Bargmann transform. We find
\begin{align*}
\SB(\exp(-R_x^2))(z) &= \dfrac{1}{\omega}\exp(-\frac{1}{2} R^2_z)\int_{\ds R^{m|2n}}\exp(2(z|x))\exp(-2R^2_x)dx\\
&= \exp\left(-\frac{1}{2} \sum_{i=1}^{m}z_i^2\right)\exp\left(-\frac{1}{2} \sum_{i=m+1}^{m+2n}z^iz_i\right)\\
&\quad \times \left(\left(\dfrac{\pi}{2}\right)^{-\frac{m}{2}}\int_{\ds R^{m}}\exp\left(2\sum_{i=1}^m z_ix_i\right)\exp\left(-2\sum_{i=1}^m x_i^2\right)dx\right)\\
&\quad \times \left(2^{-2n}\pi^n\int_{B}\exp\left(2\sum_{i=m+1}^{m+2n} z^ix_i\right)\exp\left(-2\sum_{i=m+1}^{m+2n} x^ix_i\right)\right).
\end{align*}
What we wish to prove now splits into two parts. The first part is
\begin{align*}
\int_{\ds R^{m}}\exp\left(2\sum_{i=1}^m z_ix_i\right)\exp\left(-2\sum_{i=1}^m x_i^2\right)dx = \left(\dfrac{\pi}{2}\right)^{\frac{m}{2}}\exp\left(\frac{1}{2} \sum_{i=1}^{m}z_i^2\right),
\end{align*}
which follows from the classical case. The second part is
\begin{align*}
\int_{B}\exp\left(2\sum_{i=m+1}^{m+2n} z^ix_i\right)\exp\left(-2\sum_{i=m+1}^{m+2n} x^ix_i\right) = 2^{2n}\pi^{-n}\exp\left(\frac{1}{2} \sum_{i=m+1}^{m+2n}z^iz_i\right),
\end{align*}
which we will prove by a straightforward calculation. Since the Berezin integral only depends on the coefficient in front of $x_{m+1}\ldots x_{m+2n}$ we have\begingroup\allowdisplaybreaks
\begin{align*}
&\int_{B}\exp\left(2\sum_{i=m+1}^{m+2n} z^ix_i\right)\exp\left(-2\sum_{i=m+1}^{m+2n} x^ix_i\right)\\
&\quad = \int_B \sum_{k=0}^{2n}\sum_{l=0}^n \dfrac{(-1)^l 2^{k+l}}{k!l!}\left(\sum_{i=m+1}^{m+2n}z^ix_i\right)^k\left(\sum_{i=m+1}^{m+2n}x^ix_i\right)^l\\
&\quad = \int_B \sum_{k,l=0}^{n}\dfrac{(-1)^l 2^{2k+l}}{(2k)!l!}\left(\sum_{i=m+1}^{m+2n}z^ix_i\right)^{2k}\left(\sum_{i=m+1}^{m+2n}x^ix_i\right)^l\\
&\quad = \int_B \sum_{k,l=0}^{n}\dfrac{(-1)^{k+l} 2^{2k+l}}{(2k)!l!}\left(\sum_{i,j=m+1}^{m+2n}z^iz^jx_ix_j\right)^{k}\left(\sum_{i=m+1}^{m+2n}x^ix_i\right)^l\\
&\quad = \int_B \sum_{k=0}^{n}\dfrac{(-1)^{n} 2^{n+k}}{(2k)!(n-k)!}\left(\sum_{i,j=m+1}^{m+2n}z^iz^jx_ix_j\right)^{k}\left(\sum_{i=m+1}^{m+2n}x^ix_i\right)^{n-k}\\
&\quad = \int_B \sum_{k=0}^{n}\dfrac{(-1)^{n} 2^{n+k}}{(2k)!(n-k)!}\left(\sum_{i=m+1}^{m+2n}z^iz_ix^ix_i\right)^{k}\left(\sum_{i=m+1}^{m+2n}x^ix_i\right)^{n-k}\\
&\quad = \int_B \sum_{k=0}^{n}\dfrac{(-1)^{n} 2^{2n+k}}{(2k)!(n-k)!}\left(\sum_{i=m+1}^{m+n}z_iz_{i+n}x_ix_{i+n}\right)^{k}\left(\sum_{i=m+1}^{m+n}x_ix_{i+n}\right)^{n-k}\\
&\quad = 2^{2n}\sum_{k=0}^{n}\dfrac{1}{k!}\left(\sum_{i=m}^{m+n}z_iz_{i+n}\right)^k\int_B  (-1)^{n} x_{m+1}x_{m+1+n}\ldots x_{m+n}x_{m+2n}\\
&\quad = 2^{2n}\pi^{-n} \sum_{k=0}^{n}\dfrac{1}{k!}\left(\sum_{i=m}^{m+n}z_iz_{i+n}\right)^k = 2^{2n}\pi^{-n} \sum_{k=0}^{n}\dfrac{1}{2^k k!}\left(\sum_{i=m}^{m+2n}z^iz_{i}\right)^k \\
&\quad = 2^{2n}\pi^{-n}\exp\left(\frac{1}{2} \sum_{i=m+1}^{m+2n}z^iz_i\right),
\end{align*}
as desired.\endgroup
\end{proof}

\begin{lemma}\label{Lem SB x}
We have $\SB(2x_1\exp(-R_x^2))(z) = z_1$.
\end{lemma}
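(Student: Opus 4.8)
The plan is to deduce Lemma~\ref{Lem SB x} from Lemma~\ref{Lem SB 1} by an integration by parts in the even coordinate $x_1$, so that essentially no new computation is required. Unwinding the definition of $\SB$ with $f(x)=2x_1\exp(-R^2_x)$, the integrand becomes $\exp(2(z\bullet x))\cdot 2x_1\exp(-2R^2_x)$, i.e.\ exactly the integrand from the proof of Lemma~\ref{Lem SB 1} multiplied by $2x_1$.

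Next I would rewrite the factor $2x_1$ as a derivative. Since the even block of $\beta$ equals $I_m$, the even part of $R^2_x$ is $\sum_{i=1}^{m}x_i^2$, so $\pt{x_1}\exp(-2R^2_x)=-4x_1\exp(-2R^2_x)$, hence $2x_1\exp(-2R^2_x)=-\tfrac12\,\pt{x_1}\exp(-2R^2_x)$. Substituting this and integrating by parts in $x_1$ — which is legitimate because $x_1$ is a genuine even coordinate, so this is ordinary integration by parts under $\int_{\ds R^m}dx_{\ol 0}$ with vanishing boundary terms by Gaussian decay, the Berezin integral over the odd variables being unaffected since $\pt{x_1}$ commutes with it and the odd factors do not depend on $x_1$ — transfers the derivative onto $\exp(2(z\bullet x))$. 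As $z\bullet x=\sum_i z^i x_i$ and $\beta^{i1}=\delta_{i1}$ gives $z^1=z_1$, we have $\pt{x_1}\exp(2(z\bullet x))=2z_1\exp(2(z\bullet x))$, so
\begin{align*}
\SB(2x_1\exp(-R^2_x))(z) &= \frac{1}{\omega}\exp\!\left(-\tfrac12 R^2_z\right)\int_{\ds R^{m|2n}}\exp(2(z\bullet x))\,2x_1\exp(-2R^2_x)\,dx\\
&= \frac{z_1}{\omega}\exp\!\left(-\tfrac12 R^2_z\right)\int_{\ds R^{m|2n}}\exp(2(z\bullet x))\exp(-2R^2_x)\,dx\\
&= z_1\cdot\SB(\exp(-R^2_x))(z),
\end{align*}
and Lemma~\ref{Lem SB 1} identifies the last factor as $1$, giving $z_1$.

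I do not expect a genuine obstacle here; the only point needing (minor) care is the justification of the integration by parts on $\ds R^{m|2n}$, which reduces to the classical Gaussian case already used in Lemma~\ref{Lem SB 1}. As a fallback and sanity check one can instead argue exactly as in the proof of Lemma~\ref{Lem SB 1}: separate even and odd variables; the odd factor is literally unchanged and cancels the odd part of $\exp(-\tfrac12 R^2_z)$; and the remaining even integral $\int_{\ds R^m}x_1\exp\!\big(2\sum_{i=1}^m z_ix_i-2\sum_{i=1}^m x_i^2\big)\,dx_{\ol 0}$ is evaluated by completing the square via $-2x_i^2+2z_ix_i=-2\big(x_i-\tfrac{z_i}{2}\big)^2+\tfrac12 z_i^2$, after which the shift $y_i=x_i-\tfrac{z_i}{2}$ leaves only the $\tfrac{z_1}{2}$ term by oddness of $y_1\mapsto y_1\exp(-2y_1^2)$, reproducing the value $z_1$ after all constants cancel.
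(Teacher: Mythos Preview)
Your proof is correct and is essentially the same as the paper's: the paper defers to Proposition~\ref{Prop Hermite}, whose proof writes $\ot h_\alpha$ via $\pt x^\alpha\exp(-2R_x^2)$, integrates by parts to land the derivative on $\exp(2(z\bullet x))$, and then invokes Lemma~\ref{Lem SB 1}; your argument is exactly the $\alpha=(1,0,\ldots,0)$ instance of this.
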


\begin{proof}
This will be proved in a more general setting in Proposition \ref{Prop Hermite}, see Remark \ref{Rem Hermite}. Note that Proposition \ref{Prop Hermite} only depends on the theorems we have proven before this one.
\end{proof}

\begin{Prop}\label{Prop SB gmod isom}
The Segal-Bargmann transform $\SB$ induces a $\mf g$-module isomorphism between $\ot W$ and $\ot F$.
\end{Prop}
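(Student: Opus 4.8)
The plan is to combine the intertwining property (Theorem \ref{Th Intertwining Prop}) with the fact that $\SB$ sends a cyclic generator of $\ot W$ (as a $U(\mf g)$-module) to a cyclic generator of $\ot F$. Recall from the definitions that $\ot W = \ot W_e \oplus \ot W_o$ with $\ot W_e = U(\mf g)\exp(-R^2)$ and $\ot W_o = U(\mf g)\ell_1\exp(-R^2)$, and similarly $\ot F = \ot F_e\oplus \ot F_o$ with $\ot F_e = U(\mf g)1$ and $\ot F_o = U(\mf g)\ell_1$, all modulo $\mc I_{-\frac{1}{2}}$. By Lemma \ref{Lem SB 1} we have $\SB(\exp(-R_x^2)) = 1$ and by Lemma \ref{Lem SB x} we have $\SB(2x_1\exp(-R_x^2)) = z_1$; note $2x_1\exp(-R_x^2)$ generates $\ot W_o$ since it differs from $\ell_1\exp(-R^2)$ only by the scalar $2$ (the variables $x_i$ of $\mc P(\ds R^{m|2n})$ are identified with the $\ell_i$).

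First I would check that $\SB$ maps $\ot W$ into $\ot F$: by Theorem \ref{Th Intertwining Prop}, $\SB(\dotpi(X)f) = \dotrho(X)\SB(f)$ for all $X\in\mf g$ and $f\in\ot W$, so for any $f = \dotpi(X_1)\cdots\dotpi(X_k)\exp(-R^2)\in\ot W_e$ (with $X_i\in\mf g$) we get $\SB(f) = \dotrho(X_1)\cdots\dotrho(X_k)\SB(\exp(-R^2)) = \dotrho(X_1)\cdots\dotrho(X_k)1 \in \ot F_e$, and likewise $\SB(\ot W_o)\subseteq \ot F_o$ using Lemma \ref{Lem SB x}. Extending linearly, $\SB(\ot W)\subseteq \ot F$, and the same computation shows $\SB$ is surjective onto $\ot F$: every element of $\ot F_e$ is of the form $\dotrho(X_1)\cdots\dotrho(X_k)1$, which equals $\SB(\dotpi(X_1)\cdots\dotpi(X_k)\exp(-R^2))$, and similarly for $\ot F_o$. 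That $\SB$ is $\mf g$-equivariant is then exactly Theorem \ref{Th Intertwining Prop}.

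It remains to prove injectivity. The cleanest route is to use the pairings: $\SB$ is compatible with the $L^2$-product on $\ot W$ and the Fock product on $\ot F$. Concretely, I would show $\fp{\SB(f),\SB(g)} = \ipW{f,g}$ for all $f,g\in\ot W$; this is essentially a Gaussian-integral computation (the classical Segal-Bargmann isometry, here adapted to superspace via the Berezin integral), and once established, injectivity of $\SB$ follows from non-degeneracy of $\ipW{\cdot\,,\cdot}$ (Proposition \ref{Prop L2 prod}): if $\SB(f) = 0$ then $\ipW{f,g} = \fp{0,\SB(g)} = 0$ for all $g$, hence $f = 0$. Alternatively, one can avoid the isometry statement at this point (it will be proved later, in Section \ref{SS_Superunitary}) and argue injectivity directly on the level of graded pieces: $\SB$ respects the $\ds N$-grading up to the factor relating degrees — matching $\mc P_k(\ds R^{\wh m|2\wh n})$-type data to $\mc H$-components via the $\mf s$-action of Theorem \ref{Th_DecF2} — and on each finite-dimensional $\mf k_{mcs}$-isotypic piece a $\mf g$-module surjection between spaces of equal (super)dimension is an isomorphism; here the equality of dimensions is inherited from Lemmas \ref{Lem SB 1}, \ref{Lem SB x} together with equivariance.

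The main obstacle is controlling injectivity without circular dependence on the later superunitarity results; the safest fix is to prove the isometry identity $\fp{\SB(f),\SB(g)} = \ipW{f,g}$ directly by reducing, via separation of even and odd variables exactly as in the proof of Lemma \ref{Lem SB 1}, to the classical Gaussian identity in the even variables and an explicit Berezin-integral computation in the odd ones — the odd part is a finite, purely combinatorial calculation of the same flavor as the one already carried out in Lemma \ref{Lem SB 1}. Given that identity, non-degeneracy of $\ipW{\cdot\,,\cdot}$ closes the argument.
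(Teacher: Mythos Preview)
Your approach is essentially the paper's: use Lemmas \ref{Lem SB 1} and \ref{Lem SB x} to see that $\SB$ sends the cyclic generators of $\ot W_e$ and $\ot W_o$ to those of $\ot F_e$ and $\ot F_o$, and invoke the intertwining property (Theorem \ref{Th Intertwining Prop}) to get a surjective $\mf g$-module map. The paper's proof stops there and simply declares ``isomorphism'', so your treatment of injectivity is in fact more careful than the paper's.

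On that point: your worry about circular dependence is unfounded. The proof of Theorem \ref{Th_superunitary} (the isometry $\fp{\SB f,\SB g}=\ipW{f,g}$) uses only Lemmas \ref{Lem SB 1}, \ref{Lem SB x}, Proposition \ref{Prop fip is fp}, Theorem \ref{Th Intertwining Prop}, and the skew-supersymmetry Propositions \ref{Prop pi skew sup sym} and \ref{Prop rho skew sup sym}; it never invokes Proposition \ref{Prop SB gmod isom}. So you may freely cite that isometry (or reproduce its short argument, which proceeds by reducing to the generator via intertwining and skew-supersymmetry rather than by a direct Gaussian computation) and conclude injectivity from non-degeneracy of $\ipW{\cdot\,,\cdot}$. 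Your alternative ``graded pieces / equal dimensions'' route is vaguer and would need more work to make rigorous, especially when $M\in -2\ds N$ and the modules are not irreducible; the isometry argument is cleaner and self-contained.
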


\begin{proof}
From Lemma \ref{Lem SB 1} it is clear that the Segal-Bargmann transform maps the generating element of $\ot W_e$ to the generating element of $\ot F_e$. Similarly, from Lemma \ref{Lem SB x} it is clear that the Segal-Bargmann transform maps the generating element of $\ot W_o$ to the generating element $\ot F_o$. It also intertwines the actions of $\dotpi$ and $\dotrho$. We conclude that $\SB$ is an isomorphism of $\mf g$-modules.
\end{proof}

\begin{theorem}[Superunitary property]\label{Th_superunitary}
The Segal-Bargmann transform preserves the super-inner products, i.e.\
\begin{align*}
\fp{\SB f, \SB g} = \ipW{f,g},
\end{align*}
for all $f,g\in \ot W$.
\end{theorem}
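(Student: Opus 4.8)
The plan is to reduce the superunitarity of $\SB$ to the reproducing kernel of the Fock product together with the intertwining property. First I would observe that, by Theorem~\ref{Th Intertwining Prop} and Proposition~\ref{Prop SB gmod isom}, $\SB$ is a $\mf g$-module isomorphism that intertwines $\dotpi$ and $\dotrho$, and that $\dotpi$ is skew-supersymmetric for $\ipW{\cdot\,,\cdot}$ (Proposition~\ref{Prop pi skew sup sym}) while $\dotrho$ is skew-supersymmetric for $\fp{\cdot\,,\cdot}$ (Proposition~\ref{Prop rho skew sup sym}). So both $\ipW{\SB f, \SB g}$-pulled-back-to-$\ot W$ and $\ipW{f,g}$ are super-inner products on $\ot W$ for which $\dotpi$ acts skew-supersymmetrically; if I can show they agree on the generating vectors $\exp(-R^2)$ and $\ell_1\exp(-R^2)$ (and, more precisely, that the sesquilinear form $(f,g)\mapsto \fp{\SB f,\SB g}$ is $\mf g$-invariant in the same sense), then an irreducibility/cyclicity argument via $U(\mf g)$ forces them to coincide up to scalars on each of $\ot W_e$, $\ot W_o$, and the normalization pins the scalars.

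The cleaner route, which I would actually carry out, is direct computation on a spanning set using the reproducing kernels. Since $\ot W$ consists of superpolynomials times $\exp(-R^2_x)$, and since $\SB$ is linear, it suffices to compute $\fp{\SB(p\,\exp(-R^2_x)), \SB(q\,\exp(-R^2_x))}$ for superpolynomials $p,q$. By Proposition~\ref{Prop Repr Kern Fock} the Fock product is evaluated against $\exp(z\bullet\ol w)$; so I would interchange the Fock integral in $z$ with the two real superintegrals in $x$ and $y$ defining the two Segal-Bargmann transforms, and use the Gaussian identity $\frac1\gamma\int_{\ds C^{m|2n}}\exp(-\norm z^2)\exp(2(z\bullet x))\exp(2(\ol z\bullet y))\,dz = \exp(4\,x\bullet y)$ (the superspace analogue of the classical reproducing computation, provable by the same separation of even and odd variables as in Lemma~\ref{Lem SB 1}). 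After the $E_z$ factors $\exp(-\tfrac12 R^2_z)$ are absorbed, the cross term collapses and one is left with $\frac1{\omega^2}\int\int \exp(-R^2_x-R^2_y+4\,x\bullet y)\,p(x)\ol{q(y)}\,dx\,dy$; completing the square $-R^2_x-R^2_y+4x\bullet y = -(x-y)\bullet(x-y) - \text{(diagonal)}$, one of the integrals becomes a delta-like Gaussian and, after the dust settles, the double integral reduces to $\frac1\omega\int \exp(-2R^2_x)\,p(x)\ol{q(x)}\,dx$, which is exactly $\ipW{p\exp(-R^2_x),q\exp(-R^2_x)}$ by the normalization $\ipW{\cdot\,,\cdot}=\frac1\omega\int(\cdot)\ol{(\cdot)}\,dx$ and $\omega=\int\exp(-2R^2)dx$.

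The main obstacle will be making the Berezin-integral manipulations rigorous and correctly tracking signs: interchanging the complex Berezin integral in $z$ with the real Berezin integrals in $x$ and $y$, justifying the ``Gaussian in odd variables'' identities (which are genuinely finite sums but whose combinatorial coefficients must match, as in the displayed computation inside Lemma~\ref{Lem SB 1}), and checking that the completion of the square is compatible with the orthosymplectic form $\beta$ appearing in $R^2$ and in $\bullet$. I expect the even-variable part to be literally the classical statement (cf.\ \cite{Folland}, \cite{HKMO}), so the real work is the odd-variable bookkeeping, entirely parallel to the normalization lemmas already proven for $\omega$ and $\gamma$. Once the identity $\fp{\SB f,\SB g}=\ipW{f,g}$ is established on the dense span of $\{p\exp(-R^2_x)\}$ it holds on all of $\ot W$ by bilinearity, since every element of $\ot W$ is of this form.

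Alternatively, if the direct integral computation proves too cumbersome, I would fall back on the representation-theoretic argument: define $\beta_0(f,g):=\fp{\SB f,\SB g}$ on $\ot W$; by Propositions~\ref{Prop SB gmod isom} and~\ref{Prop rho skew sup sym} this is a superhermitian sesquilinear form on $\ot W$ for which $\dotpi$ is skew-supersymmetric, and by Lemmas~\ref{Lem SB 1} and~\ref{Lem SB x} together with Proposition~\ref{Prop Repr Kern Fock} one computes $\beta_0(\exp(-R^2_x),\exp(-R^2_x)) = \fp{1,1}=1=\ipW{\exp(-R^2_x),\exp(-R^2_x)}$ and likewise $\beta_0(\ell_1\exp(-R^2_x),\ell_1\exp(-R^2_x)) = \fp{z_1,z_1}=1$. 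Since $\ot W_e = U(\mf g)\exp(-R^2_x)\bmod\mc I_\lambda$ and $\ot W_o = U(\mf g)\ell_1\exp(-R^2_x)\bmod\mc I_\lambda$ are generated as $U(\mf g)$-modules by these vectors, and since $\beta_0-\ipW{\cdot\,,\cdot}$ is a $\dotpi$-invariant form vanishing on the (cyclic) generators, a standard argument shows it vanishes identically; the only subtlety is the cross terms $\beta_0(\ot W_e,\ot W_o)$, which vanish for both forms by the parity grading (even versus odd degree functions are orthogonal under both the $L^2$- and Fock products). This gives $\fp{\SB f,\SB g}=\ipW{f,g}$ for all $f,g\in\ot W$.
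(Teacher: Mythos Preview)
Your fallback representation-theoretic approach is essentially the paper's strategy, but the paper executes it with one crucial refinement that you are missing. Rather than checking only that the two forms agree on the single pair $(\exp(-R^2_x),\exp(-R^2_x))$ and then invoking an unspecified ``standard argument'', the paper verifies directly that
\[
\fp{\SB\exp(-R^2_x),\,\SB g}=\ipW{\exp(-R^2_x),\,g}\qquad\text{for \emph{every} }g\in\ot W_e.
\]
This is a one-line computation: since the Fock product equals the Fischer product (Proposition~\ref{Prop fip is fp}), one has $\fp{1,p}=\ol{p(0)}$, so
\[
\fp{1,\SB g}=\ol{\SB(g)(0)}=\tfrac{1}{\omega}\int_{\ds R^{m|2n}}\exp(-R^2_x)\ol{g(x)}\,dx=\ipW{\exp(-R^2_x),g},
\]
just by setting $z=0$ in the defining integral of $\SB g$. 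After this, the cyclicity-plus-skew-supersymmetry reduction you describe is exactly what the paper does: write $f=\dotpi(Y)\exp(-R^2_x)$ and push $Y$ across using Theorem~\ref{Th Intertwining Prop} and Propositions~\ref{Prop pi skew sup sym} and~\ref{Prop rho skew sup sym}. The odd component is handled identically, using $\fp{z_1,p}=\pt 1\ol p|_{z=0}$, which again reduces to reading off the SB integral. Your ``standard argument'' from only $\beta_0(v_0,v_0)=\ipW{v_0,v_0}$ is not obviously sufficient here: it relies on uniqueness of invariant sesquilinear forms, a Schur-type statement that can fail when the module is merely indecomposable, and Theorem~\ref{ThDecF} shows exactly this happens for $M\in-2\ds N$. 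Establishing the base case for all $g$ is what makes the argument uniform in $m,n$.

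Your direct-integral route is a genuinely different approach and would work, but it is heavier than needed: once the factors $\exp(-\tfrac12 R^2_z)\exp(-\tfrac12 R^2_{\ol z})$ are included, the $z$-integral no longer converges absolutely and must be interpreted distributionally (it produces a delta in the imaginary direction), and the Berezin-side combinatorics, while finite, is tedious. The evaluation-at-zero trick in the paper bypasses all of this.
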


\begin{proof}
We first look at the case $f= \exp(-R^2_x)$. Because of Lemma \ref{Lem SB 1} and Proposition \ref{Prop fip is fp} we have
\begin{align*}
\fp{\SB f, \SB g} &= \fp{1, \SB g} = \ol{\SB(g(x))}(z)|_{z=0} = \dfrac{1}{\omega}\int_{\ds R^{m|2n}}\exp(-R^2_x)\ol{g(x)}dx\\
&= \ipW{f,g},
\end{align*}
for all $g\in \ot W_e$. Now suppose $f,g\in \ot W_e$. Since $\exp(-R^2_x)$ is the element that generates $\ot W_e$, there exists a $Y\in U(\mf g)$ such that $f=\pi(Y)\exp(-R^2_x)$. Therefore we can reduce the general case to the previous case using the intertwining property (Theorem \ref{Th Intertwining Prop}) and the fact that the super-inner products are skew-supersymmetric for $\dotpi$ and $\dotrho$ (Propositions \ref{Prop pi skew sup sym} and \ref{Prop rho skew sup sym}):
\begin{align*}
\fp{\SB f, \SB g} &= \fp{\SB(\dotpi(Y)\exp(-R^2_x)), \SB g} = \fp{\dotrho(Y)\SB(\exp(-R^2_x)), \SB g}\\
&= - \fp{\SB(\exp(-R^2_x)), \dotrho(Y)\SB g}\\
&= -\fp{\SB(\exp(-R^2_x)), \SB (\dotpi(Y)g)}\\
&= -\ipW{\exp(-R^2_x), \dotpi(Y)g}\\
&= \ipW{\dotpi(Y)\exp(-R^2_x), g} = \ipW{f,g},
\end{align*}
which proves the theorem for $f,g\in \ot W_e$. Note that for all $f\in \ot W_e$ and $g\in \ot W_o$ we have
\begin{align*}
\fp{\SB f, \SB g} = \ipW{f,g} = 0,
\end{align*}
since $f$ and $\SB f$ are even while $g$ and $\SB g$ are odd. For $f,g\in \ot W_o$ we first look at the case $f= 2x_1\exp(-R^2_x)$. We obtain
\begin{align*}
\fp{\SB f, \SB g} &= \fp{z_1, \SB g} = \pt{i}^z\ol{\SB(g(x))}(z)|_{z=0} = \dfrac{1}{\omega}\int_{\ds R^{m|2n}}2x_1\exp(-R^2_x)\ol{g(x)}dx\\
&= \ipW{f,g},
\end{align*}
and the rest is entirely analogous to the $f,g\in \ot W_e$ case.
\end{proof}

\begin{Cor}
The inverse Segal-Bargmann transform is given by
\begin{align*}
\SB^{-1}(p(z))(x) = \dfrac{1}{\gamma}\exp(-R^2_x)\int_{\ds C^{m|2n}}\exp(-\norm{z}^2)\exp(-\frac{1}{2}R^2_{\ol z})\exp(2(\ol z\bullet x))p(z)dz,
\end{align*}
for all $p\in \ot F$.
\end{Cor}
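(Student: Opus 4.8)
Write $Tp$ for the superfunction on the right-hand side of the asserted identity; since $\SB\colon\ot W\to\ot F$ is a bijection by Proposition \ref{Prop SB gmod isom}, it suffices to show that $\SB\circ T = \id_{\ot F}$, for then $T = \SB^{-1}$. The two ingredients are: the Gaussian integral $\int_{\ds R^{m|2n}}\exp(2(w\bullet x))\exp(-2R_x^2)\,dx = \omega\exp(\tfrac12 R_w^2)$, which is exactly the content of Lemma \ref{Lem SB 1} once one unwinds the definition of $\SB$ (and which holds for $w$ a formal collection of supercommuting variables, by completing the square in the even variables and expanding the Berezin integral); and the reproducing kernel of the Fock product, $\fp{p(z),\exp(z\bullet\overline w)} = p(w)$ from Proposition \ref{Prop Repr Kern Fock}. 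I will also use that $\bullet$ is symmetric on supercommuting variables, so that $R^2_{u+v} = R^2_u + 2\,u\bullet v + R^2_v$ and $\overline{z\bullet\overline{w}} = \overline z\bullet w = w\bullet\overline z$ — a short check resting on the (skew)symmetry of the even and odd blocks of $\beta$ together with the anticommutation of the odd variables.

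The computation itself is then direct. Substituting $Tp$ into the integral formula for $\SB$ produces a factor $\exp(-2R_x^2)$ in the $x$-variable, and after interchanging the integral over $\ds R^{m|2n}$ with the one over $\ds C^{m|2n}$ (the Berezin integrals are finite sums and commute with the ordinary Gaussian integrals, so Fubini applies termwise on the expansions of the exponentials) the variable $x$ enters only through $\exp\big(2((z'+\overline z)\bullet x)\big)\exp(-2R_x^2)$. Carrying out the $x$-integral via the identity above gives $\omega\exp\big(\tfrac12 R^2_{z'+\overline z}\big)$, and expanding $R^2_{z'+\overline z} = R^2_{z'}+2(z'\bullet\overline z)+R^2_{\overline z}$ cancels the prefactors $\exp(-\tfrac12 R^2_{z'})$ and $\exp(-\tfrac12 R^2_{\overline z})$, leaving
\[
\SB(Tp)(z') \;=\; \frac1\gamma\int_{\ds C^{m|2n}}\exp(-\norm{z}^2)\,p(z)\,\exp\big(z'\bullet\overline z\big)\,dz \;=\; \fp{p(z),\exp(z\bullet\overline{z'})} \;=\; p(z'),
\]
where the middle equality uses $\exp(z'\bullet\overline z) = \overline{\exp(z\bullet\overline{z'})}$ (the symmetry of $\bullet$ plus $\overline\beta=\beta$) and the last one is Proposition \ref{Prop Repr Kern Fock}. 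Hence $\SB\circ T = \id_{\ot F}$; since $Tp$ is manifestly a superpolynomial times $\exp(-R_x^2)$, hence lies in $\ot W$, and $\SB$ is bijective from $\ot W$ onto $\ot F$, we conclude $T = \SB^{-1}$.

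The only non-routine points are the bookkeeping with the interchange of superintegrals and the parity signs in the symmetry of $\bullet$; neither is difficult, but both require care in the odd variables. A parallel alternative is to recognise $T$ as the formal adjoint of $\SB$: for $p\in\ot F$ and $g\in\ot W$ one gets $\ipW{Tp,g} = \fp{p,\SB g}$ by Fubini together with $\overline{\SB(g)(z)} = \tfrac1\omega\exp(-\tfrac12 R^2_{\overline z})\int_{\ds R^{m|2n}}\exp(2(\overline z\bullet x))\exp(-R^2_x)\overline{g(x)}\,dx$, while superunitarity of $\SB$ (Theorem \ref{Th_superunitary}) gives $\ipW{\SB^{-1}p,g} = \fp{p,\SB g}$; non-degeneracy of the $L^2$-product on $\ot W$ (Proposition \ref{Prop L2 prod}) then forces $Tp = \SB^{-1}p$.
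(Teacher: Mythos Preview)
Your proposal is correct. The alternative you sketch in your final paragraph---recognising $T$ as the formal adjoint of $\SB$, then invoking superunitarity (Theorem~\ref{Th_superunitary}) and non-degeneracy of $\ipW{\cdot\,,\cdot}$---is precisely the paper's argument, almost verbatim.

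Your main approach, however, is genuinely different: you verify $\SB\circ T=\id_{\ot F}$ by a direct double-integral computation, doing the $x$-integral first via the Gaussian identity extracted from Lemma~\ref{Lem SB 1} and then collapsing the remaining $z$-integral via the reproducing kernel of Proposition~\ref{Prop Repr Kern Fock}. This route is more hands-on but has the advantage of being logically independent of Theorem~\ref{Th_superunitary}; indeed, one could run it \emph{before} establishing superunitarity, and then deduce the latter from the explicit inverse rather than the other way around. The paper's route is shorter on the page because it offloads the analytic work to the already-proved superunitarity, at the cost of that dependency. Both are valid; your direct computation just requires slightly more care with the conjugation convention on odd variables when identifying $\exp(z'\bullet\overline z)$ with $\overline{\exp(z\bullet\overline{z'})}$, which you correctly flag.
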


\begin{proof}
We have
\begin{align*}
&\ipW{\SB^{-1}p, f} = \fp{p,\SB f} = \dfrac{1}{\gamma}\int_{\ds C^{m|2n}}\exp(-\norm{z}^2)p(z) \ol{\SB(f(x))(z)}dz\\
&\quad= \dfrac{1}{\gamma\omega}\int_{\ds C^{m|2n}}\int_{\ds R^{m|2n}}\exp(-\norm{z}^2)\exp(-\frac{1}{2}R^2_{\ol z})\exp(2(\ol z\bullet x))\exp(-R^2_x) p(z) \ol{f(x)}dxdz\\
&\quad =\ipW{\dfrac{1}{\gamma}\int_{\ds C^{m|2n}}\exp(-\norm{z}^2)\exp(-\frac{1}{2}R^2_{\ol z})\exp(2(\ol z\bullet x))\exp(-R^2_x)p(z)dz, f},
\end{align*}
for all $p\in \ot F$ and $f\in \ot W$. Because the Fock product and the $L^2$-product are non-degenerate (Propositions \ref{Prop Non deg bfip} and \ref{Prop L2 prod}), we obtain the desired result.
\end{proof}

\subsection{Hermite superpolynomials}\label{SS_Hermite}

We have the following generalisations of the Hermite functions and Hermite polynomials.

\begin{Def}\symindex{$h_\alpha$, $H_\alpha$, $\ot h_\alpha$ and $\ot H_\alpha$}
Suppose $\alpha\in \ds N^m\times \{0,1\}^{2n}$. The \textbf{Hermite superfunctions} on $\ds R^{m|2n}$ are defined by
\begin{align*}
h_\alpha(x) := (-1)^{|\alpha|}\exp\left(\frac{R^2_x}{2}\right)\pt x^\alpha \exp(-R^2_x),
\end{align*}
with
\begin{align*}
\pt x^\alpha := (\pt 1^x)^{\alpha_1}\ldots (\pt {m+2n}^x)^{\alpha_{m+2n}} \quad \text{ and } \quad |\alpha| := \sum_{i=1}^{m+2n}\alpha_i.
\end{align*}
The \textbf{Hermite superpolynomials} in $\mc P(\ds R^{m|2n})$ are defined by
\begin{align*}
H_\alpha(x) := \exp\left(\frac{R^2_x}{2}\right)h_\alpha(x).
\end{align*}
\end{Def}

We also introduce the following renormalisations:
\begin{align*}
\ot h_\alpha(x) &:= (\sqrt{2})^{-|\alpha|}h_\alpha(\sqrt{2}x) = (-1)^{|\alpha|}2^{-|\alpha|}\exp(R^2_x)\pt x^\alpha \exp(-2R^2_x),\\
\ot H_\alpha(x) &:= (\sqrt{2})^{-|\alpha|}H_\alpha(\sqrt{2}x) = \exp(R^2_x)\ot h_\alpha(x).
\end{align*}

For $m=1$ and $n=0$ the Hermite superpolynomials are precisely the classical Hermite polynomials. In the Symplectic case the renormalised Hermite superfunctions/superpolynomials are precisely the generalised Hermite functions/polynomials defined in \cite[Section 3.3]{HKMO}. The following proposition is a straightforward generalisation of \cite[Proposition 3.13]{HKMO}.

\begin{Prop}\label{Prop Hermite}
$\SB(\ot h_\alpha(x))(z) = z^\alpha$.
\end{Prop}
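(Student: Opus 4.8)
The plan is to reduce the claim $\SB(\ot h_\alpha(x))(z) = z^\alpha$ to the two base cases already available, namely Lemma \ref{Lem SB 1} (giving $\SB(\exp(-R^2_x))(z)=1$, i.e.\ the $\alpha=0$ case) and Lemma \ref{Lem SB x} (the $\alpha$ with a single $1$ in the first even slot), together with the intertwining property of Theorem \ref{Th Intertwining Prop}. The key observation is that the renormalised Hermite superfunctions $\ot h_\alpha$ are obtained from $\exp(-2R^2_x)$ (up to the normalising constant) by applying the differential operators $\pt i^x$, and these operators — or rather their symmetrised/shifted analogues — sit inside the image of $\dotpi$. Concretely, for $X=\ell_{ij}^+-\ell_{ij}^-$ we have $\dotpi(X)=\imath(2x_ix_j-\tfrac12\pt i^x\pt j^x)$, and more usefully a suitable combination produces the creation-type operator $x_i - \tfrac12\pt i^x$ acting on functions of the form (superpolynomial)$\cdot\exp(-R^2_x)$; conjugating by $\exp(R^2_x/2)$ turns $\pt i^x$ into $\pt i^x - x_i$ and hence $\ot h_\alpha$ into $\pt x^\alpha$ applied to a constant up to lower-order corrections. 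The cleanest route is therefore to express $\ot h_{\alpha+e_k}$ as $\dotpi(X_k)\ot h_\alpha$ for an explicit $X_k\in\mf g$, where $\dotpi(X_k)$ acts on the Fock side as multiplication by $z_k$.

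The first step is the algebraic identity on the Schrödinger side: I would verify that there is an element $X_k\in\mf g$ (a linear combination of $\ell_{kk}^+,\ell_{kk}^-,L_{\ell_{kk}}$, or for the general entry of $\pt x^\alpha$ an element built from $\ell_{k\ell}^\pm$) such that $\dotpi(X_k)\big(P(x)\exp(-R^2_x)\big) = Q(x)\exp(-R^2_x)$ where $Q$ is obtained from $P$ by the raising operation underlying the Hermite recursion $\ot h_{\alpha+e_k}(x) = \big(2x_k - \pt k^x\big)\ot h_\alpha(x)$ (the standard three-term-free recursion for Hermite functions, adapted to the superspace and the $\sqrt2$-renormalisation). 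This is a direct computation using the explicit formula for $\dotpi$ in Theorem \ref{Th pil on psiW}, and the fact that $\pt k^x$ and multiplication by $x_k$ generate the correct recursion is exactly the classical one-variable computation done coordinate-wise, with the odd coordinates handled identically since $\pt k^x$ and $x_k$ anticommute appropriately. The second step is to transport this through $\SB$: by Theorem \ref{Th Intertwining Prop}, $\SB(\dotpi(X_k)\ot h_\alpha) = \dotrho(X_k)\SB(\ot h_\alpha)$, and I would compute that for the chosen $X_k$ the operator $\dotrho(X_k)$ on $\ot F$ is precisely multiplication by $z_k$ (again reading off the explicit form of $\dotrho=\dotpi_{\ds C}\circ c$, e.g.\ the combination giving $\dotrho(\ell^+_{kk}-\ell^-_{kk}) = \imath(\beta_{kk}+2z_k\pt k^z)$ is not yet multiplication, so one must instead combine it with $\dotrho(\ell^+_{kk}+\ell^-_{kk})=-\imath(z_k^2+(\pt k^z)^2)$ and $\dotrho(2L_{\ell_{kk}})=z_k^2-(\pt k^z)^2$ to isolate pure multiplication by $z_k$ — in the symplectic model $z_k = \tfrac12(\text{creation})$ is a standard linear combination of these).

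The induction is then immediate: $\SB(\ot h_0) = \SB(\exp(-R^2_x)) = 1 = z^0$ by Lemma \ref{Lem SB 1}, and if $\SB(\ot h_\alpha) = z^\alpha$ then
\begin{align*}
\SB(\ot h_{\alpha+e_k}) = \SB(\dotpi(X_k)\ot h_\alpha) = \dotrho(X_k)\SB(\ot h_\alpha) = z_k\cdot z^\alpha = z^{\alpha+e_k},
\end{align*}
where for the odd indices $k\in\{m+1,\dots,m+2n\}$ one only raises $\alpha_k$ from $0$ to $1$ (Grassmann), which is consistent with $\ot h_\alpha$ being defined for $\alpha\in\ds N^m\times\{0,1\}^{2n}$. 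Note Lemma \ref{Lem SB x} is the special case $\alpha=e_1$ of the statement, so as promised in its proof it follows from the present proposition restricted to first-order data; to avoid circularity I would simply not invoke Lemma \ref{Lem SB x} and instead let the $\alpha=e_1$ case emerge from the induction's first step.

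The main obstacle I anticipate is purely bookkeeping: pinning down the exact element $X_k\in\mf g$ (and the scalar normalisation coming from the $\sqrt2$-rescaling in $\ot h_\alpha$ versus $h_\alpha$) so that $\dotpi(X_k)$ implements the raising operator on the Schrödinger side \emph{and} $\dotrho(X_k)$ is exactly multiplication by $z_k$ on the Fock side — these must be the \emph{same} $X_k$, which is guaranteed abstractly because $\SB$ intertwines, but verifying both explicit formulas requires careful sign-tracking through the Cayley twist $c$ and the definitions of $\dotpi,\dotrho$ in Theorem \ref{Th pil on psiW} and Section \ref{SS_rol}. There is also a minor subtlety for odd coordinates, where $(\pt k^z)^2 = 0$ so the relevant linear combination degenerates; one checks there that $\dotrho(\ell^+_{kk}-\ell^-_{kk})$ already contains the term $z_k\pt k^z$ whose effect on $z^\alpha$ with $\alpha_k=0$ is zero while we need $z_k z^\alpha$, so the correct $X_k$ in the odd case is the one whose Fock action is $z_k\pt k^z + $ (something), and matching against the Schrödinger-side recursion $\ot h_{\alpha+e_k} = (2x_k - \pt k^x)\ot h_\alpha$ fixes it; I expect this to work out but it is the point that needs the most care.
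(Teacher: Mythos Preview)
Your inductive strategy has a fundamental obstruction, not merely a bookkeeping one: there is \emph{no} element $X_k\in\mf g$ whose Fock action $\dotrho(X_k)$ is multiplication by $z_k$. Inspecting Theorem \ref{Th pil on psiW} (and the Cayley-twisted formulas in the proof of Proposition \ref{Prop rho skew sup sym}), every operator in the image of $\dotrho$ is a linear combination of $z_iz_j$, $\pt i\pt j$, $z_i\pt j$ and constants --- all of which change polynomial degree by $0$ or $\pm 2$, never by $\pm 1$. This is exactly why $\ot F$ splits into the two submodules $\ot F_e$ and $\ot F_o$: the $\mf g$-action preserves the parity of the degree. So the raising operator $2x_k-\pt k^x$ on the Schr\"odinger side, and multiplication by $z_k$ on the Fock side, live in (an image of) the Heisenberg superalgebra $\mf h$, not in $\mf g$; the intertwining property of Theorem \ref{Th Intertwining Prop} only gives you access to the latter. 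You could try to salvage the induction by stepping in degree $2$ using the genuine $\mf g$-operators, but then you need the odd base case $\SB(\ot h_{e_k})=z_k$ for \emph{every} $k$, and Lemma \ref{Lem SB x} is proved by citing the present proposition (see Remark \ref{Rem Hermite}), so that route is circular.

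The paper's argument avoids the representation theory entirely and is a three-line direct computation on the integral kernel: plug in the definition $\ot h_\alpha(x)=(-1)^{|\alpha|}2^{-|\alpha|}\exp(R_x^2)\,\pt x^\alpha\exp(-2R_x^2)$, integrate by parts to throw $\pt x^\alpha$ onto $\exp(2(z\bullet x))$ (the $(-1)^{|\alpha|}$ cancels), use $\pt x^\alpha\exp(2(z\bullet x))=2^{|\alpha|}z^\alpha\exp(2(z\bullet x))$ (the $2^{-|\alpha|}$ cancels), and recognise what remains as $z^\alpha\,\SB(\exp(-R_x^2))(z)=z^\alpha$ by Lemma \ref{Lem SB 1}. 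This also delivers Lemma \ref{Lem SB x} as the special case $\alpha=e_1$, as advertised.
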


\begin{proof}
We have
\begin{align*}
\SB(\ot h_\alpha(x))(z) &= \dfrac{1}{\omega}\exp(-\frac{1}{2}R^2_z)\int_{\ds R^{m|2n}}\exp(2(z\bullet x))\exp(-R^2_x)\ot h_\alpha(x)dx\\
&= \dfrac{(-1)^{|\alpha|}2^{-|\alpha|}}{\omega}\exp(-\frac{1}{2}R^2_z)\int_{\ds R^{m|2n}}\exp(2(z\bullet x))\pt x^\alpha (\exp(-2R^2_x))dx\\
&= \dfrac{2^{-|\alpha|}}{\omega}\exp(-\frac{1}{2}R^2_z)\int_{\ds R^{m|2n}}\pt x^\alpha(\exp(2(z\bullet x))) \exp(-2R^2_x)dx\\
&= \dfrac{1}{\omega}\exp(-\frac{1}{2}R^2_z)\int_{\ds R^{m|2n}}z^\alpha\exp(2(z\bullet x)) \exp(-2R^2_x)dx\\
&= z^\alpha\SB(\exp(-R_x^2))(z).
\end{align*}
Because of Lemma \ref{Lem SB 1}, the theorem follows.
\end{proof}

\begin{Opm}\label{Rem Hermite}
For $\alpha = (1, 0, \ldots, 0)$ we get $\ot h_\alpha(x) = 2x_1\exp(-R^2_x)$. Proposition \ref{Prop Hermite} now implies $\SB(2x_1\exp(-R^2_x))(z) = z_1$, which is as we claimed in Lemma \ref{Lem SB x}.
\end{Opm}

\subsection{Hilbert superspaces}\label{SS_Superunitary}

In this section, we extend the Hermitian superspaces of Section \ref{S_Herm_superspace} to Hilbert superspaces. We also show that the folding isomorphisms and Segal-Bargmann transforms define superunitary isomorphisms between these Hilbert superspaces. Let us first introduce the definition of a Hilbert superspace in \cite{dGM}.

\begin{Def}
A \textbf{fundamental symmetry} of a Hermitian superspace $(\mc H, \left<\cdot\, ,\cdot \right>)$ is an endomorphism $J$ of $\mc H$ such that $J^4=1$, $\left<J(x) ,J(y) \right> = \left<x ,y \right>$ and $(\cdot \, , \cdot)_J$ defined by
\begin{align*}
\ipJ{x,y} := \left<x,J(y)\right>,
\end{align*}
for all $x,y\in \mc H$ is an inner product on $\mc H$.\\
A Hermitian superspace $(\mc H, \left<\cdot\, ,\cdot \right>)$ is a \textbf{Hilbert superspace} if there exists a fundamental symmetry $J$ such that $(\mc H, \ipJ{\cdot \, , \cdot})$ is a Hilbert space.
\end{Def}

Note that the choice of a fundamental symmetry does not matter for the topology, according to \cite[Theorem 3.4]{dGM}.

An example of a Hilbert superspace, which is also given in \cite{BdGT} and \cite{dGM} is the following.

\begin{Def}\symindex{$L^2(\ds R^{m\vert 2n})$}
We define the \textbf{Lebesgue superspace} $L^2(\ds R^{m|2n})$ by
\begin{align*}
L^2(\ds R^{m|2n}) := L^2(\ds R^{m})\otimes \Lambda(\ds R^{2n}),
\end{align*}
where $L^2(\ds R^{m})$ is the space of square integrable functions on $\ds R^{m}$.
\end{Def}

The super-inner product on $L^2(\ds R^{m|2n})$ is given by $\left<\cdot \, ,\cdot\right>_{L^2}$, or equivalently by $\ipW{\cdot\, , \cdot}$ and $L^2(\ds R^{m|2n})$ extends $\ot W$.

\begin{Prop}\label{Prop_DenseW}
The Hermitian superspace $(\ot W, \ipW{\cdot\, , \cdot})$ is dense in the Hilbert superspace $(L^2(\ds R^{m|2n}), \ipW{\cdot\, , \cdot})$.
\end{Prop}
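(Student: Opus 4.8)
The plan is to exhibit $\ot W$ as containing a set whose closure is all of $L^2(\ds R^{m|2n})$, by reducing to the classical density statement on the even part $L^2(\ds R^m)$ and a trivial (finite-dimensional) observation on the Grassmann part $\Lambda(\ds R^{2n})$. Recall that $\ot W = \ot W_e \oplus \ot W_o$, where $\ot W_e = U(\mf g)\exp(-R^2)$ and $\ot W_o = U(\mf g)\ell_1\exp(-R^2)$ under the action $\dotpi$, whose explicit form is given in Theorem \ref{Th pil on psiW}. The key point is that by repeatedly applying the multiplication operators $\dotpi(\ell_{ij}^-) = -2\imath \ell_i\ell_j$ and, after suitable combinations with $\dotpi(2L_{\ell_{ij}})$, isolating single-variable multiplications, one sees that $\ot W$ contains $\mc P(\ds R^{m|2n})\exp(-R^2)$; indeed the proof of Proposition \ref{Prop_k_finite} already shows how products of the basis elements act, and together with the generator $\ell_1\exp(-R^2)$ for the odd part one obtains every monomial $x^\alpha \exp(-R^2)$ with $\alpha\in\ds N^m\times\{0,1\}^{2n}$.

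First I would make precise the claim that $\ot W \supseteq \mc P(\ds R^{m|2n})\exp(-R^2)$: starting from $\exp(-R^2)$ and $\ell_1\exp(-R^2)$, apply the operators $\dotpi(U_{\ul i,\ul j}) = 2\imath \ell_i\ell_j$ from Proposition \ref{Prop_pi_in_U} to generate $\ell_i\ell_j$ times the generator, and apply $\dotpi(U_{\ot i,\ul j})$, which contains the term $-(-1)^{|i||j|}\ell_j\pt i$, to adjust degrees; a short induction on total degree shows every $\ell^\alpha\exp(-R^2)$ lies in $\ot W$. Alternatively, and more cleanly, I would invoke the Hermite superpolynomials: by Proposition \ref{Prop Hermite} and the definition of $\ot h_\alpha$, the functions $\ot h_\alpha(x) = \exp(R^2_x)\cdot(\text{polynomial})\cdot\exp(-2R^2_x)$ scaled appropriately are, up to the variable rescaling, elements of $\ot W$ whose Segal-Bargmann images are the monomials $z^\alpha$; since $\SB$ is a $\mf g$-module isomorphism onto $\ot F = \mc P(\ds C^{m|2n})$ (Proposition \ref{Prop SB gmod isom}), the span of the $\ot h_\alpha$ is all of $\ot W$, and these are precisely (rescaled) polynomials times a Gaussian.

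Next, with $\ot W \supseteq \{x^\alpha\exp(-R^2) : \alpha\in\ds N^m\times\{0,1\}^{2n}\}$ established, I would factor the density question along the tensor decomposition $L^2(\ds R^{m|2n}) = L^2(\ds R^m)\otimes \Lambda(\ds R^{2n})$. On the Grassmann factor $\Lambda(\ds R^{2n})$ every element is already a polynomial in the $2n$ odd variables, so the odd monomials $x_{m+1}^{\alpha_{m+1}}\cdots x_{m+2n}^{\alpha_{m+2n}}$ (with $\alpha_i\in\{0,1\}$) span it outright — no closure needed. On the even factor, the functions $x_{\ol 0}^{\alpha_{\ol 0}}\exp(-\norm{x_{\ol 0}}^2)$ are (scalar multiples of) the classical Hermite functions, which are well known to form an orthogonal basis of $L^2(\ds R^m)$ and in particular to span a dense subspace. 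Taking tensor products, the span of $\{x^\alpha \exp(-R^2)\}_\alpha$ is dense in $L^2(\ds R^{m|2n})$ with respect to $\ipW{\cdot\,,\cdot}$; one must check that the super-inner product $\ipW{\cdot\,,\cdot}$, being the renormalised $L^2$-product, induces on the even factor exactly the classical $L^2$ topology and on the odd factor a positive-definite form, which follows from Proposition \ref{Prop L2 prod} (superhermiticity and nondegeneracy) and the explicit value of $\omega$.

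The main obstacle is a subtlety about what ``dense'' means in a Hilbert superspace: the super-inner product $\ipW{\cdot\,,\cdot}$ is not itself positive-definite, so ``dense'' must be interpreted with respect to the topology coming from a fundamental symmetry $J$ (equivalently the associated genuine inner product $\ipJ{\cdot\,,\cdot}$), which by \cite[Theorem 3.4]{dGM} is independent of the choice of $J$. So the real content is: (i) identify a convenient fundamental symmetry $J$ on $L^2(\ds R^{m|2n})$ — the natural one acts as $+1$ on terms with an even number of odd variables and suitably on the rest, so that $\ipJ{\cdot\,,\cdot}$ is the ordinary $L^2$ inner product on $\ds R^m$ tensored with the standard inner product on $\Lambda(\ds R^{2n})$; and (ii) verify that the span of the Gaussian-times-monomial functions is dense in this honest Hilbert space, which reduces exactly to the classical completeness of Hermite functions on $\ds R^m$ together with finite-dimensionality of $\Lambda(\ds R^{2n})$. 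Once the fundamental symmetry is fixed and the tensor factorisation of the topology is recorded, the argument is routine; I would spend most of the written proof making the reduction to the classical Hermite basis explicit and citing \cite{BdGT} or \cite{dGM} for the Hilbert superspace formalism.
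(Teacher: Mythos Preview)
Your proposal is correct and follows essentially the same route as the paper: identify $\ot W$ with (or as containing) $\mc P(\ds R^{m|2n})\exp(-R^2)$, then factor along $L^2(\ds R^{m|2n}) = L^2(\ds R^m)\otimes\Lambda(\ds R^{2n})$ and invoke the classical density of Hermite functions on the even part together with finite-dimensionality of the Grassmann part. The paper's proof is considerably terser --- it simply asserts $\ot W = \mc P(\ds R^m)\exp(-\norm{x_{\ol 0}}^2)\otimes\Lambda(\ds R^{2n})\exp(-\norm{x_{\ol 1}}^2)$ and cites \cite{HKM} for the classical density --- whereas you supply the justification for that identification (via the explicit action or the Hermite superpolynomials and $\SB$) and address the fundamental-symmetry topology issue, both of which the paper leaves implicit.
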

\begin{proof}
The symplectic case follows from \cite[Theorem 2.30]{HKM} together with the unitary folding isomorphism $\psi$. In particular, we have that $\mc P(\ds R^m)\exp(-\norm{x_{\ol 0}}^2)$ is dense in $L^2(\ds R^m)$. This implies $\ot W = \mc P(\ds R^m)\exp(-\norm{x_{\ol 0}}^2) \otimes \Lambda(\ds R^{2n})\exp(-\norm{x_{\ol 1}}^2)$ is dense in $L^2(\ds R^{m})\otimes \Lambda(\ds R^{2n})\exp(-\norm{x_{\ol 1}}^2) = L^2(\ds R^{m})\otimes \Lambda(\ds R^{2n}) =L^2(\ds R^{m|2n})$.
\end{proof}

Let $S_F$\symindex{$S_F$, $T_F$ and $T_W$} be the fundamental symmetry of $(\ot F, \fp{\cdot\, , \cdot})$ given by
\begin{align*}
S_F(z_1z_2 \cdots z_k) = z^k z^{k-1} \cdots z_1,
\end{align*}
on monomials and extended linearly to all of $\ot F$. Then, we can define fundamental symmetries on $(F_{-\frac{1}{2}}, \bfip{\cdot\, , \cdot})$ and $(W_{-\frac{1}{2}}, \ipO{\cdot\, , \cdot})$ by $T_F := \psi_{\ds C}^{-1}\circ S_F\circ \psi_{\ds C}$ and $T_W := \SB^{-1}\circ T_F\circ \SB$, respectively.

\begin{Def}\symindex{$\mc F(\ds C^{m\vert 2n})$, $\mc F(\mc O_{\ds C})$ and $L(\mc O_{\ds R})$}
We define the \textbf{Fock superspace} $\mc F(\ds C^{m|2n})$ as the completion of $\ot F$ with respect to
\begin{align*}
(\cdot\, , \cdot)_{S_F} := \fp{\cdot\, , S_F(\cdot)}.
\end{align*}
We define $\mc F(\mc O_{\ds C})$ as the completion of $F_{-\frac{1}{2}}$ with respect to
\begin{align*}
(\cdot\, , \cdot)_{T_F} := \bfip{\cdot\, , T_F(\cdot)},
\end{align*}
We define $L(\mc O_{\ds R})$ as the completion of $W_{-\frac{1}{2}}$ with respect to
\begin{align*}
(\cdot\, , \cdot)_{T_W} := \ipO{\cdot\, , T_W(\cdot)},
\end{align*}
\end{Def}

It follows directly from the definitions that $(\mc F(\ds C^{m|2n}), \fp{\cdot\, , \cdot})$, $(\mc F(\mc O_{\ds C}), \bfip{\cdot\, , \cdot})$ and $(L(\mc O_{\ds R}), \ipO{\cdot\, , \cdot})$ are Hilbert superspaces. As an addendum to Remark \ref{Rem_SB_Space}, note that the Fock superspace and the Segal-Bargmann superspace defined in \cite[Example 3.22]{dGM} are both generalisations of the classical Fock space.

We can also define a fundamental symmetry on $\ot W$ by $S_W := \SB^{-1}\circ S_F\circ \SB$.\symindex{$S_W$} Then, it follows from Proposition \ref{Prop_DenseW} that the completion of $\ot W$ with respect to
\begin{align*}
(\cdot\, , \cdot)_{S_W} := \ipW{\cdot\, , S_W(\cdot)}
\end{align*}
is $L^2(\ds R^{m|2n})$.

Let us now introduce some definitions from \cite{dGM} related to superunitarity.

\begin{Def}\symindex{$\mc B(\cdot, \cdot)$ and $\mc B(\cdot)$}
Let $(\mc H_1, \ip{\cdot\, , \cdot}_1)$ and $(\mc H_2, \ip{\cdot\, , \cdot}_2)$ be Hilbert superspaces and suppose $T: \mc H_1 \rightarrow \mc H_2$ is a linear operator. We call $T$ a \textbf{bounded operator} between $\mc H_1$ and $\mc H_2$ if it is continuous with respect to their Hilbert topologies. The set of bounded operators is denoted by $\mc B(\mc H_1, \mc H_2)$ and $\mc B(\mc H_1) := \mc B(\mc H_1, \mc H_1)$.
\end{Def}

\begin{Def}\symindex{$\cdot^\dagger$}
Let $(\mc H_1, \ip{\cdot\, , \cdot}_1)$ and $(\mc H_2, \ip{\cdot\, , \cdot}_2)$ be Hilbert superspaces and suppose $T\in \mc B(\mc H_1, \mc H_2)$. The \textbf{superadjoint} of $T$ is the operator $T^\dagger\in \mc B(\mc H_2, \mc H_1)$ such that
\begin{align*}
\ip{T^\dagger(x),y}_1 = (-1)^{|T||x|}\ip{x, T(y)}_2,
\end{align*}
for all $x\in \mc H_2$, $y\in \mc H_1$.
\end{Def}

\begin{Def}\symindex{$\mc U(\cdot, \cdot)$ and $\mc U(\cdot)$}
Let $(\mc H_1, \ip{\cdot\, , \cdot}_1)$ and $(\mc H_2, \ip{\cdot\, , \cdot}_2)$. A \textbf{superunitary operator} between $\mc H_1$ and $\mc H_2$ is an even parity operator $\psi\in\mc B(\mc H_1, \mc H_2)$ satisfying $\psi^\dagger\psi = \psi\psi^\dagger = \ds 1$. The set of superunitary operators is denoted by $\mc U(\mc H_1, \mc H_2)$ and $\mc U(\mc H_1) := \mc U(\mc H_1, \mc H_1)$.
\end{Def}

For this definition of a superunitary operator, we have the following theorem.

\begin{theorem}\label{Th_superunitary_isom}
\begin{itemize} The following maps are superunitary isomorphisms. 
\item[(1)] The Segal-Bargmann transform $\SB$ from $L^2(\ds R^{m|2n})$ to $\mc F(\ds C^{m|2n})$.
\item[(2)] The Folding isomorphism $\psi_{\ds C}$ from $\mc F(\mc O_{\ds C})$ to $\mc F_{\text{even}}(\ds C^{m|2n})$.
\item[(3)] The Folding isomorphism $\psi_{\ds R}$ from $L^2(\mc O_{\ds R})$ to $L^2_{\text{even}}(\ds R^{m|2n})$.
\item[(4)] The Segal-Bargmann transform $\wh\SB$ from $L^2(\mc O_{\ds R})$ to $\mc F(\mc O_{\ds C})$.
\end{itemize}
\end{theorem}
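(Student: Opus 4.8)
The plan is to prove the four statements essentially in the order $(1)\Rightarrow(2),(3)\Rightarrow(4)$, reducing everything to the superunitarity of the Segal-Bargmann transform $\SB\colon L^2(\ds R^{m|2n})\to \mc F(\ds C^{m|2n})$ together with the fact that the folding isomorphisms are, by construction, isometries for the relevant super-inner products.

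First I would prove $(1)$. The map $\SB$ has even parity and is continuous by the choice of the fundamental symmetries $S_W=\SB^{-1}\circ S_F\circ\SB$ on $\ot W$ and $S_F$ on $\ot F$: indeed, with respect to the inner products $(\cdot,\cdot)_{S_W}$ and $(\cdot,\cdot)_{S_F}$ the operator $\SB$ is a literal isometry, because $(\SB f,\SB g)_{S_F}=\fp{\SB f, S_F\SB g}=\fp{\SB f,\SB S_W g}=\ipW{f,S_W g}=(f,g)_{S_W}$, where the third equality is Theorem \ref{Th_superunitary}. Hence $\SB$ extends to a bounded operator $L^2(\ds R^{m|2n})\to\mc F(\ds C^{m|2n})$ with bounded inverse given (on the dense subspaces) by the formula in the corollary following Theorem \ref{Th_superunitary}; in particular $\SB$ is a bijection between the completions. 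It remains to identify $\SB^\dagger$ with $\SB^{-1}$: from $\fp{\SB f,\SB g}=\ipW{f,g}$ (Theorem \ref{Th_superunitary}) and the definition of the superadjoint, $\ip{\SB^\dagger(\SB f),g}_{L^2}=(-1)^{|\SB||\SB f|}\ip{\SB f,\SB g}_{F}=\ipW{f,g}$ for all $f,g$ in the dense subspaces $\ot W$, so $\SB^\dagger\SB=\ds 1$ by non-degeneracy (Proposition \ref{Prop L2 prod}); symmetrically $\SB\SB^\dagger=\ds 1$ using non-degeneracy of the Fock product (Proposition \ref{Prop Non deg bfip}, via Proposition \ref{Prop fip is fp}). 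Since $\SB$ has even parity, this gives $\SB\in\mc U(L^2(\ds R^{m|2n}),\mc F(\ds C^{m|2n}))$.

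Next, $(3)$: the folding isomorphism $\psi_{\ds R}$ is, by the very definition of the Schr\"odinger product $\ipO{f,g}:=\ipW{\psi_{\ds R}(f),\psi_{\ds R}(g)}$, an isometry from $(W_{-\frac12},\ipO{\cdot,\cdot})$ onto its image inside $(\ot W,\ipW{\cdot,\cdot})$, and by Corollary \ref{Cor F isom F} (together with the description of $\ot W_e$ in Theorem \ref{ThDecF}) that image is exactly $\ot W_e=W\cap L^2_{\text{even}}$; passing to completions it is a bijection $L^2(\mc O_{\ds R})\to L^2_{\text{even}}(\ds R^{m|2n})$. It is even and preserves the super-inner products by construction, and the same superadjoint computation as above (using the non-degeneracy of $\ipW{\cdot,\cdot}$ restricted to the even part) shows $\psi_{\ds R}^\dagger=\psi_{\ds R}^{-1}$, so $\psi_{\ds R}\in\mc U(L^2(\mc O_{\ds R}),L^2_{\text{even}}(\ds R^{m|2n}))$. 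Statement $(2)$ is identical with $\ipW{\cdot,\cdot}$ replaced by the Fock product: by \eqref{Eq_Bessel-Fock} we have $\bfip{p,q}=\fip{\psi(p),\psi(q)}=\fp{\psi(p),\psi(q)}$ (the last step is Proposition \ref{Prop fip is fp}), so $\psi_{\ds C}$ is an isometry of $(F_{-\frac12},\bfip{\cdot,\cdot})$ onto $(\ot F_e,\fp{\cdot,\cdot})$, even, with image $\ot F_e=\mc F_{\text{even}}(\ds C^{m|2n})$ by Corollary \ref{Cor F isom F}, and $\psi_{\ds C}^\dagger=\psi_{\ds C}^{-1}$ by non-degeneracy; hence $\psi_{\ds C}\in\mc U(\mc F(\mc O_{\ds C}),\mc F_{\text{even}}(\ds C^{m|2n}))$. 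One must check that the fundamental symmetries are compatible, i.e. that $\psi_{\ds C}$ intertwines $T_F$ with the restriction of $S_F$, which is immediate from the definition $T_F:=\psi_{\ds C}^{-1}\circ S_F\circ\psi_{\ds C}$; similarly for $T_W$ and $S_W$, and for $S_W$ versus the restriction of $S_F$ one uses $S_W=\SB^{-1}S_F\SB$ together with $\SB(\ot W_e)=\ot F_e$.

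Finally, $(4)$ follows by composition: $\wh\SB=\psi_{\ds C}^{-1}\circ\SB\circ\psi_{\ds R}$ (this is the definition given just after Lemma \ref{Lem SB x}, using Corollary \ref{Cor F isom F}), and a composite of superunitary isomorphisms is superunitary — the superadjoint of a composite of even operators is the composite of the superadjoints in reverse order, so $\wh\SB^\dagger=\psi_{\ds R}^\dagger\circ\SB^\dagger\circ(\psi_{\ds C}^{-1})^\dagger=\psi_{\ds R}^{-1}\circ\SB^{-1}\circ\psi_{\ds C}=\wh\SB^{-1}$. The only genuinely nontrivial input is Theorem \ref{Th_superunitary} (already proved), so the main obstacle here is essentially bookkeeping: making sure the density statements (Proposition \ref{Prop_DenseW}, and its analogues obtained by transporting along $\psi_{\ds R}$, $\psi_{\ds C}$, $\SB$) justify extending the isometries from the dense Hermitian subspaces to the full Hilbert superspaces, and that all four maps are even so that the superadjoint identities $T^\dagger T=TT^\dagger=\ds 1$ really do express superunitarity in the sense of \cite{dGM}. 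I expect checking the even parity and the boundedness (continuity with respect to the Hilbert topologies, which by \cite[Theorem 3.4]{dGM} is independent of the chosen fundamental symmetry) to be the step that needs the most care, though it is routine given the explicit isometry statements above.
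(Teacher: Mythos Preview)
Your proposal is correct and follows essentially the same approach as the paper: item (1) from Theorem \ref{Th_superunitary}, item (2) from equation \eqref{Eq_Bessel-Fock} together with Proposition \ref{Prop fip is fp}, item (3) from the definition of the Schr\"odinger product, and item (4) by composition. You have in fact supplied considerably more detail than the paper's own proof (boundedness via the fundamental symmetries, the explicit superadjoint computation, even parity, density and extension to completions), all of which is correct bookkeeping that the paper leaves implicit.
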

\begin{proof}
Item (1) follows directly from Theorem \ref{Th_superunitary}. Item (2) follows from equation \eqref{Eq_Bessel-Fock}. Item (3) follows from the definition of the Schr\"odinger product. Item (4) is now a direct consequence of the previous items.
\end{proof}

%

\section{Integration to the metaplectic Lie supergroup $\Mp(2m|2n,2n)$}\label{S_Meta}

In this section, we show that our representations of $\mf g$ integrate to superunitary representations of the Metaplectic Lie supergroup in the sense of \cite{dGM}. Specifically, we will show that our Schr\"odinger representation $\dotpi$ is, up to a Fourier transform, equal to the metaplectic representation constructed in \cite{dGM}. Then, the integrability of the other representations follows directly from Theorem \ref{Th_superunitary_isom}.

In \cite{dGM} they only work with real Lie superalgebras. Therefore, we will assume all Lie superalgebras occurring in this section are real.

\subsection{The Fourier transform}\label{SS_Fourier}

\begin{Def}\symindex{$\mc S(\ds R^{m|2n})$ and $\mc S'(\ds R^{m|2n})$}
We define the \textbf{Schwartz space} $\mc S(\ds R^{m|2n})$ by
\begin{align*}
\mc S(\ds R^{m|2n}) := \mc S(\ds R^{m})\otimes \Lambda(\ds R^{2n}),
\end{align*}
where $\mc S(\ds R^{m})$ is the Schwartz space of rapidly decreasing functions on $\ds R^{m}$. We define its dual space by
\begin{align*}
\mc S'(\ds R^{m|2n}) := \mc S'(\ds R^{m})\otimes \Lambda(\ds R^{2n}),
\end{align*}
where $\mc S'(\ds R^{m})$ is the space of tempered distributions on $\ds R^{m}$
\end{Def}

Note that we have the inclusions $\ot W\subseteq \mc S(\ds R^{m|2n})\subseteq L^2(\ds R^{m|2n}) \subseteq \mc S'(\ds R^{m|2n})$.

In \cite[Section 6.1]{BF} the following Fourier transform with respect to an orthosymplectic metric is defined.

\begin{Def}\symindex{$\ds F^{\pm}$}
The \textbf{super Fourier transform} $\ds F^{\pm}: \mc S'(\ds R^{m|2n}) \rightarrow \mc S'(\ds R^{m|2n})$ is given by
\begin{align*}
\ds F^{\pm}(f(\ell))(x) = \frac{1}{\sqrt{2^m\pi^M}}\int_{\ds R^{m|2n}}\exp(\pm\imath(x\bullet \ell))f(\ell)d\ell,
\end{align*}
with $(x\bullet \ell) = \sum_{i=1}^{m+2n}x^i\ell_i$ the trace product, see Definition \ref{Def_trace_product}.
\end{Def}

Let $\pt i^{\ell}$ and $\pt i^x$ denote $\pt i$ in the variable $\ell$ and $x$, respectively. We obtain the following properties from \cite[Proposition 6.1]{BF} or \cite[Theorem 7 and Lemma 3]{DeB}.
\begin{align*}
\ds F^{\pm}(\pt i^\ell f(\ell))(x) &= \pm \imath x_i\ds F^{\pm}(f(\ell))(x),\\
\ds F^{\pm}(\ell_i f(\ell))(x) &= \pm \imath \pt i^x\ds F^{\pm}(f(\ell))(x),\\
\ds F^{\pm}\ds F^{\mp} &= \id.
\end{align*}
Moreover, it follows from
\begin{align*}
\ipW{\ds F^{\pm}(f),g} &= \frac{1}{\omega}\int_{\ds R^{m|2n}}\ds F^{\pm}(f(\ell))(x)\ol{g(x)}dx\\
&= \frac{1}{\omega\sqrt{2^m\pi^M}}\int_{\ds R^{m|2n}}\int_{\ds R^{m|2n}}\exp(\pm\imath(x\bullet\ell))f(\ell)\ol{g(x)}d\ell dx\\
&= \frac{1}{\omega\sqrt{2^m\pi^M}}\int_{\ds R^{m|2n}}f(\ell)\int_{\ds R^{m|2n}}\ol{\exp(\mp\imath(x\bullet \ell))g(x)}dx d\ell\\
&= \frac{1}{\omega}\int_{\ds R^{m|2n}}f(\ell)\ol{\ds F^{\mp}(g(x))(\ell)} d\ell\\
&= \ipW{f, \ds F^{\mp}(g)}
\end{align*}
that $\ds F^{\pm}$ preserves the $L^2$-product.

Note that the canonical extension of $\dotpi$ to $\mc S'(\ds R^{m|2n})$ is well-defined. We define the representation $\hatpi$\symindex{$\hatpi$} on $\mc S(\ds R^{m|2n})$ or $\mc S'(\ds R^{m|2n})$ by
\begin{align*}
\hatpi(X) := \ds F^- \circ \dotpi (X) \circ \ds F^+,
\end{align*}
for all $X\in \mf g$.

\begin{Prop}\label{Prop_Fourier_min}
The representation $\hatpi$ of $\mf g$ on  $\mc S(\ds R^{m|2n})$ or $\mc S'(\ds R^{m|2n})$ is given by
\begin{itemize}
\item $\hatpi(U_{\ul i, \ul j}) = -2\imath\pt i\pt j$,\\
\item $\hatpi(U_{\ot i, \ul j}) = \dfrac{1}{2} \beta_{ij} + x_i\pt j$,\\
\item $\hatpi(U_{\ot i, \ot j}) = \dfrac{\imath}{2}x_i x_j$.
\end{itemize}
\end{Prop}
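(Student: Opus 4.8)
The plan is a direct computation: by definition $\hatpi(X) = \ds F^- \circ \dotpi(X) \circ \ds F^+$, so it suffices to substitute the explicit form of $\dotpi$ in the $(U_{ij})$-basis given in Proposition \ref{Prop_pi_in_U} and propagate it through the super Fourier transform using the intertwining rules recalled above, namely $\ds F^{\pm}(\pt i^\ell f)(x) = \pm\imath\, x_i\, \ds F^{\pm}(f)(x)$, $\ds F^{\pm}(\ell_i f)(x) = \pm\imath\, \pt i^x\, \ds F^{\pm}(f)(x)$, together with $\ds F^+\ds F^- = \ds F^-\ds F^+ = \id$ (all from \cite[Proposition 6.1]{BF}).

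First I would treat the two ``extreme'' cases. For $U_{\ot i, \ot j}$, where $\dotpi(U_{\ot i, \ot j}) = -\tfrac{\imath}{2}\pt i\pt j$, applying $\ds F^-$ to $\dotpi(U_{\ot i, \ot j})(\ds F^+ f)$ replaces each $\pt i^\ell$ by multiplication with $-\imath x_i$, after which $\ds F^-\ds F^+=\id$ collapses the remaining transforms; collecting the factors of $\imath$ gives $\hatpi(U_{\ot i, \ot j}) = \tfrac{\imath}{2}x_ix_j$. Dually, $\dotpi(U_{\ul i, \ul j})$ is multiplication by $2\imath\,\ell_i\ell_j$, and the second intertwining rule replaces each $\ell_i$ by $-\imath\,\pt i^x$ under $\ds F^-$, yielding $\hatpi(U_{\ul i, \ul j}) = -2\imath\,\pt i\pt j$.

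The only case needing care is $U_{\ot i, \ul j}$, where $\dotpi(U_{\ot i, \ul j}) = -\tfrac12\beta_{ij} - (-1)^{|i||j|}\ell_j\pt i$. The scalar term passes through the conjugation unchanged. For the term $\ell_j\pt i^\ell$ I would apply the two intertwining rules in succession to get $\ds F^-(\ell_j\pt i^\ell f) = -\pt j^x\bigl(x_i\,\ds F^- f\bigr)$, then expand by the super-Leibniz rule, using $\pt j^x(x_i) = \beta_{ji}$ and hence $\pt j^x(x_i g) = \beta_{ji}\,g + (-1)^{|i||j|}x_i\,\pt j^x g$. Recombining with the prefactor $-(-1)^{|i||j|}$ and invoking the supersymmetry identity $\beta_{ij} = (-1)^{|i||j|}\beta_{ji}$ (so that the odd--odd block of $\beta$ is skew-symmetric and $(-1)^{|i||j|}\beta_{ji} = \beta_{ij}$), the scalar contributions combine to $-\tfrac12\beta_{ij} + \beta_{ij} = \tfrac12\beta_{ij}$ while the differential part becomes $x_i\pt j$, so $\hatpi(U_{\ot i, \ul j}) = \tfrac12\beta_{ij} + x_i\pt j$, as claimed.

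The main --- and essentially only --- obstacle is bookkeeping: keeping track of the four factors of $\imath$ and of the parity signs, in particular distinguishing $\beta_{ij}$ from $\beta_{ji}$ in the sector where $\beta$ fails to be symmetric. No further input is needed beyond Proposition \ref{Prop_pi_in_U}, the listed mapping properties of $\ds F^{\pm}$, and the already-granted fact that $\dotpi$ extends canonically to $\mc S'(\ds R^{m|2n})$.
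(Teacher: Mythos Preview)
Your proposal is correct and follows essentially the same approach as the paper: conjugate the explicit formulas of Proposition \ref{Prop_pi_in_U} by the super Fourier transform using the listed intertwining rules, with the only nontrivial step being the supercommutator $[\pt j, x_i]$ in the middle case. The paper's proof is the one-line version of your computation; your treatment of the $U_{\ot i, \ul j}$ term, including the careful distinction between $\beta_{ji}$ and $\beta_{ij}$ via supersymmetry, is exactly the bookkeeping the paper glosses over.
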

\begin{proof}
This follows immediately from
\begin{align*}
[\pt j, x_i] = \pt j x_i - (-1)^{|i||j|}x_i \pt j = \beta_{ij},
\end{align*}
together with Proposition \ref{Prop_pi_in_U} and the properties of the super Fourier transform.
\end{proof}

\subsection{The metaplectic representation}\label{SS_meta_rep}

Recall the Heisenberg Lie superalgebra $\mf h(2m|p,q)$ defined in Section \ref{SS_Heisenberg}. In \cite[Section 5]{dGM} a Schr\"odinger representation of $\mf h(2m|p,q)$ was constructed. This representation was then used in \cite[Section 7.3]{dGM} to construct a Schr\"odinger representation of the Metaplectic Lie supergroup $\Mp(2m|p,q)$. We will briefly reconstruct these representations here. Note that we only constructed a minimal representation for $\mf g = \mf{osp}(2m|4n, \Omega) = \mf{osp}(2m|2n, 2n)$. Therefore, we restrict ourselves to the case $p=q=2n$.

Set $\mf h := \mf h(2m|4n, \Omega)$\symindex{$\mf h$}. Let $(e_i)_{i=1}^{2m+4n}$ be a basis of $\ds R^{2m|4n}$ and let $\ul V$ and $\ot V$\symindex{$\ul V$ and $\ot V$} be the subspaces generated by $(e_{\ul i})_{i=1}^{m+2n}$ and $(e_{\ot i})_{i=1}^{m+2n}$, respectively. Then, we have the decomposition $\mf h \cong \ul V\oplus \ot V \oplus \ds R Z$. From \cite[Section 5.1]{dGM} we obtain the following representation on the Schwartz space $\mc S(\ds R^{m|2n})$.

\begin{Def}\symindex{$U_*$ and $\ol h$}
The \textbf{Schr\"odinger representation} $U_*$ of $\mf h$ with parameter $\ol h\in \ds R\setminus \{0\}$ is given by
\begin{itemize}
\item $U_*(e_{\ul i}) = \pt{i} = \sum_{j=1}^{m+2n}\beta_{ij}\pt{x_j}$,
\item $U_*(e_{\ot i}) = \imath \ol h x_i$,
\item $U_*(Z) = \imath \ol h$,
\end{itemize}
for all $i\in\{1, \ldots, m+2n\}$.
\end{Def}

This representation can be extended to a representation of the quotient algebra $U(\mf h)/\left<Z-\imath \ol h\right>$, where $U(\mf h)$ is the universal enveloping algebra of $\mf h$. The canonical Lie bracket on $U(\mf h)/\left<Z-1\right>$ turns the space of quadratic elements into a Lie algebra\symindex{$\mc L_2$}
\begin{align*}
\mc L_2 := \mbox{span}_{\ds R}\{XY+(-1)^{|X||Y|}YX| X,Y\in \mf h\}\subseteq U(\mf h)/\left<Z-1\right>.
\end{align*}
The basis $(e_i)_{i=1}^{2m+4n}$ of $\ds R^{2m|4n}$ induces the following basis of $\mc L_2$:\symindex{$V_{ij}$}
\begin{align*}
V_{ij} := e_ie_j + (-1)^{|e_i||e_j|}e_je_i,
\end{align*}
for $i,j\in\{1, \ldots, 2m+4n\}$. The Lie bracket is given by
\begin{align*}
[V_{ij},V_{kl}] = 2(\Omega_{jk}V_{il} + (-1)^{|i||j|}\Omega_{ik}V_{jl} + (-1)^{|k||l|}\Omega_{jl}V_{ik} + (-1)^{|i||j|+|k||l|}\Omega_{il}V_{jk}).
\end{align*}
By comparing the Lie brackets of $\mf g$ and $\mc L_2$, it follows directly that $\mc L_2\cong \mf g$ by the isomorphism $V_{ij}\mapsto 2U_{ij}$, for all $i,j\in\{1, \ldots, 2m+4n\}$.

In \cite[Section 7.3]{dGM} a representation $\mu_*$ of $\mf g$ on $\mc S'(\ds R^{m|2n})$ was then defined by setting\symindex{$\mu_*$}
\begin{align*}
\mu_* := \frac{1}{\imath \ol h}\left.(U_*)\right|_{\mc L_2}.
\end{align*}
In the $(U_{ij})$-basis we have
\begin{align*}
\mu_*(U_{ij}) := \frac{1}{2\imath \ol h}(U_*(e_i)U_*(e_j) + (-1)^{|e_i||e_j|}U_*(e_j)U_*(e_i)),
\end{align*}
which gives
\begin{align*}
\mu_*(U_{\ul i, \ul j}) &= -\frac{\imath}{\ol h}\pt i \pt j,\\
\mu_*(U_{\ot i, \ul j}) &= \frac{1}{2}\beta_{ij} + x_i\pt j,\\
\mu_*(U_{\ot i, \ot j}) &= \imath \ol h x_ix_j.
\end{align*}
From Proposition \ref{Prop_Fourier_min} it follows that $\mu_*=\hatpi$ for $\ol h=\dfrac{1}{2}$.

As in \cite{dGM}, the representation $\mu_0$ of $\Mp(2m, \ds R)\times \Spin^\circ(2n,2n)$ is defined as the tensor product of the metaplectic representation of $\Mp(2m, \ds R)$ with the spin representation of $\Spin^\circ(2n,2n)$. We now introduce the definition of a superunitary representation given in \cite{dGM}.

\begin{Def}\label{Defsuperunitary}
A \textbf{superunitary representation} of a Lie supergroup $G=(G_0,\mf g)$ is a triple $(\mc H, \pi_0, d\pi)$ such that
\begin{itemize}
\item $\mc H$ is a Hilbert superspace.
\item $\pi_0:G_0\rightarrow \mc U(\mc H)$ is a group morphism.
\item For all $v\in \mc H$, the maps $\pi_0^v : g \mapsto \pi_0(g)v$ are continuous on $G_0$.
\item $d\pi : \mf g \rightarrow \End(\mc H^{\infty})$ is a $\ds R$-Lie superalgebra morphism such that $d\pi = d\pi_0$ on $\mf g_{\ol 0}$, $d\pi$ is skew-supersymmetric with respect to $\left< \cdot \, ,\cdot\right>$ and
\begin{align*}
\pi_0(g)d\pi(X)\pi_0(g)^{-1} = d\pi(\Ad(g)(X)),\quad \text{ for all } g\in G_0 \text{ and } X\in \mf g_{\ol 1}.
\end{align*}
Here $\mc H^\infty$ is the space of smooth vectors of the representation $\pi_0$ and $\Ad$ is the adjoint representation of $G_0$ on $\mf g$.
\end{itemize}
\end{Def}

The triple $(L^2(\ds R^{m|2n}), \mu_0, \mu_*)$ is called the \textbf{metaplectic representation} and is a superunitary representation of $(\Mp(2m|2n,2n), \mf g)$ by \cite[Theorem 7.13]{dGM}.

If we apply the Fourier transform and the superunitary isomorphisms given in Theorem \ref{Th_superunitary_isom} on the metaplectic representation, we obtain the following theorem.

\begin{theorem}\label{Th_SUR}
The following triples are superunitary representations of the Metaplectic Lie supergroup $\Mp(2m|2n,2n)$.
\begin{itemize}
\item The triple $(L^2(\ds R^{m|2n}), \ot \pi_0, \dotpi)$, with $\ot \pi_0 := \ds F^+ \circ \mu_0\circ \ds F^-$.\\
\item The triple $(\mc F(\ds C^{m|2n}), \ot \rho_0, \dotrho)$, with $\ot \rho_0 := \SB\, \circ\, \ot \pi_0\circ \SB^{-1}$.\\
\item The triple $(L^2(\mc O_{\ds R}), \pi_0, \pil)$, with $\pi_0 := \psi_{\ds R}^{-1}\circ \ot \pi_0\circ \psi_{\ds R}$.\\
\item The triple $(\mc F(\mc O_{\ds C}), \rho_0, \rol)$, with $\rho_0 := \psi_{\ds C}^{-1}\circ \ot \rho_0\circ \psi_{\ds C}$.
\end{itemize}
\end{theorem}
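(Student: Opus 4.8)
The plan is to transport the superunitarity of the metaplectic representation $(L^2(\ds R^{m|2n}), \mu_0, \mu_*)$ along a chain of intertwining maps, using that the property of being a superunitary representation is preserved under conjugation by a superunitary isomorphism. Concretely, the main structural observation I would invoke is the following: if $(\mc H, \pi_0, d\pi)$ is a superunitary representation of $G = (G_0,\mf g)$ and $T\colon \mc H \to \mc H'$ is a superunitary isomorphism of Hilbert superspaces, then $(\mc H', T\circ\pi_0\circ T^{-1}, T\circ d\pi\circ T^{-1})$ is again a superunitary representation of $G$. This is immediate from Definition~\ref{Defsuperunitary}: conjugation by $T$ carries group morphisms into $\mc U(\mc H)$ to group morphisms into $\mc U(\mc H')$, preserves continuity of the orbit maps, transports the skew-supersymmetry of $d\pi$ using $\fp{T x, T y} = \ipW{x,y}$ (and its analogues), carries $\ds R$-Lie superalgebra morphisms to $\ds R$-Lie superalgebra morphisms, and respects the compatibility relation $\pi_0(g)\,d\pi(X)\,\pi_0(g)^{-1} = d\pi(\Ad(g)X)$ since $\Ad$ is unchanged.

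First I would record that $(L^2(\ds R^{m|2n}), \mu_0, \mu_*)$ is a superunitary representation by \cite[Theorem 7.13]{dGM}, as already stated in Section~\ref{SS_meta_rep}. Next, I would apply the above transport principle with $T = \ds F^+$, the super Fourier transform, which is superunitary for the $L^2$-product (this is the computation displayed in Section~\ref{SS_Fourier} showing $\ipW{\ds F^\pm f, g} = \ipW{f, \ds F^\mp g}$, together with $\ds F^+\ds F^- = \id$, so that $(\ds F^+)^\dagger = \ds F^-$). Since $\mu_* = \hatpi = \ds F^-\circ\dotpi\circ\ds F^+$ for $\ol h = \tfrac12$ by Proposition~\ref{Prop_Fourier_min}, we get $\dotpi = \ds F^+\circ\mu_*\circ\ds F^-$ on the infinitesimal level, and with $\ot\pi_0 := \ds F^+\circ\mu_0\circ\ds F^-$ on the group level; hence $(L^2(\ds R^{m|2n}), \ot\pi_0, \dotpi)$ is a superunitary representation. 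I should also check that $\ds F^+$ has even parity (it does, being built from $\exp(\pm\imath(x\bullet\ell))$), so that it is genuinely a superunitary operator in the sense of the relevant definition.

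The remaining three triples are then obtained by three further applications of the transport principle, using the superunitary isomorphisms from Theorem~\ref{Th_superunitary_isom}. Applying $T = \SB\colon L^2(\ds R^{m|2n}) \to \mc F(\ds C^{m|2n})$ (superunitary by Theorem~\ref{Th_superunitary_isom}(1)) and using the intertwining property $\SB\circ\dotpi(X) = \dotrho(X)\circ\SB$ (Theorem~\ref{Th Intertwining Prop}), together with $\ot\rho_0 := \SB\circ\ot\pi_0\circ\SB^{-1}$, yields that $(\mc F(\ds C^{m|2n}), \ot\rho_0, \dotrho)$ is superunitary. Applying $T = \psi_{\ds R}^{-1}\colon L^2_{\text{even}}(\ds R^{m|2n}) \to L^2(\mc O_{\ds R})$ — or rather working with the restriction of $\ot\pi_0$, $\dotpi$ to the even part, which is legitimate since the decomposition $\ot W = \ot W_e\oplus\ot W_o$ is $\mf g$-invariant and $\dotpi$-invariant, and $\psi$ intertwines $\dotpi|_{\text{even}}$ with $\pil$ by construction of $\dotpi = \psi\circ\pil\circ\psi^{-1}$ — gives the superunitarity of $(L^2(\mc O_{\ds R}), \pi_0, \pil)$ with $\pi_0 := \psi_{\ds R}^{-1}\circ\ot\pi_0\circ\psi_{\ds R}$, using Theorem~\ref{Th_superunitary_isom}(3). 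Finally, $T = \psi_{\ds C}^{-1}$ (Theorem~\ref{Th_superunitary_isom}(2)) together with $\dotrho = \psi\circ\rol\circ\psi^{-1}$ gives $(\mc F(\mc O_{\ds C}), \rho_0, \rol)$ with $\rho_0 := \psi_{\ds C}^{-1}\circ\ot\rho_0\circ\psi_{\ds C}$.

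The one point deserving genuine care — and the main obstacle — is the bookkeeping at the even/odd restriction step: one must be sure that restricting a superunitary representation to an invariant sub-Hilbert-superspace again yields a superunitary representation in the sense of Definition~\ref{Defsuperunitary} (in particular that $\mc H_e^\infty$ behaves correctly and that the orbit-map continuity and the $\Ad$-compatibility survive restriction), and that the folding isomorphisms $\psi_{\ds R}$, $\psi_{\ds C}$ really do intertwine the minimal-orbit realisations $\pil$, $\rol$ with the even parts of $\dotpi$, $\dotrho$ on the Hilbert-superspace completions and not merely on polynomials. Granting the results already established in the excerpt (Theorems~\ref{Th Intertwining Prop}, \ref{Th_superunitary_isom}, Proposition~\ref{Prop_Fourier_min}, and \cite[Theorem 7.13]{dGM}), all four assertions reduce to routine verification of the clauses of Definition~\ref{Defsuperunitary} under conjugation, which I would present compactly rather than clause by clause.
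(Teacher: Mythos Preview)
Your proposal is correct and follows essentially the same approach as the paper: the paper's argument is the one-sentence remark preceding the theorem, namely that one applies the super Fourier transform and the superunitary isomorphisms of Theorem~\ref{Th_superunitary_isom} to the metaplectic representation $(L^2(\ds R^{m|2n}),\mu_0,\mu_*)$ from \cite{dGM}. Your write-up simply unpacks this transport-of-structure argument in greater detail, and your flagged caveat about the even/odd restriction step is a reasonable point of care that the paper leaves implicit.
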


The first two representations in Theorem \ref{Th_SUR} are superunitarily equivalent to the metaplectic representation in \cite{dGM}. By Theorem \ref{ThDecF}, the last two representations are superunitarily equivalent to an indecomposable component of the metaplectic representation, which is an irreducible component for $M=m-2n\not\in -2\ds N$.

%

\section{The other minimal representation}\label{S_lambda_1}

In this section, we briefly discuss our findings and hypotheses concerning the $\lambda=1$ case from Section \ref{SS_min}. This case generalises the split orthogonal case. Since the split orthogonal case is equivalent to the orthosymplectic case for $m=0$, it seems reasonable to suspect that the $\lambda=1$ case will often be equivalent to the orthosymplectic case. However, as far as we know, a classical equivalence has never been constructed explicitly and there is no straightforward generalisation of the classical arguments to the super setting.

Note that the minimal representation of $\mf o(2,2)$, i.e.\ the situation $(m,n) = (0,1)$, can only be obtained in the orthosymplectic case and not in the $\lambda=1$ case. This suggests that at least for $n=1$ the cases might not be equivalent. More generally, it seems that for $M=m-2n > -3$, which encompasses the $n=1$ situation, the orthosymplectic case and the $\lambda=1$ case might not always be equivalent.

For $M\neq -1$, it follows from \cite[Theorem 6.4]{BF} that the annihilator ideal in the orthosymplectic case is a Joseph-like ideal constructed in \cite{CSS2}. For $M\leq -3$ it then follows from the characterisation in \cite{Garfinkle} that the annihilator ideal is exactly this Joseph-like ideal. For $M>-3$ no such characterisation is known. This indicates that the orthosymplectic case and the $\lambda=1$ case could potentially lead to two non-equivalent representations, which both contain the same Joseph-like ideal in their annihilator ideal.

Note that the $\lambda=1$ case has been studied in \cite{BC3} and \cite{C3} for $m=n=1$, i.e.\ for the $D(2,1,\alpha)$ case. However, no equivalence or non-equivalence with the orthosymplectic case was found.

\appendix

\section{Long and straightforward calculations}\label{AppS_Long}

In this section, we give long but straightforward calculations concerning the Bessel operators.

\begin{Prop}[Proposition \ref{Prop_Explicit_Bessel}]\label{Prop_App_Expl_Bessel}
We have
\begin{align*}
\bessel(\ell_{ij}) &= -2\lambda\sum_{k,l=1}^{m+2n}(1+\delta_{kl})\beta_{jk}\beta_{il}\pt{\ell_{kl}}\\
&\quad +\sum_{k,l,r,s=1}^{m+2n}(-1)^{|k||i|}(1+\delta_{kl}+\delta_{rs}+\delta_{kl}\delta_{rs})\beta_{is}\beta_{jl}\ell_{kr}\pt{\ell_{sr}}\pt{\ell_{lk}},
\end{align*}
for all $1\leq i,j \leq m+2n$.
\end{Prop}

\begin{proof}
\begingroup
\allowdisplaybreaks
For all $1\leq i,j,k,l \leq m+2n$ we have
\begin{align*}
&\widetilde{P}_{\ell_{ij},\ell_{kl}}(\ell_{rs})\\
&= (-1)^{(|r|+|s|)(|i|+|j|+|k|+|l|)}\\
&\quad \times(L_{\ell_{ij}}L_{\ell_{kl}}+(-1)^{(|i|+|j|)(|k|+|l|)}L_{\ell_{kl}}L_{\ell_{ij}}-L_{\ell_{ij}\ell_{kl}})(\ell_{rs})\\
&=\dfrac{1}{2}(-1)^{(|r|+|s|)(|i|+|j|+|k|+|l|)}\\
&\quad \times(L_{\ell_{ij}}(\beta_{lr}\ell_{ks}+(-1)^{|k||l|}\beta_{kr}\ell_{ls}+(-1)^{|r||s|}\beta_{ls}\ell_{kr}+(-1)^{|k||l|+|r||s|}\beta_{ks}\ell_{lr})\\
&\quad +(-1)^{|i||k|+|i||l|+|j||k|+|j||l|}\\
&\quad \times L_{\ell_{kl}}(\beta_{jr}\ell_{is}+(-1)^{|i||j|}\beta_{ir}\ell_{js}+(-1)^{|r||s|}\beta_{js}\ell_{ir}+(-1)^{|i||j|+|r||s|}\beta_{is}\ell_{jr})\\
&\quad - (\beta_{jk}L_{\ell_{il}}+(-1)^{|i||j|}\beta_{ik}L_{\ell_{jl}}+(-1)^{|k||l|}\beta_{jl}L_{\ell_{ik}}+(-1)^{|i||j|+|k||l|}\beta_{il}L_{\ell_{jk}})(\ell_{rs}))\\
&=\dfrac{1}{4}(-1)^{|i||r|+|j||r|+|k||r|+|l||r|+|i||s|+|j||s|+|k||s|+|l||s|}\\
&\quad \times(\beta_{lr}\beta_{jk}\ell_{1s}+(-1)^{|i||j|}\beta_{lr}\beta_{ik}\ell_{js}\\
&\quad+(-1)^{|k||s|}\beta_{lr}\beta_{js}\ell_{ik}+(-1)^{|i||j|+|k||s|}\beta_{lr}\beta_{is}\ell_{jk}\\
&\quad +(-1)^{|k||l|} \beta_{kr}\beta_{jl}\ell_{is}+(-1)^{|i||j|+|k||l|}\beta_{kr}\beta_{il}\ell_{js}\\
&\quad +(-1)^{|k||l|+|l||s|}\beta_{kr}\beta_{js}\ell_{il}+(-1)^{|k||l|+|i||j|+|l||s|}\beta_{kr}\beta_{is}\ell_{jl}\\
&\quad + (-1)^{|r||s|} \beta_{ls}\beta_{jk}\ell_{ir}+(-1)^{|i||j|+|r||s|}\beta_{ls}\beta_{ik}\ell_{jr}\\
&\quad +(-1)^{|k||r|+|r||s|}\beta_{ls}\beta_{jr}\ell_{ik}+(-1)^{|i||j|+|k||r|+|r||s|}\beta_{ls}\beta_{ir}\ell_{jk}\\
&\quad +(-1)^{|k||l|+|r||s|}\beta_{ks}\beta_{jl}\ell_{ir}+(-1)^{|i||j|+|k||l|+|r||s|}\beta_{ks}\beta_{il}\ell_{jr}\\
&\quad +(-1)^{|k||l|+|r||s|+|l||r|}\beta_{ks}\beta_{jr}\ell_{il}+(-1)^{|k||l|+|r||s|+|i||j|+|l||r|}\beta_{ks}\beta_{ir}\ell_{jl})\\
&\quad +(-1)^{|i||k|+|i||l|+|j||k|+|j||l|}\\
&\quad \times(\beta_{jr}\beta_{li}\ell_{ks}+(-1)^{|k||l|}\beta_{jr}\beta_{ki}\ell_{ls}+(-1)^{|i||s|}\beta_{jr}\beta_{ls}\ell_{ki}+(-1)^{|k||l|+|i||s|}\beta_{jr}\beta_{ks}\ell_{li}\\
&\quad +(-1)^{|i||j|} \beta_{ir}\beta_{lj}\ell_{ks}+(-1)^{|k||l|+|i||j|}\beta_{ir}\beta_{kj}\ell_{ls}\\
&\quad +(-1)^{|i||j|+|j||s|}\beta_{ir}\beta_{ls}\ell_{kj}+(-1)^{|i||j|+|k||l|+|j||s|}\beta_{ir}\beta_{ks}\ell_{lj}\\
&\quad + (-1)^{|r||s|} \beta_{js}\beta_{li}\ell_{kr}+(-1)^{|k||l|+|r||s|}\beta_{js}\beta_{ki}\ell_{lr}\\
&\quad +(-1)^{|i||r|+|r||s|}\beta_{js}\beta_{lr}\ell_{ki}+(-1)^{|k||l|+|i||r|+|r||s|}\beta_{js}\beta_{kr}\ell_{li}\\
&\quad +(-1)^{|i||j|+|r||s|}\beta_{is}\beta_{lj}\ell_{kr}+(-1)^{|k||l|+|i||j|+|r||s|}\beta_{is}\beta_{kj}\ell_{lr}\\
&\quad +(-1)^{|i||j|+|r||s|+|j||r|}\beta_{is}\beta_{lr}\ell_{kj}+(-1)^{|i||j|+|r||s|+|k||l|+|j||r|}\beta_{is}\beta_{kr}\ell_{lj})\\
&\quad - \beta_{jk}\beta_{lr}\ell_{is} - (-1)^{|i||l|}\beta_{jk}\beta_{ir}\ell_{ls} - (-1)^{|r||s|}\beta_{jk}\beta_{ls}\ell_{ir} - (-1)^{|i||l|+|r||s|}\beta_{jk}\beta_{is}\ell_{lr}\\
&\quad - (-1)^{|i||j|}\beta_{ik}\beta_{lr}\ell_{js} - (-1)^{|i||j|+|j||l|}\beta_{ik}\beta_{jr}\ell_{ls} \\
&\quad - (-1)^{|i||j|+|r||s|}\beta_{ik}\beta_{ls}\ell_{jr} - (-1)^{|i||j|+|j||l|+|r||s|}\beta_{ik}\beta_{js}\ell_{lr}\\
&\quad - (-1)^{|k||l|}\beta_{jl}\beta_{kr}\ell_{is} - (-1)^{|i||k|+|k||l|}\beta_{jl}\beta_{ir}\ell_{ks}\\
&\quad - (-1)^{|k||l|+|r||s|}\beta_{jl}\beta_{ks}\ell_{ir} - (-1)^{|i||k|+|k||l|+|r||s|}\beta_{jl}\beta_{is}\ell_{kr}\\
&\quad - (-1)^{|i||j|+|k||l|}\beta_{il}\beta_{kr}\ell_{2s} - (-1)^{|i||j|+|k||l|+|j||k|}\beta_{il}\beta_{jr}\ell_{ks}\\
&\quad - (-1)^{|i||j|+|k||l|+|r||s|}\beta_{il}\beta_{ks}\ell_{jr} - (-1)^{|i||j|+|k||l|+|j||k|+|r||s|}\beta_{il}\beta_{js}\ell_{kr})\\
&=\dfrac{1}{2}(-1)^{|i||r|+|j||r|+|k||r|+|l||r|+|i||s|+|j||s|+|k||s|+|l||s|}\\
&\quad \times ((-1)^{|j||k|}(\beta_{lr}\beta_{js}+(-1)^{|j||l|}\beta_{jr}\beta_{ls})\ell_{ik}\\
&\quad +(-1)^{|i||j|+|i||k|}(\beta_{lr}\beta_{is}+(-1)^{|i||l|}\beta_{ir}\beta_{ls})\ell_{jk}\\
&\quad +(-1)^{|j||l|+|k||l|}(\beta_{kr}\beta_{js}+(-1)^{|j||k|}\beta_{jr}\beta_{ks})\ell_{il}\\
&\quad +(-1)^{|i||j|+|i||l|+|k||l|}(\beta_{kr}\beta_{is}+(-1)^{|i||k|}\beta_{ir}\beta_{ks})\ell_{jl}.
\end{align*}
This implies
\begin{align*}
&\sum_{\substack{1\leq k\leq l\leq m+2n\\1\leq r\leq s \leq m+2n}}\widetilde{P}_{\ell_{kl},\ell_{rs}}(\ell_{ij})\pt{\ell_{rs}}\pt{\ell_{kl}}\\
&= \sum_{k,l,r,s=1}^{m+2n}\dfrac{1}{2}(-1)^{|k||i|+|l||i|+|r||i|+|s||i|+|k||j|+|l||j|+|r||j|+|s||j|+|l||r|}\\
&\quad \times(1+\delta_{kl}+\delta_{rs}+\delta_{kl}\delta_{rs})\beta_{si}\beta_{lj}\ell_{kr}\pt{\ell_{rs}}\pt{\ell_{kl}}\\
&\quad +\sum_{k,l,r,s=1}^{m+2n}\dfrac{1}{2}(-1)^{|k||i|+|l||i|+|r||i|+|s||i|+|k||j|+|l||j|+|r||j|+|s||j|+|l||r|+|l||s|}\\
&\quad \times (1+\delta_{kl}+\delta_{rs}+\delta_{kl}\delta_{rs})\beta_{li}\beta_{sj}\ell_{kr}\pt{\ell_{rs}}\pt{\ell_{kl}}\\
&= \dfrac{1}{2}\sum_{k,l,r,s=1}^{m+2n}(-1)^{|k||s|}(1+\delta_{kl}+\delta_{rs}+\delta_{kl}\delta_{rs})\ell_{kr}(\beta_{is}\beta_{jl} + (-1)^{|i||j|}\beta_{il}\beta_{js})\pt{\ell_{sr}}\pt{\ell_{lk}}\\
&= \sum_{k,l,r,s=1}^{m+2n}(-1)^{|k||s|}(1+\delta_{kl}+\delta_{rs}+\delta_{kl}\delta_{rs})\beta_{is}\beta_{jl}\ell_{kr}\pt{\ell_{sr}}\pt{\ell_{lk}}.
\end{align*}
We also have
\begin{align*}
\sum_{1\leq k \leq l\leq m+2n}\lambda_{\ell_{kl}}(\ell_{ij})\pt{\ell_{kl}} &= -\sum_{k,l=1}^{m+2n}(1+\delta_{kl})(\beta_{jk}\lambda(L_{\ell_{il}}) + (-1)^{|i||j|} \beta_{ik}\lambda(L_{\ell_{jl}}))\pt{\ell_{kl}}\\
&= -\lambda\sum_{k,l=1}^{m+2n}(1+\delta_{kl})(\beta_{jk}\beta_{il} + (-1)^{|i||j|} \beta_{ik}\beta_{jl})\pt{\ell_{kl}}\\
&= -2\lambda\sum_{k,l=1}^{m+2n}(1+\delta_{kl})\beta_{jk}\beta_{il}\pt{\ell_{kl}},
\end{align*}
which implies
\endgroup
\begin{align*}
\bessel(\ell_{ij}) &= -2\lambda\sum_{k,l=1}^{m+2n}(1+\delta_{kl})\beta_{jk}\beta_{il}\pt{\ell_{kl}}\\
&\quad +\sum_{k,l,r,s=1}^{m+2n}(-1)^{|k||i|}(1+\delta_{kl}+\delta_{rs}+\delta_{kl}\delta_{rs})\beta_{is}\beta_{jl}\ell_{kr}\pt{\ell_{sr}}\pt{\ell_{lk}},
\end{align*}
as desired.
\end{proof}

\begin{Prop}[Lemma \ref{Lemma_Q_Bessel}]\label{Prop_App_Q_Bessel}
Suppose $Q\in \mc P_2(\ds K^{{\wh m}|2{\wh n}})$ is given by
\begin{align*}
&Q = \sum_{i,j,k,l=1}^{m+2n}\alpha_{ijkl}\ell_{ij}\ell_{kl}, \quad \text{ with }\\ &\alpha_{ijkl} = (-1)^{|i||j|}\alpha_{jikl} = (-1)^{|k||l|}\alpha_{ijlk}=(-1)^{(|i|+|j|)(|k|+|l|)}\alpha_{klij}\in \ds C.
\end{align*}
Then the Bessel operators act trivially on $Q$ if and only if
\begin{align*}
2(-1)^{|i||j|}\lambda \alpha_{ijkl} &= (-1)^{|i||k|}\alpha_{jkil}+(-1)^{|k||l|+|i||l|}\alpha_{jlik},
\end{align*}
for all $i,j,k,l\in\{1, \ldots, m+2n\}$. 
\end{Prop}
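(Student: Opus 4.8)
The strategy is a direct computation using the explicit form of the Bessel operator. Since $Q$ is homogeneous of degree $2$ and each $\bessel(\ell_{ab})$ is a second order differential operator whose coefficients are polynomials of degree at most $1$, the superpolynomial $\bessel(\ell_{ab})Q$ is homogeneous of degree $1$ for every $a,b$. Hence ``the Bessel operators act trivially on $Q$'' is equivalent to the vanishing, for all $a,b,p,q\in\{1,\dots,m+2n\}$, of the coefficient of the basis element $\ell_{pq}$ in $\bessel(\ell_{ab})Q$. Each such coefficient is a $\ds C$-linear expression in the $\alpha_{ijkl}$, so the whole statement reduces to an identity between two linear forms, and the ``if and only if'' is then automatic: I will show that this coefficient equals, up to an overall nonzero scalar and the symmetries of $\alpha$, the difference between the two sides of the claimed relation.

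Concretely, I would substitute $Q=\sum_{i,j,k,l}\alpha_{ijkl}\ell_{ij}\ell_{kl}$ into the rewriting of Proposition~\ref{Prop_Explicit_Bessel},
\[
\bessel(\ell_{ab}) = -2\lambda(1+\delta_{ab})\pt{ba}+\sum_{c,d=1}^{m+2n}(-1)^{|c||a|}(1+\delta_{bc}+\delta_{ad}+\delta_{bc}\delta_{ad})\ell^{cd}\pt{ad}\pt{bc}.
\]
The first order part acts on the product $\ell_{ij}\ell_{kl}$ by the Leibniz rule, each $\pu{uv}$ being evaluated with the contraction identity $\pu{uv}\ell_{rs}=\delta_{ur}\delta_{vs}+(-1)^{|u||v|}\delta_{us}\delta_{vr}-\delta_{uv}\delta_{rs}\delta_{ur}$; this produces a sum of terms of the shape $(\text{scalar})\cdot\lambda\cdot\ell_{pq}$. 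For the second order part one notes that $\pt{ad}\pt{bc}(\ell_{ij}\ell_{kl})$ only keeps the two ``fully contracted'' cross terms $\pt{bc}(\ell_{ij})\,\pt{ad}(\ell_{kl})$ and $\pt{bc}(\ell_{kl})\,\pt{ad}(\ell_{ij})$, the pure second derivatives of a degree $1$ object vanishing; multiplying by $\ell^{cd}$ and re-expanding $\ell^{cd}=\sum_{r,s}\ell_{rs}\beta^{rc}\beta^{sd}$ rewrites everything in the basis $\{\ell_{pq}\}$. Collecting the coefficient of $\ell_{pq}$ and relabelling $(a,b,p,q)\leftrightarrow(i,j,k,l)$ should yield precisely
\[
2(-1)^{|i||j|}\lambda\,\alpha_{ijkl}-(-1)^{|i||k|}\alpha_{jkil}-(-1)^{|k||l|+|i||l|}\alpha_{jlik},
\]
whose vanishing for all indices is the asserted condition.

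The main obstacle will be the bookkeeping of the sign prefactors $(-1)^{|\cdot||\cdot|}$ together with the ``diagonal corrections'': the multiplicities $1+\delta_{ab}$ and $1+\delta_{bc}+\delta_{ad}+\delta_{bc}\delta_{ad}$ in the operator, and the $-\delta_{uv}\delta_{rs}\delta_{ur}$ term in the contraction identity, all produce extra Kronecker-delta terms which must be shown to cancel in pairs (essentially on the even diagonal, where coinciding indices of parity $\ol 0$ occur) so that no constraint beyond the stated one survives. I would handle this by treating the generic off-diagonal case first and then checking the coincidence cases separately, and I expect that the symmetry relations $\alpha_{ijkl}=(-1)^{|i||j|}\alpha_{jikl}=(-1)^{|k||l|}\alpha_{ijlk}=(-1)^{(|i|+|j|)(|k|+|l|)}\alpha_{klij}$ of the hypothesis are exactly what is needed to pass from the a priori $p\leftrightarrow q$–symmetrised relations that the raw computation produces to the single clean identity in the statement.
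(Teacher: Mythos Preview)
Your proposal is correct and follows essentially the same direct computation as the paper: apply the explicit form of $\bessel(\ell_{ab})$ from Proposition~\ref{Prop_Explicit_Bessel} to $Q$, compute the first-order and second-order parts separately via the Leibniz rule and the contraction identity, collect the coefficient of each $\ell_{pq}$, and use the symmetries of $\alpha$ to reduce the $p\leftrightarrow q$-symmetrised relations and diagonal cases to the single stated identity. The paper organises the bookkeeping slightly differently (it carries the full sums throughout and only separates diagonal from off-diagonal at the end), but the substance is identical.
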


\begin{proof}
\begingroup
\allowdisplaybreaks
We have
\begin{align*}
&\sum_{c,d}(1+\delta_{cd})\beta_{bc}\beta_{ad}\pt{\ell_{cd}}Q \\
&\quad =\sum_{i,j,k,l,c,d}(1+\delta_{cd})\alpha_{ijkl}\beta_{bc}\beta_{ad}\pt{\ell_{cd}}(\ell_{ij}\ell_{kl})\\
&= \sum_{i,j,k,l,c,d}(1+\delta_{cd})\alpha_{ijkl}\beta_{bc}\beta_{ad}\pt{\ell_{cd}}(\ell_{ij})\ell_{kl}\\
&\quad +\sum_{i,j,k,l,c,d}(-1)^{(|c|+|d|)(|i|+|j|)}(1+\delta_{cd})\alpha_{ijkl}\beta_{bc}\beta_{ad}\ell_{ij}\pt{\ell_{cd}}(\ell_{kl})\\
&= \sum_{i,j,k,l,c,d}(1+\delta_{cd})(\delta_{ci}\delta_{dj}+(-1)^{|i||j|}\delta_{cj}\delta_{di}-\delta_{ci}\delta_{dj}\delta_{cj}\delta_{di})\alpha_{ijkl}\beta_{bc}\beta_{ad}\ell_{kl}\\
&\quad +\sum_{i,j,k,l,c,d}(-1)^{(|c|+|d|)(|i|+|j|)}(1+\delta_{cd})\\
&\quad \times(\delta_{ck}\delta_{dl}+(-1)^{|k||l|}\delta_{cl}\delta_{dk}-\delta_{ck}\delta_{dl}\delta_{cl}\delta_{dk}) \alpha_{ijkl}\beta_{bc}\beta_{ad}\ell_{ij}\\
&= \sum_{i,j,k,l,c,d}(\delta_{ci}\delta_{dj}+(-1)^{|i||j|}\delta_{cj}\delta_{di})\alpha_{ijkl}\beta_{bc}\beta_{ad}\ell_{kl}\\
&\quad +\sum_{i,j,k,l,c,d}(-1)^{(|c|+|d|)(|i|+|j|)}(\delta_{ck}\delta_{dl}+(-1)^{|i||j|}\delta_{ck}\delta_{dl})\alpha_{ijkl}\beta_{bc}\beta_{ad}\ell_{ij}\\
&= 4\sum_{i,j,k,l}(-1)^{|i||j|}\alpha_{ijkl}\beta_{ai}\beta_{bj}\ell_{kl}
\end{align*}
and
\begin{align*}
&\pt{\ell_{fe}}\pt{\ell_{dc}}(\ell_{ij}\ell_{kl})\\
&\quad = \pt{\ell_{fe}}\left(\pt{\ell_{dc}}(\ell_{ij})\ell_{kl} + (-1)^{(|c|+|d|)(|k|+|l|)} \ell_{ij}\pt{\ell_{dc}}(\ell_{kl})\right)\\
&= \pt{\ell_{dc}}(\ell_{ij})\pt{\ell_{fe}}(\ell_{kl}) +(-1)^{(|c|+|d|)(|i|+|j|)} \pt{\ell_{fe}}(\ell_{ij})\pt{\ell_{dc}}(\ell_{kl})\\
&= (\delta_{di}\delta_{cj}+(-1)^{|i||j|}\delta_{dj}\delta_{ci}-\delta_{di}\delta_{cj}\delta_{dj}\delta_{ci})(\delta_{fk}\delta_{el}+(-1)^{|k||l|}\delta_{fl}\delta_{ek}-\delta_{fk}\delta_{el}\delta_{fl}\delta_{ek})\\
&\quad +(-1)^{(|c|+|d|)(|i|+|j|)}\\
&\quad \times (\delta_{fi}\delta_{ej}+(-1)^{|i||j|}\delta_{fj}\delta_{ei}-\delta_{fi}\delta_{ej}\delta_{fj}\delta_{ei})(\delta_{dk}\delta_{cl}+(-1)^{|k||l|}\delta_{dl}\delta_{ck}-\delta_{dk}\delta_{cl}\delta_{dl}\delta_{ck})
\end{align*}
and
\begin{align*}
&\sum_{c,d,e,f}(-1)^{|a||c|}(1+\delta_{cd}+\delta_{ef}+\delta_{cd}\delta_{ef})\beta_{af}\beta_{bd}\ell_{ce}\pt{\ell_{fe}}\pt{\ell_{dc}}Q\\
&= \sum_{c,d,e,f,i,j,k,l}(-1)^{|a||c|}(1+\delta_{cd}+\delta_{ef}+\delta_{cd}\delta_{ef})\alpha_{ijkl}\beta_{af}\beta_{bd}\ell_{ce}\pt{\ell_{fe}}\pt{\ell_{dc}}(\ell_{ij}\ell_{kl})\\
&= \sum_{c,d,e,f,i,j,k,l}(-1)^{|a||c|}(\delta_{di}\delta_{cj}+(-1)^{|i||j|}\delta_{dj}\delta_{ci})(\delta_{fk}\delta_{el}+(-1)^{|k||l|}\delta_{fl}\delta_{ek})\\
&\quad \quad \times\alpha_{ijkl}\beta_{af}\beta_{bd}\ell_{ce}\\
&\quad + \sum_{c,d,e,f,i,j,k,l}(-1)^{|a||c|+(|c|+|d|)(|i|+|j|)}(\delta_{fi}\delta_{ej}+(-1)^{|i||j|}\delta_{fj}\delta_{ei})\\
&\quad\quad \times(\delta_{dk}\delta_{cl}+(-1)^{|k||l|}\delta_{dl}\delta_{ck})\alpha_{ijkl}\beta_{af}\beta_{bd}\ell_{ce}\\
&= \sum_{i,j,k,l}(-1)^{|a||j|}\alpha_{ijkl}\beta_{ak}\beta_{bi}\ell_{jl} + \sum_{i,j,k,l}(-1)^{|a||j|+|k||l|}\alpha_{ijkl}\beta_{al}\beta_{bi}\ell_{jk}\\
&\quad +\sum_{i,j,k,l}(-1)^{|a||i|+|i||j|}\alpha_{ijkl}\beta_{ak}\beta_{bj}\ell_{il}+\sum_{i,j,k,l}(-1)^{|a||i|+|k||l|+|i||j|}\alpha_{ijkl}\beta_{al}\beta_{bj}\ell_{ik}\\
&\quad +\sum_{i,j,k,l}(-1)^{|a||l|}\alpha_{klij}\beta_{ai}\beta_{bk}\ell_{lj}+\sum_{i,j,k,l}(-1)^{|a||k|+|k||l|}\alpha_{klij}\beta_{ai}\beta_{bl}\ell_{kj}\\
&\quad +\sum_{i,j,k,l}(-1)^{|a||l|+|i||j|}\alpha_{klij}\beta_{aj}\beta_{bk}\ell_{li}+\sum_{i,j,k,l}(-1)^{|a||k|+|k||l|+|i||j|}\alpha_{klij}\beta_{aj}\beta_{bl}\ell_{ki}\\
&= 8\sum_{i,j,k,l} (-1)^{|a||k|}\alpha_{jkil}\beta_{ai}\beta_{bj}\ell_{kl}
\end{align*}
which implies
\begin{align*}
\bessel(\ell_{ab})Q &= 8\sum_{i,j,k,l}\left( (-1)^{|a||k|}\alpha_{jkil}-(-1)^{|i||j|}\lambda\alpha_{ijkl}\right)\beta_{ai}\beta_{bj}\ell_{kl}\\
&= 8\sum_{i,j,|k|=0}\left(\alpha_{jkik}-(-1)^{|i||j|}\lambda\alpha_{ijkk}\right)\beta_{ai}\beta_{bj}\ell_{kk}\\
&\quad + 8\sum_{i,j,k<l}\left( (-1)^{|a||k|}\alpha_{jkil}-(-1)^{|i||j|}\lambda\alpha_{ijkl}\right)\beta_{ai}\beta_{bj}\ell_{kl}\\
&\quad + 8\sum_{i,j,k>l}\left( (-1)^{|a||k|}\alpha_{jkil}-(-1)^{|i||j|}\lambda\alpha_{ijkl}\right)\beta_{ai}\beta_{bj}\ell_{kl}\\
&= 8\sum_{i,j,|k|=0}\left(\alpha_{jkik}-(-1)^{|i||j|}\lambda\alpha_{ijkk}\right)\beta_{ai}\beta_{bj}\ell_{kk}\\
&\quad + 8\sum_{i,j,k<l}\left( (-1)^{|a||k|}\alpha_{jkil}-(-1)^{|i||j|}\lambda\alpha_{ijkl}\right)\beta_{ai}\beta_{bj}\ell_{kl}\\
&\quad + 8\sum_{i,j,k<l}(-1)^{|k||l|}\left( (-1)^{|a||l|}\alpha_{jlik}-(-1)^{|i||j|}\lambda\alpha_{ijlk}\right)\beta_{ai}\beta_{bj}\ell_{kl}\\
&= 8\sum_{i,j,k<l}\left( (-1)^{|a||k|}\alpha_{jkil}+(-1)^{|k||l|+|a||l|}\alpha_{jlik}-2(-1)^{|i||j|}\lambda\alpha_{ijkl}\right)\beta_{ai}\beta_{bj}\ell_{kl}\\
&\quad + 8\sum_{i,j,|k|=0}\left(\alpha_{jkik}-(-1)^{|i||j|}\lambda\alpha_{ijkk}\right)\beta_{ai}\beta_{bj}\ell_{kk}.
\end{align*}
If we now impose $\bessel(\ell_{ij})Q=0$, for all $i,j\in\{1, \ldots, m+2n\}$, we obtain the desired conditions on the coefficients of $Q$.
\endgroup
\end{proof}

\makeatletter
\let\@mkboth\@gobbletwo
\makeatother
\printindex[sym]

\bibliography{citations} 

\begin{thebibliography}{10}

\bibitem{Folland}
G.~B. Folland, {\em Harmonic analysis in phase space}, vol.~122 of {\em Annals
  of Mathematics Studies}.
\newblock Princeton University Press, Princeton, NJ, 1989.

\bibitem{GanSavin}
W.~T. Gan and G.~Savin, ``On minimal representations definitions and
  properties,'' {\em Represent. Theory}, vol.~9, pp.~46--93, 2005.

\bibitem{HKM}
J.~Hilgert, T.~Kobayashi, and J.~M\"{o}llers, ``Minimal representations via
  {B}essel operators,'' {\em J. Math. Soc. Japan}, vol.~66, no.~2,
  pp.~349--414, 2014.

\bibitem{HKMO}
J.~Hilgert, T.~Kobayashi, J.~M\"{o}llers, and B.~{\O}rsted, ``Fock model and
  {S}egal-{B}argmann transform for minimal representations of {H}ermitian {L}ie
  groups,'' {\em J. Funct. Anal.}, vol.~263, no.~11, pp.~3492--3563, 2012.

\bibitem{BCARXIV2}
S.~Barbier and S.~Claerebout, ``A general approach to constructing minimal
  representations of lie supergroups,'' {\em Preprint. arXiv:2303.00378}, 2023.

\bibitem{BC2}
S.~Barbier and K.~Coulembier, ``On structure and {TKK} algebras for {J}ordan
  superalgebras,'' {\em Comm. Algebra}, vol.~46, no.~2, pp.~684--704, 2018.

\bibitem{Nishiyama}
K.~Nishiyama, ``Oscillator representations for orthosymplectic algebras,'' {\em
  J. Algebra}, vol.~129, no.~1, pp.~231--262, 1990.

\bibitem{dGM}
A.~de~Goursac and J.-P. Michel, ``{Superunitary Representations of Heisenberg
  Supergroups},'' {\em Int. Math. Res. Not. IMRN}, 08 2018.
\newblock rny184.

\bibitem{BF}
S.~{Barbier} and J.~{Frahm}, ``{A minimal representation of the orthosymplectic
  Lie supergroup},'' {\em Int. Math. Res. Not. IMRN}, 2019.

\bibitem{BCD}
S.~Barbier, S.~Claerebout, and H.~De~Bie, ``A {F}ock model and the
  {S}egal-{B}argmann transform for the minimal representation of the
  orthosymplectic {L}ie superalgebra {$\mathfrak{osp}(m,2|2n)$},'' {\em SIGMA
  Symmetry Integrability Geom. Methods Appl.}, vol.~16, pp.~085, 33 pages,
  2020.

\bibitem{C2}
K.~Coulembier, ``On a class of tensor product representations for the
  orthosymplectic superalgebra,'' {\em J. Pure Appl. Algebra}, vol.~217, no.~5,
  pp.~819--837, 2013.

\bibitem{CSS2}
K.~Coulembier, P.~Somberg, and V.~Sou\v{c}ek, ``Joseph ideals and harmonic
  analysis for {$\mathfrak{osp}(m|2n)$},'' {\em Int. Math. Res. Not. IMRN},
  no.~15, pp.~4291--4340, 2014.

\bibitem{BC3}
S.~Barbier and S.~Claerebout, ``A {S}chr\"{o}dinger model, {F}ock model and
  intertwining {S}egal-{B}argmann transform for the exceptional {L}ie
  superalgebra {$D(2, 1; \alpha)$},'' {\em J. Lie Theory}, vol.~31, no.~4,
  pp.~1153--1188, 2021.

\bibitem{C3}
S.~Barbier and S.~Claerebout, ``A superunitary fock model of the exceptional
  lie supergroup $\mathds{D}(2,1;\alpha)$,'' {\em Commun. Math. Phys.}, 2023.

\bibitem{CCTV}
C.~Carmeli, G.~Cassinelli, A.~Toigo, and V.~S. Varadarajan, ``Unitary
  representations of super {L}ie groups and applications to the classification
  and multiplet structure of super particles,'' {\em Comm. Math. Phys.},
  vol.~263, no.~1, pp.~217--258, 2006.

\bibitem{BC1}
S.~Barbier and K.~Coulembier, ``Polynomial realisations of {L}ie
  (super)algebras and {B}essel operators,'' {\em Int. Math. Res. Not. IMRN},
  no.~10, pp.~3148--3179, 2017.

\bibitem{CCF}
C.~Carmeli, L.~Caston, and R.~Fioresi, {\em Mathematical foundations of
  supersymmetry}.
\newblock EMS Ser. Lect. Math., Z{\"u}rich: European Mathematical Society
  (EMS), 2011.

\bibitem{KO1}
T.~Kobayashi and B.~{\O}rsted, ``Analysis on the minimal representation of
  {$\rm O(p,q)$}. {I}. {R}ealization via conformal geometry,'' {\em Adv.
  Math.}, vol.~180, no.~2, pp.~486--512, 2003.

\bibitem{KO2}
T.~Kobayashi and B.~{\O}rsted, ``Analysis on the minimal representation of
  {$\rm O(p,q)$}. {II}. {B}ranching laws,'' {\em Adv. Math.}, vol.~180, no.~2,
  pp.~513--550, 2003.

\bibitem{KO3}
T.~Kobayashi and B.~{\O}rsted, ``Analysis on the minimal representation of
  {$\rm O(p,q)$}. {III}. {U}ltrahyperbolic equations on {${\mathbb
  R}^{p-1,q-1}$},'' {\em Adv. Math.}, vol.~180, no.~2, pp.~551--595, 2003.

\bibitem{KM}
T.~Kobayashi and G.~Mano, ``The {S}chr\"{o}dinger model for the minimal
  representation of the indefinite orthogonal group {${\rm O}(p,q)$},'' {\em
  Mem. Amer. Math. Soc.}, vol.~213, no.~1000, pp.~vi+132, 2011.

\bibitem{DS}
H.~De~Bie and F.~Sommen, ``Spherical harmonics and integration in superspace,''
  {\em J. Phys. A}, vol.~40, no.~26, pp.~7193--7212, 2007.

\bibitem{LS}
R.~L\'{a}vi\v{c}ka and D.~\v{S}m\'{\i}d, ``Fischer decomposition for
  polynomials on superspace,'' {\em J. Math. Phys.}, vol.~56, no.~111704, p.~9,
  2015.

\bibitem{C}
K.~Coulembier, ``The orthosymplectic superalgebra in harmonic analysis,'' {\em
  J. Lie Theory}, vol.~23, no.~1, pp.~55--83, 2013.

\bibitem{K2}
V.~G. Kac, ``Lie superalgebras,'' {\em Advances in Math.}, vol.~26, no.~1,
  pp.~8--96, 1977.

\bibitem{Mu}
I.~M. Musson, {\em Lie superalgebras and enveloping algebras}, vol.~131 of {\em
  Graduate Studies in Mathematics}.
\newblock American Mathematical Society, Providence, RI, 2012.

\bibitem{BdGT}
P.~Bieliavsky, A.~de~Goursac, and G.~Tuynman, ``Deformation quantization for
  {H}eisenberg supergroup,'' {\em J. Funct. Anal.}, vol.~263, no.~3,
  pp.~549--603, 2012.

\bibitem{Le}
D.~A. Le\u{\i}tes, ``Introduction to the theory of supermanifolds,'' {\em
  Uspekhi Mat. Nauk}, vol.~35, no.~1(211), pp.~3--57, 255, 1980.

\bibitem{AAR}
G.~E. Andrews, R.~Askey, and R.~Roy, {\em Special functions}, vol.~71 of {\em
  Encyclopedia of Mathematics and its Applications}.
\newblock Cambridge University Press, Cambridge, 1999.

\bibitem{PSS}
D.~Pe\~{n}a Pe\~{n}a, I.~Sabadini, and F.~Sommen, ``Segal-{B}argmann-{F}ock
  modules of monogenic functions,'' {\em J. Math. Phys.}, vol.~58, no.~10,
  pp.~103507, 9, 2017.

\bibitem{DeB}
H.~De~Bie, ``Fourier transform and related integral transforms in superspace,''
  {\em J. Math. Anal. Appl.}, vol.~345, no.~1, pp.~147--164, 2008.

\bibitem{Garfinkle}
D.~Garfinkle, {\em A {NEW} {CONSTRUCTION} {OF} {THE} {JOSEPH} {IDEAL}}.
\newblock ProQuest LLC, Ann Arbor, MI, 1982.
\newblock Thesis (Ph.D.)--Massachusetts Institute of Technology.

\end{thebibliography}
\bibliographystyle{ieeetr}

\end{document}